\documentclass[11pt,oneside,english,reqno]{amsart}
\usepackage[utf8]{inputenc}
\usepackage[T1]{fontenc}
\setcounter{secnumdepth}{3}
\setcounter{tocdepth}{3}
\usepackage{color}
\usepackage{babel}
\usepackage{amstext}
\usepackage{float}
\usepackage{amsthm}
\usepackage{amssymb}
\usepackage{graphicx}
\usepackage{amsmath}
\usepackage[unicode=true,pdfusetitle,
 bookmarks=true,bookmarksnumbered=false,bookmarksopen=false,
 breaklinks=false,pdfborder={0 0 0},pdfborderstyle={},backref=false,colorlinks=true]
 {hyperref}
\hypersetup{
 citecolor=blue}

\usepackage{geometry}
\geometry{verbose,bottom=1in}

\makeatletter
\numberwithin{equation}{section}
\numberwithin{figure}{section}
\theoremstyle{definition}
\newtheorem{thm}{\protect\theoremname}[section]
  \theoremstyle{definition}
  
  \theoremstyle{definition}
  \newtheorem{defn}[thm]{\protect\definitionname}
  \theoremstyle{definition}
  \newtheorem{lem}[thm]{\protect\lemmaname}
  \theoremstyle{remark}
  \newtheorem{rem}[thm]{\protect\remarkname}
  \theoremstyle{definition}
  \newtheorem{cor}[thm]{\protect\corollaryname}
  \theoremstyle{definition}
  \newtheorem*{thm*}{\protect\theoremname}
  \theoremstyle{definition}
  \newtheorem{prop}[thm]{\protect\propositionname}


\numberwithin{equation}{section}
\numberwithin{figure}{section}
\usepackage{enumitem}		
 \let\footnote=\endnote
\@ifundefined{lettrine}{\usepackage{lettrine}}{}


\theoremstyle{definition}
\newtheorem{thmx}{Theorem}

\def\ep{\varepsilon}
\newcommand{\e}{\varepsilon}
\newcommand{\vp}{\varphi}

\newcommand{\hol}{H\"older }

\newcommand{\hx}{\hat{x}}
\newcommand{\hy}{\hat{y}}

\newcommand{\hm}{\hat{m}}
\newcommand{\hnu}{\hat{\nu}}
\newcommand{\heta}{\hat{\eta}}
\newcommand{\hmu}{\hat{\mu}}

\newcommand{\Wloc}{\mathcal{W}_{\text{loc}}}
\newcommand{\Sig}{\Sigma_T}

\newcommand{\glr}{\text{GL}_d(\R)}

\def\id{\text{id}}


\theoremstyle{definition}
\newtheorem{theorem}{Theorem}[section]
\newtheorem{lemma}[theorem]{Lemma}

\newtheorem{corollary}[theorem]{Corollary}
\newtheorem{proposition}[theorem]{Proposition}
\newtheorem{definition}[theorem]{Definition}

\newcommand{\bmat}[1]{\begin{bmatrix} #1 \end{bmatrix}}

\newcommand{\inn}[1]{\left\langle #1 \right\rangle}
\newcommand{\set}[1]{\left\{ #1 \right\}}
\newcommand{\abs}[1]{\left| #1 \right|}

\newcommand{\norm}[1]{\left \| #1 \right \|}
\newcommand{\wt}[1]{\widetilde{ #1}}

\newcommand{\ol}[1]{\overline{#1}}

\def\[#1\]{\begin{align*}#1\end{align*}}

\DeclareMathOperator{\spn}{span}

\DeclareMathOperator{\var}{Var}

\DeclareMathOperator{\op}{op}

\DeclareMathOperator{\spec}{spec}
\DeclareMathOperator{\res}{res}
\DeclareMathOperator{\ran}{Ran}
\DeclareMathOperator{\interior}{int}
\DeclareMathOperator{\geom}{geom}
\DeclareMathOperator{\alg}{alg}

\renewcommand{\mod}{\; \mathrm{mod} \;}

\newcommand{\inv}{\text{inv}}
\newcommand{\Lip}{\text{Lip}}

\def\a{\alpha}
\def\A{\mathcal{A}}

\def\b{\beta}
\def\B{\mathcal{B}}

\def\c{\mathcal{C}}  
\def\C{\mathbb{C}}
\def\CC{\mathcal{C}}

\def\d{\delta}
\def\D{\mathcal{D}}     

\def\E{\mathbb{E}}

\newcommand{\h}{\hat}

\def\I{\mathrm{I}}

\def\L{\mathcal{L}}

\def\M{\mathcal{M}}

\def\n{\mathcal{N}}
\def\N{\mathbb{N}}

\def\P{\mathbb{P}}

\def\R{\mathbb{R}}

\def\s{\sigma}
\def\S{\Sigma} 
\def\SS{\mathcal{S}}

\def\U{\mathcal{U}}

\def\w{\mathcal{W}}

\def\Z{\mathbb{Z}}

\makeatother
  \providecommand{\corollaryname}{Corollary}
  \providecommand{\definitionname}{Definition}
  \providecommand{\factname}{Fact}
  \providecommand{\lemmaname}{Lemma}
  \providecommand{\propositionname}{Proposition}
  \providecommand{\remarkname}{Remark}
  \providecommand{\theoremname}{Theorem}

\usepackage{float}

\address{Department of Mathematics, The University of Chicago, Chicago, IL 60637, USA} 
\email{kihopark@math.uchicago.edu}

\address{Department of Mathematics, Northwestern university, Evanston, IL 60208, USA} 
\email{mark.piraino@northwestern.edu}

\begin{document}

\sloppy
\title{Transfer operators and limit laws for typical cocycles}
\author{Kiho Park, Mark Piraino}
\date{\today}
\begin{abstract}
We show that typical cocycles (in the sense of Bonatti and Viana) over irreducible subshifts of finite type obey several limit laws with respect to the unique equilibrium states for \hol potentials. These include the central limit theorem and the large deviation principle. We also establish the analytic dependence of the top Lyapunov exponent on the underlying equilibrium state.
The transfer operator and its spectral properties play key roles in establishing these limit laws. 
\end{abstract}
\maketitle

\section{Introduction}

There is a vast literature on the asymptotic properties for products of independent random matrices, including the strong laws of large numbers \cite{MR121828}, the simplicity of the top Lyapunov subspace \cite{MR727020, MR2087783}, various limit laws \cite{MR669072}, and the regularity of the Lyapunov exponent with respect to the data \cite{page1989regularite, duarte2016lyapunov, MR3667994}. The goal of this article is to extend some of these results to the case in which the matrices are not independent but instead driven by a sufficiently nice dynamical system. 

Let $T$ be an irreducible adjacency matrix and $\Sig$ be the two-sided subshift of finite type defined by $T$. For a given \hol potential $\h\psi \colon \Sig \to \R$ and its unique equilibrium state $\hmu=\hmu_{\h\psi}$, it is well known that $\hmu$ satisfies various ergodic and statistical properties; see \cite{bowen1975ergodic, MR1793194}. 
These systems generalize the i.i.d. setting in the sense that they have conditional probabilities which may depend on the entire past. The assumption that $\h\psi$ is \hol is equivalent to the assumption that the conditional probabilities vary \hol continuously. 

Given a continuous cocycle $\h\A \colon \Sig \to \glr$, there are already known results about some of the listed properties above for the long term behavior of $\displaystyle \frac{1}{n}\log \|\h\A^n(\hx)\|$ for $\hmu$-generic points $\hx$. The strong law of large numbers for example is known to hold in great generality \cite{MR356192}; see also \cite{oseledets1968multiplicative}. Sufficient conditions which imply the simplicity of the top Lyapunov subspace have received much attention \cite{bonatti2004lyapunov,avila2007simplicity}.
There has also been a significant amount of work concerning the continuity of the Lyapunov exponents \cite{bochi2005lyapunov,duarte2016lyapunov,MR3667994,viana2019continuity}.
However, other than few exceptions such as \cite{bougerol1988theoremes}, less attention has been paid to the problem of establishing the limit laws. 
\let\thefootnote\relax\footnotetext{M.P. was supported in part by the National Science Foundation grant "RTG: Analysis on manifolds" at Northwestern University.}

 In the scalar case, it is well known that for a \hol potential $\h\vp \colon \Sig \to \R$ whose $n$-th Birkhoff sum is denoted by $S_n\h\vp$, the distribution of 
$$\frac{S_n\h\vp(\hx) - n\hmu(\h\psi)}{\sqrt{n}}$$
with respect to $\hmu$ satisfies various limit laws such as the central limit theorem, the large deviations principle, and a collection of other properties; see \cite{ratner1973central,young1990large} for instance. 

The main goal of this article is to extend some of these limit theorems to a class of  $\glr$-cocycles. More specifically, we consider 1-typical cocycles among fiber-bunched cocycles.
For $\h\A \colon \Sig\to \glr$, we say $\h\A$ is \textit{fiber-bunched} if it is nearly conformal, and here ``nearly'' depends on the metric defined on $\Sig$ and the \hol exponent of $\h\A$. 
The 1-typicality assumption was first introduced by Bonatti and Viana \cite{bonatti2004lyapunov}. They showed that 1-typical cocycles have simple top Lyapunov exponent with respect to any ergodic measures with continuous local product structure. The set of all such measures includes unique equilibrium states for \hol potentials such as $\hmu$. In some sense, the 1-typicality assumption is a suitable analogue for the proximality and the strong irreducibility that commonly appear in the study of the products of i.i.d. random matrices. Moreover, the set of 1-typical cocycles forms an open and dense subset of fiber-bunched cocycles. See Section \ref{sec: prelim} for its precise definition.
We denote the Lyapunov exponents of $\h\A$ with respect to $\hmu$ by $\displaystyle \lambda_1(\h\A,\hmu) \geq \ldots \geq \lambda_d(\h\A,\hmu)$.
For the next two theorems, fix any unit vector $u\in \R^d$ and define the variance as follows:
\begin{equation}\label{eq: sigma}
\var:=\var(\h\A,\hmu)=\lim_{n \to \infty}\frac{1}{n}\int \left(\log \norm{\hat{\A}^{n}(\hx)u}-n\lambda_{1}(\hat{\A},\hmu)\right)^{2}d\hmu.
\end{equation}
We will see that under the 1-typicality assumption on $\h\A$, the value of $\var$ is independent of the choice of $u$; see Proposition \ref{prop: deriv of rho}.

The first main theorem is the following version of the central limit theorem.

\begin{thmx}[Central Limit theorem]\label{thm:CLT}
Let $\S_{T}$ be a shift of finite type defined by an irreducible adjacency matrix $T$, and let $\hat{\A} \colon \Sig \to \glr$ be a $1$-typical cocycle. Let $\h\psi \colon \Sig \to \R$ be a \hol potential and $\hmu \in \M(\s)$ its unique equilibrium state. If $\var(\h\A,\hmu)>0$, then
 for every unit vector $u \in \R^d$,
	\[ \frac{\log \norm{\hat{\A}^{n}(\hx)u} - n\lambda_{1}(\hat{\A},\hmu)}{\sqrt{n}}\xrightarrow[n \to \infty]{\text{dist}}\n(0,\var). \]
	If $\var(\h\A,\hmu)=0$, then
	\[ \frac{\log \norm{\hat{\A}^{n}(\hx)u} - n\lambda_{1}(\hat{\A},\hmu)}{\sqrt{n}}\xrightarrow[n \to \infty]{\text{dist}}0. \]
\end{thmx}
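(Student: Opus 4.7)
The plan is to apply the Nagaev--Guivarc'h method via analytic perturbation of a family of transfer operators on the projective bundle. First I would reduce from the two-sided shift to the one-sided shift: the fiber-bunching hypothesis supplies canonical stable holonomies $H^s$ which conjugate $\h\A$ to a cocycle $\A$ whose values depend only on the forward coordinates, while $\hmu$ and $\h\psi$ push down to an equilibrium state $\mu$ and \hol potential $\psi$ on the one-sided shift $\Sig^+$. This reduction changes $\log\norm{\h\A^n(\hx)u}$ by a uniformly bounded quantity, so the centered $\sqrt{n}$-scaled limit distribution is preserved.

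Working on $\Sig^+\times\RP^{d-1}$ with the skew product $F(x,[v])=(\s x,[\A(x)v])$ and observable $\Phi(x,[v])=\log\bigl(\norm{\A(x)v}/\norm{v}\bigr)$, I would introduce the twisted transfer operators
\[ \L_t f(x,[v]) = \sum_{\s y = x} e^{\psi(y) + t\,\Phi(y,[\A(y)^{-1}v])}\, f(y,[\A(y)^{-1}v]) \]
acting on a suitable space of \hol functions on the projective bundle; at $t=0$ this is the transfer operator of $F$ associated with the lift of $\mu$. The essential step is to show that $\L_0$ has a simple isolated leading eigenvalue at $1$ with a spectral gap, the eigenstate being the unique $F$-invariant Markov lift of $\hmu$ to $\Sig^+\times\RP^{d-1}$ in the sense of \cite{bonatti2004lyapunov, avila2007simplicity}. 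The $1$-typicality hypothesis is decisive here: proximality at the distinguished periodic orbit $p$ together with the transversality of the homoclinic holonomy at $p$ preclude any nontrivial $F$-invariant proper subbundle, ruling out additional unit-modulus spectrum via a standard Ionescu-Tulcea--Marinescu / Doeblin--Fortet compactness argument.

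Once the gap for $\L_0$ is available, analytic perturbation theory gives an analytic family $t\mapsto\rho(t)$ of simple leading eigenvalues with $\rho(0)=1$, together with a persistent rank-one spectral projection. Computing the first two derivatives yields
\[ (\log\rho)'(0) = \lambda_1(\h\A,\hmu), \qquad (\log\rho)''(0) = \var(\h\A,\hmu), \]
the second identity simultaneously confirming that $\var$ is independent of $u$. The Nagaev identity
\[ \int e^{it(\log\norm{\A^n(x)v} - n\lambda_1)/\sqrt{n}}\,d\mu = e^{-it\sqrt{n}\lambda_1}\,\rho(it/\sqrt{n})^n + O(\theta^n) \]
for some $\theta\in(0,1)$, uniform in the unit vector $u$, combined with the Taylor expansion of $\log\rho$ at $0$, shows that the characteristic function converges to $e^{-t^2\var/2}$ when $\var>0$ and to the constant $1$ when $\var=0$, yielding convergence in distribution to $\n(0,\var)$ and to $\delta_0$ respectively. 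The main obstacle throughout is the spectral-gap step: the perturbative and characteristic-function manipulations are routine once the gap is established, whereas proving simplicity of the leading eigenvalue of $\L_0$ is exactly where the structural assumption of $1$-typicality is indispensable.
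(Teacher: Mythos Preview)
Your overall strategy—reduce to the one-sided shift, run the Nagaev--Guivarc'h method on a twisted transfer operator over $\Sig^+\times\P^{d-1}$, and use $1$-typicality to secure the required spectral picture—is exactly what the paper does. But the specific operator you wrote down does not yield the Nagaev identity you claim. With fiber action $[v]\mapsto[\A(y)^{-1}v]$ and observable $\Phi(y,[w])=\log(\|\A(y)w\|/\|w\|)$, the one-step weight at a preimage $y$ is $\Phi(y,[\A(y)^{-1}v])=\log(\|v\|/\|\A(y)^{-1}v\|)$, and iterating gives
\[
\L_t^n 1(x,[v]) \;=\; \sum_{\sigma^n y=x} g^{(n)}(y)\,\bigl\|\A^n(y)^{-1}v\bigr\|^{-t},
\]
so that $\displaystyle\int\L_{it/\sqrt n}^{\,n} 1(x,[u])\,d\mu(x)=\int\|\A^n(x)^{-1}u\|^{-it/\sqrt n}\,d\mu$. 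This is the characteristic function of $-\log\|\A^n(x)^{-1}u\|$, which is centred near $n\lambda_d$, not $n\lambda_1$; your displayed identity for $\log\|\A^n(x)u\|$ is false. The paper fixes this by taking the fiber action $[u]\mapsto[\A(y)^{*}u]$ instead: the adjoints compose in the right order along a backward orbit, so the telescoped weight is $\|(\A^n(y))^{*}u\|$, and one obtains $\int\|(\A^n(x))^{*}u\|^{z}\,d\mu$. Via $\sigma$-invariance on the two-sided shift this equals the moment generating function of $\log\|\A_*^n(\hx)u\|$ under $\hmu$, and since $\A$ is $1$-typical iff $\A_*$ is, the CLT for $\hat\A$ follows. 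In short, the inverse and the adjoint are not interchangeable here; only the adjoint makes the bookkeeping work.

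A secondary issue: you assert a spectral gap for $\L_0$, but when $T$ is irreducible of period $h>1$ the peripheral spectrum is $\{e^{2\pi ik/h}:0\le k<h\}$, each eigenvalue simple (this is Theorem~\ref{thm:spectralstructure} in the paper). The perturbed decomposition therefore reads $\L_z^n=\rho_z^n\bigl(\sum_{k=0}^{h-1}e^{2\pi i nk/h}P_{k,z}+S_z^n\bigr)$, and the limit argument must use $P_{k,0}1=0$ for $k\neq0$ to kill the extra peripheral terms. Your single-$\rho^n$ formula is only valid in the primitive case.
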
	

The next theorem establishes the large deviation principle in the same setting.

\begin{thmx}[Large Deviation Principle]\label{thm:LDP}
In the setting of Theorem \ref{thm:CLT}, suppose that $\var(\h\A,\hmu)>0$.
Then there exist $\eta>0$ and a strictly convex function $\Lambda^{\ast} \colon [0,\eta] \to \R$ such that $\Lambda^\ast$ is positive on $(0,\eta)$ and that for any $\e\in (0,\eta)$ and any unit vector $u \in \R^d$,
	\[ \lim_{n \to \infty}\frac{1}{n}\log \hmu\set{\hx \in \Sig \colon \Big| \log \norm{\hat{\A}^{n}(\hx)u} - n\lambda_{1}(\hat{\A},\hmu) \Big|>n\e}=-\Lambda^{\ast}(\e). \]
\end{thmx}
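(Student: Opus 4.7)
The approach is an application of the Gärtner-Ellis theorem, with the logarithmic moment generating function supplied by the twisted transfer operators underlying Proposition~\ref{prop: deriv of rho}. Lift to the skew product $F(\hx,[v]) = (\s\hx, [\h\A(\hx)v])$ on $\Sig \times \RP^{d-1}$, and set $\Phi(\hx, [v]) := \log(\|\h\A(\hx)v\|/\|v\|)$, so that $\log\|\h\A^n(\hx)u\| = \sum_{k=0}^{n-1}\Phi(F^k(\hx, [u]))$. The $1$-typicality hypothesis ensures that for $t$ in a real neighborhood of $0$, the twisted transfer operator $\L_{\h\psi + t\Phi}$, acting on Hölder functions over the extension, has a simple maximal eigenvalue $\rho(t)$, a strictly positive leading eigenfunction $h_t$ uniformly bounded above and below on $\RP^{d-1}$, and a spectral gap; analytic perturbation theory then makes $t \mapsto \log\rho(t)$ real analytic near $0$, with $(\log\rho)'(0) = \lambda_1(\h\A,\hmu)$ and $(\log\rho)''(0) = \var > 0$ by Proposition~\ref{prop: deriv of rho}.

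From the spectral decomposition of $\L_{\h\psi + t\Phi}^n$ and the uniform positivity of $h_t$, one obtains, for every unit $u \in \R^d$,
\begin{equation*}
\int \|\h\A^n(\hx)u\|^t\,d\hmu(\hx) = \rho(t)^n \rho(0)^{-n} \bigl(c_t([u]) + o_n(1)\bigr),
\end{equation*}
with $c_t([u])$ bounded between two positive constants uniformly in $[u]$. Hence the centered logarithmic moment generating function
\begin{equation*}
\Lambda(t) := \lim_{n \to \infty}\frac{1}{n}\log\int \exp\bigl(t(\log\|\h\A^n(\hx)u\| - n\lambda_1(\h\A,\hmu))\bigr)\,d\hmu(\hx) = \log\rho(t) - \log\rho(0) - t\lambda_1(\h\A,\hmu)
\end{equation*}
exists, is independent of $u$, and is real analytic on some $(-t_0, t_0)$, with $\Lambda(0) = \Lambda'(0) = 0$ and $\Lambda''(0) = \var > 0$. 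After shrinking $t_0$, $\Lambda$ is strictly convex on $[-t_0, t_0]$, so its Legendre transform $I$ is real analytic and strictly convex on an open interval containing $0$, with $I(0) = 0$ and $I > 0$ elsewhere.

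The Gärtner-Ellis theorem then yields, for small $\e > 0$,
\begin{equation*}
\lim_{n \to \infty}\frac{1}{n}\log\hmu\{\pm(\log\|\h\A^n(\hx)u\| - n\lambda_1(\h\A,\hmu)) > n\e\} = -I(\pm\e),
\end{equation*}
and combining the two tails produces the two-sided rate $\Lambda^*(\e) := \min(I(\e), I(-\e))$. Since $I$ is real analytic, either $I(\cdot) \equiv I(-\cdot)$ or on some sufficiently small $(0, \eta]$ one of the two is strictly less than the other throughout; in either case $\Lambda^*$ agrees with a single real analytic, strictly convex branch on $[0, \eta]$, vanishing at $0$ and strictly positive on $(0, \eta]$, which yields the rate function of Theorem~\ref{thm:LDP}. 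The main technical obstacle is upstream, namely the spectral gap of $\L_{\h\psi + t\Phi}$ and the uniform positivity of $h_t$ over $\RP^{d-1}$: at $t = 0$, this is exactly where $1$-typicality enters (through Proposition~\ref{prop: deriv of rho}), and the extension to nonzero $t$ is a standard analytic perturbation argument.
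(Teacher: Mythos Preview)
Your approach is exactly the paper's (Theorem~\ref{thm:LDPgfunction} via Lemma~\ref{lem:GartnerEllis}): G\"artner--Ellis with $\Lambda(t)=\log\rho_t-t\lambda_1$, where $\rho_t$ is the leading eigenvalue of the twisted operator $\L_t$, and Proposition~\ref{prop: deriv of rho} supplies $\Lambda'(0)=0$ and $\Lambda''(0)=\var>0$. Two small cautions: the operator in question is the paper's $\L_z$ on $\Sig^+\times\P^{d-1}$ with fiber action $\A(y)^*$ (after the reduction to the one-sided shift), not literally the transfer operator of the forward skew product $F$ you wrote down; and for irreducible non-primitive $T$ there are $h$ peripheral eigenvalues of equal modulus (Proposition~\ref{prop:perturbation}), so your prefactor $c_t([u])$ must be allowed to oscillate in $n$---harmless for the $\tfrac{1}{n}\log$ limit, but your formula as stated is slightly off.
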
 

As a corollary, we also obtain the following version of the large deviation principle.

\begin{cor}\label{cor: LDP}
In the setting of Theorem \ref{thm:CLT}, suppose that $\var(\h\A,\hmu)>0$. Then for all sufficiently small $\ep>0$, there exists $C>0$ such that for all $n\geq 0$,
$$\hmu\set{\hx \in \Sig \colon \Big| \log \norm{\hat{\A}^{n}(\hx)} - n\lambda_{1}(\hat{\A},\hmu) \Big|>n\e} \leq Ce^{-C^{-1}n}.$$
\end{cor}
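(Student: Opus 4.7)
The plan is to deduce the large deviation bound for the operator norm from the vector version in Theorem \ref{thm:LDP} by splitting into upper and lower tails and comparing the operator norm with $\|\hat{\A}^n(\hx)e_i\|$ for the standard basis vectors $e_1,\dots,e_d$.

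First I handle the lower tail. Since $\|\hat{\A}^{n}(\hx)u\|\le\|\hat{\A}^{n}(\hx)\|$ for every unit vector $u$, the event $\{\log\|\hat{\A}^{n}(\hx)\|-n\lambda_{1}(\hat{\A},\hmu)<-n\e\}$ is contained in $\{\log\|\hat{\A}^{n}(\hx)u\|-n\lambda_{1}(\hat{\A},\hmu)<-n\e\}$ for any fixed unit $u$, so Theorem \ref{thm:LDP} directly supplies an exponential bound $\le e^{-n\Lambda^{\ast}(\e)/2}$ for all sufficiently large $n$.

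For the upper tail I would use the elementary inequality
\[
\|\hat{\A}^{n}(\hx)\|\le\sqrt{d}\,\max_{1\le i\le d}\|\hat{\A}^{n}(\hx)e_{i}\|,
\]
which follows from comparing the operator norm with the Hilbert--Schmidt norm. Consequently, for $n$ large enough that $\tfrac{1}{2}\log d<n\e/2$, the event $\{\log\|\hat{\A}^{n}(\hx)\|-n\lambda_{1}(\hat{\A},\hmu)>n\e\}$ is contained in
\[
\bigcup_{i=1}^{d}\bigl\{\log\|\hat{\A}^{n}(\hx)e_{i}\|-n\lambda_{1}(\hat{\A},\hmu)>n\e/2\bigr\}.
\]
Applying Theorem \ref{thm:LDP} to each unit vector $e_{i}$ (assuming $\e/2<\eta$, which is guaranteed by taking $\e$ sufficiently small) and using a union bound, each term is at most $e^{-n\Lambda^{\ast}(\e/2)/2}$ for large $n$, giving an overall bound of $d\cdot e^{-n\Lambda^{\ast}(\e/2)/2}$.

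Combining the two tails yields an exponential bound of the form $C_{0}e^{-c_{0}n}$ for all $n$ larger than some $N_{0}=N_{0}(\e)$. Since the measure in question is trivially bounded above by $1$, one can absorb the finitely many small values of $n$ by enlarging the constant, producing a single $C>0$ such that the stated inequality holds for every $n\ge 0$. I do not expect any real obstacle here; the only subtlety is that Theorem \ref{thm:LDP} is a limiting statement so the pre-limit constants may depend on the vector $e_{i}$, but this is harmless because only $d$ vectors are involved and one simply takes the worst such constant.
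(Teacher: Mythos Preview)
Your proposal is correct and follows essentially the same strategy as the paper: reduce the operator-norm statement to the vector version of Theorem \ref{thm:LDP} via a finite union over basis vectors. The only cosmetic difference is that the paper avoids your $\sqrt{d}$ factor and the halving $\e\to\e/2$ by passing to the equivalent norm $\|A\|_{\U}:=\max_{1\le i\le d}\|Au_i\|$, for which the upper-tail event is \emph{exactly} the union $\bigcup_i\{\log\|\A^{[n]}(x)u_i\|>n\e\}$; the norm equivalence is then absorbed into the constant $C$. Your explicit treatment of the lower tail is fine (the paper leaves it implicit), and your remark about absorbing the finitely many small $n$ and the vector-dependent pre-limit constants is exactly right.
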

We note that this version of the large deviation principle has previously been obtained by Gou\"ezel and Stoyanov \cite{gouezel2019} in the same setting. Furthermore, during the writing process of this paper, we were made aware that Duarte, Klein, and Poletti \cite{duartekleinpoletti} also establish such large deviation principle in a related setting.
Both works then use this result to show interesting applications.

We also remark that for the products of i.i.d. random matrices, both the central limit theorem and the large deviation principle have been previously established under various assumptions; see \cite{MR669072, MR886674, benoist2016random,sert2019}. Under a suitable moment condition together with an irreducibility assumption (i.e., Zariski density of the semi-group generated by the support of the distribution), the latest versions of the central limit theorem and the large deviation are due to Benoist and Quint \cite{benoist2016random} and Sert \cite{sert2019}, respectively. 	

Our next result establishes the analytic dependence of the Lyapunov exponent $\lambda_1(\h\A,\hmu)$ on the measure $\hmu$.


\begin{thmx}[Analyticity of the exponent]\label{thm:Analyticitygeneral}
In the setting of Theorem \ref{thm:CLT}, let $\{\h\vp_n\}_{n\in \N_0}$ be a sequence of positive real-valued \hol continuous functions on $\Sig$, and let  
	\[ \hat{\psi}_{t} :=\sum_{n=0}^{\infty}t^{n}\hat{\varphi}_{n}.\]
Suppose that $\displaystyle \sum_{n=0}^{\infty}\e^{n}\norm{\hat{\varphi}_{n}}_{C^{\alpha}(\S_{T})}<\infty$, for some $\e,\a>0$. Then $t \mapsto \lambda_{1}(\hat{\A},\mu_{\hat{\psi}_{t}})$ is real analytic in a neighborhood of $0$.
\end{thmx}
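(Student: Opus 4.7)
The plan is to express $\lambda_{1}(\hat{\A}, \mu_{\hat\phi})$ as a derivative of the logarithm of the leading eigenvalue of an analytic family of twisted transfer operators, and then reduce to Kato--Rellich perturbation theory together with the norm-convergent power-series expression of $\hat\psi_{t}$.

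\textbf{Step 1 (Twisted transfer operator).} Building on the transfer-operator machinery for $1$-typical cocycles developed earlier in the paper, for each \hol potential $\hat\phi$ near $\hat\vp_{0}$ and each complex parameter $s$ near $0$ I introduce a twisted operator $\L_{\hat\phi,s}$ acting on a suitable Banach space of \hol functions on $\Sig \times \RP^{d-1}$ (or, equivalently, on \hol sections of an associated bundle). The untwisted operator $\L_{\hat\phi,0}$ is the Ruelle transfer operator for $\hat\phi$ lifted to the projective bundle via the $\RP^{d-1}$-action of $\hat{\A}$. Under the $1$-typicality hypothesis this lifted operator is quasi-compact with a simple isolated leading eigenvalue $\rho(\hat\phi,0) = e^{P(\hat\phi)}$ and a spectral gap, and the twist is set up so that
\[
\left.\frac{\partial}{\partial s}\log \rho(\hat\phi,s) \right|_{s=0} = \lambda_{1}(\hat{\A}, \mu_{\hat\phi}).
\]
This is the same operator framework underlying Theorems A and B, where the second $s$-derivative yields $\var(\hat\A,\mu_{\hat\phi})$.

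\textbf{Step 2 (Joint analyticity of the operator family).} I check that $(\hat\phi,s) \mapsto \L_{\hat\phi,s}$ is jointly complex-analytic from a neighborhood in $C^{\alpha}(\Sig) \times \C$ into the Banach algebra of bounded operators on the chosen space. This is routine: $\hat\phi \mapsto e^{\hat\phi}$ is entire as a map into $C^{\alpha}$, and $s \mapsto \|\hat{\A}(\cdot)v\|^{s}$ is entire with locally uniform bounds on the relevant compact set of projective directions.

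\textbf{Step 3 (Kato--Rellich).} Since $\L_{\hat\vp_{0},0}$ has a simple isolated leading eigenvalue with spectral gap, analytic perturbation theory gives that $\rho(\hat\phi,s)$, the associated spectral projection, and hence $\partial_{s}\log \rho(\hat\phi,s)|_{s=0}$ all depend complex-analytically on $(\hat\phi,s)$ on a neighborhood of $(\hat\vp_{0},0)$. In particular, $\hat\phi \mapsto \lambda_{1}(\hat{\A}, \mu_{\hat\phi})$ is complex-analytic on a $C^{\alpha}$-neighborhood of $\hat\vp_{0}$.

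\textbf{Step 4 (Composition with the power series).} The hypothesis $\sum_{n\ge 0} \e^{n}\|\hat\vp_{n}\|_{C^{\alpha}(\Sig)}<\infty$ is precisely the statement that $t \mapsto \hat\psi_{t}$ is a norm-convergent power series in $C^{\alpha}(\Sig)$ on the complex disk of radius $\e$; in particular $t \mapsto \hat\psi_{t}$ is complex-analytic there and equals $\hat\vp_{0}$ at $t=0$. Composing with the analytic map from Step 3 produces the desired analyticity of $t \mapsto \lambda_{1}(\hat{\A}, \mu_{\hat\psi_{t}})$ in a neighborhood of $0$.

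The main obstacle is Step 1: constructing the twist so that the derivative identity holds while the operator retains a simple isolated leading eigenvalue with spectral gap. This is exactly where $1$-typicality is essential, providing the proximality/irreducibility-type behavior on $\RP^{d-1}$ needed for the leading eigenvalue of the lifted operator to be simple. Assuming these structural results are in hand from the earlier sections (they also drive Theorems A and B), the remaining three steps are a standard combination of analytic perturbation theory and composition of analytic maps.
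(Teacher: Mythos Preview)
Your proposal is correct in outline and arrives at the same conclusion, but it takes a somewhat different route from the paper's argument.

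The paper does not work with a two-parameter family $(\hat\phi,s)\mapsto \L_{\hat\phi,s}$ and the derivative identity $\lambda_1=\partial_s\log\rho|_{s=0}$. Instead it first reduces the two-sided potential $\hat\psi_t$ to a one-sided $\psi_t$ via a bounded linear map $W\colon C^\alpha(\Sig)\to C^{\alpha/2}(\Sig^+)$, then further to a $g$-function $g_t$ using the eigenfunction $h_t$ of the scalar transfer operator $L_{\psi_t}$; both reductions preserve analyticity in $t$. With $g_t$ in hand, the paper applies analytic perturbation theory to the single-parameter family $z\mapsto \L_{g_z}$ (no twist $s$) to get an analytic family of eigenmeasures $\nu_z$, and then invokes the Furstenberg-type formula $\lambda_1=\langle L_{\log g}\psi_1,\nu\rangle$ established in Proposition~\ref{prop:FurstenbergsFormula}. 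Analyticity of $t\mapsto\lambda_1$ then follows because it is a pairing of two analytic objects.

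Your approach buys conceptual economy: one perturbation argument in two variables replaces the explicit chain of reductions and the separate Furstenberg formula. The paper's approach buys concreteness: each reduction step is standard and the final expression for $\lambda_1$ is a linear functional of manifestly analytic data, so no joint-analyticity bookkeeping is needed. One point you should tighten is that the transfer operator in this paper lives on $\Sig^+\times\P^{d-1}$, not $\Sig\times\P^{d-1}$; the passage from $\hat\psi_t$ on the two-sided shift to a one-sided object is exactly the content of the paper's first reduction and is not entirely automatic. Also note that for irreducible but non-primitive $T$ the peripheral spectrum consists of $h$ simple eigenvalues, not a single one; this does not obstruct your Step~3 but your phrasing ``simple isolated leading eigenvalue'' should be adjusted accordingly.
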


These theorems are established via introducing and studying the spectral properties of the operator $\L$, which we describe below.
From the given \hol continuous function $\h\psi \colon \Sig \to \R$, we construct a \hol continuous function $\psi$ in the cohomology class of $\h\psi$ that depends only on the future coordinates and realize $\psi$ as a function on the one-sided subshift $\Sig^+$. Associated to $\psi$, the normalized Ruelle-Perron-Frobenius operator $L_\psi$ on $C(\Sig^+)$ is defined by 
$$
L_\psi f(x) :=\frac{1}{\lambda} \sum\limits_{y \colon \s y= x}e^{\psi(y)}f(y)
$$
where $\lambda = e^{P(\psi)}$.

In this setting, the operator $L_\psi$ acting on a suitable space of \hol functions is quasi-compact and has a unique eigenfunction $h$ and a unique eigenmeasure $\eta$ corresponding to its spectral radius. Such eigendata give rise to the unique equilibrium state $\mu:=\mu_\psi$ for $\psi$ defined by $d\mu =h\cdot d\eta$; in fact, the equilibrium state $\hmu$ for $\h\psi$ can be retrieved as the natural extension of $\mu$. Historically, exploiting such spectral property of $L_\psi$ has been a popular method for establishing statistical properties for $\hmu$ mentioned above, see \cite{MR1793194} and the references contained therein. 

By extending $L_\psi$, we define an operator 
$\L \colon C(\Sig^+\times \P^{d-1}) \to C(\Sig^+\times  \P^{d-1})$ defined by 
$$
\L f(x,\ol{u}) := \frac{1}{\lambda}\sum\limits_{y \colon \s y = x}e^{\psi(y)} f(y, \ol{\A(y)^{*}u}).
$$

In Theorem \ref{thm: main} we establish a Lasota-Yorke inequality for $\L$, and we use it to deduce further spectral properties of $\L$ in Theorem \ref{thm:spectralstructure}. The proofs of the main theorems stated above are based on such spectral properties of $\L$. In proving these spectral properties of $\L$, we exploit the gap in the top Lyapunov exponents $\lambda_1(\h\A,\hmu)>\lambda_2(\h\A, \hmu)$ ensured by the 1-typicality assumption on $\h\A$. One difficulty in exploiting such a gap comes from the non-uniform nature of Lyapunov exponents. Such difficulty is overcome with the aid of the 1-typicality assumption: we show that non-uniform convergence of some expressions that limit to $\lambda_1(\h\A,\hmu)$ becomes uniform when suitably averaged. See Section \ref{sec: quasi-compactness} for details

We note that in the case where $\Sig$ is a full shift and $\A$ is locally constant, the value of $\L f(x,\ol{u})$ only depends on the second coordinate $\ol{u}$, and $\L$ descends to an operator on $C(\P^{d-1})$. Similar operators have appeared in the literature to study various objects such as the subadditive equilibrium states \cite{piraino2018weak} and the products of random matrices and its associated stationary measures; see \cite{guivarc2015spectral} and references therein.

The paper is organized as follows. In Section \ref{sec: prelim}, we elaborate on the setting for the main theorems and survey relevant preliminary results. Then in Section \ref{sec: quasi-compactness}, we prove quasi-compactness of the operator $\L$ under the 1-typicality assumption on $\A$. Using the quasi-compactness of $\L$, we establish spectral properties of $\L$ in Section \ref{sec: spectral results}. In Section \ref{sec: limit theorems}, we prove the main results of this paper. Section \ref{sec: appendix} is the Appendix containing the results useful in establishing the main theorems.
\\

\noindent \textbf{Acknowledgments:} The authors would like to thank Amie Wilkinson and Aaron Brown for helpful discussions. The authors would also like to thank Jairo Bochi and Silvius Klein for helpful comments.

\section{Preliminaries}\label{sec: prelim}
\subsection{Set up} We elaborate on the setting for the main theorems and set up the relevant notations here.
\begin{definition}
We say that a non-negative matrix $T$ is \emph{irreducible} if for any $i,j$ there exists an $M$ such that $(T^{M})_{ij}>0$. We say that a matrix is \emph{primitive} if there exists an $M\in \N$ such that $T^{M}>0$.
\end{definition}
Throughout the paper, $T$ is an irreducible $q\times q$-square matrix with entries in $\{0,1\}$. 
We denote by $\Sig$ and $\Sig^+$ the corresponding two-sided and one-sided subshift of finite type defined by $T$, respectively. We will use $\s$ to denote the left shift operator for both $\Sig$ and $\Sig^+$. For points and objects relevant to the two-sided subshift $\Sig$, we will often indicate it with the hat notation, such as $\hx$. 

Let $P \colon \Sig \to \Sig^+$ be the standard projection mapping $ (x_i)_{i \in \Z}$ to $(x_i)_{i\in \N_0}$.
For any $\hx\in \Sig$, we define \textit{cylinder around $\hx$ of length $n$} by  
$$[\hx]_n:=\{(y_i)_{i \in \Z} \in \Sig \colon x_i = y_i \text{ for all }0 \leq i \leq n-1\}.$$
For any $x \in \Sig^+$, we likewise define the cylinder $[x]_n$ as $P([\hx]_n)$ for any $\hx \in P^{-1}(x)$.

We equip $\Sig$ with the following metric $\rho_0$: for $\hx = (x_i)_{i\in \Z}$ and $\hy = (y_i)_{i\in \Z}$,
$$\rho_0(\hx,\hy) := 2^{-k}$$
where $k$ is the largest integer such that $x_i = y_i$ for all $|i|<k$.
Similarly, we equip the one-sided subshift $\Sig^+$ with the same metric.
With such metric $\rho_0$, the subshift $(\Sig,\s)$ becomes a hyperbolic homemorphism. Its local stable sets are defined as
$$\Wloc^s(\hx):=\{\hy \in \Sig \colon x_i = y_i \text{ for all }i \in \N_0\},$$
and the local unstable sets $\Wloc^u(\hx)$ are likewise defined as the set of $\hy$ where $x_i = y_i$ for all $i \leq 0$. In the standard manner, we then extend the definition to global stable and unstable sets $\w^{s/u}(\hx)$.
By an abuse of notations, for any $x\in \Sig^+$ we define the local stable set of $x$ as $$\Wloc^{s}(x):=P^{-1}(x)=\Wloc^{s}(\hx) \text{ for any }\hx \in P^{-1}(x).$$

For any $\hx,\hy\in \Sig$ with $x_0 = y_0$, define the \textit{bracket} of $\hx$ and $\hy$ by
$$[\hx,\hy] := \Wloc^u(\hx) \cap \Wloc^s(\hy).$$
For any $\hx = (x_i)_{i \in \Z} \in \Sig$ and $y = (y_i)_{i\in \N_0} \in \Sig^+$ with $x_0 = y_0$, we define $[\hx,y]:=[\hx,\hy]$ by choosing any $\hy \in \Wloc^s(y)$. Note $[\hx,y]$ is well-defined independent of the choice of $\hy \in \Wloc^s(y)$, and we may often denote $[\hx,y]$ by $\hx_y$.

For any continuous cocycle $\A \colon \Sig \to \glr$ and $n\in \N$, we set
$$\A^n(\hx):= \A(\s^{n-1}\hx) \ldots \A(\hx).$$
From the definition, it satisfies the \textit{cocycle equation}: for any $m,n\in\N$ and $\hx \in \Sig$, $$\A^{m+n}(\hx) = \A^m(\s^n \hx)\A^n(\hx).$$

We can associate $\A$ to two related cocycles:
the adjoint cocycle and the inverse cocycle. Both are cocycles over $(\Sig,\s^{-1})$ and they are defined by
\begin{equation}\label{eq: inverse cocycle}
\A_*(\hx) := \A(\s^{-1}\hx)^* \text{ and }\A^{-1}(\hx) :=\A(\s^{-1}\hx)^{-1}.
\end{equation}
In particular, their iterations are given by $\A_*^n(\hx) = [\A^{n}(\s^{-n}\hx)]^*$ and $\A^{-n}(\hx)=\A^n(\s^{-n}\hx)^{-1}$. We denote the adjoint of the inverse cocycle by $\A^{-1}_*$; this defines a cocycle over $(\Sig,\s)$ by $\A^{-1}_*(\hx) = [\A(\hx)^{-1}]^*$.

Another system related to a given cocycle $\A \colon \Sig \to \glr$ is the skew product $\h{F}_\A \colon \Sig \times \P^{d-1} \to \Sig \times \P^{d-1}$ defined by
$$\h{F}_\A(\hx,v) = (\s\hx ,\ol{\A(\hx)v}).$$ It is clear that the action of $\A^n(x)$ on the projective space $\P^{d-1}$ is encoded in the second coordinate of the iterations of the skew product $\h{F}_\A$. Similarly, we denote the skew product on $\Sig^+ \times \P^{d-1}$ by $F_\A$. 

A cocycle $\A \colon \Sig \to \glr$ is $\theta$-\hol if there exists $C>0$ such that for all $\hx,\hy\in \Sig$, 
$$\|\A(\hx)- \A(\hy)\| \leq C \cdot \rho_0(\hx,\hy)^\theta.$$
We denote the set of all $\theta$-\hol cocycles by $C^\theta(\Sig,\glr)$. Throughout the paper, we will work with $\theta$-\hol cocycles $\A\colon \Sig \to \glr$ for some $\theta \in (0,1]$ satisfying an extra condition called the fiber-bunching.
\begin{defn}
A $\theta$-\hol cocycle $\A$ is \textit{fiber-bunched} if 
$$\|\A(x)\|\cdot \|\A(x)^{-1}\| < 2^\theta$$
for every $x\in \Sig$.  
\end{defn}
Without loss of generality, we may assume that $\theta = 1$. Indeed, for the general case when $\theta \neq 1$ we may rescale the metric on $\Sig$ to 
$$\rho(x,y) := \rho_0(x,y)^\theta$$
so that $\A$ belongs to $\Lip(\Sig,\glr):=C^1(\Sig,\glr)$. 


Clearly, conformal cocycles are fiber-bunched. In fact, small perturbations of conformal cocycles are also fiber-bunched. An alternate way to think of fiber-bunched cocycles is that their skew products $ \h{F}_\A \colon \Sig\times \P^{d-1} \to \Sig\times \P^{d-1}$ defined above are partially hyperbolic systems.
The set of fiber-bunched cocycles 
$$C_b(\Sig,\glr):=\{\A\in \Lip(\Sig,\glr) \colon \A \text{ is fiber-bunched}\}$$ forms an open subset of $\Lip(\Sig,\glr)$. 

One of the most important property of fiber-bunched cocycles is the convergence of the \textit{canonical stable/unstable holonomy} $H^{s/u}_{\hx,\hy}$: for any $\hy \in \Wloc^{s/u}(\hx)$, 
\begin{equation}\label{eq: canonical holonomies}
H^s_{\hx,\hy} :=\lim\limits_{n \to \infty} \A^n(\hy)^{-1}\A^n(\hx) ~\text{ and }~H^u_{\hx,\hy}:= \lim\limits_{n \to -\infty} \A^n(\hy)^{-1}\A^n(\hx).
\end{equation}
Moreover, the canonical holonomies vary \hol continuously in the basepoints $\hx,\hy\in \Sig$: there exists $C>0$ such that for any $\hy \in \Wloc^{s/u}(\hx)$, 
\begin{equation}\label{eq: holder holonomies}
\|H^{s/u}_{\hx,\hy} - I\| \leq C\cdot \rho(\hx,\hy).
\end{equation}
See \cite{kalinin2013cocycles} for further details.

It can be easily checked that the canonical stable holonomies $H^{s}_{\hx,\hy}$ satisfy the following properties:
\begin{enumerate}
	\item $H^s_{\hx,\hx} = I$ and $H^s_{\hy,\h{z}}\circ H^s_{\hx,\hy} = H^s_{\hx,\h{z}}$ for any $\hy,\h{z} \in \Wloc^s(\hx)$,
	\item $\A(\hx) = H^s_{\s \hy,\s \hx} \circ \A(\hy) \circ H^s_{\hx,\hy}$,
	\item $H^s\colon (\hx,\hy) \mapsto H^s_{\hx,\hy}$ is continuous as $\hx$ and $\hy$ vary continuously while satisfying the relation $\hy \in \Wloc^s(\hx)$.
\end{enumerate}
Likewise, the canonical unstable holonomies $H^u_{\hx,\hy}$ satisfy the analogous properties. 

The canonical holonomies guarantee that $\A$ has the following bounded distortion property: there exists $C>0$ such that for any $n \in \N$ and $\hx,\hy\in \Sig$ with $\hy \in [\hx]_n$, we have
\begin{equation}\label{eq: unif comparison}
C^{-1} \leq \frac{\|\A^n(\hx)\|}{\|\A^n(\hy)\|} \leq C.
\end{equation}


For any vector $v\in \R^d$, we denote its corresponding projection onto $\P^{d-1}$ by $\ol{v}$. 
We put a metric $d $ on $\P^{d-1}$ defined by 
\begin{equation}\label{eq: d}
d(\ol{u},\ol{v}) := \frac{\|u \wedge v\|}{\|u\| \cdot \|v\|}.
\end{equation}
It can be easily verified that $d(\ol{u},\ol{v})$ is equal to the sine of the angle between $\ol{u}$ and $\ol{v}$. 

For such metric $d$, there exists $C>0$ (twice the constant from \eqref{eq: holder holonomies}, for instance) such that for any $\ol{u}\in \P^{d-1}$ and $\hy \in \Wloc^{s/u}(\hx)$, 
\begin{equation}\label{eq: holder holonomies 2}
d\left(\ol{H^{s/u}_{\hx,\hy}u},\ol{u}\right) \leq C\cdot \rho(\hx,\hy).
\end{equation}


By an abuse of notation, we also denote by $d$ the metric on the product space $\Sig \times \P^{d-1}$ and $\Sig^+ \times \P^{d-1}$ defined as the maximum of the distance in each coordinate:
$$d((\hx,\ol{v}),(\hy,\ol{u})) := \max\{\rho(\hx,\hy),d(\ol{u},\ol{v})\}.$$ 


Lastly, we describe how to conjugate and realize a given fiber-bunched cocycle $\h{\A}\colon \Sig \to \glr$ as a $\glr$-cocycle $\A$ over the one-sided subshift $(\Sig^+,\s)$.
Assuming that $\Sig$ has $q$ alphabets, for each $i \in \{1,2,\ldots,q\}$, fix $\heta^i \in \Sig$ with $(\heta^i)_0 = i$.
For each $\hx = (x_i)_{i \in \Z} \in \Sig$, we set
$$\hx_\eta:=[\heta^{x_0},\hx].$$ 
We then define a new cocycle $\A$ on $\Sig$ given by
$$\A(\hx):= \hat{\A}(\hx_\eta).$$
Denoting the local holonomies of $\h{\A}$ by $\h{H}^{s/u}$, two cocycles $\A$ and $\h{\A}$ are conjugated to one another by the conjugacy $\CC(\hx):=\h{H}^s_{\hx,\hx_\eta}$; see \cite{backes2018continuity} for instance.
From its definition, we have 
\begin{equation}\label{constant along stable}
\A(\hx) = \A(\h{y}) \text{ for }\h{y} \in \Wloc^s(\hx).
\end{equation}
Denoting its local holonomies by $H^{s/u}$, we have $H^s \equiv I$, and hence, we may realize $\A$ as a cocycle over the one-sided subshift $(\Sig^+,\s)$. 

We will always assume that all fiber-bunched cocycles from here on have already been conjugated as above, and use $\A$ to denote the cocycle over $(\Sig^+,\s)$ as well as $(\Sig,\s)$ when the context is clear.


\subsection{Typicality assumption on $\A$}
In order to introduce the 1-typicality assumption on $\A$, we need to introduce the notion of holonomy loops. For any periodic point $p \in \Sig$, the \textit{homoclinic points} of $p$ are points that belong to the intersection between $\w^s(p)$ and $\w^u(p)$. For uniformly hyperbolic systems such as $(\Sig,\s)$, the set of $p$-homoclinic points are dense for any periodic point $p$. To each $p$-homoclinic point $z$, we can associate a \textit{homoclinic loop} defined by
$$\psi_z:= H^s_{z,p}\circ H^u_{p,z}.$$

\begin{defn}
We say $\A \in C_b(\Sig,\glr)$ is \textit{1-typical} if there exist a periodic point $p$ of some period $n$ and a $p$-homoclinic point $z$ such that
\begin{enumerate}
\item $P:=\A^{n}(p)$ has simple eigenvalues of distinct norms, and
\item for any $I,J \subseteq \{1,\ldots,d\}$ with $|I|+|J| \leq d$, 
$$\{\psi_z(v_i) \colon i\in \I \} \cup \{v_j \colon j\in J\}$$
are linearly independent, where $\{v_i\}_{i=1}^d$ are the eigenvectors of $P$.
\end{enumerate}
\end{defn}


Bonatti and Viana \cite{bonatti2004lyapunov} introduced the 1-typicality assumption as a sufficient condition for the simplicity of the top and bottom Lyapunov exponents with respect to any ergodic measures with continuous local product structure. Considering the pinching and twisting assumptions as suitable analogues for the proximality and the strong irreducibility, their work can be seen as generalizations to non-i.i.d. setting of Furstenberg's positivity of top Lyapunov exponent \cite{furstenberg1963noncommuting}. 

If the pinching and twisting conditions hold for all exterior product cocycles $\A^{\wedge t}$, $t\in \{1,\ldots, \lfloor d/2\rfloor \}$, such cocycles $\A$ are called \textit{typical}. Using standard tricks, the simplicity of the top and bottom Lyapunov exponents for 1-typicality can be modified to show the simplicity of all Lyapunov exponents for such cocycles. Moreover, typical cocycles form an open and dense subset of $C_b(\Sig,\glr)$.

Typicality, in slightly altered forms, have recently been employed to study subadditive thermodynamic formalism; see \cite{park2020quasi}.  


\subsection{Invariant measures}\label{subsec: invariant measures}
Let $\h\psi$ be a \hol potential on $\Sig$ and $\hmu\in \M(\s)$ be its unique equilibrium state; the uniqueness of $\hmu$ is well-known when $T$ is primitive \cite{bowen1975ergodic}, but the result still holds when $T$ is irreducible \cite{MR1793194}.
We fix $\h\psi$ and $\hmu$ once and for all throughout the paper.

Let $\psi$ be a \hol potential cohomologous to $\h\psi$ defined as in \cite[Lemma 1.6]{bowen1975ergodic}, and we realize $\psi$ as a potential on $\Sig^+$. Then the projection of $\hmu$ onto $\Sig^+$ denoted by $\mu$ is the unique equilibrium state for $\psi$.
%

Instead of working directly with $\psi$, we will work with $g$-functions, which helps simplify the presentations of the proofs for main theorems. We say $g \colon \Sig^+ \to \R$ is a \textit{$g$-function} if $g$ is a positive function satisfying
$$\sum\limits_{y\colon \s y = x} g(y) = 1$$
for all $x \in \Sig^+$. This is equivalent to the condition that $L_{\log g}1 = 1$. 

Recalling that $h$ is the unique eigenfunction corresponding to the spectral radius of $L_\psi$, a \hol function $\log g$ defined by 
\begin{equation}\label{eq: g}
 g(x):= \frac{e^{\psi(x)}}{\lambda} \cdot \frac{h(x)}{h(\s x)}
 \end{equation}
is cohomologous to $\psi$. Hence, we may work with $\log g$ instead of the given $\psi$, and its Ruelle-Perron-Frobenius operator is then defined by 
$$L_{\log g} f(x) := \sum\limits_{y \colon \s y = x}g(y)f(y).$$
Then its unique eigenmeasure corresponding to the spectral radius coincides with $\mu$, the unique equilibrium state for $\psi$. Replacing $\psi$ with $\log g$, we can see that $\L$ takes the form 
$$\L f(x,\ol{u}) := \sum\limits_{y \colon \s y = x}g(y) f(y,\ol{\A(y)^{*}u}).$$

It is clear that the spectral radius $\rho(\L)$ is equal to 1, and the constant function 1 is an eigenfunction of $\L$.
Moreover, setting $g^{(n)}(x):=g(\s^{n-1} x)\ldots g(x)$, the Gibbs property of $\mu$ gives
$$\mu(\I) \asymp g^{(n)}(x)$$
for any cylinder $\I$ of length $n$ and any $x \in \I$.



By considering $\Sig^+$ as the parameter space for the local stable sets of $\Sig$, we may consider a disintegration $\{\hmu^s_x\}_{x\in \Sig^+}$ of $\hmu$ where each $\hmu^s_x$ is a probability measure on $\Wloc^s(x)$ for $\mu$-a.e. $x$. For any such disintegration, we have
$$\hmu=\int_{\Sig^+} \hmu_x^s \, d\mu(x).$$

It is well-known that equilibrium states of \hol potentials, such as $\hmu$, have continuous local product structure; see \cite{leplaideur2000local}. From such property, there exists a disintegration $\{\hmu^s_x\}_{x\in \Sig^+}$ of $\hmu$ whose  unstable holonomies between local stable sets defined by
\begin{align*}
\begin{split}
h_{x,y} \colon (\Wloc^s(x), \hmu^s_x) &\to(\Wloc^s(y), \hmu^s_y)\\
\hx &\mapsto [\hx,y]
\end{split}
\end{align*}
are absolutely continuous for \textit{every} $x,y \in \Sig$ with $x_0=y_0$. Its Jacobian $J_{x,y}$ depends continuously in $x,y \in \Sig^+$, and we have  
\begin{equation}\label{eq: abs cty}
\hmu^s_y = J_{x,y}(h_{x,y})_*\hmu^s_x.
\end{equation}

We now survey relevant results from Bonatti and Viana \cite{bonatti2004lyapunov}.
Let $\A \colon \Sig \to \glr$ be a fiber-bunched cocycle, and $\hm$ be a $ \h{F}_{\A}$-invariant measure on $\Sig \times \P^{d-1}$ that projects to $\hmu$ under the canonical projection $\h{\pi} \colon \Sig\times\P^{d-1} \to \Sig$. Setting $m:=(P\times \id)_*\hm$, a Martingale argument shows that $\hm$ can be retrieved from $m$ in the following sense: letting $\hx_n:=P(\s^{-n}\hx)$, for $\hmu$-a.e. $\hx$, 
\begin{equation}\label{eq: retrieve hm from m}
\lim_{n\to\infty} \A^n(\hx_n)_*m_{\hx_n} = \hm_{\hx}.
\end{equation}

\begin{defn} A probability measure $\hm$ on $\Sig\times \mathbb{P}^{d-1}$ is \textit{$H^u$-invariant} if it projects to $\hmu$ and there exists a disintegration $\{\hm_{\hx}\}_{\hx \in \Sig}$ along the fibers such that 
$$(H^u_{\hx,\h{y}})_*\hm_{\hx} = \hm_{\h{y}}$$
for every $\hx$ and $\h{y}$ in the same local unstable set. 
We say $\hm$ is \textit{$(\A,H^u)$-invariant} if, in addition, $\hm$ is $ \h{F}_{\A}$-invariant. Such measure is also known as a $u$-$state$.

We say a probability measure $m$ on $\Sig^+\times \mathbb{P}^{d-1}$ is \textit{$(\A,H^u)$-invariant} if there exists a $(\A,H^u)$-invariant probability measure $\hm$ on $\Sig^+\times \mathbb{P}^{d-1}$ with $(P\times \id)_* \hm = m$.
\end{defn} 
We similarly define the $(\A,H^s)$-invariance. While the existence of $(\A,H^u)$-invariant measures is not a priori obvious, it is shown in  \cite{bonatti2004lyapunov} that the set of $(\A,H^u)$-invariant measures is necessarily non-empty. 

The main result of Bonatti and Viana \cite{bonatti2004lyapunov} is that if $\A$ is 1-typical, then the top and bottom Lyapunov exponents of $\A$ with respect to $\hmu$ are simple. Let $\xi(\hx) \in\mathbb{P}^{d-1}$ be the projectivization of the top Oseledets subspace at $\hx$ with respect to $\A$ and $\hmu$; when there is no confusion, we will denote a unit vector in its direction also by $\xi(\hx)$. Then consider a probability measure $\hm$ on $\Sig\times \mathbb{P}^{d-1}$ which projects to $\hmu$ and whose conditional measures are defined by
$$\hm_{\hx} :=\delta_{\xi(\hx)}.$$
Since $\xi(\hx)$ is both $\h{F}_\A$ and $H^u$-invariant, it follows that $\hm$ is a $(\A,H^u)$-invariant measure.

Bonatti and Viana then proceed to show that $\hm$ is the \textit{unique} $(\A,H^u)$-invariant measure over $\hmu$ when $\A$ is 1-typical. Setting $m:=(P\times \id)_* \hm$, we summarize the properties of $\hm$ and $m$ in the following proposition:
\begin{prop}\label{prop: nu}\cite{bonatti2004lyapunov} Suppose $\A$ is 1-typical. Then $m$ constructed as above satisfies the following properties:
\begin{enumerate}
\item $m$ admits a disintegration $\{m_x\}_{x\in \Sig^+}$ such that 
$x \mapsto m_x$ is continuous for \textit{every} $x \in \Sig^+$.
\item For $\mu$-a.e. $x \in \Sig^+$, we have
$$\displaystyle\sum\limits_{\s y = x} \frac{1}{J_{\mu}\s(y)}\A(y)_*m_y = m_x,$$
where $J_{\mu}\s \colon \Sig^+ \to (0,\infty)$ is the Jacobian for $\mu$.
\item For any $x \in \Sig^+$ and proper subspace $V \subset \R^d$, we have $m_x(V) = 0$.
\item For every $x \in \Sig^+$, we have
\begin{equation}\label{eq: m and hm}
m_x = \int \hm_{\hx}~d\hmu_{x}^s(\h{x})  = \int \delta_{\xi(\hx)}~d\hmu_{x}^s(\h{x}).
\end{equation}
\end{enumerate}
\end{prop}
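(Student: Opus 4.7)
My plan is to obtain property (4) first, since it gives an explicit formula for $m_x$ from which the remaining properties follow. By construction $\hm_{\hx} = \delta_{\xi(\hx)}$ with $\xi$ the projectivized top Oseledets direction, and the continuous local product structure of $\hmu$ provides a disintegration $\hmu = \int \hmu^s_x \, d\mu(x)$. Pushing $\hm$ forward by $P \times \id$ and using that $P$ is a bijection from $\Wloc^s(x)$ to the fiber over $x$ gives
\[
m_x = \int \delta_{\xi(\hx)} \, d\hmu^s_x(\hx)
\]
as the natural disintegration of $m$ over $\mu$, valid for $\mu$-a.e.\ $x$. This is (4).

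To upgrade (4) to the \emph{everywhere} continuity asserted in (1), I would use the absolute continuity relation \eqref{eq: abs cty}: for $y$ with $y_0 = x_0$,
\[
m_y = \int \delta_{\xi([\hx,y])} \, J_{x,y}(\hx) \, d\hmu^s_x(\hx).
\]
The $H^u$-invariance of $\{\hm_{\hx}\}$ identifies $\xi([\hx,y]) = \overline{H^u_{\hx,[\hx,y]} \xi(\hx)}$, so the H\"older estimate \eqref{eq: holder holonomies 2} together with the continuity of $J_{x,y}$ forces $m_y \to m_x$ weakly as $y \to x$. This extends the formula to \emph{every} $x$ and shows that $x \mapsto m_x$ is continuous on all of $\Sig^+$. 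Property (2) would then follow from the $F_{\A}$-invariance of $m$: disintegrating $\int f \circ F_{\A} \, dm = \int f \, dm$ using $\{m_x\}$ and collecting preimages under $\s$ produces the Rokhlin--Ledrappier identity $m_x = \sum_{\s y = x} (J_\mu \s(y))^{-1} \A(y)_* m_y$ for $\mu$-a.e.\ $x$, with the factor $\A(y)_*$ coming from the action on the projective fiber.

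The main obstacle is (3), the absolute continuity of $m_x$ with respect to proper subspaces, and this is precisely where the $1$-typicality assumption enters. My plan is to argue by contradiction: suppose there is some $x_0 \in \Sig^+$ and a proper subspace $V \subset \R^d$ with $m_{x_0}(V) > 0$. Using (2) iteratively one propagates this atom along backward orbits to obtain a measurable family $\hx \mapsto V(\hx)$ of proper subspaces with $\hm_{\hx}(V(\hx)) > 0$ and which is both $\A$-equivariant and $H^{s/u}$-invariant along the corresponding local leaves. Restricting this family to the periodic point $p$ and its homoclinic point $z$ given by $1$-typicality, the equivariance forces $V(p)$ to be invariant under $P = \A^n(p)$ (so a sum of eigenspaces of $P$) and the $H^{s/u}$-invariance forces $\psi_z(V(p)) = V(p)$. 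But the twisting condition in the definition of $1$-typicality was engineered precisely to rule out any such joint invariant collection of proper subspaces, yielding the contradiction. Continuity of $m_x$ from (1) then spreads the conclusion from $\mu$-a.e.\ $x$ to every $x \in \Sig^+$.
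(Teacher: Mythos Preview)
The paper does not give its own proof of this proposition: it is stated with the citation \cite{bonatti2004lyapunov} and the properties are attributed to Bonatti and Viana, so there is no proof here to compare against. Your outline for items (4), (1), and (2) is essentially the standard way one extracts these facts from the construction of $\hm$ and the local product structure of $\hmu$, and it is sound.

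Your sketch for (3), however, has real imprecisions even as an outline. First, you pass from $m_{x_0}(V)>0$ to a statement about $\hm_{\hx}(V(\hx))>0$; since $\hm_{\hx}=\delta_{\xi(\hx)}$ is a Dirac mass, the latter is just the condition $\xi(\hx)\in V(\hx)$, so you are tacitly switching between the averaged measures $m_x$ and the fiberwise Dirac measures without saying how. Second, ``propagating the atom along backward orbits via (2)'' does not by itself produce a \emph{measurable family} of subspaces $V(\hx)$ defined on a set of full measure; the Bonatti--Viana argument instead works with the function $x\mapsto \sup_V m_x(V)$ (supremum over proper subspaces of fixed dimension), shows it is $\sigma$-invariant and hence constant, and only then extracts an invariant section. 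Third, your passage to the periodic point $p$ and its homoclinic point $z$ is delicate: the family you build is a priori defined only $\hmu$-almost everywhere, so restricting it to the specific orbit of $p$ requires a separate continuity or density argument. The general direction---reduce (3) to the nonexistence of a holonomy-invariant proper subbundle and kill that with pinching plus twisting---is correct, but the actual execution in \cite{bonatti2004lyapunov} is considerably more careful than your sketch suggests.
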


We end this subsection by commenting on the construction of the top Oseledets subspace $\xi(\hx)$. The following remark will be useful in the proof of Theorem \ref{thm: main}.
\begin{rem}\label{rem: inv measures}
Under 1-typicality, Bonatti and Viana obtain two measurable splittings of $\R^d$: for $\hmu$-a.e. $\hx$,
$$ \xi(\hx)\oplus V_2(\hx)=\R^d = \omega(\hx) \oplus W_2(\hx).$$
The 1-dimensional top Oseledets subspace equals to the span of $\xi(\hx)$ and its complement $V_2(\hx)$ consists of all vector $v \in \R^d$ whose asymptotic growth rate $\displaystyle \lim\limits_{n\to \infty} \frac{1}{n}\log \|\A^n(\hx)v\|$ is strictly less than $\lambda_1(\A,\hmu)$. Similarly, $\omega(\hx)$ spans the 1-dimensional bottom Oseledets subspace of $\A$ with respect to $\hmu$ and $W_2(\hx)$ is its complementary subspace. Note $\xi(\hx)$ is preserved under $H^u$ whereas $\omega(\hx)$ is preserved under $H^s$. These subspaces can be constructed as follow:

Letting $\hx_n:=P(\s^{-n}\hx)$, for $\hmu$-a.e. $\hx$, $\displaystyle \frac{\A^n(\hx_n)}{\|\A^n(\hx_n)\|}$ converges to a rank 1 quasi-projective map whose image coincides with $\xi(\hx) \in \mathbb{P}^{d-1}$. Denoting the $KAK$-decomposition, also known as the singular value decomposition, of $\A^n(\hx_n)$ by $K_nA_nU_n$, let $K_\infty \in O(d)$ be a subsequent limit of $K_n$. Then, we have $\xi(\hx)=K_\infty e_1$ for $\hmu$-a.e. $\hx$.
If we apply this construction of $\xi(\hx)$ to $\A_*$, then the corresponding direction $\xi_*(\hx)$ is used to define the slower subspace of the Lyapunov splitting $V_2(\hx):= \xi_*(\hx)^{\perp}$, necessarily transverse to $\xi(\hx)$. 

%

Applying this construction to the inverse cocycle $\A^{-1}$ defines $\omega(\hx)$. Moreover, applying this construction to the adjoint of the inverse cocycle $\A^{-1}_*$, we obtain $\omega_*(\hx)$ whose orthogonal complement defines the complementary subspace $W_2(\hx)$. We notice that $\omega_*(\hx)$ coincides with $K_\infty e_d$, where $K_\infty$ is defined as in the previous paragraph. This is because keeping in mind the $KAK$-decomposition of $\A^{n}(\s^{-n}\hx)$ from the previous paragraph, the image of the rank 1 quasi-projective limit of $(\A^{-1}_*)^n(\s^{-n}\hx) = ((\A^n(\s^{-n}\hx))^{-1})^*$ is equal to $\ol{K_\infty e_d} \in \mathbb{P}^{d-1}$.
\end{rem}

\section{Quasi-compactness of $\L$}\label{sec: quasi-compactness}
For $f \in C^\a (\Sig^+\times \P^{d-1})$, we set 
$$\displaystyle |f|_{\a}:= \sup\limits_{(x,\ol{u})\neq (y,\ol{v})} \frac{ |f(x,\ol{u})-f(y,\ol{v})| }{d((\hx,\ol{u}),(\hy,\ol{v}))^\a} .$$ 
With this notation, $\norm{f}_\a = |f|_\a + \norm{f}_\infty$.

The main goal of this section is to prove the following theorem establishing a Lasota-Yorke inequality for $\L$.
\begin{thm}\label{thm: main} Suppose $\S_{T}^{+}$ is a shift of finite type defined by an irreducible $T$, and $\hat{\A}$ is $1$-typical. Then for all $\alpha>0$ sufficiently small, there exist $C>0$ and $\b\in (0,1)$ such that for any $f \in C^\alpha(\Sig^+\times \mathbb{P}^{d-1})$, we have
\begin{equation}\label{eq: lasota-yorke}
\|\L^n f\|_\alpha \leq \b^n\|f\|_\alpha+C\|f\|_\infty.
\end{equation}
for all $n \in \N$.
\end{thm}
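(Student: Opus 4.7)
The plan is to reduce \eqref{eq: lasota-yorke} to two separate estimates: the variation of $\L^{n}f$ in the base variable $x$ and its variation in the fiber variable $\ol{u}$. The identity $\L 1 = 1$ gives $\norm{\L^{n}f}_{\infty} \le \norm{f}_{\infty}$, so it is enough to control the H\"older seminorm $\abs{\L^{n}f}_{\a}$. Expanding
\begin{equation*}
\L^{n}f(x, \ol{u}) = \sum_{\s^{n} y = x} g^{(n)}(y)\, f\bigl(y,\, \ol{\A^{n}(y)^{*}u}\bigr),
\end{equation*}
I would bound $\abs{\L^{n}f(x, \ol{u}) - \L^{n}f(x', \ol{u}')}$ by splitting into an $x$-increment at fixed $\ol{u}$ and a $\ol{u}$-increment at fixed $x$.

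The $x$-increment is handled by a classical Ruelle--Perron--Frobenius computation: for $x, x'$ in a common $n$-cylinder, pair each preimage $y$ of $x$ with the preimage $y'$ of $x'$ sharing its first $n$ symbols so that $\rho(y,y') \le 2^{-n}\rho(x,x')$, and then control the differences of $g^{(n)}(y)$ and of $f(y, \ol{\A^{n}(y)^{*}u})$ using the bounded distortion \eqref{eq: unif comparison}, the H\"older regularity of $g$ and of $\A$, and the H\"older continuity \eqref{eq: holder holonomies 2} arising from the canonical holonomies. This contributes at most $C(2^{-n\a}\abs{f}_{\a} + \norm{f}_{\infty})\rho(x,x')^{\a}$, which is of the desired Lasota--Yorke shape.

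The heart of the theorem is the $\ol{u}$-increment at fixed $x$. The plan is to exploit the pointwise projective estimate
\begin{equation*}
d(\ol{Mu},\ol{Mv}) \le \frac{\g_{1}(M)\g_{2}(M)}{\norm{Mu}\norm{Mv}}\,\norm{u}\norm{v}\,d(\ol{u},\ol{v}),
\end{equation*}
where $\g_{1}\ge\g_{2}$ are the top two singular values of $M = \A^{n}(y)^{*}$, which produces contraction by $\g_{2}/\g_{1}$ whenever $\ol{u}$ and $\ol{v}$ stay at a definite angle from the slow singular direction of $M$. I would split the preimages $y$ of $x$ under $\s^{n}$ into a \emph{good} set, on which both an angular lower bound holds and the ratio $\g_{2}(\A^{n}(y))/\g_{1}(\A^{n}(y))$ is nearly $e^{-n(\lambda_{1}-\lambda_{2})}$, and its \emph{bad} complement. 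Taking $\a$ sufficiently small so that the $\a$-th power of the projective contraction still beats the angular blow-up contributed by the definition of the good set, the good preimages produce a term bounded by $\b^{n}\abs{f}_{\a}\,d(\ol{u},\ol{v})^{\a}$; on the bad preimages one uses the trivial bound $2\norm{f}_{\infty}$ together with an exponentially small estimate on the total $g^{(n)}$-weight of the bad set.

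The main obstacle is carrying out this good/bad split \emph{uniformly} in $x, \ol{u}, \ol{v}$. The Oseledets convergence $\tfrac{1}{n}\log(\g_{1}/\g_{2})(\A^{n}(y)) \to \lambda_{1}-\lambda_{2}$ and the position of the slow singular direction are only defined $\hmu$-almost everywhere and are non-uniform in nature. The crucial input, alluded to in the introduction, is the unique $(\A, H^{u})$-invariant measure $m$ from Proposition \ref{prop: nu}: its disintegration $\{m_{x}\}$ depends continuously on $x$ and assigns zero mass to every proper subspace, and the $KAK$-construction of the top Oseledets direction $\xi(\hx)$ recalled in Remark \ref{rem: inv measures} identifies $m_{x}$ as an average of $\delta_{\xi(\hx)}$-masses over local stable fibers. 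Together these translate the pointwise Oseledets convergence into a uniform averaged statement: with an appropriate angular threshold, the $g^{(n)}$-mass of the bad preimages decays exponentially and the good-set singular-value ratio equals $e^{-n(\lambda_{1}-\lambda_{2})+o(n)}$, both uniformly in $x$ and in the input directions, which is precisely what is needed to conclude \eqref{eq: lasota-yorke}.
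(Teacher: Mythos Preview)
Your outline correctly isolates the fiber increment as the heart of the matter and points to the right ingredients, but the good/bad mechanism has two gaps. First, the bound $2\norm{f}_{\infty}$ on bad preimages cannot be absorbed: at fixed $x$ you need an estimate of the form $(\beta^{n}\abs{f}_{\a}+C\norm{f}_{\infty})\,d(\ol{u},\ol{v})^{\a}$, and a term of order $\norm{f}_{\infty}\cdot e^{-cn}$ carrying no $d(\ol{u},\ol{v})^{\a}$ factor blows up once you divide by $d(\ol{u},\ol{v})^{\a}$. Second, and more fundamentally, the assertion that the $g^{(n)}$-mass of the bad set decays exponentially, uniformly in $x,\ol{u},\ol{v}$, is itself a large-deviation statement. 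The properness and continuity of $m_{x}$ in Proposition~\ref{prop: nu} only give that the mass near a hyperplane tends to $0$, with no rate; and you offer no argument for a uniform exponential bound on the fraction of preimages with anomalous singular-value gap. Your last paragraph asserts precisely these rates without proof, and that is the hard quantitative step. A related omission: the $x$-increment is not purely classical either, since varying $y$ also moves the fiber coordinate $\ol{\A^{n}(y)^{*}u}$, and this projective drift (the paper's $\tau_{n,\a}$, Subsection~\ref{subsec: tau}) again requires the Lyapunov-gap analysis rather than just bounded distortion.

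The paper avoids the good/bad split entirely by averaging. It shows directly that $w_{n,\a}:=\max_{x}\sup_{\ol u\ne\ol v}\sum_{\s^n y=x}g^{(n)}(y)\bigl(d(\ol{\A^{[n]}(y)u},\ol{\A^{[n]}(y)v})/d(\ol u,\ol v)\bigr)^{\a}$ satisfies $w_{n,\a}<1$ for some $n$ and all small $\a$: subadditivity of $\log w_{n,\a}$ (Lemma~\ref{lem: subadditive}) reduces this to a single $n$, and the elementary bound $e^{r}\le 1+r+\tfrac{r^{2}}{2}e^{|r|}$ reduces that, for $\a$ small, to the negativity of the $\hmu^{s}_{x}$-integral of the \emph{logarithm} of the contraction ratio. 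That integral is then handled by \eqref{eq: property of d} together with the uniform convergence of \emph{averaged} growth rates in Propositions~\ref{prop: growth Kingman} and~\ref{prop: growth for all u}; no pointwise or large-deviation control is needed.
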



As done in Section \ref{sec: prelim}, we will work with the cocycle $\A \colon \Sig \to \glr$ that is constant along the local stable sets obtained from $\h\A$ according to \eqref{constant along stable}. The canonical holonomies of $\A$ are denoted by $H^{s/u}$ where $H^s \equiv I$. 

For $x\in \Sig^+$ and $n\in \N$, we define
	\[ \A^{[n]}(x):= \A(x)^*\A(\sigma x)^*\cdots \A(\sigma^{n-1}x)^* = [\A^n(x)]^* .\]
Then the iterates of $\L$ takes the following form
$$\L^nf(x,\ol{u}) :=  \sum\limits_{y \colon \s^n y = x}g^{(n)}(y)\cdot f(y, \ol{\A^{[n]}(y)u}). 
$$
\subsection{Controlling the \hol norm of $\L^n f$} 
We will make a series of deductions to establish sufficient conditions for \eqref{eq: lasota-yorke}. First, we set 
$$
t_{n,\alpha}(x):=\sup\limits_{\ol{u} \neq \ol{v}} \sum\limits_{\s^n y = x} g^{(n)}(y) \cdot \Big(\frac{d(\ol{\A^{[n]}(y)u},\ol{\A^{[n]}(y)v})}{d(\ol{u},\ol{v})}\Big)^\alpha.
$$
and
$$ w_{n,\alpha}:=\max\limits_{x\in \Sig^+} t_{n,\alpha}(x).$$

For any $x_1$ and $x_2$ with the same 0-th symbol, consider any $y_1\in \s^{-n}x_1$. Then there exists $y_2 \in \s^{-n}x_2$ such that $y_1$ and $y_{2}$ belong to the same $n$-cylinder.
We set
$$
\tau_{n,\alpha}  := \sup\limits_{\substack{x_1 \neq x_2 \\ \ol{v}}} \sum\limits_{\substack{\s^ny_i=x_i\\ i\in \{1,2\}}} g^{(n)}(y) \cdot \Big(\frac{d(\ol{\A^{[n]}(y_1)v},\ol{\A^{[n]}(y_2)v)}}{\rho(x_1,x_2)}  \Big)^\alpha.
$$
The supremum is taken over all distinct $x_1,x_2$ with the common $0$-th symbol and all $\ol{v} \in \P^{d-1}$, and the summation is over all $y_1 \in \s^{-n}x_1$ paired with the corresponding $y_2 \in \s^{-n}x_2$ in the same $n$-cylinder. 

The following proposition formulates a sufficient condition to establish Theorem \ref{thm: main}.
\begin{prop}\label{prop: w_n,alpha}
For all $\alpha>0$ sufficiently small, there exists $C>0$ such that for any $(x_1,\ol{u}),(x_2,\ol{v}) \in \Sig^+\times \P^{d-1}$, $f \in C^\alpha(\Sig^+ \times \mathbb{P}^{d-1})$, and $n\in \N$, we have
\begin{align*}
|\L^nf(x_1,\ol{u}) - \L^nf(x_2,\ol{v})| \leq |f|_\alpha \cdot \Big[w_{n,\alpha} \cdot  &d(\ol{u},\ol{v})^\alpha +(\tau_{n,\alpha}+2^{-n\alpha})\cdot \rho(x_1,x_2)^\alpha\Big]\\ &+C\cdot \|f\|_\infty \cdot \rho(x_1,x_2)^\alpha.
\end{align*}
\end{prop}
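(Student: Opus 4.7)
My plan is to split the difference through the intermediate point $(x_1,\ol{v})$ via the triangle inequality:
\begin{equation*}
|\L^n f(x_1,\ol{u})-\L^n f(x_2,\ol{v})|\le A+B,
\end{equation*}
where $A:=|\L^n f(x_1,\ol{u})-\L^n f(x_1,\ol{v})|$ will produce the $w_{n,\alpha}\,d(\ol{u},\ol{v})^\alpha$ contribution and $B:=|\L^n f(x_1,\ol{v})-\L^n f(x_2,\ol{v})|$ will produce the $(\tau_{n,\alpha}+2^{-n\alpha})\rho(x_1,x_2)^\alpha$ term together with the absolute $C\|f\|_\infty\rho(x_1,x_2)^\alpha$ term.

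The bound on $A$ is essentially immediate. Expanding $\L^n f(x_1,\cdot)$ as a sum over $y\in\sigma^{-n}x_1$, using $|f|_\alpha$ only in the projective variable, and factoring out $d(\ol{u},\ol{v})^\alpha$ produces exactly the sum defining $t_{n,\alpha}(x_1)$, so $A\le |f|_\alpha\,t_{n,\alpha}(x_1)\,d(\ol{u},\ol{v})^\alpha\le |f|_\alpha\,w_{n,\alpha}\,d(\ol{u},\ol{v})^\alpha$.

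For $B$ I split into two cases. If $x_1,x_2$ have distinct zeroth symbols then $\rho(x_1,x_2)=1$ and the trivial estimate $B\le 2\|f\|_\infty=2\|f\|_\infty\rho(x_1,x_2)^\alpha$ already suffices. Otherwise each preimage $y_1\in\sigma^{-n}x_1$ has a unique partner $y_2\in\sigma^{-n}x_2$ obtained by replacing its tail, and this partner lies in the same length-$n$ cylinder as $y_1$. Pairing them up and applying the triangle inequality inside each pair gives
\begin{equation*}
B\le \sum_{y_1,y_2}\bigl|g^{(n)}(y_1)-g^{(n)}(y_2)\bigr|\,\|f\|_\infty + \sum_{y_1,y_2} g^{(n)}(y_2)\,\bigl|f(y_1,\ol{\A^{[n]}(y_1)v})-f(y_2,\ol{\A^{[n]}(y_2)v})\bigr|.
\end{equation*}
The first sum is handled by the standard bounded-distortion estimate for Hölder $g$: telescoping $\log g^{(n)}(y_1)-\log g^{(n)}(y_2)=\sum_{k=0}^{n-1}[\log g(\sigma^k y_1)-\log g(\sigma^k y_2)]$ together with $\rho(\sigma^k y_1,\sigma^k y_2)=2^{-(n-k)}\rho(x_1,x_2)$ yields $|g^{(n)}(y_1)-g^{(n)}(y_2)|\le C\,g^{(n)}(y_2)\,\rho(x_1,x_2)^\alpha$ with a constant $C$ independent of $n$ (geometric series in $k$); since $\sum g^{(n)}(y_2)=1$, this gives the $C\|f\|_\infty\rho(x_1,x_2)^\alpha$ term. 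For the second sum I invoke $\max(a,b)^\alpha\le a^\alpha+b^\alpha$ (valid for $\alpha\in(0,1]$) to split the Hölder bound on $f$ into a base contribution bounded by $\rho(y_1,y_2)^\alpha=2^{-n\alpha}\rho(x_1,x_2)^\alpha$—which, using $\sum g^{(n)}(y_2)=1$, produces the $|f|_\alpha\,2^{-n\alpha}\rho(x_1,x_2)^\alpha$ term—and a fiber contribution $d(\ol{\A^{[n]}(y_1)v},\ol{\A^{[n]}(y_2)v})^\alpha$, which is precisely the expression entering the definition of $\tau_{n,\alpha}$ (up to bounded distortion for $g^{(n)}$) and contributes $|f|_\alpha\,\tau_{n,\alpha}\,\rho(x_1,x_2)^\alpha$.

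The argument is largely routine; the only point requiring care is the bounded-distortion estimate for $g^{(n)}$, where the geometric decay of $\rho(\sigma^k y_1,\sigma^k y_2)$ in $n-k$ is exactly what keeps the constant $C$ independent of $n$. Note that $\alpha$ being ``sufficiently small'' is not actually used in this particular proposition—any $\alpha\in(0,1]$ works—but it keeps the statement compatible with the small-$\alpha$ regime in which $w_{n,\alpha}$ and $\tau_{n,\alpha}$ are later controlled.
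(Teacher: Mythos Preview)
Your argument is correct and follows essentially the same telescoping strategy as the paper: the paper telescopes through four intermediate points $(y_1,\ol{\A^{[n]}(y_1)u})\to(y_1,\ol{\A^{[n]}(y_1)v})\to(y_1,\ol{\A^{[n]}(y_2)v})\to(y_2,\ol{\A^{[n]}(y_2)v})$ with weight $g^{(n)}(y_1)$ and then changes the weight, whereas you first split at the level of $\L^n f$ and then use $\max(a,b)^\alpha\le a^\alpha+b^\alpha$ in place of the middle intermediate point --- these are equivalent rearrangements of the same estimate. Two small corrections: (i) to match the definition of $\tau_{n,\alpha}$ exactly (with coefficient $1$, not a bounded-distortion constant) you should add and subtract $g^{(n)}(y_1)f(y_2,\ol{\A^{[n]}(y_2)v})$ rather than the other cross term, so that $g^{(n)}(y_1)$ appears in the second sum; (ii) your final parenthetical is wrong: the bounded-distortion step only yields $|g^{(n)}(y_1)-g^{(n)}(y_2)|\le C\,g^{(n)}(y_i)\,\rho(x_1,x_2)^{\beta}$ where $\beta$ is the H\"older exponent of $\log g$, so you need $\alpha\le\beta$ to conclude $\le C\,g^{(n)}(y_i)\,\rho(x_1,x_2)^{\alpha}$ --- this is precisely where ``$\alpha$ sufficiently small'' is used in this proposition.
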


\begin{proof}
If $x_1$ and $x_2$ do not have the same $0$-th symbol, then $\rho(x_1,x_2) =1$ and 
$$
|\L^nf(x_1,\ol{u}) - \L^nf(x_2,\ol{v})| \leq \sum\limits_{\s^n y_1 = x_1}g^{(n)}(y_1)\|f\|_\infty +\sum\limits_{\s^n y_2 = x_2}g^{(n)}(y_2)\|f\|_\infty = 2\|f\|_{\infty}
$$
where the last equality follows from the fact that $\sum\limits_{\s^n y = x} g^{(n)}(y) = (L_{\log g}^n 1)(x) = 1$ for any $x \in \Sig^+$ and $n\in\N$.

So we may assume that the 0-th symbols of $x_1$ and $x_2$ agree.
In order to control the \hol norm of $\L^n f$, we consider the difference
$$\L^nf(x_1,\ol{u}) - \L^nf(x_2,\ol{v}) =\sum\limits_{\substack{\s^ny_i=x_i\\ i\in \{1,2\}}}\Big[g^{(n)}(y_1)f(y_1, \ol{\A^{[n]}(y_1)u})-g^{(n)}(y_2)f(y_2, \ol{\A^{[n]}(y_2)v})\Big].$$
The difference within the rectangular bracket may be estimated by the sum of four differences: 
\begin{equation}\label{diff 1}
g^{(n)}(y_1) f(y_1, \ol{\A^{[n]}(y_1)u})-g^{(n)}(y_1) f(y_1, \ol{\A^{[n]}(y_1)v})
\end{equation}
and
\begin{equation}\label{diff 15}
g^{(n)}(y_1) f(y_1, \ol{\A^{[n]}(y_1)v})-g^{(n)}(y_1) f(y_1, \ol{\A^{[n]}(y_2)v})
\end{equation}
and
\begin{equation}\label{diff 2}
g^{(n)}(y_1) f(y_1, \ol{\A^{[n]}(y_2)v})-g^{(n)}(y_1) f(y_2, \ol{\A^{[n]}(y_2)v})
\end{equation}
and 
\begin{equation}\label{diff 3}
g^{(n)}(y_1) f(y_2, \ol{\A^{[n]}(y_2)v})-g^{(n)}(y_2)f(y_2, \ol{\A^{[n]}(y_2)v}).
\end{equation}

Since $f$ is $\alpha$-H\"older, the absolute value of \eqref{diff 1} is bounded above by 
$$g^{(n)}(y_1)\cdot |f|_\alpha  \cdot d(\ol{\A^{[n]}(y_1)u},\ol{\A^{[n]}(y_1)v})^\alpha.$$
The sum of these terms over all $y_1 \in \s^{-n}x_1$ is bounded above by $w_{n,\a} \cdot |f|_\a \cdot d(\ol{u},\ol{v})^\a$.

Similarly, the absolute value of \eqref{diff 15} is bounded above by
$$g^{(n)}(y_1)\cdot |f|_\alpha  \cdot d(\ol{\A^{[n]}(y_1)v},\ol{\A^{[n]}(y_2)v})^\alpha,$$
and the sum of these terms is bounded above by $\tau_{n,\a}\cdot |f|_\a\cdot \rho(x_1,x_2)^\a$.

Likewise, the absolute value of \eqref{diff 2} is bounded above by
$$g^{(n)}(y_1)\cdot |f|_\alpha \cdot \rho(y_1,y_2)^\alpha = g^{(n)}(y_1)\cdot |f|_\alpha \cdot (2^{-n} \rho(x_1,x_2))^\a .$$

Since $\log g$ is \hol continuous, for sufficiently small $\a >0$ (i.e., smaller than the \hol exponent of $\log g$) we have $|\log g^{(n)}(y_1) - \log g^{(n)}(y_2)|\leq K \rho (x_1,x_2)^\alpha$ for some $K$. Then, the absolute value of \eqref{diff 3} is bounded above by
$$\|f\|_\infty \cdot g^{(n)}(y_1)\cdot | e^{K\rho(x_1,x_2)^\alpha}-1|\leq \|f\|_\infty \cdot g^{(n)}(y_1)\cdot K_1 \rho(x_1,x_2)^\alpha$$
for some $K_1>0$. 

Putting all together, this gives
\begin{align*}
|\L^nf(x_1,\ol{u}) - \L^nf(x_2,\ol{v})| \leq |f|_\alpha \cdot \Big[w_{n,
\alpha}&\cdot  d(\ol{u},\ol{v})^\alpha +\Big(\tau_{n,\alpha}+2^{-n\alpha}\sum\limits_{\s^n y_1 = x_1} g^{(n)}(y_1)\Big)\cdot \rho(x_1,x_2)^\alpha\Big]\\
&+\|f\|_\infty\cdot K_1 \sum\limits_{\s^n y_1 = x_1} g^{(n)}(y_1) \cdot \rho(x_1,x_2)^\alpha.
\end{align*}
Since $\displaystyle \sum\limits_{\s^n y_1 = x_1} g^{(n)}(y_1) = 1$, this proves the proposition.
\end{proof} 

\begin{cor}\label{cor: LY}
Suppose for $\alpha>0$ sufficiently small that we have
\begin{equation}\label{eq: limsup w_n,alpha}
\displaystyle\limsup\limits_{n\to\infty} \frac{1}{n}\log w_{n,\alpha}<0  
\end{equation}
and 
\begin{equation}\label{eq: limsup tau_n,alpha}
\displaystyle\limsup\limits_{n\to\infty} \frac{1}{n}\log \tau_{n,\alpha}<0.
\end{equation}
Then Theorem \ref{thm: main} holds for such $\a>0$.
Moreover, 
there exist $\beta \in (0,1)$ and $C>0$ such that for any $(x,\ol{u}),(y,\ol{v}) \in\Sig^+\times \P^{d-1}$,
$$\abs{\L^{n}f(x,\ol{u})-\L^{n}f(y,\ol{v})}\leq \beta^{n}|f|_{\alpha} +C\rho(x,y)^{\alpha}\norm{f}_{\infty}.$$
\end{cor}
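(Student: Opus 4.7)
The corollary is essentially a bookkeeping consequence of Proposition \ref{prop: w_n,alpha}: the hypotheses \eqref{eq: limsup w_n,alpha} and \eqref{eq: limsup tau_n,alpha} convert the three coefficients appearing in that proposition into a common exponentially small factor, after which both claimed estimates drop out.

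The first step is to translate the two $\limsup$ conditions into concrete exponential bounds. I pick $\beta_0\in (0,1)$ strictly greater than both decay rates and also strictly greater than $2^{-\alpha}$, and choose $c_0>0$ so that
\[ w_{n,\alpha}\le c_0\beta_0^n,\qquad \tau_{n,\alpha}\le c_0\beta_0^n,\qquad 2^{-n\alpha}\le c_0\beta_0^n\qquad \text{for every }n\in\N. \]
Substituting into the inequality of Proposition \ref{prop: w_n,alpha} yields, for every $n$ and every $(x_1,\ol u),(x_2,\ol v)\in\Sig^+\times\P^{d-1}$, the master estimate
\[ |\L^n f(x_1,\ol u)-\L^n f(x_2,\ol v)|\le c_0\beta_0^n|f|_\alpha\bigl[d(\ol u,\ol v)^\alpha+2\rho(x_1,x_2)^\alpha\bigr]+C\|f\|_\infty\rho(x_1,x_2)^\alpha, \]
which I will use for both assertions.

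For the Lasota–Yorke inequality \eqref{eq: lasota-yorke}, I bound both $d(\ol u,\ol v)^\alpha$ and $\rho(x_1,x_2)^\alpha$ by the product distance $d((x_1,\ol u),(x_2,\ol v))^\alpha$ and divide, obtaining $|\L^n f|_\alpha \le 3c_0\beta_0^n|f|_\alpha+C\|f\|_\infty$. Since $\L 1=1$ and $\L$ is positive, $\|\L^n f\|_\infty\le\|f\|_\infty$ holds trivially, so summing gives
\[ \|\L^n f\|_\alpha \le 3c_0\beta_0^n\|f\|_\alpha+(C+1)\|f\|_\infty. \]
Choosing any $\beta\in(\beta_0,1)$ and enlarging $C$ so as to absorb the factor $3c_0$ for the finitely many initial $n$ where $3c_0\beta_0^n>\beta^n$ yields \eqref{eq: lasota-yorke} as stated in Theorem \ref{thm: main}.

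For the ``moreover'' bound I use that $\P^{d-1}$ has bounded diameter in the metric $d$, so $d(\ol u,\ol v)^\alpha\le K$ for a constant $K$, while $\rho(x_1,x_2)^\alpha\le 1$. The bracketed term in the master estimate is then at most $K+2$, giving
\[ |\L^n f(x,\ol u)-\L^n f(y,\ol v)|\le c_0(K+2)\beta_0^n|f|_\alpha+C\|f\|_\infty\rho(x,y)^\alpha, \]
after which passing to a slightly larger $\beta\in(0,1)$ absorbs the prefactor $c_0(K+2)$ and produces exactly the stated inequality. There is no real obstacle in this corollary itself; the substantive work of this section lies upstream, in verifying \eqref{eq: limsup w_n,alpha} and \eqref{eq: limsup tau_n,alpha} from the $1$-typicality of $\A$, while the corollary is a direct deduction from Proposition \ref{prop: w_n,alpha}.
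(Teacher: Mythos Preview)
Your proof is correct and follows exactly the approach the paper intends: the paper's own proof is two sentences (``immediate corollary of Proposition \ref{prop: w_n,alpha}'' plus ``diameter of $\P^{d-1}$ is finite and $\rho(x,y)\le 1$''), and you have simply written out the details of that deduction. The only minor quibble is that ``enlarging $C$ to absorb $3c_0$ for finitely many initial $n$'' does not literally work since $|f|_\alpha$ is not controlled by $\|f\|_\infty$; but this is harmless, as one may either carry a constant in front of $\beta^n$ (which is all Hennion's theorem requires) or pass to $\beta\in(\beta_0,1)$ and note the inequality for large $n$ suffices---the paper glosses over the same point.
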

\begin{proof}
The first statement is an immediate corollary of Proposition \ref{prop: w_n,alpha}.
The second claim also easily follows since the diameter of $\P^{d-1}$ is finite and $\rho(x,y)$ is bounded above by 1 for any $x,y\in \Sig^+$.
\end{proof}

%
%

We have reduced proving Theorem \ref{thm: main} to establishing  \eqref{eq: limsup w_n,alpha} and \eqref{eq: limsup tau_n,alpha} for all $\a>0$ sufficiently small.
We will first focus on establishing \eqref{eq: limsup w_n,alpha}, and deal with \eqref{eq: limsup tau_n,alpha} in Subsection \ref{subsec: tau}.
The following lemma shows that the $\limsup$ in \eqref{eq: limsup w_n,alpha} is in fact a limit. This fact allows us to manipulate the condition \eqref{eq: limsup w_n,alpha} more effectively.
\begin{lem}\label{lem: subadditive} The sequence $\{\log w_{n,\alpha}\}_{n\in \N}$ is subadditive, and hence
$$\lim_{n\to \infty} \frac{1}{n}\log w_{n,\alpha} = \inf_{n\to \infty} \frac{1}{n} \log w_{n,\alpha}.$$
In particular, \eqref{eq: limsup w_n,alpha} would follow if we can show that there exists $n \in \N$ such that
\begin{equation}\label{eq: lim w_n,alpha}
\log w_{n,\alpha} =\log \max\limits_{x \in \Sig^+} t_{n,\alpha}(x) <0.
\end{equation}
\end{lem}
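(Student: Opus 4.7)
The plan is to verify submultiplicativity $w_{n+m,\alpha} \le w_{n,\alpha}\, w_{m,\alpha}$ directly from the cocycle identity for $\A^{[n]}$, deduce subadditivity of $\{\log w_{n,\alpha}\}$, and then invoke Fekete's lemma to obtain both the existence of the limit and the ``infimum'' characterization. The final claim about \eqref{eq: lim w_n,alpha} is then immediate because if one term of the sequence $\frac{1}{n}\log w_{n,\alpha}$ is negative, the infimum (and hence the limit) is negative.

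The only real computation is submultiplicativity. Fix $x \in \Sig^+$ and $\ol{u}\ne \ol{v}$. Every $y$ with $\s^{n+m}y = x$ factors as $\s^n y = z$ with $\s^m z = x$. Using $g^{(n+m)}(y) = g^{(n)}(y)\, g^{(m)}(z)$ and the identity
\[
\A^{[n+m]}(y) = [\A^{n+m}(y)]^* = [\A^n(y)]^*[\A^m(\s^n y)]^* = \A^{[n]}(y)\,\A^{[m]}(z),
\]
I write $\ol{u'} := \ol{\A^{[m]}(z)u}$ and $\ol{v'} := \ol{\A^{[m]}(z)v}$ and telescope the ratio:
\[
\frac{d\bigl(\ol{\A^{[n+m]}(y)u},\ol{\A^{[n+m]}(y)v}\bigr)}{d(\ol{u},\ol{v})} = \frac{d\bigl(\ol{\A^{[n]}(y)u'},\ol{\A^{[n]}(y)v'}\bigr)}{d(\ol{u'},\ol{v'})} \cdot \frac{d(\ol{u'},\ol{v'})}{d(\ol{u},\ol{v})}.
\]
Raising to the $\alpha$-th power, summing over all $y$ with $\s^{n+m}y = x$, grouping by $z = \s^n y$, and first bounding the inner sum over $\{y : \s^n y = z\}$ by $t_{n,\alpha}(z) \le w_{n,\alpha}$ gives
\[
\sum_{\s^{n+m}y=x} g^{(n+m)}(y) \left(\frac{d(\ol{\A^{[n+m]}(y)u},\ol{\A^{[n+m]}(y)v})}{d(\ol{u},\ol{v})}\right)^{\!\alpha} \le w_{n,\alpha} \sum_{\s^m z = x} g^{(m)}(z)\left(\frac{d(\ol{u'},\ol{v'})}{d(\ol{u},\ol{v})}\right)^{\!\alpha}.
\]
The right-hand sum is bounded by $t_{m,\alpha}(x) \le w_{m,\alpha}$. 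Taking $\sup$ over $\ol{u}\ne\ol{v}$ and $\max$ over $x$ yields $w_{n+m,\alpha} \le w_{n,\alpha}\, w_{m,\alpha}$, i.e.\ the subadditivity of $\log w_{n,\alpha}$.

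Applying Fekete's subadditivity lemma, the limit $\lim_{n\to\infty} \frac{1}{n}\log w_{n,\alpha}$ exists and equals $\inf_{n\ge 1} \frac{1}{n}\log w_{n,\alpha}$. For the last sentence, if there exists $n$ with $\log w_{n,\alpha} < 0$, then $\inf_{k\ge 1}\frac{1}{k}\log w_{k,\alpha} \le \frac{1}{n}\log w_{n,\alpha} < 0$, so \eqref{eq: limsup w_n,alpha} holds. I do not anticipate any genuine obstacle; the only subtle point is keeping track of which factor of $\A^{[\cdot]}$ is applied first, which is resolved by the identity $\A^{[n+m]}(y) = \A^{[n]}(y)\A^{[m]}(\s^n y)$ coming from taking the adjoint of the standard cocycle relation.
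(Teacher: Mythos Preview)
Your proof is correct and follows essentially the same approach as the paper's: you telescope the ratio using the cocycle identity for $\A^{[\cdot]}$, group the double sum, and bound each layer by the corresponding $t_{\cdot,\alpha}$, then invoke Fekete's lemma. The only cosmetic difference is that the paper splits via $z=\sigma^{m}y$ (using $\A^{[m+n]}(y)=\A^{[m]}(y)\A^{[n]}(\sigma^{m}y)$) whereas you split via $z=\sigma^{n}y$; since the roles of $m$ and $n$ are symmetric this is immaterial.
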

\begin{proof}
We demonstrate the idea for $w_n:=w_{n,1}$. The proof readily extends to $w_{n,\alpha}$ for any $\alpha>0$. 

Let $m,n\in \N$, and $x \in \Sig^+$ be the point that achieves the maximum in $w_{m+n}$. Then
\begin{align*}
w_{m+n} &= \sup\limits_{\ol{u} \neq \ol{v}} \sum\limits_{\s^{m+n} y = x}g^{(m+n)}(y)\cdot \frac{d(\ol{\A^{[m+n]}(y)u},\ol{\A^{[m+n]}(y)v})}{d(\ol{u},\ol{v})}\\
&=\sup\limits_{\ol{u} \neq \ol{v}} \sum\limits_{\s^{m+n} y = x} g^{(n)}(\s^m y) \cdot \frac{d(\ol{\A^{[n]}(\s^m y)u},\ol{\A^{[n]}(\s^m  y)v})}{d(\ol{u},\ol{v})} \cdot  g^{(m)}(y) \cdot \frac{d(\ol{\A^{[n+m]}(y)u},\ol{\A^{[n+m]}(y)v})}{d(\ol{\A^{[n]}(\s^m y)u},\ol{\A^{[n]}(\s^m  y)v})}\\
&=\sup\limits_{\ol{u} \neq \ol{v}} \sum\limits_{\s^{n} z = x}g^{(n)}(z)\cdot \frac{d(\ol{\A^{[n]}(z)u},\ol{\A^{[n]}(z)v})}{d(\ol{u},\ol{v})}\Big(\sum\limits_{\s^{m} y = z} g^{(m)}(y)\cdot \frac{d(\ol{\A^{[m]}(y)u_z},\ol{\A^{[m]}(y)v_z})}{d(\ol{u_z},\ol{v_z})}\Big)
\end{align*}
where $u_z:=\A^{[n]}(z)u$ and $v_z:=\A^{[n]}(z)v$. It then follows that
$$w_{m+n}\leq \sup\limits_{\ol{u} \neq \ol{v}} \sum\limits_{\s^{n} z = x} g^{(n)}(z) \cdot \frac{d(\ol{\A^{[n]}(z)u},\ol{\A^{[n]}(z)v})}{d(\ol{u},\ol{v})}\cdot  t_m(z) 
\leq w_n \cdot w_m.
$$

This shows that $\{\log w_{n}\}_{n \in \N}$ is subadditive. The second claim follows from Fekete's subadditive lemma. The last claim is a trivial consequence of the second claim.
\end{proof}
\subsection{Passing to the integral with respect to $\hmu^s_x$}

We will express $t_{n,\alpha}$ as an integral over $\hmu^s_x$. Recalling that $\hmu= \{\hmu^s_x\}_{x\in \Sig}$ is the disintegration of $\hmu$ along the local stable sets and $P\colon \Sig \to \Sig^+$ is the canonical projection, we begin by establishing the Gibbs-like property on $\hmu^s_x$. 
\begin{lem}\label{lem: unif comparison} 
For any $x=(x_i)_{i\in \N_0}\in \Sig^+$ and $\I=i_0\ldots i_{n-1}$ such that $y:=i_0\ldots i_{n-1}x_0x_1\ldots \in \Sig^+$ is admissible, we have
$$\hmu^s_x(\{\hy \in \Wloc^s(x) \colon P(\s^{-n}\hy) = y\}) \asymp g^{(n)}(y).$$
\end{lem}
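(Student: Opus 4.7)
The plan is to express $\hmu^s_x(A_\I)$, where $A_\I := \{\hy \in \Wloc^s(x) \colon P(\s^{-n}\hy) = y\}$, as a limit of ratios of $\hmu$-measures of two-sided cylinders, and then to estimate those cylinders using $\s^n$-invariance of $\hmu$ together with the Gibbs property of $\mu$. First observe that $A_\I$ consists precisely of those $\hy \in \Wloc^s(x)$ with $(\hy_{-n},\ldots,\hy_{-1}) = (i_0,\ldots,i_{n-1})$. For each $k\geq 0$, I thicken it to the two-sided cylinder
$$A_\I^{(k)} := \{\hy \in \Sig \colon \hy_{-n}\ldots\hy_{-1} = \I \text{ and } \hy_0\ldots\hy_k = x_0\ldots x_k\},$$
which by direct inspection satisfies $A_\I^{(k)} = \s^n(P^{-1}([y]_{k+n+1}))$.

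The first step is to estimate $\hmu(A_\I^{(k)})$ uniformly in $k$. Combining $\s^n$-invariance of $\hmu$, the Gibbs property $\mu([z]_N) \asymp g^{(N)}(z)$, and the cocycle identity $g^{(k+n+1)}(y) = g^{(n)}(y)\cdot g^{(k+1)}(x)$ (which follows from $\s^n y = x$), I obtain
$$\hmu(A_\I^{(k)}) = \mu([y]_{k+n+1}) \asymp g^{(n)}(y)\cdot g^{(k+1)}(x) \asymp g^{(n)}(y)\cdot \mu([x]_{k+1}),$$
with constants independent of $k$, $n$, $\I$, and $x$.

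The second step is to identify $\hmu(A_\I^{(k)})/\mu([x]_{k+1})$ with $\hmu^s_x(A_\I)$ in the limit $k\to\infty$. For each $z \in [x]_{k+1}$, set $A^z_\I := \{\hat{w} \in \Wloc^s(z) \colon \hat{w}_{-n}\ldots\hat{w}_{-1} = \I\}$, so that $A^x_\I = A_\I$ and $A_\I^{(k)} \cap \Wloc^s(z) = A^z_\I$; disintegration then yields $\hmu(A_\I^{(k)}) = \int_{[x]_{k+1}} \hmu^s_z(A^z_\I)\,d\mu(z)$. By the continuous local product structure \eqref{eq: abs cty}, the unstable holonomy $h_{x,z}$ sends $A_\I$ bijectively onto $A^z_\I$ with continuous Jacobian, so $z \mapsto \hmu^s_z(A^z_\I)$ is continuous at $z=x$; hence the $\mu$-average of this function over $[x]_{k+1}$ tends to $\hmu^s_x(A_\I)$ as $k\to\infty$. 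Combining with the first step yields $\hmu^s_x(A_\I) \asymp g^{(n)}(y)$. The main subtlety will be invoking the continuous local product structure to promote the estimate from $\mu$-a.e.\ $x$ (which a martingale argument would give for free) to every $x \in \Sig^+$.
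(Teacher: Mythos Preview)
Your proof is correct and uses the same ingredients as the paper (shift-invariance of $\hmu$, the Gibbs property of $\mu$, disintegration, and the local product structure), but you organize them differently. You let $k\to\infty$, using continuity of the Jacobians $J_{x,z}$ to identify $\hmu^s_x(A_\I)$ as a limit of ratios $\hmu(A_\I^{(k)})/\mu([x]_{k+1})$. The paper instead works with the single two-sided cylinder $A_\I^{(0)}$ determined by $\I x_0$: it writes $\hmu(A_\I^{(0)}) = \int_{[x_0]} \hmu^s_z(A^z_\I)\,d\mu(z)$ and observes that, since the Jacobians $J_{x,z}$ are uniformly bounded above and below (continuous on a compact set), the integrand is \emph{uniformly} comparable to $\hmu^s_x(A_\I)$ for every $z\in[x_0]$; no limit is needed. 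Your route is a bit longer but has the advantage of making explicit why the constants are uniform in $x$ and $\I$, and your remark about promoting the estimate from $\mu$-a.e.\ $x$ to every $x$ via the continuous local product structure is exactly the point.
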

\begin{proof}
From $\s$-invariance of $\hmu$, $P_*\hmu = \mu$, and the Gibbs property of $\mu$, we have
$$\hmu(\{\h\omega \in \Sig \colon \omega_{i-n} = y_i \text{ for }0\leq i \leq n-1 \text{ and }\omega_0 = x_0 \})=\mu([\I x_0]) \asymp g^{(n+1)}(y).$$
Since the term on the left is equal to $$ \int_{[x_0]} \hmu^s_x(\{\hy \in \Wloc^s(x) \colon P(\s^{-n}\hy) = y\})\,d\mu(x)$$ 
and the disintegrated measures $\{\hmu^s_x\}_{x \in \Sig^+}$ are absolutely continuous \eqref{eq: abs cty} with respect to each other, 
this integral is uniformly comparable to $$\hmu^s_x(\{\hy \in \Wloc^s(x) \colon P(\s^{-n}\hy) = y\}) \cdot \mu([x_0]) \asymp \hmu^s_x(\{\hy \in \Wloc^s(x) \colon P(\s^{-n}\hy) = y\}) \cdot g(x).$$
Noting that $\s^n y=x$, canceling off $g(x)$ from both sides proves the lemma. 
\end{proof}

We will now switch to notation more suited to the two-sided subshift $\Sig$ by using the adjoint cocycle $\A_*$ over $(\Sig,\s^{-1})$. 

Consider any $x,y \in \Sig^+$ with $\s^n y = x$ and any $\s^{-n}\hy_1,\s^{-n}\hy_2 \in \Wloc^s(y)$. Since $\A$ is constant along the local stable sets \eqref{constant along stable}, we have $\A^{n}( \hy_1)^*=\A^{n}(\hy_2)^*$. This observation together with the previous lemma give
$$t_{n,\alpha}(x) \asymp \sup\limits_{\ol{u} \neq \ol{v}} \int \Big[\frac{d(\ol{\A_*^{n}(\hx)u},\ol{\A_*^{n}(\hx)v})}{d(\ol{u},\ol{v})}\Big]^\alpha \, d\hmu^s_x(\hx).$$

Similar to the arguments from \cite[Ch. 5, Lemma 2.3]{MR886674}, in order to establish \eqref{eq: lim w_n,alpha} for all $\a>0$ sufficiently small, it suffices to show that there exists $n\in \N$ such that 
\begin{equation}\label{eq: wts}
\sup\limits_{\substack{\ol{u} \neq \ol{v} \\ x\in \Sig^+}} \int\log \Big(\frac{d(\ol{\A_*^{n}(\hx)u},\ol{\A_*^{n}(\hx)v})}{d(\ol{u},\ol{v})}\Big)\, d\hmu^s_x(\hx)<0.
\end{equation}
Indeed assuming \eqref{eq: wts}, we establish \eqref{eq: lim w_n,alpha} as follows: let $\alpha_0 = \alpha_0(n)>0$ be a sufficiently small constant such that 
$\displaystyle\alpha\log \Big(\frac{d(\ol{\A_*^{n}(\hx)u},\ol{\A_*^{n}(\hx)v})}{d(\ol{u},\ol{v})}\Big)$ is sufficiently close to 0 for all $\a \in (0,\a_0)$, $\hx \in \Sig$, and $(\ol{u}, \ol{v}) \in \mathbb{P}^{d-1}\times \mathbb{P}^{d-1}\setminus \Delta$.
Applying the identity
$$e^{r} \leq 1+r +\frac{r^2}{2}e^{|r|} \text{ for } |r| \approx 0$$
to $\displaystyle r = \alpha\log \Big(\frac{d(\ol{\A_*^{n}(\hx)u},\ol{\A_*^{n}(\hx)v})}{d(\ol{u},\ol{v})}\Big)$ and integrating with respect to $\hmu^s_x$,
we have
$$\int\Big(\frac{d(\ol{\A_*^{n}(\hx)u},\ol{\A_*^{n}(\hx)v})}{d(\ol{u},\ol{v})}\Big)^\alpha \, d\hmu_x^s(\hx)
\leq 1+\alpha\int  \log \Big(\frac{d(\ol{\A_*^{n}(\hx)u},\ol{\A_*^{n}(\hx)v})}{d(\ol{u},\ol{v})}\Big)\,d\hmu^s_x(\hx).$$
Taking the supremum over all $x\in \Sig^+$ and $(\ol{u}, \ol{v})\in \mathbb{P}^{d-1} \times \mathbb{P}^{d-1} \setminus \Delta$ followed by the logarithm, \eqref{eq: wts} then translates to the required inequality $\log w_{n,\alpha}<0$ of \eqref{eq: lim w_n,alpha}. 

So far, we have reduced proving Theorem \ref{thm: main} to establishing \eqref{eq: limsup tau_n,alpha} and \eqref{eq: wts}. Note that we did not yet make use of the 1-typicality assumption on $\A$ in deductions thus far.

\subsection{Making use of the gap in Lyapunov exponents}
Denoting the $i$-th singular value of $A$ by $\s_i(A)$, the angular metric $d$ defined as in \eqref{eq: d} satisfy 
\begin{equation}\label{eq: property of d}
\frac{d(\ol{\A_*^{n}(\hx)u},\ol{\A_*^{n}(\hx)v})}{d(\ol{u},\ol{v})} \leq \frac{\s_1(\A_*^{n}(\hx))\cdot \s_2(\A_*^{n}(\hx))}{\|\A_*^{n}(\hx)u\|\cdot \|\A_*^{n}(\hx)v\|}.
\end{equation}
Using this inequality together with the 1-typicality assumption on $\A$, we will establish \eqref{eq: wts} in this subsection.

We begin with a general result which follows from Kingman's subadditive ergodic theorem. In particular, it does not require any assumption on $\A$.
\begin{prop}\label{prop: growth Kingman} The Lyapunov exponents $\{\lambda_i\}_{1\leq i \leq d}$ of $\A$ with respect to $\hmu$ listed in a decreasing order satisfy the following:
\begin{enumerate}
\item
For $\hmu$-a.e. $\hx$,
$$\lim_{n\to\infty}\frac{1}{n}\log \|\A_*^{n}(\hx)^{\wedge k}\| =\sum\limits_{i=1}^{k} \lambda_{i}.$$
\item
For every $x \in \Sig^+$ and $\hmu^s_x$-a.e. $\hx$,
$$\lim_{n\to\infty}\frac{1}{n}\log \|\A_*^{n}(\hx)^{\wedge k}\| =\sum\limits_{i=1}^{k} \lambda_{i}.$$
\item 
The convergence in 
$$\lim_{n\to\infty}\frac{1}{n}\int \log \|\A_*^{n}(\hx)^{\wedge k}\| d\hmu^s_x(\hx)= \sum\limits_{i=1}^{k} \lambda_{i}.$$
is uniform in $x \in \Sig^+$.
\end{enumerate}
\end{prop}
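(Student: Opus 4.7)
The plan is to deduce (1) from Kingman's subadditive ergodic theorem, upgrade its $\hmu$-a.e.\ conclusion to the fiberwise a.e.\ statement (2) via the continuous local product structure of $\hmu$ combined with bounded distortion of $\A$, and derive (3) from (2) through dominated convergence together with an equicontinuity argument for the family of integrals.

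For (1), I would apply Kingman to $\phi_n(\hx) := \log \|\A_*^n(\hx)^{\wedge k}\|$ viewed as a subadditive cocycle over the ergodic system $(\Sig, \s^{-1}, \hmu)$; subadditivity follows from the cocycle identity $\A_*^{n+m}(\hx) = \A_*^m(\s^{-n}\hx)\A_*^n(\hx)$ together with submultiplicativity of $\|\cdot\|$ on exterior powers. Because $A$ and $A^*$ share singular values, $\A$ and $\A_*$ have the same Lyapunov spectrum with respect to $\hmu$, so the top Lyapunov exponent of $\A_*^{\wedge k}$ equals $\sum_{i=1}^{k} \lambda_i$, yielding (1).

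For (2), the key observation is that the limit along $\phi_n$ depends only on the past of $\hy$: if $\hy, \hy' \in \Sig$ agree in coordinates $i < 0$, then $\s^{-n}\hy$ and $\s^{-n}\hy'$ lie in the same $n$-cylinder, so \eqref{eq: unif comparison} applied to $\A^{\wedge k}$ gives $\|\A^n(\s^{-n}\hy)^{\wedge k}\| \asymp \|\A^n(\s^{-n}\hy')^{\wedge k}\|$ uniformly in $n$, forcing the limits to coincide. Let $G \subset \Sig$ be the full-$\hmu$-measure set from (1); disintegration supplies $\hmu^s_{x_0}(G) = 1$ for $\mu$-a.e.\ $x_0 \in \Sig^+$. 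For arbitrary $y \in \Sig^+$, I would pick such a good $x_0$ with $(x_0)_0 = y_0$, available since $\mu$ is Gibbs and $T$ is irreducible. The unstable holonomy $h_{x_0,y} \colon \Wloc^s(x_0) \to \Wloc^s(y)$ preserves the past, so $h_{x_0,y}(G \cap \Wloc^s(x_0)) \subset G$, and absolute continuity \eqref{eq: abs cty} then forces $\hmu^s_y(G) = 1$.

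For (3), the uniform bound $\tfrac{1}{n}|\phi_n(\hx)| \leq k \log \max_{\hy}\max(\|\A(\hy)\|, \|\A(\hy)^{-1}\|)$ combined with (2) and dominated convergence yields pointwise convergence of $\Phi_n(x)/n := \tfrac{1}{n}\int \phi_n\, d\hmu^s_x$ to $\sum_{i=1}^k \lambda_i$ for every $x \in \Sig^+$. The main technical step, which I expect to be the chief obstacle, is equicontinuity of the family $\{\Phi_n/n\}$: for $x, y \in \Sig^+$ with $x_0 = y_0$, using $\hmu^s_y = J_{x,y}(h_{x,y})_*\hmu^s_x$ and expanding the integrand difference
\begin{equation*}
\log\|\A_*^n([\hat{w},y])^{\wedge k}\|\, J_{x,y}([\hat{w},y]) - \log\|\A_*^n(\hat{w})^{\wedge k}\|,
\end{equation*}
I would split this into an $O(1)$ contribution from bounded distortion (since $\s^{-n}\hat{w}$ and $\s^{-n}[\hat{w},y]$ share an $n$-cylinder) and an $O(n\rho(x,y))$ contribution from $|J_{x,y} - 1|$ controlled by continuity of the Jacobian. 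After integrating and dividing by $n$, this yields $|\Phi_n(x)/n - \Phi_n(y)/n| \leq C/n + C\rho(x,y)$ for a uniform $C$. Combined with uniform continuity of each individual $\Phi_n/n$ for small $n$ and compactness of $\Sig^+$, equicontinuity plus pointwise convergence to a constant combine via a standard covering argument to produce the desired uniform convergence.
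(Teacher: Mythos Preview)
Your argument is correct and, for (1) and (2), is essentially identical to the paper's: Kingman for (1), and for (2) your observation that the limit depends only on the negative coordinates is exactly the paper's statement that the good set from (1) is ``unstable-saturated,'' after which the transport to an arbitrary fiber via $h_{x_0,y}$ and absolute continuity \eqref{eq: abs cty} is precisely how the promotion is carried out.

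For (3) the paper is much terser: it simply cites Lemma~\ref{lem: unif comparison} and the bounded distortion estimate \eqref{eq: unif comparison}, without spelling out the equicontinuity mechanism. The computation underlying that citation is the one you wrote down --- split $\Phi_n(y)-\Phi_n(x)$ into a bounded-distortion piece (which is $O(1)$) and a piece coming from the discrepancy of the conditional measures --- so your route and the paper's coincide in substance. One small correction: you write the second contribution as $O(n\rho(x,y))$, which presumes a Lipschitz bound on $J_{x,y}$, whereas the paper only asserts continuity of the Jacobian; you therefore get $|\Phi_n(x)/n - \Phi_n(y)/n| \le C/n + C\,\omega(\rho(x,y))$ for some modulus of continuity $\omega$, but this is still enough for equicontinuity and hence for the uniform convergence.
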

\begin{proof} (1) is due to Kingman's subadditive ergodic theorem applied to the adjoint cocycle $\A^{*}$.

For (2), the full $\hmu$-measure subset in which the statement of (1) holds is unstable-saturated; that is, if $\hx$ belongs to such a subset, then its entire local unstable set $\Wloc^u(\hx)$ also belongs to the subset. Hence, we can promote ``$\hmu$-a.e. $\hx$'' from (1) into ``every $x \in \Sig^+$ and $\hmu^s_x$-a.e. $\hx$.''

For (3), it is clear from (2) that the integral converges for every $x\in\Sig^+$. The uniform convergence follows from Lemma \ref{lem: unif comparison} and \eqref{eq: unif comparison}.
%
\end{proof}

The following proposition relies on the construction and the properties of the measures considered in Subsection \ref{subsec: invariant measures}.

\begin{prop}\label{prop: growth for all u} If $\A$ is 1-typical, then for every $x \in \Sig^+$, $\hmu^s_x$-a.e. $\hx$, and any unit vector $u \in \R^d\setminus \{0\}$,
\begin{equation}\label{typicality usage}
\lim_{n \to \infty}\frac{1}{n}\int \log \|\A_*^{n}(\hx)u\|\,d\hmu^s_x(\hx) =\lambda_1.
\end{equation} 
Moreover, the convergence is uniform in $x$ and $u$.
\end{prop}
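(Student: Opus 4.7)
The strategy is to decompose $\log\|\A_*^n(\hx) u\|$ via the singular value decomposition into a ``growth'' piece controlled by Proposition \ref{prop: growth Kingman}(3) and an ``angle'' piece controlled by uniform non-concentration of the measures $m_x$ from Proposition \ref{prop: nu}. Fix $x \in \Sig^+$ and a unit vector $u$. Writing the SVD $\A^n(\s^{-n}\hx) = K_n(\hx)\,\Sigma_n(\hx)\,U_n(\hx)^*$, one has $\|\A_*^n(\hx) u\| \geq \sigma_1(\A^n(\s^{-n}\hx))\cdot |\langle u, K_n(\hx) e_1\rangle|$, and by Remark \ref{rem: inv measures} the unit vector $K_n(\hx) e_1$ converges to $\xi(\hx)$ for $\hmu^s_x$-a.e.\ $\hx$ (invoking unstable saturation as in Proposition \ref{prop: growth Kingman}(2)). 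Proposition \ref{prop: nu}(3) applied to the hyperplane $V = u^\perp$ gives $m_x(u^\perp) = 0$, and since \eqref{eq: m and hm} identifies $m_x$ with the distribution of $\xi(\cdot)$ under $\hmu^s_x$, $\hmu^s_x$-a.e.\ $\hx$ satisfies $\langle u, \xi(\hx)\rangle\neq 0$. Combined with $\frac{1}{n}\log\sigma_1(\A^n(\s^{-n}\hx))\to\lambda_1$ (Kingman), this yields $\frac{1}{n}\log\|\A_*^n(\hx) u\|\to \lambda_1$ for $\hmu^s_x$-a.e. $\hx$. Since the integrand is uniformly bounded in terms of $\|\A\|_\infty$ and $\|\A^{-1}\|_\infty$, bounded convergence gives the pointwise statement $\frac{1}{n}\phi_n(x,u)\to \lambda_1$, where $\phi_n(x, u) := \int \log\|\A_*^n(\hx) u\|\,d\hmu^s_x(\hx)$.

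For uniformity, the upper bound $\limsup_n \sup_{x,u}\frac{1}{n}\phi_n\leq \lambda_1$ follows from $\|\A_*^n u\|\leq \|\A_*^n\|$ and Proposition \ref{prop: growth Kingman}(3) with $k=1$. For the uniform lower bound, the same SVD inequality gives
\[\frac{1}{n}\phi_n(x,u)\geq \frac{1}{n}\int\log\|\A_*^n(\hx)\|\,d\hmu^s_x(\hx) + \frac{1}{n}\int\log|\langle u,K_n(\hx)e_1\rangle|\,d\hmu^s_x(\hx),\]
the first summand of which converges uniformly in $x$ to $\lambda_1$ by Proposition \ref{prop: growth Kingman}(3). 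It therefore suffices to show the second integral is $O(1)$ uniformly in $(n,x,u)$; letting $\nu_n^x:=(\ol{K_n e_1})_*\hmu^s_x$, this becomes a uniform bound on $\int \log|\langle u,v\rangle|\,d\nu_n^x(\ol v)$. The limit measures $m_x$ satisfy uniform non-concentration on hyperplanes: from continuity of $x\mapsto m_x$, upper semi-continuity of $m_x$ on closed sets, compactness of $\Sig^+\times\P^{d-1}$, monotone convergence $\{v:d(v,u^\perp)\leq \delta\}\downarrow u^\perp$ as $\delta\downarrow 0$, and Proposition \ref{prop: nu}(3), a Dini-type compactness argument yields: for every $\epsilon>0$ there is $\delta>0$ with $m_x(\{|\langle u,v\rangle|<\delta\})<\epsilon$ for all $(x,u)$. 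Layer-cake integration then gives $|\int\log|\langle u,v\rangle|\,dm_x|\leq C$ uniformly in $(x,u)$.

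The main obstacle is transferring this uniform non-concentration from $m_x$ to the time-$n$ measures $\nu_n^x$, which requires the weak-star convergence $\nu_n^x\to m_x$ to hold uniformly in $x$. Pointwise convergence is immediate from $K_n(\hx) e_1\to \xi(\hx)$ $\hmu^s_x$-a.e., but uniformity in $x$ must be extracted from the exponential Oseledets gap $\lambda_1>\lambda_2$ ensured by 1-typicality: this gap provides a geometric alignment rate of $K_n e_1$ to $\xi$, which, combined with continuity of $x\mapsto m_x$ and a covering argument in the spirit of the averaged estimates developed earlier in Section \ref{sec: quasi-compactness}, should yield the required uniform convergence and hence the uniform bound on the angle integral.
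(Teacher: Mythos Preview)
Your pointwise argument is correct and essentially matches the paper: both use the SVD, the identification of $K_\infty e_1$ with $\xi(\hx)$ from Remark \ref{rem: inv measures}, and the properness $m_x(u^\perp)=0$ from Proposition \ref{prop: nu}(3) to conclude $\frac{1}{n}\log\|\A_*^n(\hx)u\|\to\lambda_1$ for $\hmu^s_x$-a.e.\ $\hx$.

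The gap is in the uniformity argument, and you flag it yourself with the phrase ``should yield.'' There are two distinct problems. First, even if the weak-star convergence $\nu_n^x\to m_x$ were uniform in $x$, this would \emph{not} control $\int\log|\langle u,v\rangle|\,d\nu_n^x$, because $\log|\langle u,\cdot\rangle|$ is unbounded near $u^\perp$; weak-star convergence only tests against bounded continuous functions. What you actually need is uniform non-concentration of the $\nu_n^x$ themselves near hyperplanes, uniformly in $(n,x,u)$, and this does not follow from the corresponding property of the limit $m_x$ without an additional uniform tightness estimate. Second, your appeal to ``the exponential Oseledets gap $\lambda_1>\lambda_2$'' to extract a uniform alignment rate of $K_n e_1$ toward $\xi$ is not justified: Oseledets convergence is non-uniform in $\hx$, and the gap provides only an asymptotic rate at generic points, not a rate uniform over $\Wloc^s(x)$ or over $x$.

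The paper sidesteps the unbounded-integrand issue entirely by using a sequential characterization of uniform convergence on the compact set $\Sig^+\times\mathbb{S}^{d-1}$: it suffices to show that for any $x_n\to x$ and $u_n\to u$, one has $\frac{1}{n}\int\log\|\A_*^n(\hy)u_n\|\,d\hmu^s_{x_n}(\hy)\to\lambda_1$. The trick is to pull the integral over $\hmu^s_{x_n}$ back to one over $\hmu^s_x$ via the holonomies: for $\hx\in\Wloc^s(x)$ set $\hx_n=[\hx,x_n]$, write $\A_*^n(\hx_n)u_n = H^{s,*}_{\s^{-n}\hx,\s^{-n}\hx_n}\,\A_*^n(\hx)\,\wt u_n$ with $\wt u_n:=H^{s,*}_{\hx_n,\hx}u_n\to u$, and invoke Lemma \ref{lem: growth rate of u} (which is stated for sequences $u_n\to u$ precisely for this purpose) to obtain $\|\A_*^n(\hx_n)u_n\|/\|\A_*^n(\hx)\|\to|\langle u,\xi(\hx)\rangle|$ for $\hmu^s_x$-a.e.\ $\hx$. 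The change of measure from $\hmu^s_x$ to $\hmu^s_{x_n}$ is handled by the absolute continuity \eqref{eq: abs cty} with its continuous Jacobian, and bounded convergence closes the argument. No uniform control of the angle integral is ever needed.
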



In order to prove this proposition, we need the following preliminary lemma where we use the same notations from Remark \ref{rem: inv measures}. In particular, $\xi(\hx)$ is the one-dimensional top Lyapunov subspace of $\A$ with respect to $\hmu$, and we denote by $H^{s,*}$ the local stable holonomies of the adjoint cocycle $\A_*$ over $(\Sig,\s^{-1})$.

\begin{lem}\label{lem: growth rate of u} Let $\A$ be 1-typical. Then for any $x \in \Sig^+$, $\hmu^s_x$-a.e. $\hx$, and any sequence of unit vectors $\{u_n\}_{n \in\N}$ converging to some $u \in \R^d \setminus \{0\}$,
we have
$$\lim\limits_{n \to \infty} \frac{\|\A_*^{n}(\hx)u_n\|}{\|\A_*^{n}(\hx)\|} = |\langle u,\xi(\hx)\rangle|.$$
Moreover, for any $y \in \Sig^+$ with the same 0-symbol as $\hx$, we have
$$
\lim_{n \to \infty}\frac{\|\A_*^{n}(\hx)H^{s,*}_{\hx_y,\hx}v\|}{\|\A_*^{n}(\hx)\|} =|\langle v, \xi(\hx_y) \rangle|
$$
where $\h{x}_y:=[\hx,y]$.
\end{lem}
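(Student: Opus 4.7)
The strategy is to read off both identities from the singular value decomposition of $\A^n(\s^{-n}\hx)$, exploiting the simplicity $\lambda_1(\A,\hmu) > \lambda_2(\A,\hmu)$ guaranteed by $1$-typicality (Bonatti--Viana), together with the identification of the leading left singular direction with $\xi(\hx)$ from Remark \ref{rem: inv measures}.

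For the first identity, I would write the SVD $\A^n(\s^{-n}\hx) = K_n D_n U_n$ with $K_n, U_n \in O(d)$ and $D_n = \diag(\sigma_1^{(n)}, \ldots, \sigma_d^{(n)})$ in decreasing order. Since $\A_*^n(\hx) = (\A^n(\s^{-n}\hx))^{*} = U_n^{\t} D_n K_n^{\t}$ and $\|\A_*^n(\hx)\| = \sigma_1^{(n)}$, expanding in the orthonormal frame $\{K_n e_i\}_{i=1}^{d}$ yields
\begin{equation*}
\left(\frac{\|\A_*^n(\hx)u_n\|}{\|\A_*^n(\hx)\|}\right)^{2} = \sum_{i=1}^{d}\left(\frac{\sigma_i^{(n)}}{\sigma_1^{(n)}}\right)^{2}\langle u_n, K_n e_i\rangle^{2}.
\end{equation*}
The simplicity of $\lambda_1$ forces $\sigma_i^{(n)}/\sigma_1^{(n)} \to 0$ for every $i \geq 2$ at $\hmu$-a.e.\ $\hx$, killing those terms (uniformly, since $|\langle u_n, K_n e_i\rangle| \leq 1$). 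Remark \ref{rem: inv measures} identifies any subsequential limit of $K_n e_1$ with $\xi(\hx)$ in $\P^{d-1}$; combined with $u_n \to u$, this forces $\langle u_n, K_n e_1\rangle^{2} \to \langle u, \xi(\hx)\rangle^{2}$, with the square absorbing the sign and subsequence ambiguity. Taking square roots yields the first limit.

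For the second identity, I would apply the first limit to the constant sequence $u_n \equiv H^{s,*}_{\hx_y, \hx}v$ (after normalizing to a unit vector and rescaling at the end) to obtain
\begin{equation*}
\lim_{n\to\infty}\frac{\|\A_*^n(\hx)H^{s,*}_{\hx_y, \hx}v\|}{\|\A_*^n(\hx)\|} = |\langle H^{s,*}_{\hx_y, \hx}v, \xi(\hx)\rangle|.
\end{equation*}
A short computation from \eqref{eq: canonical holonomies} combined with $\A_*^n = (\A^n \circ \s^{-n})^{*}$ produces the identity $[H^{s,*}_{\hx_y, \hx}]^{*} = H^{u}_{\hx, \hx_y}$, so that the inner product rewrites as $\langle v, H^{u}_{\hx, \hx_y}\xi(\hx)\rangle$. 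The $H^u$-invariance of the top Oseledets direction noted in Remark \ref{rem: inv measures} then identifies $H^{u}_{\hx, \hx_y}\xi(\hx)$ with $\xi(\hx_y)$ as a line in $\P^{d-1}$, delivering $|\langle v, \xi(\hx_y)\rangle|$.

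To promote from $\hmu$-a.e.\ $\hx$ to every $x \in \Sig^+$ and $\hmu^{s}_{x}$-a.e.\ $\hx$, I would follow the template of Proposition \ref{prop: growth Kingman}(2): the stable-holonomy invariance $\A_*^n(\hx) = H^{s,*}_{\s^{-n}\hy, \s^{-n}\hx}\A_*^n(\hy) H^{s,*}_{\hx, \hy}$ for $\hy \in \Wloc^u(\hx)$ makes the good set unstable-saturated, after which the absolute continuity \eqref{eq: abs cty} of the unstable holonomies between local stable sets upgrades the conclusion to every $x$. The most delicate point is the normalization in the second identity: the $H^u$-invariance of $\xi$ is intrinsic to $\P^{d-1}$, so passing to unit-vector representatives in $\R^d$ naturally introduces a positive scalar which must either be absorbed into a coherent choice of unit vector for $\xi(\hx_y)$ along the orbit, or, as I anticipate, be shown to play no role in the downstream use where only $\tfrac{1}{n}\log$ of this quantity is retained.
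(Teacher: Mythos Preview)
Your argument is correct and follows the paper's proof essentially verbatim: the same SVD expansion of $\A_*^n(\hx)=U_n^*A_nK_n^*$, the identification $K_\infty e_1=\xi(\hx)$ from Remark~\ref{rem: inv measures}, the adjoint identity $(H^{s,*}_{\hx_y,\hx})^*=H^u_{\hx,\hx_y}$, and the unstable-saturation upgrade. Your closing worry about the scalar in $H^u_{\hx,\hx_y}\xi(\hx)=c\,\xi(\hx_y)$ is a genuine imprecision in the lemma's second formula that the paper also elides; as you correctly note, it is harmless because only positivity of the limit (and then $\tfrac{1}{n}\log$ of it) is used downstream.
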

\begin{proof}
Let $B:=\{\hx\in \Sig \colon \xi(\hx) \text{ is well-defined}\}$. Then $\hmu(B)=1$ and $B$ is unstable-saturated as $\xi(\hx)$ is $H^u$-invariant. 

For $\hx \in B$, we have seen in Remark \ref{rem: inv measures} that $\xi(\hx)$ is equal to $ K_\infty e_1$, where $K_n$ is from the $KAK$-decomposition $\A^{n}(P(\s^{-n}\hx)) = K_nA_nU_n$ and $K_\infty$ is the subsequent limit of $K_n$. Then $\A_*^{n}(\hx)$ is equal to $(K_nA_nU_n)^*=U_n^*A_nK^*_n$.
Since $U_n \in O(d)$, we have
$$\frac{\|\A_*^{n}(\hx)u_n\|^2}{\|\A_*^{n}(\hx)\|^2}= \frac{\|(U_n^*A_nK^*_n)u_n\|^2}{\|\A_*^{n}(\hx)\|^2}=\frac{\|A_nK_n^*u_n\|^2}{\|A_n\|^2} \xrightarrow{n\to\infty} \langle K_\infty^* u, e_1\rangle^2=\langle u, \xi(\hx)\rangle^2.$$
Since $B$ is unstable-saturated, this holds true for \textit{every} $x$ and $\hmu^s_x$-a.e. $\hx$

For the second statement, using the identity $H^{s,*}_{\hx,\hy} =(H^u_{\hy,\hx})^*$ we have
$$
\lim_{n \to \infty}\frac{\|\A_*^{n}(\hx)H^{s,*}_{\hx_y,\hx}v\|}{\|\A_*^{n}(\hx)\|} = |\langle H^{s,*}_{\hx_y,\hx}v, \xi(\hx) \rangle| =|\langle v,H^{u}_{\hx,\hx_y} \xi(\hx) \rangle|.
$$ 
The claim then follows by noticing $\xi(\hx_y) = H^{u}_{\hx,\hx_y} \xi(\hx)$ since the top Lyapunov subspace $\xi(\hx)$ is preserved under $H^u$.
\end{proof}


\begin{proof}[Proof of Proposition \ref{prop: growth for all u}]
For any $x \in \Sig^+$ and unit $u \in \R^d \setminus \{0\}$, we define $$B_{x,u}:=\{\hx \in \Wloc^s(\hx) \colon u \perp \xi(\hx)\} = \{\hx \in \Wloc^s(\hx) \colon \xi(\hx) \in u^\perp\}.$$
Recalling the $F_\A$-invariant probability measure $m$ from Proposition \ref{prop: nu}, we have $$\hmu_{x}^s(B_{x,u}) = \hmu_{x}^s(\{\hx \in \Wloc^s(x) \colon \xi(\hx ) \in u^\perp \})= \int \delta_{\xi(\hx)}(u^\perp)d\hmu_{x}^s(\hx)=m_x(u^\perp) =0.$$
The last equality is due to the fact that $m_x$ is proper for \textit{every} $x \in\Sig^+$. Then Proposition \ref{prop: growth Kingman} (2) and Lemma \ref{lem: growth rate of u}
give
$$
\lim\limits_{n \to \infty} \frac{1}{n}\log\|\A_*^{n}(\hx)u\| = \lambda_1$$
for every $x \in \Sig^+$ and $\hmu^s_x$-a.e. $\hx$, which then yields \eqref{typicality usage}.

Since $\Sig^+ \times \mathbb{S}^{d-1}$ is compact, the uniform convergence of \eqref{typicality usage} in $x \in \Sig^+$ and $u \in \mathbb{S}^{d-1}$ would follow if for any $x_n \to x \in \Sig^+$ and $u_n \to u \in \mathbb{S}^{d-1}$, we have
\begin{equation}\label{uniform conv}
\lim\limits_{n\to\infty}\frac{1}{n}\int \log \|\A_*^{n}(\hx)u_n\|\,d\hmu^s_{x_n}(\hx) = \lambda_1.
\end{equation}
In order to establish \eqref{uniform conv}, we consider a full $\hmu^s_x$-measure subset $U_x$ of $\Wloc^s(x)$ such that for any $\hx \in U_x$, we have
\begin{enumerate}
\item $\xi(\hx)$ exists, and
\item $\hx$ does not belong to $B_{x,u}$, and 
\item $\displaystyle \lim\limits_{n\to\infty}\frac{1}{n}\log\|\A_*^{n}(\hx)\|=\lambda_1$.
\end{enumerate} 

For any $\hx \in U_x$ and $n\in \N$, let $\hx_n:=[\hx,x_n]$ be the unique point in the intersection between $\Wloc^s(x_n)$ and $\Wloc^u(\hx)$. 
Then we have 
$$\A_*^{n}(\hx_n)u_n = H^{s,*}_{\s^{-n}\hx,\s^{-n}\hx_n}\A_*^{n}(\hx) H^{s,*}_{\hx_n,\hx} u_n.$$
Denoting $H^{s,*}_{\hx_n,\hx} u_n$ by $\wt{u}_n$, we have $\wt{u}_n \to u$ since $\hx_n \to \hx$ and $u_n \to u$. Moreover, $H^{s,*}_{\s^{-n}\hx,\s^{-n}\hx_n} \to I$ as $n\to \infty$. The first statement of Lemma \ref{lem: growth rate of u} then gives
$$\frac{\|\A_*^{n}(\hx_n)u_n\|}{\|\A_*^{n}(\hx)\|}=\frac{ \|H^{s,*}_{\s^{-n}\hx,\s^{-n}\hx_n}\A_*^{n}(\hx)\wt{u}_n\|}{\|\A_*^{n}(\hx)\|} \xrightarrow{n\to \infty} |\langle u,\xi(\hx)\rangle|.$$
Since $x \mapsto \hmu^s_x$ is continuous from absolute continuity  \eqref{eq: abs cty} of $\{\hmu^s_x\}_{x\in \Sig^+}$ and $\displaystyle \frac{1}{n}\log\|\A_*^{n}(\hx)\|$ limits to $\lambda_1$ from the choice of $\hx \in U_x$, together we obtain \eqref{uniform conv} as required.
\end{proof}

Then \eqref{eq: wts} easily follows:
from \eqref{eq: property of d} and Proposition \ref{prop: growth Kingman} and \ref{prop: growth for all u}, there exists $n \in \N$ such that
\begin{align*}
\frac{1}{n}\int\log \Big(&\frac{d(\ol{\A_*^{n}(\hx)u},\ol{\A_*^{n}(\hx)v})}{d(\ol{u},\ol{v})}\Big)\,d\hmu^s_x(\hx) \\
&=\frac{1}{n}\int \Big(\log \|\A_*^{n}(\hx)^{\wedge 2}\| - \log\|\A_*^{n}(\hx)u\|-\log\|\A_*^{n}(\hx)v\|\Big) \,d\hmu^s_x(\hx)\\
&< \frac{1}{2}(\lambda_{2}-\lambda_{1})\\
&<0
\end{align*}
for every $x\in \Sig^+$ and $(\ol{u},\ol{v})\in \mathbb{P}^{d-1}\times \mathbb{P}^{d-1} \setminus \Delta$. In the last inequality, we have used the result of Bonatti and Viana \cite{bonatti2004lyapunov} that the top Lyapunov exponent $\lambda_1$ is simple when $\A$ is 1-typical. This establishes \eqref{eq: wts} as required.

\subsection{Controlling $\tau_{n,\alpha}$}\label{subsec: tau}
We will complete the proof of Theorem \ref{thm: main} by showing \eqref{eq: limsup tau_n,alpha} which states that
$$\displaystyle\limsup\limits_{n\to\infty} \frac{1}{n}\log \tau_{n,\alpha}<0.$$
  
Similar to how we passed from $t_{n,\alpha}$ to the integral, we have 
\begin{equation}\label{eq: tau integral}
\tau_{n,\alpha} \asymp \sup\limits_{\substack{x,y \in\Sig^+\\ \ol{v} \in \mathbb{P}^{d-1}}} \int \Big(\frac{ d(\ol{\A_*^{n}(\hx)v},\ol{\A_*^{n}(\hx_y)v})}{\rho(x,y)}\Big)^\alpha d\hmu^s_x(\hx)
\end{equation}
where the supremum is taken over all $x,y\in \Sig^+$ with the common 0-symbol and all $\ol{v}\in \P^{d-1}$.
This is because for any $x,y\in \Sig^+$ with the common 0-symbol and $z_1 \in \s^{-n}x$ and $z_2 \in \s^{-n}y$ in the same $n$-cylinder, we have $$d(\A^{[n]}(z_1),\A^{[n]}(z_2)) = d(\A_*^n(\hx),\A_*^n(\hx_y))$$
for any $\hx \in \s^n(\Wloc^s(z_1))$. Lemma \ref{lem: unif comparison} then gives \eqref{eq: tau integral}.

Writing $\A_*^{n}(\hx_y) = H^1_{\hx_y}\A_*^{n}(\hx)H^2_{\hx_y} $ where $H^1_{\hx_y} = H^{s,*}_{\s^{-n}\hx,\s^{-n}\hx_y}$ and $H^2_{\hx_y} = H^{s,*}_{\hx_y,\hx}$, 
the integral appearing in \eqref{eq: tau integral} is bounded above by
$$
\int  \Big(\frac{d(\ol{H^1_{\hx_y}\A_*^{n}(\hx)H^2_{\hx_y}v},\ol{\A_*^{n}(\hx)H^2_{\hx_y}v})}{\rho(x,y)}\Big)^\alpha \,d\hmu^s_x(\hx)+
\int \Big( \frac{ d(\ol{\A_*^{n}(\hx)H^2_{\hx_y} v},\ol{\A_*^{n}(\hx)v})}{\rho(x,y)}\Big)^\alpha\, d\hmu^s_x(\hx),
$$ 
and it suffices to show that both terms are decreasing exponentially fast to 0. 

For the first term, we apply \eqref{eq: holder holonomies 2} with $u = \A_*^{n}(\hx)H^2_{\hx_y}v$:
$$d(\ol{H^1_{\hx_y}u},\ol{u}) \leq C\cdot \rho(\s^{-n}\hx,\s^{-n}\hx_y)=C \cdot 2^{-n} \cdot \rho(x,y).$$
Hence, the first term is bounded above by $C^\a \cdot 2^{-n\alpha}$.

For the second term, apply \eqref{eq: property of d}:
\begin{align*}
d(\ol{\A_*^{n}(\hx)H^2_{\hx_y}v},\ol{\A_*^{n}(\hx)v}) &\leq d(\ol{H^2_{\hx_y} v},\ol{v}) \cdot  \frac{\s_1(\A_*^{n}(\hx))\cdot \s_2(\A_*^{n}(\hx))}{\|\A_*^{n}(\hx)H^2_{\hx_y} v\|\cdot \|\A_*^{n}(\hx)v\|}\\
&\leq C\cdot \rho(x,y) \cdot  \frac{\s_1(\A_*^{n}(\hx))\cdot \s_2(\A_*^{n}(\hx))}{\|\A_*^{n}(\hx)H^2_{\hx_y} v\|\cdot \|\A_*^{n}(\hx)v\|}.
\end{align*} 
All terms in the product except $\|\A_*^{n}(\hx)H^2_{\hx_y} (v)\|$ can be dealt with by proceeding similar to the above subsection. So it suffices to show the following analogue of \eqref{typicality usage}: for any $x,y\in \Sig^+$ with $x_0 = y_0$ and $v\in \mathbb{S}^{d-1}$, we have
\begin{equation}\label{eq: growth rate 2}
\lim\limits_{n\to \infty}\frac{1}{n}\int \log \|\A_*^{n}(\hx)H^{2}_{\hx_y}v\|\, d\hmu^s_x(\hx)= \lambda_1
\end{equation}
and the convergence is uniform in $x,y\in \Sig^+$ and $v \in \mathbb{S}^{d-1}$. 


Recalling that $\hx_y = [\hx,y]$, we have
\begin{align*}
\hmu^s_{x}(\{\hx \in \Wloc^s(x) \colon v \perp \xi(\hx_y)\}) &=  \int \delta_{\xi([\hx,y])}(v^\perp)\,d\hmu^s_x(\hx)\\
&= \int J_{y,x}(\hy) \delta_{\xi([h_{y,x}(\hy),y])}(v^\perp)\,d\hmu^s_y(\hy)
\end{align*}
where the second equality is due to the absolute continuity \eqref{eq: abs cty} of $\{\hmu^s_x\}_{x\in \Sig^+}$. 
Since
$[h_{y,x}(\hy),y]=[[\hy,x],y]= \hy$, this gives 
$$\hmu^s_{x}(\{\hx \in \Wloc^s(x) \colon v \perp \xi(\hx_y)\}) = \int J_{y,x}(\hy)\delta_{\xi(\hy)}(v^\perp)\, d\hmu^s_y(\hy)= 0$$ using the properness of $m_y$ as in the proof for Proposition \ref{prop: growth for all u}. This together with Proposition \ref{prop: growth Kingman} and the second statement of Lemma \ref{lem: growth rate of u} establishes \eqref{eq: growth rate 2}. The uniform convergence in \eqref{eq: growth rate 2} can also be shown as in the proof of Proposition \ref{prop: growth for all u} using the absolute continuity of $\{\hmu^s_x\}_{x\in \Sig^+}$.

\subsection{Quasi-compactness and the eigenmeasure of $\L$}
Using the Lasota-Yorke inequality established in Theorem \ref{thm: main}, we show that $\L$ is quasi-compact by applying the following result of Hennion to $X =C(\Sig^+ \times \P^{d-1})$, $T=\L$, $\|\cdot\|_1 = \|\cdot \|_\a$, $\|\cdot\|_2 = \|\cdot \|_\infty$. 
For details, see \cite{MR1129880} and references therein.

\begin{prop}\cite{MR1129880}\label{prop: hennion}
Let $(X,\|\cdot \|_1)$ be a Banach space and $T  \colon (X,\|\cdot\|_1) \to (X,\|\cdot\|_1)$ a bounded linear operator with spectral radius $\rho(T)$. Suppose there exists a norm $\|\cdot \|_2$ such that $T \colon (X,\|\cdot\|_1)\to (X,\|\cdot\|_2)$ is compact and there exist $\{R_n\}_{n\in \N},\{r_n\}_{n\in \N} \subset \R$ such that $r:=\liminf\limits_{n\to \infty} (r_n)^{1/n}<\rho(T)$ and for every $f \in X$
$$\|T^nf\|_1 \leq r_n \|f\|_1 + R_n\|f\|_2.$$
Then $T$ is quasi-compact and the essential spectrum of $T$ is less than or equal to $r$.
\end{prop}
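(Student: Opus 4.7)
My plan is to invoke Nussbaum's characterization of the essential spectral radius: for any bounded operator $T$ on a Banach space $(X,\|\cdot\|_1)$, one has
\[
\rho_{\mathrm{ess}}(T) = \lim_{n\to\infty}\alpha_1\bigl(T^n(B_1)\bigr)^{1/n},
\]
where $B_1$ is the closed unit ball of $(X,\|\cdot\|_1)$ and $\alpha_1(S)$ denotes the Kuratowski measure of non-compactness of $S$ with respect to $\|\cdot\|_1$, i.e., the infimum of $\e>0$ such that $S$ can be covered by finitely many sets of $\|\cdot\|_1$-diameter at most $\e$. Granted this, to conclude that $\rho_{\mathrm{ess}}(T)\le r$ it suffices to produce a constant $C>0$ with $\alpha_1(T^n(B_1))\le Cr_{n-1}$ for every $n\ge 1$.

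To obtain such a bound I would combine the Lasota-Yorke inequality with the compactness assumption as follows. Fix $n\ge 1$ and $\e>0$. Since $T\colon(X,\|\cdot\|_1)\to(X,\|\cdot\|_2)$ is compact, the set $T(B_1)$ is totally bounded in the weaker norm $\|\cdot\|_2$, so there exists a finite subset $\{g_1,\dots,g_m\}\subset T(B_1)$ such that for every $f\in B_1$ one can choose an index $i$ with $\|Tf-g_i\|_2<\e$. Because both $Tf$ and $g_i$ lie in $T(B_1)$, we also have the coarse bound $\|Tf-g_i\|_1\le 2M$, where $M:=\|T\|_1$ is the operator norm of $T$ on $(X,\|\cdot\|_1)$. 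Setting $h:=Tf-g_i$ and applying the hypothesized inequality at level $n-1$ yields
\[
\|T^nf-T^{n-1}g_i\|_1=\|T^{n-1}h\|_1\le r_{n-1}\|h\|_1+R_{n-1}\|h\|_2\le 2Mr_{n-1}+R_{n-1}\e.
\]
Hence $\{T^{n-1}g_1,\dots,T^{n-1}g_m\}$ is a finite $(2Mr_{n-1}+R_{n-1}\e)$-net for $T^n(B_1)$ in $\|\cdot\|_1$; letting $\e\to 0$ gives $\alpha_1(T^n(B_1))\le 4Mr_{n-1}$, as desired.

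Taking $n$-th roots and passing to the limit,
\[
\rho_{\mathrm{ess}}(T)\le \liminf_{n\to\infty}(4Mr_{n-1})^{1/n}=\liminf_{n\to\infty}r_{n-1}^{1/n}=r<\rho(T),
\]
which is precisely quasi-compactness together with the claimed bound on the essential spectrum. The main obstacle is invoking Nussbaum's formula in the form adapted to this two-norm setup; this is a standard piece of functional analysis (see \cite{MR1129880} and the references therein) but requires some care because the compactness of $T$ is only witnessed through the auxiliary weaker norm $\|\cdot\|_2$, and one must keep track of $\|\cdot\|_1$-diameters while producing $\e$-nets in $\|\cdot\|_2$. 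Once Nussbaum's formula is in hand, the remainder is a single epsilon-net argument funneled through one iterate of the Lasota-Yorke inequality.
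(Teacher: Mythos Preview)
The paper does not prove this proposition at all; it is simply quoted from Hennion \cite{MR1129880} with the remark ``For details, see \cite{MR1129880} and references therein.'' Your sketch is essentially the standard Hennion--Nussbaum argument and is correct: bound the Kuratowski measure of noncompactness of $T^n(B_1)$ by first using the compactness of $T$ into $\|\cdot\|_2$ to get a finite $\e$-net, then push through one Lasota--Yorke step to convert this into a $\|\cdot\|_1$-cover of controlled diameter. One minor point worth noting is that when you ``let $\e\to 0$'' the number $m$ of net points blows up, but this is harmless since the definition of $\alpha_1$ only requires \emph{some} finite cover for each diameter bound; your argument handles this correctly. The final identification $\liminf_n r_{n-1}^{1/n}=r$ is also fine (pass to the subsequence realizing $\liminf r_n^{1/n}$ and shift by one).
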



%

We now establish properties of the eigenmeasure for $\L$. 
For any probability measure $\zeta$ on $\Sig^+ \times \mathbb{P}^{d-1}$ sitting over $\mu$ and continuous function $f \in C(\Sig^+ \times \mathbb{P}^{d-1})$, consider a function $\wt{f}$ on $\Sig^+$ defined by $\displaystyle \wt{f}(x):=\int f(x,\ol{\A(x)^{*}u})\,d\zeta_{\s x}(u).$
Recalling that $\mu$ is the eigenfunction of $L_{\log g}$, we have
\begin{align*}
\int \wt{f} \,d\mu =\int \wt{f}\,d(L_{\log g}^*\mu)= \int  L_{\log g}\wt{f}\, d\mu &=\int \sum\limits_{\s y = x}g(y)\wt{f}(y)\,d\mu(x)\\
&= \int\sum\limits_{\s y=x}g(y)\int f(y,\ol{\A(y)^{*}u})\, d\zeta_x(u)d\mu(x)\\
&= \int \L f \, d\zeta.
\end{align*}
Comparing the conditional measures of both sides (with the definition of $\wt{f}$ in mind), this is equivalent to 
\begin{equation}\label{eq: L^*}
(\L^*\zeta)_x =(\A(x)^*)_*\zeta_{\s x}.
\end{equation}
This may also be observed from the identity
$$(\L f_1)\cdot f_2 =\L(f_1 \cdot f_2 \circ F_{\A_*^{-1}})$$
where $\A_*^{-1}$ is the adjoint of the inverse cocycle for $\A$.
\begin{prop}\label{prop: eigendata}
There exists a unique probability measure $\nu$ on $\Sig^+ \times \P^{d-1}$ such that $\L^*\nu = \nu$. Moreover, $\nu$ is $F_{\A^{-1}_*}$-invariant and $\nu_x$ is equal to $\delta_{\xi_*(x)}$ for $\mu$-a.e. $x$.
\end{prop}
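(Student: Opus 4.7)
I would exhibit an explicit $\L^*$-fixed measure of the claimed form and then use the singular-value gap from $1$-typicality to obtain uniqueness via the equivariance \eqref{eq: L^*}.

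\textbf{Explicit fixed measure.} Set $\nu_x := \delta_{\xi_*(x)}$ over $\mu$. The direction $\xi_*(x) = V_2^+(x)^\perp$, where $V_2^+(x) := \{v \in \R^d : \limsup_n n^{-1}\log\|\A^n(x) v\| \leq \lambda_2\}$, is defined for $\mu$-a.e.\ $x$ since $1$-typicality forces $\lambda_1 > \lambda_2$ (the Bonatti-Viana result behind Proposition \ref{prop: nu}), and depends only on the future coordinates (since $\A$ is constant along local stable sets by \eqref{constant along stable}), so $\nu$ is well-defined on $\Sig^+ \times \P^{d-1}$. By \eqref{eq: L^*}, the equation $\L^*\nu = \nu$ reduces to $\overline{\A(x)^*\xi_*(\s x)} = \xi_*(x)$, which follows from the forward equivariance $\A(x) V_2^+(x) = V_2^+(\s x)$ (immediate from the cocycle equation) together with the identity $(\A(x)^{-1}W)^\perp = \A(x)^* W^\perp$:
\[
\A(x)^*\xi_*(\s x) = \A(x)^* V_2^+(\s x)^\perp = (\A(x)^{-1} V_2^+(\s x))^\perp = V_2^+(x)^\perp = \xi_*(x).
\]
The $F_{\A^{-1}_*}$-invariance of $\nu$ is then a direct consequence: \eqref{eq: L^*} rewrites $\L^*\nu = \nu$ as $(\A^{-1}_*(x))_*\nu_x = \nu_{\s x}$ for $\mu$-a.e.\ $x$, which, combined with the $\sigma$-invariance of $\pi_*\nu = \mu$, is precisely the fiber equivariance characterizing $(F_{\A^{-1}_*})_* \nu = \nu$.

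\textbf{Uniqueness and main obstacle.} Let $\nu'$ satisfy $\L^*\nu' = \nu'$. Testing against functions of the form $\phi \circ \pi$ with $\phi \in C(\Sig^+)$ and invoking uniqueness of the Ruelle eigenmeasure of $L_{\log g}$ forces $\pi_*\nu' = \mu$; iterating \eqref{eq: L^*} then produces $\nu'_x = ((\A^n(x))^*)_*\nu'_{\s^n x}$ for every $n$ and $\mu$-a.e.\ $x$. Under $1$-typicality, $\sigma_2(\A^n(x))/\sigma_1(\A^n(x))$ decays exponentially, so $(\A^n(x))^*/\|\A^n(x)\|$ converges to a rank-one quasi-projective map with image $\xi_*(x)$, and the pushforward on the right tends weakly to $\delta_{\xi_*(x)}$ provided $\nu'_{\s^n x}$ does not asymptotically concentrate on the kernel of this limit. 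Establishing this properness is the principal technical obstacle. I would handle it by lifting $\nu'$ to a $\hat F_{\A^{-1}_*}$-invariant measure on $\Sig\times\P^{d-1}$ over $\hmu$ and invoking the analogue of Proposition \ref{prop: nu}(3) for the $1$-typical cocycle $\A^{-1}_*$ to rule out fiberwise mass on proper subspaces; alternatively, one may use the quasi-compactness of $\L$ from Theorem \ref{thm: main} to show directly that the eigenspace of $\L$ at the eigenvalue $1$ is one-dimensional (by identifying any $\L$-invariant function as a constant through the same iteration combined with the spectral gap of $L_{\log g}$), which by duality yields uniqueness of the $\L^*$-fixed probability measure.
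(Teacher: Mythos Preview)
Your existence argument is correct and more explicit than the paper's (which just invokes weak$^*$ compactness): you build $\nu$ directly from the Oseledets filtration and verify $\L^*\nu=\nu$ via the equivariance $\A(x)V_2^+(x)=V_2^+(\sigma x)$ and the adjoint identity. Good.

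For uniqueness, your two suggested routes have different statuses. Route (b) works cleanly: if $\L f=f$ in $C^\alpha$, then Corollary \ref{cor: LY} applied to $\L^n f=f$ and letting $n\to\infty$ forces $|f(x,\ol u)-f(y,\ol v)|\le C\rho(x,y)^\alpha\|f\|_\infty$, so $f$ is independent of $\ol u$; then $L_{\log g}f=f$ and the classical simplicity of $1$ for $L_{\log g}$ makes $f$ constant. Since $I-\L$ is Fredholm of index zero at the isolated eigenvalue $1$, $\dim\ker(I-\L^*)=\dim\ker(I-\L)=1$, and uniqueness of the $\L^*$-fixed probability follows.

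Route (a) as you phrase it has a gap. Proposition \ref{prop: nu}(3) (properness of conditionals) is a property of the \emph{unique} $u$-state, not of an arbitrary $\hat F$-invariant lift, so you cannot invoke it for $\hat\nu'$ without first identifying $\hat\nu'$ as that state. The paper closes exactly this gap with a one-line observation you omit: applying \eqref{eq: retrieve hm from m} to $\A_*^{-1}$ gives $\hat\nu'_{\hat x}=\lim_n(\A_*^{-1})^n(\hat x_n)_*\nu'_{\hat x_n}=\nu'_{P(\hat x)}$, so the lift depends only on $P(\hat x)$; since $H^s\equiv I$ for $\A$ (hence for $\A_*^{-1}$), $\hat\nu'$ is automatically an $(\A_*^{-1},H^s)$-state, and Bonatti--Viana uniqueness of such states gives $\nu'=\nu$ directly---rendering your convergence-of-pushforwards argument and the properness step unnecessary. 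Your iteration $\nu'_x=((\A^n(x))^*)_*\nu'_{\sigma^n x}$ with moving kernel $(V_ne_1)^\perp$ would otherwise require uniform control you do not establish.
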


\begin{proof}
The existence of such $\nu$ is standard: it follows from the fact that $\L^*$ maps the space of probability measures to itself and that the set of probability measures is weak$^{\ast}$ compact.
From \eqref{eq: L^*} we have $\nu_{\s x} = (\A^{-1}_*(x))_*\nu_x$, which shows that $\nu$ is $F_{\A^{-1}_*}$-invariant.

For the remaining statements claimed in the proposition, it suffices to show that $\nu$ is $(\A_*^{-1},H^s)$-invariant.
This is because the $(\A^{-1}_*,H^s)$-invariant measure is unique when $\A$ (hence $\A^{-1}_*$) is 1-typical, and it is characterized by its conditional measures supported on the slowest Lyapunov subspace $\xi_*(x)$ with respect to $\A^{-1}_*$ and $\mu$.

Being an eigenfunction of $\L$, $\nu$ has to project to $\mu$ under $\pi\colon \Sig^{+}\times\P^{d-1} \to \Sig^+$, and \eqref{eq: retrieve hm from m} applied to $\A^{-1}_*$ gives
$$\hnu_{\hx}=\lim\limits_{n\to \infty}(\A_*^{-1})^n(P(\s^{-n}\hx)) \nu_{P(\s^{-n}\hx)} =\lim\limits_{n\to \infty} \nu_x = \nu_{x}.$$
This implies that $\hnu_{\hx} = \hnu_{\hy}$ is for any $\hy \in \Wloc^s(\hx)$. Since the local stable holonomies for $\A^{-1}_*$ are identically equal to $I$ (due to the same reasoning why $H^s \equiv I$ for $\A$), it follows that $\hnu$ is $(\A^{-1}_*,H^s)$-invariant.
\end{proof}

\subsection{$\L$ defined with the inverse}
In this subsection, we remark that all results obtained thus far equally holds if we had defined the operator with $\A(y)^{-1}$ in place of $\A(y)^{*}$:
$$\L_{\text{inv}} f(x,\ol{u}) := \sum\limits_{y \colon \s y = x}g(y) f(y,\ol{\A(y)^{-1}u}).$$
Theorem \ref{thm: main} holds for $\L_\inv$ with the statements of Proposition \ref{prop: growth for all u} and Lemma \ref{lem: growth rate of u} replaced by
$$\lim_{n \to \infty}\frac{1}{n}\int \log \|\A^{-n}(\hx)u\|\,d\hmu^s_x(\hx) =-\lambda_d$$ 
and for $u_n \to u$,
$$\lim\limits_{n \to \infty} \frac{\|\A^{-n}(\hx)u_n\|}{\|\A^{-n}(\hx)\|} = |\langle u,\omega_*(\hx)\rangle|.$$
Moreover, the proof would make use of the gap in the bottom exponents $\lambda_{d-1}>\lambda_d$ instead of the gap in the top exponents $\lambda_1 >\lambda_2$.

We have
$$(\L_{\text{inv}} f_1)\cdot f_2 =\L_{\text{inv}}(f_1 \cdot f_2 \circ F_\A),$$
and its eigenmeasure $\nu_{\inv}$ of $\L_{\text{inv}}$ obtained as in Proposition \ref{prop: eigendata} is unique and $F_\A$-invariant. 
Moreover, $\nu_\inv$ is $(\A,H^s)$-invariant, so its conditional measure $\nu_{\inv,x}$ sits on the slowest Lyapunov subspace $\omega(x)$ with respect to $\A$ and $\mu$. This observation is compatible with the fact that $\L_\inv$ acts on the fiber $\P^{d-1}$ by the inverse cocycle $\A^{-1}$.



\section{The Peripheral Spectrum of $\L$}\label{sec: spectral results}
In this section, we will prove the following theorem assuming the results and terminologies from the Appendix (Section \ref{sec: appendix}). 

\begin{thm}\label{thm:spectralstructure}
Suppose $\S_{T}^{+}$ is a shift of finite type defined by an irreducible $T$, and $\hat{\A}$ is $1$-typical. Then for all $\alpha>0$ sufficiently small, $\L$ is quasi-compact and the following are true: there exists $h \in \N$ such that
	\begin{enumerate}
		\item 
		There exists a unique probability measure $\nu$ with $\L^{\ast}\nu = \nu$.
		
		\item 
		The only eigenvalues of $\L$ of modulus $1$ are $ \set{e^{\frac{2 \pi i}{h}k}}_{k=0}^{h-1}$ and each is a simple.
		
		\item 
		$\L$ can be written as
		\[ \L=\sum_{k=0}^{h-1}\lambda_{k}P_{k}+S \]
		where $\displaystyle \lambda_{k}=\exp\Big(\frac{2 k \pi i}{h}\Big)$. 
		
		Moreover, $P_{k}^{2}=P_{k}$, $P_{i}P_{j}=0$ for $i\neq j$, $P_{0}f =\inn{f,\nu}1$, and $S$ is a bounded linear operator with $\rho(S)<1$ and $P_{i}S=SP_{i}=0$.
	\end{enumerate}
\end{thm}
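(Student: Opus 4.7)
The plan is to combine the Lasota--Yorke inequality of Theorem \ref{thm: main} with Proposition \ref{prop: hennion}, the information about $\nu$ already established in Proposition \ref{prop: eigendata}, and an abstract spectral decomposition theorem for quasi-compact positive operators that will be stated in Section \ref{sec: appendix}.

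First I would verify quasi-compactness by applying Proposition \ref{prop: hennion} with $X=C^\alpha(\Sig^+\times\P^{d-1})$, $T=\L$, $\|\cdot\|_1=\|\cdot\|_\alpha$, and $\|\cdot\|_2=\|\cdot\|_\infty$. Compactness of $\L:(C^\alpha,\|\cdot\|_\alpha)\to(C^\alpha,\|\cdot\|_\infty)$ is a standard Arzel\`a--Ascoli argument, using that the Lasota--Yorke inequality makes $\L$-images of $\|\cdot\|_\alpha$-bounded sets uniformly equicontinuous. Theorem \ref{thm: main} supplies $r_n=\b^n$ and $R_n=C$ with $r=\b<1=\rho(\L)$, so $\L$ is quasi-compact with essential spectral radius at most $\b$.

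Claim (1) is already Proposition \ref{prop: eigendata}. For (2) and (3), I would invoke an abstract decomposition theorem from Section \ref{sec: appendix} for quasi-compact positive operators (here $\L$ is positive since $g>0$) whose spectral radius is attained by the positive eigenfunction $1$; such a theorem yields exactly the cyclic structure of peripheral eigenvalues and the spectral decomposition asserted in (3), provided one shows that every peripheral eigenspace is geometrically simple. Concretely, I need to show: if $\L\vp=\lambda\vp$ with $|\lambda|=1$ and $\vp\in C^\alpha$, then $\vp$ is determined by $\lambda$ up to a scalar.

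To do this, I would run the standard argument via the triangle inequality: positivity of $\L$ gives $|\vp|=|\L\vp|\leq\L|\vp|$, and integrating against $\nu$ together with $\L^*\nu=\nu$ forces $\L|\vp|=|\vp|$ on $\supp\nu$. By Proposition \ref{prop: eigendata}, $\nu$ disintegrates as $\nu_x=\delta_{\xi_*(x)}$ over $\mu$, so $|\vp|$ equals a constant on the graph of $\xi_*$. I would then propagate this equality to all of $\Sig^+\times\P^{d-1}$ by iterating
\[ |\vp|(x,\ol u)=\L^n|\vp|(x,\ol u)=\sum_{\s^ny=x}g^{(n)}(y)\,|\vp|(y,\ol{\A^{[n]}(y)u}), \]
combined with irreducibility of $T$ (to spread base preimages through $\Sig^+$) and 1-typicality of $\A$ (to ensure that the fiber points $\ol{\A^{[n]}(y)u}$ sweep out a dense subset of $\P^{d-1}$ as $y$ ranges over preimages). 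This yields $|\vp|\equiv c$. Once $|\vp|$ is constant, the equality case in $|\L\vp|\leq\L|\vp|$ forces the summands in $\L\vp(x,\ol u)$ to share a common argument, which pins down $\vp$ up to a scalar. Closure of the peripheral eigenvalues under products (checked on unimodular eigenfunctions) together with quasi-compactness then forces the peripheral spectrum to be a finite cyclic subgroup of $S^1$, giving the integer $h$; the identity $P_0 f=\langle f,\nu\rangle 1$ simply encodes that the rank-one projection onto $\C\cdot 1$ along $\ker(\L-\mathrm{id})^\perp$ is integration against the left eigenvector $\nu$ normalized by $\langle 1,\nu\rangle=1$.

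The main obstacle is the propagation step: passing from $\L|\vp|=|\vp|$ on $\supp\nu$ to $\L|\vp|=|\vp|$ on all of $\Sig^+\times\P^{d-1}$. This is where the dynamical input beyond abstract quasi-compactness becomes essential, and where 1-typicality enters crucially via the richness of the fiberwise action of the adjoint cocycle on $\P^{d-1}$; without it, an $\L$-invariant function could a priori vary in directions transverse to $\xi_*$.
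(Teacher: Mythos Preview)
Your verification of quasi-compactness and your handling of part (1) match the paper and are fine. The difficulty is in the propagation step you flag at the end, and here there is a genuine gap.

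The classical triangle-inequality argument you sketch relies on the eigenmeasure having full support, so that $\L|\vp|=|\vp|$ $\nu$-a.e.\ upgrades to equality everywhere by continuity. But as you note, here $\nu$ is supported on the graph of $\xi_*$, so this fails. Your proposed fix---that the fiber points $\ol{\A^{[n]}(y)u}$ sweep out a dense set in $\P^{d-1}$---goes in the wrong direction: under $1$-typicality the action of $\A^{[n]}(y)=[\A^n(y)]^*$ on $\P^{d-1}$ is \emph{contracting} towards $\xi_*(y)$ (this is precisely why $\nu$ is the delta on the graph; see Remark \ref{rem: inv measures} and Proposition \ref{prop: eigendata}). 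So iterating $\L^n$ pulls fiber points toward the graph, not away from it, and the density argument cannot work. There is also the prior issue that you only have the inequality $|\vp|\leq\L|\vp|$, not the equality $|\vp|=\L^n|\vp|$, off $\supp\nu$.

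The paper circumvents this entirely. The integer $h$ is taken a priori to be the period of the adjacency matrix $T$, giving a cyclic block decomposition $C^\alpha(\Sig^+\times\P^{d-1})\cong\bigoplus_p C^\alpha([(p)]\times\P^{d-1})$ on which $\L$ acts cyclically (Proposition \ref{prop: L decomposed}, Proposition \ref{prop:rotationsymmetry}). On each block, $\L^h$ is shown to have a spectral gap via the abstract semi-positive framework of Theorem \ref{abstractPFThm-Primtive}. The key dynamical input is not density of fiber orbits but rather the refined estimate of Corollary \ref{cor: LY},
\[
|\L^{n}f(x,\ol u)-\L^{n}f(y,\ol v)|\leq \beta^{n}|f|_{\alpha}+C\rho(x,y)^{\alpha}\|f\|_{\infty},
\]
in which the fiber distance $d(\ol u,\ol v)$ has been completely absorbed into the $\beta^n$ term. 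Combined with Lemma \ref{lem:positivity} (if $\L^n f$ has a zero, then $\L^k f$ has zeros with base point close to any prescribed $x$), this shows that a nonnegative $f$ with $\L^n_{(p)}f$ vanishing somewhere for every $n$ must satisfy $\L^n_{(p)}f\to 0$, which is exactly the semi-positivity condition. This is the substitute for full support of $\nu$: the Lasota--Yorke inequality itself forces $\L^n$ to forget the fiber coordinate exponentially fast.
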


As $T$ is irreducible, so is $T^{\ast}$. Hence, up to reordering the alphabet, we may assume that $T^{\ast}$ can written as a block matrix
\[ T^{\ast}= \bmat{0& T_{12}^{\ast} & 0 & \cdots & 0 \\ \vdots &0 & T_{23}^{\ast}& \cdots &0\\ \vdots &  &   & \ddots & \vdots \\  0 & 0& \cdots & & T_{h-1h}^{\ast}\\ T_{h1}^{\ast} & 0& \cdots & & 0} \]
where the diagonal blocks are square. This naturally partitions the alphabet in classes $\set{(p)}_{p=1}^{h}$ based on the implied decomposition of $\R^{n}$. For convenience we will write
\[ [(p)] = \bigcup_{i \in (p)}[i]. \]

For any $f \in C^{\alpha}(\S_{T}^{+}\times \P^{d-1})$, we define the multiplication operator $M_{f}$ by
\begin{equation}\label{eq: multiplication operator}
 M_{f}h= fh .
\end{equation}
It is clear that $M_f$ is a bounded linear operator on the space $C^{\alpha}(\S_{T}^{+}\times \P^{d-1})$ for any $0<\alpha\leq 1$.

\begin{proposition}\label{prop: L decomposed}~ 
	\begin{enumerate}
		\item 
		The space $C^{\alpha}(\S_{T}^{+}\times \P^{d-1})$ is isomorphic to the space $\bigoplus\limits_{i}C^{\alpha}([(i)]\times \P^{d-1})$.
		
		\item 
		When thought of as acting on $\bigoplus\limits_{i}C^{\alpha}([(i)]\times \P^{d-1})$, $\L$ can be written as $\L=[L_{ij}]$ where 
		\[ L_{ij}f(x,\ol{u}) = \sum_{k \in (j)\colon kx\in \S_{T}^{+}} g(kx) f(kx, \ol{\A(kx)^*u})\chi_{[(i)]\times \P^{d-1}}(x,\ol{u}). \]
		Notice that $L_{ij}=0$ unless there exist $a\in (i)$ and $b\in (j)$ such that $T_{ab}^{\ast}=T_{ba}=1$; this is equivalent to $j \equiv i+1 \mod h$.
		\item 
		For each $1\leq p\leq h$, we have
		\[ \L \chi_{[(p)]\times \P^{d-1}} = \begin{cases}
		\chi_{[(h)]\times \P^{d-1}}& p=1\\
		\chi_{[(p-1)]\times \P^{d-1}}& p\geq 2
		\end{cases}. \] 
	\end{enumerate}
\end{proposition}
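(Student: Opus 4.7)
My plan is to dispatch the three parts in order; parts (1) and (3) are essentially book-keeping, while part (2) is a direct computation from the definition of $\L$.

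For part (1), the key point is that $\Sig^+=\bigsqcup_{i=1}^{h}[(i)]$ is a disjoint union of \emph{clopen} sets, since each $[(i)]$ is a finite union of 1-cylinders $[j]$ with $j\in(i)$. Hence $\Sig^+\times\P^{d-1}=\bigsqcup_i [(i)]\times\P^{d-1}$ is also a disjoint union of clopen sets. The isomorphism sends $f\mapsto (f|_{[(i)]\times\P^{d-1}})_i$, with inverse given by extension by zero on each piece followed by summation. Hölder control is automatic: any two points lying in different classes $(i)\neq (j)$ have $\rho$-distance equal to $1$ (they disagree at the $0$-th coordinate), so the Hölder seminorms of the pieces add up to bound the Hölder seminorm of the glued function up to a universal constant, and vice versa.

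For part (2), I would fix $f=(f_j)_j\in\bigoplus_j C^\alpha([(j)]\times\P^{d-1})$, view it as a function on $\Sig^+\times\P^{d-1}$ via the identification from (1), and compute directly from the definition of $\L$:
\[
\L f(x,\ol{u}) \;=\; \sum_{\sigma y=x} g(y)\,f(y,\ol{\A(y)^{*}u}) \;=\; \sum_{j=1}^{h}\sum_{\substack{k\in(j)\\ kx\in\Sig^+}} g(kx)\,f_j(kx,\ol{\A(kx)^{*}u}).
\]
Restricting the output to $[(i)]\times\P^{d-1}$ reads off the block entries $L_{ij}$ as stated. To show that $L_{ij}=0$ unless $j\equiv i+1\pmod h$, I would translate the block structure of $T^{*}$: its only nonzero off-diagonal blocks sit at positions $(p,p+1)$ (cyclically), so $T^{*}_{ab}=T_{ba}=1$ forces $a\in(p)$ and $b\in(p+1)$ for some $p$. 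Equivalently, every edge of $\Sig$ runs from class $(p+1)$ back to class $(p)$; hence for $x$ with $x_0\in(i)$, every admissible symbol $k$ with $T_{k,x_0}=1$ must satisfy $k\in(i+1)$.

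For part (3), I would apply part (2) to the tuple corresponding to $f=\chi_{[(p)]\times\P^{d-1}}$, namely $f_p\equiv 1$ on $[(p)]\times\P^{d-1}$ and $f_j\equiv 0$ for $j\neq p$. Then $(\L f)_i=L_{ip}f_p$ vanishes unless $p\equiv i+1\pmod h$, i.e.\ $i\equiv p-1\pmod h$. On that class, the cyclic edge structure forces every preimage $kx$ of $x$ to satisfy $k\in(p)$ automatically, so the restriction $k\in(p)$ is vacuous and
\[
L_{i,p} f_p(x,\ol{u}) \;=\; \sum_{\substack{k\in(p)\\ kx\in\Sig^+}} g(kx) \;=\; \sum_{\sigma y=x} g(y) \;=\; 1,
\]
using that $g$ is a $g$-function. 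Reading the cyclic correspondence $i\equiv p-1\pmod h$ in the two cases $p\geq 2$ and $p=1$ gives exactly the stated formula. The main obstacle, modest as it is, is tracking the direction of the cyclic action on classes — edges of $\Sig$ point $(p+1)\to(p)$ rather than the reverse — since this is what determines whether $\L$ shifts the class index down by one; it is the only place where an off-by-one error could plausibly creep in.
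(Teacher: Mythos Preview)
Your proposal is correct and follows essentially the same route as the paper's proof: the same restriction/summation isomorphism for (1), the same splitting of the preimage sum by class for (2), and the same use of the $g$-function identity for (3). The only cosmetic differences are that the paper writes the block entries as $L_{ij}=M_{\chi_{[(i)]\times\P^{d-1}}}\L M_{\chi_{[(j)]\times\P^{d-1}}}$ before expanding, and proves (3) by a direct computation rather than by invoking (2); your tracking of the cyclic direction $(p+1)\to(p)$ is accurate and matches the paper's conclusion $j\equiv i+1\pmod h$.
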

\begin{proof}~
	\begin{enumerate}
		\item 
		Define the map
		\[ \id:C^{\alpha}(\S_{T}^{+}\times \P^{d-1}) \to \bigoplus_{i=1}^{h}C^{\alpha}([(i)]\times \P^{d-1})\]
		by
		\[\id(f)= (f\cdot \chi_{[(1)]\times \P^{d-1}}, f\cdot\chi_{[(2)]\times \P^{d-1}}, \ldots, f\cdot \chi_{[(h)]\times \P^{d-1}}).  \]
		One can check that $\id$ is linear and that $\norm{\id(f)}\leq \norm{f}_{\alpha}$. Moreover the function
		\[ (f_{1},f_{2}, \cdots , f_{h})\mapsto \sum_{i=1}^{h}f_{i} \]
		defines an inverse for $\id$.
		
		\item 
		Notice that
		\begin{align*}
		L_{ij}f(x,\ol{u}) &= M_{\chi_{[(i)]\times \P^{d-1}}}\L M_{\chi_{[(j)]\times \P^{d-1}}}f(x,\ol{u})\\
		&=\chi_{[(i)]\times \P^{d-1}}(x,\ol{u})\sum_{k:kx\in \S_{T}^{+}}g(kx)M_{[(j)]\times \P^{d-1}}f(kx,\ol{\A(kx)^{\ast}u})\\
		&=\chi_{[(i)]\times \P^{d-1}}(x,\ol{u})\sum_{k:kx\in \S_{T}^{+}}g(kx)\chi_{[(j)]\times \P^{d-1}}(kx,\ol{\A(kx)^{\ast}u})f(kx,\ol{\A(kx)^{\ast}u})\\
		&=\sum_{k:k\in (j),kx\in \S_{T}^{+}}g(kx)f(kx,\ol{\A(kx)^{\ast}u})\chi_{[(i)]\times \P^{d-1}}(x,\ol{u}).
		\end{align*}
	
	\item 
	Notice that
	\begin{align*}
	\L \chi_{[(p)] \times \P^{d-1}}(x,\ol{u}) &= \sum_{k:kx \in \S_{T}^{+}}g(kx)\chi_{[(p)]\times \P^{d-1}}(kx,\ol{\A(kx)^*u})\\
	&=\sum_{k:k\in (p),kx \in \S_{T}^{+}}g(kx)\\
	&=\begin{cases}
	\chi_{[(h)]\times \P^{d-1}}(x,\ol{u})& p=1\\
	\chi_{[(p-1)]\times \P^{d-1}}(x,\ol{u})& p\geq 2
	\end{cases}.
	\end{align*} 
	\end{enumerate} 
This completes the proof.
\end{proof}

\begin{lemma}\label{lem:positivity}
Suppose that $T$ is primitive with $T^m>0$ for some $m\in \N$. Given $f \geq 0$, suppose further that there exists a point $(z,\ol{w})$ such that $\L^nf(z,\ol{w})=0$ for some $n\in \N$.
Then for any $0 \leq k \leq n$ and $x \in \S_{T}^{+}$, there exists $(y,\ol{v})$ such that $\L^kf(y,\ol{v}) = 0$ and $\rho(x,y) \leq 2^{-(n-k-m)}$. 
\end{lemma}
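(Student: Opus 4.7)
The plan is to combine two ingredients: positivity of the transfer operator $\L$, which propagates the vanishing of $\L^n f$ at $(z,\ol{w})$ backwards to vanishings of $\L^k f$ at preimages under $\s^{n-k}$, and primitivity of $T$, which lets us place one such preimage close to the prescribed base point $x$.

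The first step is to factor $\L^n = \L^{n-k}\circ \L^k$ and expand the iterate explicitly:
\[ 0 \;=\; \L^n f(z,\ol{w}) \;=\; \sum_{y'\colon \s^{n-k}y' = z} g^{(n-k)}(y')\cdot (\L^k f)\bigl(y',\ol{\A^{[n-k]}(y')w}\bigr). \]
Every summand is nonnegative: $\L$ preserves nonnegativity so $\L^k f \geq 0$, while $g$ is a $g$-function and in particular strictly positive, so $g^{(n-k)}(y') > 0$. Consequently every summand vanishes, and for \emph{every} $y' \in \s^{-(n-k)}(z)$, the pair $(y',\ol{v})$ with $\ol{v}:=\ol{\A^{[n-k]}(y')w}$ satisfies $\L^k f(y',\ol{v})=0$.

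The second step is to select such a $y'$ close to the given $x$. Writing $y' = y'_0 y'_1\cdots y'_{N-1}z_0z_1\cdots$ with $N:=n-k$, I would prescribe $y'_i = x_i$ for $0\leq i\leq N-m$ (the edge case $N<m$ is trivial since the asserted bound $2^{-(n-k-m)}$ is then at least $1$ and any admissible preimage works). It remains to fill in the $m-1$ symbols $y'_{N-m+1},\ldots,y'_{N-1}$ so that the word $x_{N-m}\,y'_{N-m+1}\cdots y'_{N-1}\,z_0$ is admissible; this amounts to finding an admissible bridge from $x_{N-m}$ to $z_0$ in exactly $m$ transitions, whose existence is precisely the hypothesis $T^m>0$. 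Setting $y:=y'$ yields agreement with $x$ on coordinates $0,\ldots,N-m$, so $\rho(x,y)\leq 2^{-(N-m+1)}\leq 2^{-(n-k-m)}$ as required.

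I do not anticipate any serious obstacle: the argument is a standard combination of positive-operator bookkeeping with a short combinatorial use of primitivity, and the only point requiring care is the degenerate range $n-k\leq m$, which is automatic.
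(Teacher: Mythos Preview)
Your proof is correct and follows essentially the same approach as the paper: both factor $\L^n=\L^{n-k}\circ\L^k$, use positivity to conclude that $\L^k f$ vanishes at every $(y,\ol{\A^{[n-k]}(y)w})$ with $\s^{n-k}y=z$, and then pick such a $y$ close to $x$. Your second step spells out the primitivity argument that the paper leaves as the single line ``among all such $y$, there necessarily exists a $y$ such that $\rho(x,y)\leq 2^{-(n-k-m)}$,'' but the content is the same.
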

\begin{proof}
	Notice that
	\[ 0=\L^{n}f(z,\ol{w})= \sum_{\sigma^{n-k}y=z}g^{(n-k)}(y)\L^{k}f(y,\ol{\A^{[n-k]}(y)w}). \]
	Thus $\L^{k}f(y,\ol{\A^{[n-k]}(y) w})=0$ for all $y$ such that $\sigma^{n-k}y=z$. Among all such $y$, there necessarily exists a $y$ such that $\rho(x,y) \leq 2^{-(n-k-m)}$.
\end{proof}

\begin{lemma}\label{lem:blockspectralgap}
For each $1 \leq p\leq h$, the restriction $\L_{(p)}:=\L^h|_{C^\alpha([p]\times \P^{d-1})}$ has a spectral gap. 
\end{lemma}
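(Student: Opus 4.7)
The plan is to verify that $\L_{(p)}$ is a quasi-compact positive operator whose only peripheral eigenvalue is $1$ and that this eigenvalue is simple, which is the definition of a spectral gap. First, because $T^{*}$ has the cyclic block form displayed above, the letters in class $(p)$ support a topologically mixing subshift under $\sigma^{h}$; equivalently, $T^{h}$ restricted to $(p)$ is primitive. Iterating the Lasota-Yorke inequality of Theorem \ref{thm: main} $h$ times and restricting to $C^{\alpha}([(p)] \times \P^{d-1})$ yields a Lasota-Yorke inequality for $\L_{(p)}$, so Hennion's criterion (Proposition \ref{prop: hennion}) shows that $\L_{(p)}$ is quasi-compact with essential spectral radius strictly less than $1$. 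Its peripheral spectrum is therefore a finite union of eigenvalues of finite multiplicity.

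By Proposition \ref{prop: L decomposed}(3) we have $\L^{h}\chi_{[(p)] \times \P^{d-1}} = \chi_{[(p)] \times \P^{d-1}}$, so $1$ is an eigenvalue of $\L_{(p)}$; the restriction of the eigenmeasure $\nu$ from Proposition \ref{prop: eigendata} to $[(p)] \times \P^{d-1}$ supplies a corresponding $\L_{(p)}^{*}$-eigenmeasure $\nu_{(p)}$. To establish the spectral gap, I would argue that $1$ is the only peripheral eigenvalue and that it is simple. Given $\L_{(p)} f = \lambda f$ with $|\lambda| = 1$, the pointwise inequality $|\L_{(p)}f| \leq \L_{(p)}|f|$ together with pairing against $\nu_{(p)}$ forces $\L_{(p)}|f| = |f|$, so $\max|f| - |f|$ is a nonnegative $\L_{(p)}$-fixed function. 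Lemma \ref{lem:positivity} applied to this function, together with primitivity of $\sigma^{h}$ on $(p)$ and continuity of $f$, shows that its zero set projects onto $[(p)]$; pushing this zero set forward along the preimage tree of the projective skew product (whose forward orbits are dense on $[(p)] \times \P^{d-1}$ by the transitivity furnished by $1$-typicality, cf.\ Proposition \ref{prop: nu}(3)) then forces $|f|$ to be a positive constant, which we normalize to $1$.

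With $|f| \equiv 1$, the equality case of the triangle inequality in
\[ \sum_{\sigma^{h} y = x} g^{(h)}(y)\, f\bigl(y, \overline{\A^{[h]}(y) u}\bigr) = \lambda f(x, \overline{u}) \]
gives $f(y, \overline{\A^{[h]}(y) u}) = \lambda f(x, \overline{u})$ for every preimage $y$ of $x$. Iterating this identity along periodic orbits of $\sigma^{h}$ in $(p)$ and choosing, at each such orbit, a real eigenvector of its return matrix (available in particular at the periodic point provided by the $1$-typicality assumption, and by perturbation and density of periodic orbits at orbits of all sufficiently large periods), one obtains $\lambda^{n} = 1$ for a collection of integers $n$ whose greatest common divisor is $1$ by primitivity; hence $\lambda = 1$. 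The resulting identity $f(y, \overline{\A^{[h]}(y) u}) = f(x, \overline{u})$ combined with the strong irreducibility ensured by $1$-typicality forces $f$ to be constant on each fiber, whence $f = \chi_{[(p)] \times \P^{d-1}}$, completing the proof.

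The main obstacle lies in the final step, namely controlling $f$ on the projective fibers: the skew-product structure allows for potential phase eigenfunctions depending nontrivially on $\overline{u}$, and ruling these out is precisely where the $1$-typicality hypothesis is essential. Without pinching and twisting, invariant cross sections on $\P^{d-1}$ could sustain additional peripheral eigenvalues and thereby destroy the spectral gap.
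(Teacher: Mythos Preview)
Your argument has a genuine gap at the step where you conclude that $|f|$ is constant. You correctly deduce that the zero set of $h := \max|f| - |f|$ projects onto all of $[(p)]$, but the passage to $h \equiv 0$ relies on the assertion that forward orbits of the projective skew product are dense in $[(p)] \times \P^{d-1}$. You cite Proposition~\ref{prop: nu}(3) for this, but that item only says that the conditional measures $m_x$ assign zero mass to proper subspaces; it is a statement about measures, not about topological transitivity of the skew product, and no such transitivity is proved (or expected) anywhere in the paper. The later step showing $f$ is fiberwise constant has the same defect: the appeal to ``strong irreducibility ensured by $1$-typicality'' is not a result available here, and twisting of a single homoclinic loop does not by itself yield minimality of the projective action.

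The paper's proof avoids fiberwise arguments entirely by exploiting a feature of Corollary~\ref{cor: LY} that you have not used: the bound
\[
\abs{\L^{nh}f(x,\ol u) - \L^{nh}f(y,\ol v)} \leq \beta^{nh}|f|_\alpha + C\,\rho(x,y)^\alpha \norm{f}_\infty
\]
has a remainder depending only on the \emph{base} distance $\rho(x,y)$, not on $d(\ol u,\ol v)$. Hence once Lemma~\ref{lem:positivity} produces a zero of $\L_{(p)}^n f$ over some $y$ close to a given $x$, one obtains smallness of $\L_{(p)}^n f(x,\ol u)$ for \emph{every} $\ol u$ simultaneously. This verifies semi-positivity of $\L_{(p)}$ with respect to the cone of nonnegative functions, and the abstract Perron--Frobenius theorem (Theorem~\ref{abstractPFThm-Primtive}) then delivers the spectral gap directly, with no need to analyze peripheral eigenfunctions on the fibers. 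The $1$-typicality hypothesis is consumed upstream, in proving the Lasota--Yorke inequality itself, rather than through any direct fiberwise dynamics at this stage.
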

\begin{proof}
Since $\L_{(p)}$ is quasi-compact from Proposition \ref{prop: hennion},
in view of Theorem \ref{abstractPFThm-Primtive} it suffices to show that $\L_{(p)}$ is semi-positive with respect to the cone of non-negative functions. 

Notice that if $\L_{(p)}^{N}f>0$ for some $N\in \N$, then $\L_{(p)}^{n}f>0$ for all $n \geq N$. 
Therefore, in view of Definition \ref{defn: sufficient} and \ref{defn: semi-positive}, it suffices to show the following statement: if $f\geq 0$ and there exists $\{(z_{n},\ol{w}_{n})\}_{n\in \N}\subset [(p)]\times \P^{d-1}$ with $\L_{(p)}^{n}f(z_{n},\ol{w}_{n})=0$ for all $n\in \N$, then $\L_{(p)}^{n}f$ converges to $0$.


Let $x$ be any given point in $[(p)]$.		
By Theorem \ref{thm:irreduciblePF} there exists $M\in \N$ such that for any $i,j \in (p)$, there exists a word $\I$ of length $Mh$ such that the word $i\I j$ is admissible.		
Thus for any $n\in \N$, 
we can apply Lemma \ref{lem:positivity} to $\L_{(p)}^{2n+M}f(z_{2n+M},\ol{w}_{2n+M})=0$ with $M$, $2n+M$, and $n$ playing the roles of $m$, $n$, and $k$ respectively. This gives $(y,\ol{v})$ such that $\L^{n}_{(p)}(y,\ol{v}) = 0$ and $\rho(x,y) \leq 2^{-nh}$.

Noticing that $\L_{(p)}^{n}f$ agrees with $ \L^{nh}f$ for $f\in C([(p)]\times \P^{d-1})$, Corollary \ref{cor: LY} then gives for any $\ol{u} \in \P^{d-1}$
	\begin{align*}
	\abs{\L_{(p)}^{n}f(x,\ol{u})}&=\abs{\L^{nh}f(x,\ol{u})-\L^{nh}f(y,\ol{v})}\\
	&\leq \norm{f}_{\alpha} \beta^{nh}+C\norm{f}_{\infty}\rho(x,y)^{\alpha}\\
	& \leq \norm{f}_{\alpha} \beta^{nh}+C\norm{f}_{\infty}2^{-\a nh}.
	\end{align*}
In particular, this shows that $\L_{(p)}^{n}f(x,\ol{u})$ converges to $0$ for any $(x,\ol{u})$.

It is clear that point-mass measures $\{\d_{(x,\ol{u})}\}$ form a sufficient collection for the cone of non-negative functions, and hence we conclude that $\L_{(p)}$ is semi-positive.
\end{proof}

\begin{lemma}\label{lem:algmultiplicityof1}
	The algebraic multiplicity of the eigenvalue $1$ for $\L$ is one.
\end{lemma}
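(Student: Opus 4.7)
The plan is to prove the lemma in two steps: first showing that the eigenspace for the eigenvalue $1$ is one-dimensional (geometric simplicity), and then ruling out any Jordan block of size greater than one (no generalized eigenvectors).

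For the first step, suppose $f \in C^\alpha(\S_T^+ \times \P^{d-1})$ satisfies $\L f = f$. By Proposition \ref{prop: L decomposed}(3), $\L$ cycles the classes $[(p)]$ with period $h$, so $\L^h$ preserves each subspace $C^\alpha([(p)] \times \P^{d-1})$. Decomposing $f = \sum_{p=1}^h f_p$ with $f_p := f \cdot \chi_{[(p)]\times \P^{d-1}}$, each component satisfies $\L_{(p)} f_p = f_p$. Lemma \ref{lem:blockspectralgap} combined with Theorem \ref{abstractPFThm-Primtive} tells us that the leading eigenvalue of $\L_{(p)}$ is simple, and since $\chi_{[(p)]\times \P^{d-1}}$ is already a $1$-eigenfunction by Proposition \ref{prop: L decomposed}(3), we deduce $f_p = c_p \chi_{[(p)]\times \P^{d-1}}$ for some scalar $c_p$. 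Substituting back into $\L f = f$ and using Proposition \ref{prop: L decomposed}(3) once more, the identity
\[
c_1 \chi_{[(h)]\times \P^{d-1}} + \sum_{p=2}^{h} c_p \chi_{[(p-1)]\times \P^{d-1}} \;=\; \sum_{p=1}^{h} c_p \chi_{[(p)]\times \P^{d-1}}
\]
forces $c_1 = c_2 = \cdots = c_h$, so $f$ is a constant multiple of the constant function $1$.

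For the second step, suppose $f$ is a generalized eigenvector, namely $(\L - I) f = g$ where $\L g = g$. By the first step, $g = c \cdot 1$ for some scalar $c$. Pairing with the eigenmeasure $\nu$ from Proposition \ref{prop: eigendata} and using $\L^\ast \nu = \nu$, we obtain
\[
\int f \, d\nu + c \;=\; \int (\L f) \, d\nu \;=\; \int f \, d(\L^\ast \nu) \;=\; \int f \, d\nu,
\]
which forces $c = 0$ and hence $g = 0$. Thus every generalized eigenvector is already a genuine eigenvector, and combined with the first step the algebraic multiplicity of $1$ equals one.

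The main technical point is verifying that $\L^h$ actually preserves each $C^\alpha([(p)] \times \P^{d-1})$ so that restricting to blocks is legitimate, and then importing the simplicity statement from the Perron--Frobenius type result Theorem \ref{abstractPFThm-Primtive}. The algebraic simplicity step is then a clean one-line duality argument against $\nu$.
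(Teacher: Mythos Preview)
Your proof is correct and follows essentially the same two-step approach as the paper: first use the block decomposition and Lemma~\ref{lem:blockspectralgap} to force any $1$-eigenfunction into $\spn\{\chi_{[(p)]\times\P^{d-1}}\}$, then use the cyclic action of $\L$ on these indicators to conclude $f$ is constant; second, kill any Jordan block by pairing $\L f - f = c\cdot 1$ against the eigenmeasure $\nu$. The only cosmetic difference is that the paper phrases the cyclic step as an explicit $h\times h$ permutation matrix, whereas you write out the resulting linear relations directly.
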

\begin{proof}
Given $f = (f_1,\ldots,f_{h}) \in \ker(I-\L$), we have $\L_{(p)} f_p = f_p$ for each $1 \leq p \leq h$. From Lemma \ref{lem:blockspectralgap}, this implies that $f_p$ is a constant multiple of the function $\chi_{[(p)]\times \P^{d-1}}$. Hence $f \in \spn\set{\chi_{[(p)]\times \P^{d-1}}\colon 1 \leq p \leq h}$. 

Now consider the action of $\L$ on the subspace $\spn\set{\chi_{[(p)]\times \P^{d-1}}\colon 1 \leq p \leq h}$. Notice from Proposition \ref{prop: L decomposed} that 
\[ \L\chi_{[(p)]\times\P^{d-1}} = \chi_{[(p-1)]\times\P^{d-1}} \text{ for }p\geq 2 \text{ and }\L\chi_{[(1)]\times\P^{d-1}} =\chi_{[(h)]\times\P^{d-1}} .\]
 When written as a matrix with basis $\set{\chi_{[(p)]\times \P^{d-1}} \colon  1 \leq p \leq h}$, the operator $\L$ is exactly 
 \[ [\L]_{\set{\chi_{[(p)]\times \P^{d-1}}\colon 1 \leq p \leq h}}=\bmat{0& 1 & 0 & \cdots & 0 \\ \vdots &0 & 1& \cdots &0\\ \vdots &  &   & \ddots & \vdots \\  0 & 0& \cdots & & 1\\ 1 & 0& \cdots & & 0}. \]
This matrix is irreducible and its only eigenvector corresponding to the eigenvalue $1$ is the constant function $1$. Therefore $f =\alpha 1$ for some $\alpha \in \C$.
	
	Next we show that the order of the eigenvalue $1$ is one. We need to show that 
	\[ \ker (I-\L)^{2}=\ker(I-\L) .\]
The inclusion $\ker (I-\L)\subseteq \ker (I-\L)^{2}$ is clear, so we must show the other inclusion. If $f \in \ker (I-\L)^{2}$, then $f - \L f \in \ker (I-\L)$ and thus $f - \L f = \alpha 1$ for some $\alpha \in \C.$ Then
	\[ 0=\inn{f-\L f, \nu} = \alpha \inn{1,\nu} = \alpha, \]
	where $\nu$ is the eigenmeasure of $\L$ from Proposition \ref{prop: eigendata}.
	Therefore $f-\L f =0$ and $f \in \ker(I-\L)$.
\end{proof}

\begin{proof}[Proof of Theorem \ref{thm:spectralstructure}] The quasi-compactness of $\L$ is already shown in Proposition \ref{prop: hennion}.
	\begin{enumerate}
	\item This follows from Proposition \ref{prop: eigendata}.
		\item
		It is clear that $1$ is an eigenvalue of $\L$.  From Proposition \ref{prop:rotationsymmetry} the spectrum of $\L$ is invariant under multiplication by $e^{\frac{2 \pi i}{h}}$. Thus for each $0\leq k\leq h-1$, $e^{\frac{2 \pi i}{h}k}$ is an eigenvalue of $\L$. As the only eigenvalue of modulus $1$ for $\L^{h}$ is $1$, this implies that the only eigenvalues of modulus $1$ are $\set{e^{\frac{2 \pi i}{h}k}}_{k=0}^{h-1}$. By Lemma \ref{lem:algmultiplicityof1} the eigenvalue $1$ is simple, and thus by Proposition \ref{prop:rotationsymmetry} the eigenvalue $e^{\frac{2 \pi i}{h}k}$ is simple for each $k$.
		
		\item
		Using spectral projections, $\L$ can be written as
		\[ \L=\sum_{k=0}^{h-1}e^{\frac{2 \pi i}{h}k}P_{k}+S \]
		where $P_{k}$ is the spectral projection onto the eigenspace associated to $e^{\frac{2 \pi i}{h}k}$ and $\rho(S)<1$. 
		
		Notice that as the eigenvalues $\set{e^{\frac{2 \pi i}{h}k}}_{k=0}^{h-1}$ are simple, $P_{k}$ is a projection onto the one-dimensional subspace $\ker(e^{\frac{2 \pi i}{h}k}I-\L)$ and it can be written as
		\[ P_{k}=\lim_{\xi \to e^{\frac{2 \pi i}{h}k}}(\xi - e^{\frac{2 \pi i}{h}k})R(\xi, \L) \]
		where $R(\xi, \L) = (\xi -\L)^{-1}$ is the resolvent of $\L$. As $\L$ commutes with $R(\xi, \L)$ we have that $P_{k}\L=\L P_{k} = e^{\frac{2 \pi i}{h}k}P_{k}$. Notice
		\[ e^{\frac{2 \pi i}{h}k}P_{k}P_{k'}= \L P_{k}P_{k'} = P_{k}\L P_{k'}=e^{\frac{2 \pi i}{h}k'}P_{k}P_{k'} \]
		for $k \neq k'$, so it must be that $P_{k}P_{k'}=0$.
		
		Finally notice that 
		\[ SP_{j} = \left(\L - \sum_{k=0}^{h-1}e^{\frac{2 \pi i}{h}k}P_{k}\right)P_{j} = \L P_{j}-e^{\frac{2 \pi i}{h}j}P_{j} =0 \]
		and similarly $P_{j}S=0$. 
	\end{enumerate}
\end{proof}

\begin{corollary}
		Suppose $T$ is primitive, $\S_{T}^{+}$ is the shift of finite type defined by $T$, 
		 and ${\h\A}$ is $1$-typical. Then for all $\a>0$ sufficiently small the operator $\L:C^{\alpha}(\S_{T}^{+}\times \P^{d-1})\to C^{\alpha}(\S_{T}^{+}\times \P^{d-1})$ has a spectral gap.
\end{corollary}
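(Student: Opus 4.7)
The plan is to reduce the corollary directly to Theorem \ref{thm:spectralstructure} by showing that the integer $h$ appearing there equals $1$ whenever $T$ is primitive. Recall that $h$ was defined as the number of blocks in the cyclic decomposition of $T^{\ast}$ (equivalently of $T$) which partitions the alphabet into classes $(1),\ldots,(h)$. This decomposition is precisely the period of the irreducible matrix $T$, i.e., the greatest common divisor of the lengths of closed loops in the associated graph.

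First I would observe that primitivity of $T$ forces its period to be $1$. Indeed, by definition of primitivity there exists $M \in \N$ with $T^M > 0$, which immediately implies $T^{M+1} > 0$ as well (using any edge out of each vertex), so $\gcd(M, M+1) = 1$ divides the period. Equivalently, in the block form of $T^{\ast}$ presented before Proposition \ref{prop: L decomposed}, the only decomposition available is the trivial one with a single class, namely $h = 1$.

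With $h = 1$ in hand, Theorem \ref{thm:spectralstructure} gives the decomposition
\[
\L = P_0 + S,
\]
where $P_0$ is the rank-one projection $P_0 f = \langle f, \nu\rangle \cdot 1$ onto the eigenspace of the simple eigenvalue $1$, and $S$ is a bounded operator with $\rho(S) < 1$ satisfying $P_0 S = S P_0 = 0$. This is exactly the statement that $\L$ has a spectral gap on $C^{\alpha}(\Sig_T^+ \times \P^{d-1})$.

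There is no real obstacle here; the only thing to verify is the elementary fact that primitivity implies trivial cyclic structure, after which everything is an immediate consequence of the spectral description already established. The proof therefore consists of invoking Theorem \ref{thm:spectralstructure} with the observation that $h = 1$.
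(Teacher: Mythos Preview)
Your proposal is correct and matches the paper's own proof: the paper simply notes that primitivity of $T$ forces $h=1$, and then the spectral gap is immediate from Theorem~\ref{thm:spectralstructure}. Your additional justification that primitivity implies period $1$ is a welcome elaboration of what the paper leaves implicit.
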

\begin{proof}
	If $T$ is primitive, then $h=1$ and the result follows from Theorem \ref{thm:spectralstructure}.
\end{proof}

\section{Applications of Theorem \ref{thm:spectralstructure}}\label{sec: limit theorems}
In this section, we will prove Theorem \ref{thm:CLT} and \ref{thm:LDP}.
However, instead of directly proving them for $\A$, we will prove them for the adjoint cocycle $\A_*$. Recall that $\A$ is 1-typical if and only if $\A_*$ is.

First, we make a few simplifications as done in Section \ref{sec: prelim}; that is, we assume that $g \colon \Sig^+ \to \R$ is a \hol continuous $g$-function, and let $\mu$ be its equilibrium state obtained as the eigenmeasure of $L_{\log g}$. We denote the $\s$-invariant measure on $\Sig$ that projects to $\mu$ by $\hmu$. We will use $\lambda_1$ to denote the common top Lyapunov exponent:
$$\lambda_1 :=\lambda_1(\A,\mu)=\lambda_1(\A,\hmu) = \lambda_1(\A_*,\hmu).$$

Recalling the notation $\A^{[n]}(x)=[\A^n(x)]^*$ from Section \ref{sec: quasi-compactness}, we have
$$\int \log \norm{\A^{[n]}(x)u} d\mu = \int \log \norm{\A(\hx)^*\ldots \A(\s^{n-1}\hx)^*u}d\hmu = \int \log \norm{\A_*^n(\hx)u}d\hmu,$$
where the first equality is because $\A$ is constant along the local stable sets and the second equality is due to $\s$-invariance of $\hmu$.
Hence, for the adjoint cocycle $\A_*$, $\var(\h\A,\hmu)$ defined in \eqref{eq: sigma} may alternatively be described by 
$$
\var=\lim_{n \to \infty}\frac{1}{n}\int \left(\log \norm{\A^{[n]}(x)u} -n\lambda_{1}\right)^{2}d\mu.
$$


In the same vein, because the distribution of $\Big(\log \norm{\A_*^n(\hx)u} -n \lambda_1\Big)$ with respect to $\hmu$ agrees with that of $\Big(\log \norm{\A^{[n]}(x)u}-n\lambda_{1}\Big)$ with respect to $\mu$, it suffices to establish the claimed results for the later distribution; see Theorem \ref{thm: clt 2} and \ref{thm:LDPgfunction}.


\subsection{Central limit theorem}
In this section we will prove the central limit theorem (Theorem \ref{thm:CLT}). The proof uses the spectral properties of the operator $\L$ as well as the standard functional analytic proof for central limit theorems.

\begin{theorem}\label{thm: clt 2}
	Suppose $\h\A:\S_{T}\to \glr$ is $1$-typical. If $\var>0$,
	then 
	\[ \frac{\log \norm{\A^{[n]}(x)u} - n\lambda_{1}}{\sqrt{n}}\xrightarrow[n \to \infty]{\text{dist}}\n(0,\var). \]
	If $\var=0$, then
	\[ \frac{\log \norm{\A^{[n]}(x)u} - n\lambda_{1}}{\sqrt{n}}\xrightarrow[n \to \infty]{\text{dist}}0. \]
\end{theorem}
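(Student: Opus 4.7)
The plan is to apply the Nagaev--Guivarc'h spectral method, using the quasi-compactness of the transfer operator established in Theorem~\ref{thm:spectralstructure}. For $t$ in a complex neighborhood of $0$, introduce the perturbed transfer operator
\[
 \L_t f(x,\ol u) := \sum_{\sigma y = x} g(y)\, e^{it\,\log \|\A(y)^{*}u\|}\, f(y,\ol{\A(y)^{*}u}),
\]
where $u$ denotes the unit representative of $\ol u$. A direct induction, using the cocycle identity together with $\log\|\A(z)^{*}v\| - \log\|v\| = \log\|\A(z)^{*}\ol v\|$, gives
\[
 \L_t^{n} 1(x,\ol u) = \sum_{\sigma^{n}y=x} g^{(n)}(y)\, e^{it\,\log\|\A^{[n]}(y)u\|}.
\]
Pairing against $\mu$ and using $L_{\log g}^{*}\mu = \mu$ yields
\[
 \int \L_t^{n} 1(x,\ol u)\, d\mu(x) = \int e^{it\,\log\|\A^{[n]}(y)u\|}\,d\mu(y),
\]
which is precisely the characteristic function to be analyzed.

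The family $t\mapsto\L_t$ is analytic from a neighborhood of $0\in\C$ into the bounded operators on $C^{\alpha}(\S_{T}^{+}\times\P^{d-1})$, since $(y,\ol u)\mapsto \log\|\A(y)^{*}u\|$ is $\alpha$-\hol. Theorem~\ref{thm:spectralstructure} provides the spectral gap of $\L_0 = \L$: the eigenvalue $1$ is simple and isolated (when $h\geq 2$, the remaining peripheral eigenvalues $e^{2\pi ik/h}$ are likewise simple, but their spectral projections applied to $1$ give functions $\phi_{k}$ with $\int\phi_{k}\,d\mu=0$ by the cyclic symmetry of $\mu$, and hence do not contribute to leading order). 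Kato's analytic perturbation theory then produces an analytic eigenvalue branch $\lambda(t)$ with $\lambda(0)=1$, analytic eigendata $h_t,\nu_t$ normalized by $h_0=1$, $\nu_0=\nu$, $\langle h_t,\nu_t\rangle=1$, and a decomposition $\L_t = \lambda(t)\Pi_t + N_t$ with $\Pi_t f = \langle f,\nu_t\rangle h_t$ and $\rho(N_t)\leq \beta<1$ uniformly for $t$ near $0$. Consequently
\[
 \int \L_t^{n} 1(\cdot,\ol u)\, d\mu = \lambda(t)^{n}\langle 1,\nu_t\rangle\!\!\int h_t(\cdot,\ol u)\, d\mu + O(\beta^{n}),
\]
where the prefactor tends to $1$ uniformly in $\ol u$ as $t\to 0$.

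Writing $\lambda(t) = 1 + ita - \tfrac{t^{2}}{2}b + O(t^{3})$, differentiate $\L_t h_t = \lambda(t) h_t$ at $t=0$ and pair with $\nu$. Using $h_0=1$ and $\L^{\ast}\nu=\nu$, we obtain $\lambda'(0) = i\int \psi\, d\nu$ where, via the change of variables $\ol v = \ol{\A(y)^{*}u}$, one has $\psi(y,\ol v) = -\log\|[\A(y)^{-1}]^{*}v\|$. By Proposition~\ref{prop: eigendata} the eigenmeasure satisfies $\nu_x=\delta_{\xi_{*}(x)}$, and since $\xi_{*}$ is the slowest Lyapunov direction of $\A_{*}^{-1}$ with exponent $-\lambda_{1}$, Birkhoff's theorem yields $a = -\int \log \|[\A(\cdot)^{-1}]^{*}\xi_{*}(\cdot)\|\,d\mu = \lambda_{1}$. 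The identification $b - a^{2}=\var$ follows from a Green--Kubo-type computation expressing the asymptotic variance as the second derivative of $\log\lambda(t)$ at $0$; this is precisely Proposition~\ref{prop: deriv of rho}, and its justification (notably the vector-independence of $\var$) is the main technical step.

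Finally, apply L\'evy's continuity theorem: substituting $t/\sqrt{n}$ for $t$ and multiplying by $e^{-it\sqrt{n}\lambda_{1}}$,
\[
 \int e^{it(\log\|\A^{[n]}(\cdot)u\|-n\lambda_{1})/\sqrt n}\, d\mu = e^{-it\sqrt n\,\lambda_{1}}\Bigl[\lambda(t/\sqrt n)^{n}(1+o(1)) + O(\beta^{n})\Bigr].
\]
The Taylor expansion gives $n\log\lambda(t/\sqrt n) = it\sqrt n\,\lambda_{1} - \tfrac{t^{2}}{2}\var + o(1)$, so the right-hand side tends to $e^{-\var\, t^{2}/2}$, the characteristic function of $\n(0,\var)$. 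When $\var=0$ the limit is $1$, yielding convergence in distribution to the constant $0$.
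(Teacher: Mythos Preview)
Your approach is the same Nagaev--Guivarc'h spectral method the paper uses: perturb $\L$ to $\L_t$, express the characteristic function as $\int \L_t^n 1\,d\mu$, apply analytic perturbation theory to the peripheral eigenvalue, Taylor-expand, and invoke L\'evy continuity. Your identification of $\lambda'(0)$ via the eigenmeasure is a minor variant of the paper's route through Proposition~\ref{prop: deriv of rho} and Proposition~\ref{prop:FurstenbergsFormula}.

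There is, however, a genuine error in your treatment of the imprimitive case $h\geq 2$. You write $\L_t = \lambda(t)\Pi_t + N_t$ with $\rho(N_t)\leq\beta<1$ uniformly near $t=0$. This is false: by the rotation symmetry of Proposition~\ref{prop:rotationsymmetry}, the spectrum of $\L_t$ is invariant under multiplication by $e^{2\pi i/h}$, so for small $t$ there are $h$ simple eigenvalues $\rho_t e^{2\pi ik/h}$, all of modulus $|\rho_t|\approx 1$. Your $N_t$ would have to contain the $h-1$ other peripheral branches, whence $\rho(N_t)=|\rho_t|$, and the claimed $O(\beta^n)$ remainder in your displayed formula for $\int\L_t^n 1\,d\mu$ is not justified. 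Your parenthetical that $\int\phi_k\,d\mu=0$ at $t=0$ does not rescue this: in fact $P_{k,0}1=0$ \emph{identically} for $k\neq 0$ (because $1$ lies in the range of $P_{0,0}$ and the projections satisfy $P_kP_0=0$), so the statement is trivial; the actual obstruction comes from the perturbed projections $P_{k,t}1$, which are nonzero for $t\neq 0$ and are paired with eigenvalues of the same modulus as $\lambda(t)$.

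The paper's fix (Proposition~\ref{prop:perturbation} and the proof of Theorem~\ref{thm: clt 2}) is to retain all $h$ projections in the decomposition $\L_z = \rho_z\bigl(\sum_{k=0}^{h-1} e^{2\pi ik/h}P_{k,z} + S_z\bigr)$ with $\rho(S_z)<1$, and then argue by continuity that $P_{k,it/\sqrt{n}}1 \to P_{k,0}1 = 0$ for $k\neq 0$ as $n\to\infty$. Since all $h$ terms carry the common prefactor $\rho_{it/\sqrt n}^{\,n}$, the $k\neq 0$ contributions are absorbed into the $(1+o(1))$ factor. Once you make this correction, the rest of your argument goes through exactly as in the paper.
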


We begin by defining a function $\psi_{n}:\S_{T}^{+}\times \P^{d-1}\to \R$ by
\begin{equation}\label{eq: psi_n} 
\psi_{n}(x,\ol{u}):=\log \norm{\A^{[n]}(x)\frac{u}{\norm{u}}}. 
\end{equation}
Moreover, for $n \geq 1$ and $z \in \C$, we set
	\[ S_{n,z}(x,\ol{u}):=\norm{\A^{[n]}(x)\frac{u}{\norm{u}}}^{z} .\]
For $z \in \C$ we define the operator
\begin{equation}\label{eq:perturbedoperators}
\L_{z}f(x,\ol{u}):=\sum_{\sigma y =x}g(y)\norm{\A(y)^*\frac{u}{\norm{u}}}^{z}f(y,\ol{\A(y)^*u}). 
\end{equation}
Notice that when $z=0$, $\L_0$ agrees with the operator $\L$ considered in the previous sections.

In addition recall the definition of the transfer operator $L_{\log g} \colon C(\S_{T}^{+})\to C(\S_{T}^{+})$
defined by
\[ L_{\log g}f(x)=\sum_{\sigma y =x}g(y)f(y). \]
Observe that $L_{\log g}$ could also be considered as acting on $C(\S_{T}^{+}\times \P^{d-1})$ by
\[ L_{\log g}f(x,\ol{u})=\sum_{\sigma y =x}g(y)f(y,\ol{u}). \]

\begin{lemma}\label{lemma:Ops}~
	\begin{enumerate}
		\item 
		For any $n \geq 1$.
		\[ \L_{z}^{n}f(x,\ol{u}) =\sum_{\sigma^{n}y=x}g^{(n)}(y)\norm{\A^{[n]}(y)\frac{u}{\norm{u}}}^{z}f(y, \ol{\A^{[n]}(y)u}) . \]
		
		\item 
		For any $n \geq 1$ and $u\in \R^{d}$ we have that
		\[ \L_{z}^{n}1(x,\ol{u})=L_{\log g}^{n}S_{n,z}(x,\ol{u}) \]
		\item For any unit vector $u \in \R^d$,
\begin{equation}
\label{eq:Characteristicfunction}
 \int \norm{\A^{[n]}(x)u}^{z}d\mu(x) =\int \L_{z}^{n}1(x,\ol{u})\,d\mu(x)
\end{equation}
	\end{enumerate}
\end{lemma}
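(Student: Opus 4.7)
The lemma is essentially bookkeeping: each assertion reduces to a direct computation once the adjoint cocycle relation is unwound. My plan is to establish (1) by induction on $n$, deduce (2) by specializing $f \equiv 1$, and deduce (3) from the invariance $L_{\log g}^{\ast}\mu = \mu$.

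For (1), I would proceed by induction on $n$, with the base case $n=1$ being exactly the definition \eqref{eq:perturbedoperators}. For the inductive step, the key algebraic identity is
\[
\A^{[n+1]}(y) = \A^{[n]}(y)\,\A(\sigma^{n} y)^{\ast},
\]
which follows directly from the definition $\A^{[m]}(y) = \A(y)^{\ast}\A(\sigma y)^{\ast}\cdots \A(\sigma^{m-1}y)^{\ast}$, together with the cocycle relation $g^{(n+1)}(z) = g^{(n)}(z)\, g(\sigma^{n}z)$ for the $g$-function. Writing $\L_{z}^{n+1}f = \L_{z}(\L_{z}^{n}f)$ and expanding with the inductive hypothesis, one gets a double sum indexed by $\sigma y = x$ and $\sigma^{n} z = y$, which collapses to a single sum over $\sigma^{n+1}z = x$. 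The only point requiring a small amount of care is the cancellation of the normalizing factor: the preimage variable for the outer application of $\L_{z}$ carries the unit vector $\frac{\A(y)^{\ast}u}{\norm{\A(y)^{\ast}u}}$, so when one multiplies the weight $\norm{\A(y)^{\ast}\tfrac{u}{\norm{u}}}^{z}$ by the inner weight $\bigl\|\A^{[n]}(z)\tfrac{\A(y)^{\ast}u}{\norm{\A(y)^{\ast}u}}\bigr\|^{z}$, the $\norm{\A(y)^{\ast}u}^{z}$ factors cancel and one is left with $\norm{\A^{[n+1]}(z)\tfrac{u}{\norm{u}}}^{z}$.

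For (2), I would simply put $f \equiv 1$ in the formula from (1):
\[
\L_{z}^{n}1(x,\ol{u}) = \sum_{\sigma^{n}y=x} g^{(n)}(y)\,\norm{\A^{[n]}(y)\tfrac{u}{\norm{u}}}^{z}
= \sum_{\sigma^{n}y=x} g^{(n)}(y)\, S_{n,z}(y,\ol{u}).
\]
Because the extension of $L_{\log g}$ to $C(\Sigma_{T}^{+}\times \P^{d-1})$ acts trivially in the $\ol{u}$ coordinate, the right hand side is exactly $L_{\log g}^{n}S_{n,z}(x,\ol{u})$, giving (2).

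For (3), recall that $\mu$ was chosen precisely as the unique probability measure with $L_{\log g}^{\ast}\mu = \mu$, so $\int L_{\log g}^{n}h\, d\mu = \int h\, d\mu$ for any $h \in C(\Sigma_{T}^{+})$. Integrating the identity in (2) against $\mu$ in the $x$ variable and using this invariance gives
\[
\int \L_{z}^{n}1(x,\ol{u})\, d\mu(x) = \int S_{n,z}(x,\ol{u})\, d\mu(x) = \int \norm{\A^{[n]}(x)\tfrac{u}{\norm{u}}}^{z} d\mu(x),
\]
and since $u$ is already a unit vector the last expression equals $\int \norm{\A^{[n]}(x)u}^{z}\, d\mu(x)$. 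The only potential obstacle is the inductive algebra in part (1); once the correct order of adjoints and the cancellation of $\norm{u}$ are pinned down, parts (2) and (3) are immediate.
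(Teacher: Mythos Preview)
Your proposal is correct and follows essentially the same approach as the paper: the paper verifies (1) by explicitly expanding the $n=2$ case (with the same cancellation of normalizing factors you describe) and declaring the general case similar, while (2) and (3) are handled exactly as you outline. Your inductive formulation of (1) is a slightly cleaner packaging of the same computation.
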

\begin{proof}
	\begin{enumerate}
		\item 
		Notice that
		\begin{align*}
		\L^{2}_{z}f(x,\ol{u})&=\sum_{\sigma y_{1} =x}g(y)\norm{\A(y_{1})^*\frac{u}{\norm{u}}}^{z}\L_{z}f(y,\ol{\A(y_{1})^*u})\\
		&=\sum_{\sigma y_{1} =x}g(y_{1})\norm{\A(y_{1})^*\frac{u}{\norm{u}}}^{z}\sum_{\sigma y_{2} = y_{1}}g(y_{2})\norm{\A(y_{2})^*\frac{\A(y_{1})^*u}{\norm{\A(y_{1})^*u}}}^{z}f(y_{2},\ol{\A(y_{2})\A(y_{1})u})\\
		&=\sum_{\sigma^{2} y =x}g(y)g(\sigma y)\norm{\A(y)^*\A(\sigma y)^*\frac{u}{\norm{u}}}^{z}f(y , \ol{\A(y)^*\A(\sigma y)^*u})\\
		&= \sum_{\sigma^{2}y=x}g^{(2)}(y)\norm{\A^{[2]}(y)\frac{u}{\norm{u}}}^{z}f(y, \ol{\A^{[2]}(y)u}).
		\end{align*}
		The general case for $n \geq 1$ is similar.
		
		\item 
		Notice that 
$$
		L_{\log g}^{n}S_{n,z}(x,\ol{u}) = \sum_{\sigma^{n}y=x}g^{(n)}(y)S_{n,z}(y,\ol{u})
	=\sum_{\sigma^{n}y=x}g^{(n)}(y)\norm{\A^{[n]}(y)\frac{u}{\norm{u}}}^{z}=\L_{z}^{n}1(x,\ol{u}).$$
	\item Since $(L_{\log g})^* \mu = \mu$, from the definition of $S_{n, z}(x,\ol{u})$ we have
$$ \int \norm{\A^{[n]}(x)u}^{z}d\mu(x)=\int S_{n, z}(x,\ol{u}) \, d(L_{\log g}^{\ast})^{n}\mu(x) = \int L_{\log g}^{n}S_{n, z}(x,\ol{u}) \,d\mu(x).
$$
Now the claim follows from (2).
\end{enumerate}
\end{proof}


		For each $n\in\N$, define
		\[ C_{\A^{[n]}}f (x,\ol{u}):= f (x,\ol{\A^{[n]}(x)u}). \]
For the following lemma, recall also the definition for $M_f$ and $\psi_n$ from \eqref{eq: multiplication operator} and \eqref{eq: psi_n}. 
\begin{lemma}\label{lem: property L_z}~
	\begin{enumerate}
		\item 
	For any $n\in \N$,	$C_{\A^{[n]}}$ is a bounded linear operator on $C^{\alpha}(\S_{T}^{+}\times \P^{d-1})$.
		
		\item For any $z\in \C$,
		\[ \L_{z}^{n}=\sum_{k=0}^{\infty}\frac{z^{k}}{k!}L_{\log g}^{n} M_{\psi_{n}}^{k} C_{\A^{[n]}} \]
		where we interpret $M^{0}_{\psi_{n}}=I$. In particular, $z \mapsto \L_{z}$ is analytic. 
		
	\end{enumerate}
\end{lemma}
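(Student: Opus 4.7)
The plan is to prove (1) by a triangle-inequality computation on the metric of $\Sig^+\times \P^{d-1}$, and to prove (2) by expanding the power $\|\cdot\|^{z}$ as an entire power series in $z$ and then upgrading the resulting pointwise identity to convergence in operator norm.

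For (1), it is immediate that $\|C_{\A^{[n]}}f\|_{\infty}\le\|f\|_{\infty}$, so only the $\alpha$-H\"older seminorm needs attention. Fixing two points $(x,\ol{u})$ and $(y,\ol{v})$ and decomposing
\[
 d(\ol{\A^{[n]}(x)u},\ol{\A^{[n]}(y)v})\le d(\ol{\A^{[n]}(x)u},\ol{\A^{[n]}(x)v})+d(\ol{\A^{[n]}(x)v},\ol{\A^{[n]}(y)v}),
\]
the first term is bounded by a uniform multiple of $d(\ol{u},\ol{v})$, using that for an invertible matrix $A$ one has $d(\ol{Au},\ol{Av})\le \|A\|\|A^{-1}\|\,d(\ol{u},\ol{v})$, together with compactness of $\Sig^{+}$ and continuity of $\A$. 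The second term is bounded by $\|\A^{[n]}(x)^{-1}\|\cdot\|\A^{[n]}(x)-\A^{[n]}(y)\|$, which by the standing normalization $\theta=1$ and Lipschitz continuity of $x\mapsto \A^{[n]}(x)$ is bounded by $C_{n}\rho(x,y)$. Combining these gives $d(\ol{\A^{[n]}(x)u},\ol{\A^{[n]}(y)v})\le K_{n}\max(d(\ol{u},\ol{v}),\rho(x,y))$, and then the subadditivity $\max(a,b)^{\alpha}\le a^{\alpha}+b^{\alpha}$ together with the H\"older hypothesis on $f$ yields $|C_{\A^{[n]}}f|_{\alpha}\le K_{n}'\,|f|_{\alpha}$.

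For (2), I would substitute the absolutely convergent expansion
\[
 \Big\|\A^{[n]}(y)\tfrac{u}{\|u\|}\Big\|^{z}=e^{z\,\psi_{n}(y,\ol{u})}=\sum_{k=0}^{\infty}\frac{z^{k}}{k!}\,\psi_{n}(y,\ol{u})^{k}
\]
into the closed form of $\L_{z}^{n}f(x,\ol{u})$ given by Lemma \ref{lemma:Ops}(1) and interchange the finite sum over preimages with the infinite sum in $k$. Recognizing $\psi_{n}(y,\ol{u})^{k}f(y,\ol{\A^{[n]}(y)u})$ as $M_{\psi_{n}}^{k}C_{\A^{[n]}}f$ evaluated at $(y,\ol{u})$ and the outer sum as the action of $L_{\log g}^{n}$ delivers the claimed identity pointwise. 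To upgrade this to convergence in the operator norm on $C^{\alpha}$, I would use that $(C^{\alpha},\|\cdot\|_{\alpha})$ is a Banach algebra, so that $\|M_{\psi_{n}}^{k}\|_{\alpha}=\|M_{\psi_{n}^{k}}\|_{\alpha}\le \|\psi_{n}\|_{\alpha}^{k}$ (noting that $\psi_{n}$ is Lipschitz by the same type of estimate used in (1), hence has finite $\alpha$-norm), combined with boundedness of $L_{\log g}^{n}$ and of $C_{\A^{[n]}}$ on $C^{\alpha}$ (the latter being exactly part (1)), to obtain the estimate
\[
 \sum_{k=0}^{\infty}\frac{|z|^{k}}{k!}\|L_{\log g}^{n}M_{\psi_{n}}^{k}C_{\A^{[n]}}\|_{\alpha}\le \|L_{\log g}^{n}\|_{\alpha}\|C_{\A^{[n]}}\|_{\alpha}\,e^{|z|\,\|\psi_{n}\|_{\alpha}}<\infty
\]
for every $z\in\C$, uniformly on compact subsets. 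This shows that the series representation converges in operator norm and defines an entire function $z\mapsto\L_{z}^{n}$; specializing $n=1$ yields the analyticity of $z\mapsto \L_{z}$ asserted in the lemma.

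The main obstacle is the metric bookkeeping in (1): one must carefully track how the projective distance $d$ transforms under a fiber-bunched Lipschitz cocycle and how this interacts with the H\"older exponent $\alpha$, without leaving behind an unwanted factor of $\rho^{\alpha^{2}}$. Once that estimate is in place, (2) is a routine power-series computation plus the Banach-algebra property of $C^{\alpha}$.
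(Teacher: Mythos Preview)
Your proposal is correct and, for part (2), essentially identical to the paper's argument: both expand $\|\cdot\|^{z}=e^{z\psi_n}$ as a power series, identify the $k$-th term with $L_{\log g}^{n}M_{\psi_n}^{k}C_{\A^{[n]}}$, and bound the operator-norm series by $\|L_{\log g}^{n}\|\,\|C_{\A^{[n]}}\|\,e^{|z|\,\|M_{\psi_n}\|}$ to get analyticity (the paper writes $\|M_{\psi_n}\|_{\op}^{k}$ where you invoke the Banach-algebra inequality $\|M_{\psi_n}^{k}\|\le\|\psi_n\|_\alpha^{k}$, but these are the same thing).

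For part (1) there is a small methodological difference. You split via the triangle inequality
\[
d(\ol{\A^{[n]}(x)u},\ol{\A^{[n]}(y)v})\le d(\ol{\A^{[n]}(x)u},\ol{\A^{[n]}(x)v})+d(\ol{\A^{[n]}(x)v},\ol{\A^{[n]}(y)v}),
\]
bounding the first term by the condition-number estimate and the second by Lipschitz continuity of $x\mapsto\A^{[n]}(x)$; the paper instead bounds $d(\ol{\A(x)^{*}u},\ol{\A(y)^{*}w})$ in a single step by a constant times $d(\ol{u},\ol{w})$ using a mixed wedge-product inequality. Your two-term decomposition is more elementary and arguably more robust, since it makes explicit how the $\rho(x,y)$ contribution enters. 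Incidentally, your worry about a stray $\rho^{\alpha^{2}}$ is unfounded: once you have $d(\ol{\A^{[n]}(x)u},\ol{\A^{[n]}(y)v})\le K_n\max(\rho(x,y),d(\ol{u},\ol{v}))$, the product metric satisfies $d\big((x,\ol{\A^{[n]}(x)u}),(y,\ol{\A^{[n]}(y)v})\big)\le K_n'\,d((x,\ol{u}),(y,\ol{v}))$ directly, and raising to the $\alpha$ power gives the H\"older bound with no iterated exponent.
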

\begin{proof}~
	\begin{enumerate}
		\item 
		We demonstrate the idea for $n=1$. The proof easily extends for other $n\in\N$.
		
		It is clear that $C_{\A^{[1]}}$ is linear. Notice that for $f \in C^{\alpha}(\S_{T}^{+}\times \P^{d-1})$ and $(x,\ol{u}), (y,\ol{w})\in \S_{T}^{+}\times \P^{d-1}$ we have
		\begin{align*}
		\abs{C_{\A^{[1]}}f(x,\ol{u})-C_{\A^{[1]}}f(y,\ol{w})}&=\abs{f(x,\ol{\A(x)^*u}) - f(y,\ol{\A(y)^*w})}\\
		&\leq |f|_{\a}\cdot d((x,\ol{\A(x)^*u}), (y,\ol{\A(y)^*w}))^\a\\
		& = |f|_{\a}\cdot \max\set{\rho(x,y), d(\ol{\A(x)^*u},\ol{\A(y)^*w})}^\a.
		\end{align*}
		We have that
		\begin{align*}
		d(\ol{\A(x)^*u},\ol{\A(y)^*w})&=\frac{\norm{\A(x)^*u \wedge \A(y)^*w}}{\norm{\A(x)^*u} \norm{\A(y)^*w}}\\
		&\leq \frac{\norm{\A(x)^*\wedge\A(y)^*}}{\norm{\A(x)^*u} \norm{\A(y)^*w}}\cdot \frac{\norm{u \wedge w}}{\norm{u}\norm{w}}\\
		&\leq \left(\sup_{x,y}\frac{\norm{\A(x)^*\wedge\A(y)^*}}{\sigma_{d}(\A(x)^*) \sigma_{d}(\A(y)^*)}\right)\cdot d(\ol{u},\ol{w}).
		\end{align*}
		Therefore
		\[ \abs{C_{\A^{[1]}}f}_{\alpha} \leq \abs{f}_{\alpha}\left(\sup_{x,y}\frac{\norm{\A(x)^*\wedge\A(y)^*}}{\sigma_{d}(\A(x)^*) \sigma_{d}(\A(y)^*)}\right)^\a. \]
		As $\norm{C_{\A^{[1]}}f}_{\infty}\leq \norm{f}_{\infty}$, we have that $C_{\A^{[1]}}$ is bounded.
		
		
		\item 
		Notice that 
		\[ \norm{L_{\log g}^{n} M_{\psi_{n}}^{k} C_{\A^{[n]}}}_{\op} \leq \norm{M_{\psi_{n}}}_{\op}^{k}\norm{L_{\log g}}_{\op}^{n}\norm{C_{\A^{[n]}}}_{\op} .\]
		Thus for any $z \in \C$
		\begin{align*}
		\sum_{k=0}^{\infty}\norm{\frac{z^{k}}{k!}L_{\log g}^{n} M_{\psi_{n}}^{k} C_{\A^{[n]}}}&\leq \norm{L_{\log g}}_{\op}^{n}\norm{C_{\A^{[n]}}}_{\op}\sum_{k=0}^{\infty}\frac{\left(\abs{z}\norm{M_{\psi_{n}}}_{\op}\right)^{k}}{k!}\\
		&=\norm{L_{\log g}}_{\op}^{n}\norm{C_{\A^{[n]}}}_{\op} e^{\abs{z}\norm{M_{\psi_n}}_{\op}}.
		\end{align*}
		Thus by Proposition \ref{prop:abelbanach}, $\displaystyle \sum_{k=0}^{\infty}\frac{z^{k}}{k!}L_{\log g}^{n} M_{\psi_{n}}^{k} C_{\A^{[n]}}$ converges for all $z\in \C$ and defines an analytic function from $\C$ to $\B(C^{\alpha}(\S_{T}^{+}\times \P^{d-1}))$. 
		
Then for $f \in C^{\alpha}(\S_{T}^{+}\times \P^{d-1})$, we have 
		\begin{align*}
		\left(\sum_{k=0}^{\infty}\frac{z^{k}}{k!}L_{\log g}^{n} M_{\psi_{n}}^{k} C_{\A^{[n]}}\right)f(x,\ol{u})
		&=\sum_{k=0}^{\infty}\frac{z^{k}}{k!}\sum_{\sigma^{n} y =x}g^{(n)}(y)\left(\log \norm{\A^{[n]}(y)\frac{u}{\norm{u}}}\right)^{k}f(y,\ol{\A^{[n]}(y)u})\\
		&=\sum_{\sigma^{n} y =x}g^{(n)}(y)\sum_{k=0}^{\infty}\frac{z^{k}}{k!}\left(\log \norm{\A^{[n]}(y)\frac{u}{\norm{u}}}\right)^{k}f(y,\ol{\A^{[n]}(y)u})\\
		&=\sum_{\sigma^{n} y =x}g^{(n)}(y)\exp\Big(z\log \norm{\A^{[n]}(y)\frac{u}{\norm{u}}}\Big)f(y,\ol{\A^{[n]}(y)u})\\
		&=\sum_{\sigma^{n} y =x}g^{(n)}(y)\norm{\A^{[n]}(y)\frac{u}{{\norm{u}}}}^{z}f(y,\ol{\A^{[n]}(y)u})\\
		&=\L_{z}^{n}f(x,\ol{u}).
		\end{align*}
		
	\end{enumerate}
\end{proof}
%

\begin{proposition}\label{prop:perturbation}
	Suppose that $\h\A$ is $1$-typical, and let $\L_{z}$ be the family of operators defined in \eqref{eq:perturbedoperators}. There exists an open set $U \subseteq \C$ containing $0$ such that for any $z \in U$ we can write
	\[ \L_{z}=\rho_{z}\left(\sum_{k=0}^{h-1}e^{\frac{2 \pi i}{h}k}P_{k,z}+S_{z} \right) \]
	where $P_{i,z}P_{j , z}=P_{j,z}P_{i,z} = 0$ for $i \neq j$, $\rho(S_{z})<1$, and $P_{k, z}S_{z}=S_{z}P_{k, z}=0$ for all $0\leq k\leq h-1$. The functions $z \mapsto \rho_{z},P_{k,z}, S_{z}$ are all analytic. Moreover, for any $p\geq 1$ there exist constants $C$ and $0<\beta<1$ such that
	\[ \norm{\frac{d^{p}}{dz^{p}}S_{z}^{n}}\leq C \cdot \beta^{n}. \]
\end{proposition}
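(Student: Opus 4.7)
The plan is to treat $\{\L_z\}_{z\in\C}$ as an analytic family of bounded operators on $C^\a(\Sig^+\times\P^{d-1})$ and apply standard analytic perturbation theory via the Dunford functional calculus, taking as input the spectral decomposition of $\L_0=\L$ from Theorem \ref{thm:spectralstructure} and the analyticity of $z\mapsto\L_z$ established in Lemma \ref{lem: property L_z}. By Theorem \ref{thm:spectralstructure}, the spectrum of $\L$ consists of the $h$ simple peripheral eigenvalues $\{e^{2\pi i k/h}\}_{k=0}^{h-1}$ together with a closed set contained in $\{|\zeta|\le r\}$ for some $r<1$. I would choose pairwise disjoint small circles $\Gamma_k$ around $e^{2\pi i k/h}$ and a circle $\Gamma_\infty$ of radius $r'\in(r,1)$ enclosing the remaining spectrum. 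For $z$ in a neighborhood $U$ of $0$ these contours stay in the resolvent set of $\L_z$, and the Dunford projections
\[ P_{k,z}:=\frac{1}{2\pi i}\oint_{\Gamma_k}(\zeta I-\L_z)^{-1}\,d\zeta \]
are analytic in $z$, satisfy $P_{k,z}^2=P_{k,z}$ and $P_{k,z}P_{j,z}=0$ for $k\ne j$, and have one-dimensional range (since the rank is locally constant). The associated eigenvalue $\lambda_{k,z}$ is analytic in $z$ with $\lambda_{k,0}=e^{2\pi i k/h}$.

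To extract a common factor $\rho_z$, the next step is to exploit the rotational symmetry $U^{-1}\L_z U=e^{2\pi i/h}\L_z$, where $U$ is the multiplication operator by the locally constant function $\sum_{p=1}^{h}e^{2\pi i p/h}\chi_{[(p)]\times\P^{d-1}}$. This symmetry is a direct consequence of the block decomposition of Proposition \ref{prop: L decomposed}, and it persists for $\L_z$ because the scalar factor $\bigl\|\A(y)^{\ast}u/\|u\|\bigr\|^z$ depends only on $y$ and $u$ and does not alter which block the summation index lies in. Consequently $\mathrm{spec}(\L_z)$ is invariant under multiplication by $e^{2\pi i/h}$; combined with the fact that the perturbed eigenvalues $\lambda_{k,z}$ remain close to the distinct points $e^{2\pi i k/h}$ for $z\in U$, this forces $\lambda_{k,z}=e^{2\pi i k/h}\rho_z$ with $\rho_z:=\lambda_{0,z}$ analytic.

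Define $S_z$ by $\L_z=\rho_z\bigl(\sum_{k}e^{2\pi i k/h}P_{k,z}+S_z\bigr)$; then $z\mapsto S_z$ is analytic and $P_{k,z}S_z=S_zP_{k,z}=0$ by the Dunford calculus. Since $S_z$ corresponds to the restriction of $\rho_z^{-1}\L_z$ to the invariant complement of the peripheral eigenspaces, one has $\rho(S_z)\le r'/|\rho_z|<1$ uniformly for $z$ in a smaller neighborhood $V\subseteq U$. To promote this pointwise spectral-radius bound to a uniform norm estimate $\|S_z^n\|\le C\beta^n$, I would fix any $\beta\in(r',1)$, choose $n_0$ with $\|S_0^{n_0}\|<\beta^{n_0}$, and shrink $V$ so that $\|S_z^{n_0}\|<\beta^{n_0}$ for all $z\in V$ by continuity of $z\mapsto S_z^{n_0}$; writing any $n$ as $n=qn_0+r_1$ with $0\le r_1<n_0$ then yields $\|S_z^n\|\le(\sup_{w\in V}\|S_w\|)^{r_1}\beta^{qn_0}\le C\beta^n$. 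Finally, Cauchy's integral formula applied to the analytic map $z\mapsto S_z^n$ gives, for any $p\ge1$ and $z$ in a still smaller neighborhood,
\[ \Bigl\|\frac{d^p}{dz^p}S_z^n\Bigr\|\le \frac{p!}{r_0^p}\sup_{|w-z|=r_0}\|S_w^n\|\le C(p)\beta^n, \]
where $r_0>0$ is chosen so that the closed disk $\{|w-z|\le r_0\}$ remains in $V$. The main obstacle is precisely this last step of promoting $\rho(S_z)<1$ to a uniform operator-norm decay, since the spectral radius is only upper semi-continuous on general Banach spaces; once the iterate argument handles it, the derivative bound is immediate from analyticity and Cauchy, and everything else is a routine application of analytic perturbation theory once the $z=0$ spectral picture and the rotational symmetry are in hand.
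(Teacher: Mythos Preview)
Your proposal is correct and follows essentially the same route as the paper: analytic perturbation theory applied to the spectral picture of $\L_0$ from Theorem~\ref{thm:spectralstructure}, together with the rotational symmetry coming from the block form of $\L_z$ (the paper invokes Proposition~\ref{prop:rotationsymmetry} directly, you phrase it via conjugation by a multiplication operator). The only difference is that where the paper defers to Kato for the perturbation theory and to \cite[Ch.~V, Lemma 3.2(iv)]{MR886674} for the uniform bound on $\|\tfrac{d^p}{dz^p}S_z^n\|$, you supply the argument explicitly via the iterate trick $\|S_z^{n_0}\|<\beta^{n_0}$ plus Cauchy's estimate; this makes your write-up more self-contained but is not a genuinely different method.
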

\begin{proof}
	As the function $z \mapsto \L_{z}$ is analytic we have by analytic perturbation theory for linear operators (see \cite[Chapter 7 Section 3]{MR1335452}) that there exists some neighborhood $U$ containing $0$ on which for all $z \in U$, the only eigenvalues of $\L_{z}$ outside a disk of radius $0<r<\rho(L_{z})$ are $\set{\lambda_{k,z}}_{k=0}^{h-1}$. 
	
	Moreover the function $z \mapsto \lambda_{k,z}$ is analytic, $\lambda_{k,0}=e^{\frac{2 \pi i}{h}k}$, and $\lambda_{k,z}$ is simple for all $k$. It can be seen that when $\L_{z}:\bigoplus_{k}C^{\alpha}([(k)] \times \P^{d-1}) \to \bigoplus_{k}C^{\alpha}([(k)] \times \P^{d-1})$ is written in its matrix form that
	\[ \L_{z}= \bmat{0&L_{12,z} & 0 & \cdots & 0 \\ \vdots &0 & L_{23,z}& \cdots &0\\ \vdots &  &   & \ddots & \vdots \\  0 & 0& \cdots & & L_{h-1h,z}\\ L_{h1,z} & 0& \cdots & & 0} .\]
Thus by Proposition \ref{prop:rotationsymmetry} the spectrum of $\L_{z}$ is invariant under multiplication by $e^{\frac{2 \pi i}{h}k}$ for $0 \leq k \leq h-1$. Thus $\L_{z}$ has $h$ eigenvalues of modulus $\rho(\L_{z})$; in particular it must be that $\abs{\lambda_{k,z}}=\rho(\L_{z})$ and $\lambda_{k,z}:=e^{\frac{2 \pi i}{h}k}\lambda_{0,z}$.
	The remainder of the proof is similar to that of Theorem \ref{thm:spectralstructure} (3). 
%
%
	
	Finally to see that for any $p\geq 1$ there exist constants $C$ and $0<\beta<1$ such that
	\[ \norm{\frac{d^{p}}{dz^{p}}S_{z}^{n}}\leq C \beta^{n}, \]
we refer the reader to \cite[Chapter V Lemma 3.2(iv)]{MR886674}; while the assumptions in the lemma are different then ours, but the proof applies in our case.
\end{proof}


\begin{proposition} \label{prop: deriv of rho}
Denoting by $\rho'_z$ and $\rho''_z$ the derivative and the second derivative of $z \mapsto \rho_z$,
for any unit $u \in \R^{d}$ we have
	\begin{enumerate}
		\item  
$ \displaystyle		\rho_{0}' = \lim_{n \to \infty}\frac{1}{n}\int \log \norm{\A^{[n]}(x)u}d\mu=\lambda_{1}. $
		
		\item 
$\displaystyle \rho_{0}'' = \lim_{n \to \infty}\frac{1}{n}\int \left(\log \norm{\A^{[n]}(x)u}-n\lambda_{1}\right)^{2}d\mu = \var.$
	\end{enumerate}
\end{proposition}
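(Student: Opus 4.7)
The plan is to extract $\rho_0'$ and $\rho_0''$ as Taylor coefficients at $z=0$ of
\[ F_n(z,\ol{u}) := \int \L_z^n 1(x,\ol{u})\,d\mu(x), \]
which by Lemma \ref{lemma:Ops}(3) equals $\int \|\A^{[n]}(x)u\|^z\,d\mu(x)$. Part (1) will then follow by identifying the first Taylor coefficient with $\lambda_1$ via Proposition \ref{prop: growth for all u}; part (2) will follow by directly computing the second moment.

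First I will use the spectral decomposition of Proposition \ref{prop:perturbation}, together with $P_{i,z}P_{j,z}=0$ and $P_{k,z}S_z=0$, to obtain $\L_z^n = \rho_z^n\bigl(\sum_{k=0}^{h-1}e^{2\pi ikn/h}P_{k,z} + S_z^n\bigr)$ and hence
\[ F_n(z,\ol{u}) = \rho_z^n\Bigl(\sum_{k=0}^{h-1} e^{2\pi ikn/h} c_k(z,\ol{u}) + \epsilon_n(z,\ol{u})\Bigr) =: \rho_z^n G_n(z,\ol{u}), \]
where $c_k(z,\ol{u}) := \int (P_{k,z}1)(x,\ol{u})\,d\mu(x)$ and $\epsilon_n(z,\ol{u}) := \int (S_z^n 1)(x,\ol{u})\,d\mu(x)$. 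At $z=0$, since the only eigenfunction of $\L=\L_0$ for the eigenvalue $1$ is the constant function $1$, we have $P_{0,0}1=1$, $P_{k,0}1=0$ for $k\neq 0$, and $S_0^n 1=0$, so $G_n(0,\ol{u})=1$. The derivative bound $\|\partial_z^p S_z^n\|\le C\beta^n$ from Proposition \ref{prop:perturbation} makes $\partial_z^p \epsilon_n(0,\ol{u})$ decay exponentially, which gives $G_n^{(p)}(0,\ol{u})=O(1)$ uniformly in $n$ and $\ol{u}$ for every $p\ge 1$.

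For (1), differentiating the identity once at $z=0$ gives
\[ \int \log \|\A^{[n]}(x)u\|\,d\mu = F_n'(0,\ol{u}) = n\rho_0' + G_n'(0,\ol{u}) = n\rho_0' + O(1), \]
so dividing by $n$ and letting $n\to\infty$ identifies the limit as $\rho_0'$. To identify $\rho_0'$ with $\lambda_1$, I will invoke Proposition \ref{prop: growth for all u}, whose uniform convergence in the unit vector $u$ also confirms the independence of $\var$ from $u$ asserted after \eqref{eq: sigma}. For (2), a second differentiation gives $F_n''(0,\ol{u}) = n(n-1)(\rho_0')^2 + n\rho_0'' + 2n\rho_0' G_n'(0,\ol{u}) + G_n''(0,\ol{u})$, which equals $\int(\log \|\A^{[n]}(x)u\|)^2 d\mu$. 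Expanding
\[ \int\bigl(\log \|\A^{[n]}(x)u\| - n\lambda_1\bigr)^2\, d\mu = F_n''(0,\ol{u}) - 2n\lambda_1 F_n'(0,\ol{u}) + n^2\lambda_1^2 \]
and substituting $\rho_0'=\lambda_1$ from part (1), the $O(n^2)$ contributions and the $2n\lambda_1 G_n'(0,\ol{u})$ terms cancel exactly, and the claimed identification follows after dividing by $n$ and sending $n\to\infty$.

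The main obstacle is purely bookkeeping: when $h>1$, the oscillating factors $e^{2\pi ikn/h}$ prevent $G_n^{(p)}(0,\ol{u})$ from converging as $n\to\infty$, so I must verify that these quantities enter only as $O(1)$ corrections which vanish upon dividing by $n$. The exponential decay of $\partial_z^p S_z^n$ from Proposition \ref{prop:perturbation} and the algebraic cancellation in the variance expansion are the key ingredients that make the analysis go through cleanly and with uniformity in $\ol{u}$.
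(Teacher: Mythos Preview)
Your approach is essentially the paper's: express $\int\|\A^{[n]}(x)u\|^z\,d\mu$ via the spectral decomposition of $\L_z^n$, differentiate in $z$, and read off the Taylor coefficients. Part (1) is fine and matches the paper.

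In part (2), however, your cancellation is incomplete. Carrying out your substitution with $\rho_0'=\lambda_1$ gives
\[
F_n''(0)-2n\lambda_1 F_n'(0)+n^2\lambda_1^2
= n(n-1)\lambda_1^2 + n\rho_0'' + 2n\lambda_1 G_n'(0) + G_n''(0) - 2n\lambda_1\bigl(n\lambda_1+G_n'(0)\bigr)+n^2\lambda_1^2.
\]
The $n^2\lambda_1^2$ terms and the $2n\lambda_1 G_n'(0)$ terms do cancel, as you say, but $n(n-1)\lambda_1^2$ also contributes $-n\lambda_1^2$, which does \emph{not} cancel. After dividing by $n$ and letting $n\to\infty$ you obtain
\[
\var=\rho_0''-\lambda_1^2=\rho_0''-(\rho_0')^2,
\]
not $\rho_0''$. (A one-dimensional sanity check: if $\A(x)\equiv a>0$ then $\rho_z=a^z$, so $\rho_0''=(\log a)^2$ while $\var=0$.)

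The paper sidesteps this by first rescaling $\A(x)\mapsto e^{-\lambda_1}\A(x)$, which forces $\rho_0'=\lambda_1=0$; under that normalization your computation and the stated identity $\rho_0''=\var$ are correct. If you prefer to avoid the normalization, simply record the correct general identity $\var=\rho_0''-(\rho_0')^2$; this is exactly what is needed in the downstream applications (the CLT and LDP proofs both impose $\lambda_1=0$ before invoking the proposition).
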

\begin{proof}
	\begin{enumerate}
		\item 
		
		

For the first equality, 
recall from \eqref{eq:Characteristicfunction} that 
		\begin{align*}
		\int \norm{\A^{[n]}(x)u}^{z} \, d\mu(x) &= \int \L_{z}^{n}1(x,\ol{u})\,d\mu(x) \\
		&= \rho_{z}^{n}\int \sum_{k=0}^{h-1}e^{\frac{2 \pi i}{h}nk}P_{k,z}1(x,\ol{u})+S_{z}^{n}1(x,\ol{u})\,d\mu(x)
		\end{align*}
for any $z\in \C$.		
Taking the derivative of both sides give 
\begin{align}\label{eq:firstderivative}	
		\int \norm{\A^{[n]}(x)u}^{z}\log \norm{\A^{[n]}(x)u}d\mu &= n\rho_{z}^{n-1}\rho_{z}'\int \sum_{k=0}^{h-1}e^{\frac{2 \pi i}{h}nk}P_{k,z}1(x,\ol{u})+S_{z}^{n}1(x,\ol{u})d\mu(x)\\
		& \;\;\;\;\;\;\; +\rho_{z}^{n}\int \sum_{k=0}^{h-1}e^{\frac{2 \pi i}{h}nk}P_{k,z}'1(x,\ol{u})+(S_{z}^{n})'1(x,\ol{u})d\mu(x)\nonumber
		\end{align}
Noticing that $\rho_0 = 1$ and $S_01 =(S_0P_0)1 = 0$, evaluating \eqref{eq:firstderivative} at $z=0$ gives
$$		\rho_{0}' = \frac{1}{n}\int \log \norm{\A^{[n]}(x)u}\,d\mu-\frac{1}{n}\int \sum_{k=0}^{h-1}e^{\frac{2 \pi i}{h}nk}P_{k,0}'1(x,\ol{u})+(S_{0}^{n})'1(x,\ol{u})\,d\mu(x).
$$
Now taking the limit as $n \to \infty$ gives the result.
		
		\item 
By dividing $\A(x)$ by $e^{\lambda_1}$ if necessary, we may assume that $\lambda_1=\rho_{0}'=0$. Differentiating \eqref{eq:firstderivative} again gives
		\begin{align*}
		\rho_{0}'' = \frac{1}{n}\int \left(\log \norm{\A^{[n]}(x)u}\right)^{2}d\mu-\frac{1}{n}\int \sum_{k=0}^{h-1}e^{\frac{2 \pi i}{h}nk}P_{k,0}''1(x,\ol{u})+(S_{0}^{n})''1(x,\ol{u})\,d\mu(x).
		\end{align*}
By the same reasoning as in the proof of (1), the second term tends to 0 as $n \to \infty$. 
	\end{enumerate}
\end{proof}

The last ingredient we need in order to prove Theorem \ref{thm: clt 2} is the following Levy continuity theorem:

\begin{theorem}[Levy continuity]\label{thm:LevyContinuity}
	A sequence of random variables $(X_{n})_{n=1}^{\infty}$ converges in distribution to a random variable $Y$ if and only if $\E(e^{itX_{n}})\xrightarrow{n \to \infty}\E(e^{itY})$ for all $t \in \R$.
\end{theorem}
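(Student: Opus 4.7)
The plan is to prove the two implications separately, with the forward direction being essentially immediate and the reverse direction requiring a tightness argument combined with uniqueness of characteristic functions.

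For the forward direction, assume $X_n \xrightarrow{\text{dist}} Y$. For any $t \in \R$, the function $x \mapsto e^{itx}$ is bounded and continuous, so splitting into real and imaginary parts and applying the definition of convergence in distribution (equivalently, Portmanteau) to $\cos(tx)$ and $\sin(tx)$ gives $\E(e^{itX_n}) \to \E(e^{itY})$. This is a one-line argument.

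For the reverse direction, the key steps are: (i) establish tightness of $\{X_n\}$; (ii) invoke Prokhorov's theorem to extract convergent subsequences; (iii) use uniqueness of characteristic functions to identify the limit. For tightness, I would use the standard estimate obtained by integrating against a box: writing $\varphi_n(t) := \E(e^{itX_n})$, Fubini gives
\[ \frac{1}{2\delta}\int_{-\delta}^{\delta}(1-\varphi_n(t))\,dt = \E\!\left(1 - \frac{\sin(\delta X_n)}{\delta X_n}\right) \geq \frac{1}{2}\,\P\!\left(|X_n| > \tfrac{2}{\delta}\right), \]
since $1 - \sin(u)/u \geq 1/2$ when $|u| \geq 2$. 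Because $\varphi_n \to \varphi_Y$ pointwise with $|\varphi_n| \leq 1$, dominated convergence yields
\[ \limsup_{n\to\infty}\frac{1}{2\delta}\int_{-\delta}^{\delta}(1-\varphi_n(t))\,dt = \frac{1}{2\delta}\int_{-\delta}^{\delta}(1-\varphi_Y(t))\,dt, \]
and this last quantity tends to $0$ as $\delta \downarrow 0$ by continuity of $\varphi_Y$ at $0$ (which follows from $\varphi_Y(0)=1$ and dominated convergence applied to $\E(e^{itY})$). Given $\e > 0$, first choose $\delta$ small enough that the right-hand integral is $< \e/2$, then choose $n_0$ large enough that the estimate extends to all $n \geq n_0$; finitely many remaining $X_n$ are tight individually. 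This produces a compact set (an interval $[-2/\delta, 2/\delta]$) capturing at least $1-\e$ of the mass for every $n$.

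Given tightness, Prokhorov's theorem guarantees that every subsequence $\{X_{n_k}\}$ has a further subsequence $\{X_{n_{k_j}}\}$ converging in distribution to some random variable $Z$. By the forward direction already established, $\E(e^{itX_{n_{k_j}}}) \to \E(e^{itZ})$ for every $t$; but by hypothesis this limit equals $\E(e^{itY})$. Hence $Z$ and $Y$ have the same characteristic function, and by the uniqueness theorem for characteristic functions (injectivity of the Fourier transform on finite measures) they have the same distribution. Since every subsequence has a further subsequence converging in distribution to the law of $Y$, the entire sequence $X_n$ converges in distribution to $Y$. The main obstacle is the tightness step: all the real content of the theorem is hidden in the elementary inequality relating the tail probability of $X_n$ to an integral of $1-\varphi_n$, together with the continuity of $\varphi_Y$ at the origin.
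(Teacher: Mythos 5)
Your proof is correct: the forward direction via Portmanteau, and the reverse direction via the truncation inequality $\frac{1}{2\delta}\int_{-\delta}^{\delta}(1-\varphi_n(t))\,dt \geq \frac{1}{2}\P(|X_n|>2/\delta)$ for tightness, followed by Prokhorov's theorem, uniqueness of characteristic functions, and the subsequence principle, is exactly the standard textbook argument. The paper states L\'evy continuity as a classical result and gives no proof of its own, so there is nothing to compare against; your write-up would serve as a complete and accurate justification, and you correctly observe that continuity of $\varphi_Y$ at $0$ is automatic here because the limit is assumed to be the characteristic function of an actual random variable $Y$.
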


\begin{proof}[Proof of Theorem \ref{thm: clt 2}]
As in the proof of Proposition \ref{prop: deriv of rho} (2), we may assume that $\lambda_1=0$.
	By Proposition \ref{prop:perturbation}, we can write
	\[ \L_{z}^{n}=\rho_{z}^{n}\left(\sum_{k=0}^{h-1}e^{\frac{2 \pi i}{h}nk}P_{k,z}+S_{z}^{n} \right) \]
for $\abs{z}$ sufficiently small.
	By \eqref{eq:Characteristicfunction} we have that for $n$ sufficiently large
	\begin{align*}
	\int \norm{\A^{[n]}(x)u}^{\frac{it}{\sqrt{n}}}d\mu(x)&= \int \L_{\frac{it}{\sqrt{n}}}^{n}1(x,\ol{u})d\mu(x) \\
	&= \int \rho_{\frac{it}{\sqrt{n}}}^{n}\left(\sum_{k=0}^{h-1}e^{\frac{2 \pi i}{h}nk}P_{k,\frac{it}{\sqrt{n}}}+S_{\frac{it}{\sqrt{n}}}^{n} \right)1(x,\ol{u})d\mu(x)  \\
	&= \rho_{\frac{it}{\sqrt{n}}}^{n}\left(\sum_{k=0}^{h-1}\int e^{\frac{2 \pi i}{h}nk} P_{k, \frac{it}{\sqrt{n}}}1(x,\ol{u})d\mu(x) + \int S^{n}_{\frac{it}{\sqrt{n}}}1(x,\ol{u})d\mu(x)\right)
	\end{align*}
	Notice that $z \mapsto P_{k, z}$ is continuous. Thus
	\[ \lim_{n \to \infty}P_{0,\frac{it}{\sqrt{n}}}1=P_{0,0}1= 1, \] 
	and for $k\neq 1$
	\[ \lim_{n \to \infty}P_{k ,\frac{it}{\sqrt{n}}}1 = P_{k,0}1=0 \]
	where the limit is taken in $C^{\a}(\S_{T}^{+}\times \P^{d-1})$. Furthermore
	\[ \norm{S_{\frac{it}{\sqrt{n}}}^{n}1(x,\ol{u})}_{C^{\a}(\S_{T}^{+}\times \P^{d-1})} = O(\beta^{n}) \]
	for some $0<\beta<1$. Note that this $\b$ can be chosen uniformly for all $S_{z}$ with $\abs{z}$ sufficiently small. Thus for $n$ sufficiently large
	\begin{equation}\label{eq:charfuncspectralradius}
	\int \norm{\A^{[n]}(x)u}^{\frac{it}{\sqrt{n}}}d\mu(x) = \rho_{\frac{it}{\sqrt{n}}}^{n}(1+o(1)). 
	\end{equation}
	Using the Taylor expansion of $\rho_{z}$ at $z=0$ we have
	\[ \rho_{\frac{it}{\sqrt{n}}}^{n} = \left( 1-\frac{\rho_{0}''}{2}\left(\frac{t}{\sqrt{n}}\right)^{2}+O\left(\frac{t^{3}}{n^{3/2}}\right)\right)^{n}; \]
here we have used the assumption that $\rho_0'=\lambda_1=0$.
Since $\rho_0'' = \var$, we have
	\begin{align*}
	\int \norm{\A^{[n]}(x)u}^{\frac{it}{\sqrt{n}}}d\mu(x) &= \left( 1-\frac{\rho_{0}''}{2}\left(\frac{t}{\sqrt{n}}\right)^{2}+O\left(\frac{t^{3}}{n^{3/2}}\right)\right)^{n}(1+o(1))\\
	&\xrightarrow{n \to \infty}\exp\left[-\frac{1}{2}\var  \cdot t^{2}\right].
	\end{align*}
	If $\var>0$, then $\displaystyle \exp\left[-\frac{1}{2}\var \cdot t^{2}\right]$ is the characteristic function for $\n(0, \var)$. Hence by Theorem \ref{thm:LevyContinuity} (Levy continuity),
	\[ \frac{\log \norm{\A^{[n]}(x)u}}{\sqrt{n}}\xrightarrow[n \to \infty]{\text{ dist}} \n(0,  \var). \]
	If $\var=0$, then $\displaystyle\exp\left[-\frac{1}{2}\var \cdot t^{2}\right]=1$ is the characteristic function of $0$. So Levy continuity implies that
	\[ \frac{\log \norm{\A^{[n]}(x)u}}{\sqrt{n}}\xrightarrow[n \to \infty]{\text{dist}}0. \]
This completes the proof.
\end{proof}

\subsection{Furstenberg's formula}
For i.i.d. products of matrices, Furstenberg \cite{furstenberg1963noncommuting} gave a formula for the top Lyapunov exponent assuming strong irreducibility:
\[ \lambda_{1}=\iint \log \frac{\norm{Au}}{\norm{u}} \,d\mu(A)d\nu(u) \]
where $\mu$ is the distribution of $A$ and $\nu$ is any stationary measure on the projective space. If, in addition, $A$ is proximal, then the stationary measure $\nu$ is unique.

As a byproduct of the previous subsection, we prove the following analogue of Furstenberg's formula for the top Lyapunov exponent $\lambda_1$.
\begin{proposition}[Furstenberg's Formula]
\label{prop:FurstenbergsFormula}
		\[ \lambda_1= \rho_{0}'=\int \sum_{\sigma y =x}g(y) \log \norm{\A(y)^*\frac{u}{\norm{u}}}\,d\nu(x,u) \]
		where $\nu$ is the eigenmeasure of $\L$ from Proposition  \ref{prop: eigendata}. 
\end{proposition}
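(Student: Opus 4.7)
The first equality $\lambda_1 = \rho_0'$ is already contained in Proposition~\ref{prop: deriv of rho}(1), so my focus is on the second equality. Write $\Phi(x,\ol{u}) := \sum_{\sigma y = x} g(y) \log \norm{\A(y)^* \frac{u}{\norm{u}}}$ for the integrand appearing in the statement, so the target is $\int \Phi\,d\nu = \rho_0'$. By Lemma~\ref{lem: property L_z}(2) with $n = 1$, differentiating the Taylor expansion of $\L_z$ at $z = 0$ and applying to the constant function $1$ (noting $C_{\A^{[1]}} 1 = 1$) gives $\L_0' 1 = L_{\log g} \psi_1 = \Phi$. Hence it suffices to prove $\int \L_0' 1 \, d\nu = \rho_0'$.

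The plan is to evaluate $\displaystyle \frac{d}{dz}\int \L_z^n 1\,d\nu\Big|_{z=0}$ in two different ways and compare as $n \to \infty$. On one hand, the product rule yields $(\L_z^n)'\big|_{z=0} = \sum_{i=0}^{n-1} \L_0^{n-1-i} \L_0' \L_0^i$; applying to $1$ and using $\L_0 1 = 1$ collapses this to $\sum_{i=0}^{n-1} \L_0^{n-1-i} \Phi$. Integrating against $\nu$ and invoking the invariance $\L_0^* \nu = \nu$ from Proposition~\ref{prop: eigendata} yields exactly $n \int \Phi\,d\nu$.

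On the other hand, the analytic spectral decomposition from Proposition~\ref{prop:perturbation} gives $\L_z^n = \rho_z^n \bigl( \sum_{k=0}^{h-1} e^{2 \pi i n k/h} P_{k,z} + S_z^n \bigr)$ on a neighborhood of $z = 0$. Theorem~\ref{thm:spectralstructure}(3) shows $P_{0,0} 1 = 1$, $P_{k,0} 1 = 0$ for $k \neq 0$ (from $P_{k,0} P_{0,0} = 0$), and $S_0 1 = 0$ (from $S_0 P_{0,0} = 0$), so integrating $\L_z^n 1$ against $\nu$ gives $\rho_z^n(1 + G_n(z))$ with $G_n(0) = 0$. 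Differentiating and using $\rho_0 = 1$ yields $n \rho_0' + G_n'(0)$. Equating the two computations produces $n \int \Phi\,d\nu = n \rho_0' + G_n'(0)$, after which dividing by $n$ and sending $n \to \infty$ finishes the proof provided $G_n'(0) = o(n)$.

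The main technical step, and really the only nontrivial one, is the uniform-in-$n$ bound on $G_n'(0) = \sum_{k} e^{2 \pi i n k/h} \int P_{k,0}' 1 \, d\nu + \int (S_z^n)'\big|_{z=0} 1 \, d\nu$. The first sum is a bounded almost-periodic sequence in $n$ since the projections $P_{k,z}$ depend analytically on $z$, and the second term is $O(\beta^n)$ by the estimate $\norm{\frac{d}{dz} S_z^n \big|_{z=0}} \leq C \beta^n$ supplied by Proposition~\ref{prop:perturbation}. Once this bound is in place, the limit $n \to \infty$ gives $\int \Phi\,d\nu = \rho_0'$ as desired.
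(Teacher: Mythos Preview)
Your proof is correct, but it takes a different route from the paper's. The paper's argument is purely algebraic and avoids any limit in $n$: it differentiates the eigenvalue relation $\L_z P_{0,z} = \rho_z P_{0,z}$, left-multiplies by $P_{0,z}$, and uses $P_{0,z}\L_z = \rho_z P_{0,z}$ together with $P_{0,z}^2 = P_{0,z}$ to cancel terms, obtaining $P_{0,z}\L_z' P_{0,z} = \rho_z' P_{0,z}$ directly. Evaluating at $z=0$ and applying to the constant function $1$ then gives $\int \L_0' 1\,d\nu = \rho_0'$ in one line. Your approach instead reruns the asymptotic computation underlying Proposition~\ref{prop: deriv of rho}, pairing against $\nu$ rather than $\mu$: the key extra observation is that $\L_0^*\nu = \nu$ collapses the product-rule sum to $n\int\Phi\,d\nu$, after which the bounded error $G_n'(0)$ is washed out by dividing by $n$. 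Your route is entirely natural given the machinery already established, and has the merit of making the parallel with Proposition~\ref{prop: deriv of rho} explicit; the paper's route is shorter and more directly exploits the rank-one projection structure of $P_{0,z}$.
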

\begin{proof}
Write $\displaystyle \L_{z}=\rho_{z}\left(\sum_{k=0}^{h-1}e^{\frac{2 \pi i}{h}k}P_{k,z}+S_{z}\right)$ for $\abs{z}$ small as in proposition \ref{prop:perturbation}. Then differentiating the identity $\L_{z}P_{0,z}=\rho_{z}P_{0,z}$ gives
$$\L_{z}'P_{0,z}+\L_{z}P_{0,z}'=\rho_{z}'P_{0,z}+\rho_{z}P_{0,z}.$$
Since $P_{0,z}\L_{z}=\rho_{z}P_{0,z}$ and $P_{0,z}^2 = P_{0,z}$, applying $P_{0,z}$ to both sides gives
$$P_{0,z}\L_{z}'P_{0,z}+\rho_{z}P_{0,z}P_{0,z}'=\rho_{z}'P_{0,z}+\rho_{z}P_{0,z}P_{0,z}'.$$
Canceling $\rho_{z}P_{0,z}P_{0,z}'$ from both sides gives $P_{0,z}\L_{z}'P_{0,z}=\rho_{z}'P_{0,z}$. Evaluating this identity at $z=0$ and applying the operators to the constant function 1 gives
$$\int \L_{0}'1 d\nu= P_{0,0}\L_{0}'P_{0,0}1=\rho_{0}'P_{0,0}1=\rho_{0}'.$$
From Lemma \ref{lem: property L_z} (2), we have $(\L)'_{0} = L_{\log g} M_{\psi_{1}}C_{\A^{[1]}}$. Thus
\begin{align*}
\lambda_1=\rho_{0}' = \int \L_{0}'1 \,d\nu &= \int L_{\log g}M_{\psi_1}C_{\A^{[1]}}1 \,d\nu\\
&=\int L_{\log g}\psi_1 \, d\nu \\
&=  \int \sum_{\sigma y =x}g(y)\log \norm{\A(y)^*\frac{u}{\norm{u}}}\,d\nu(x,u),
\end{align*}
completing the proof.
\end{proof}

\subsection{Large deviation principle}
In this subsection, we prove Theorem \ref{thm:LDP} and Corollary \ref{cor: LDP}.

	For small enough $\e \in \R$, define
	\[ \Lambda(\e) := \log \rho_{\e}-\e  \lambda_1\]
	where $\rho_{\e}$ is the spectral radius of the operator
	\[ \L_{\e}f(x,\ol{u}):=\sum_{\sigma y =x}g(y)\norm{\A(y)^{\ast}\frac{u}{\norm{u}}}^{\e}f(y,\ol{\A(y)^{\ast}u}). \]

	
\begin{theorem}[Large Deviation Principle]\label{thm:LDPgfunction}
	Suppose ${\h\A}:\S_{T}\to \glr$ is $1$-typical and $\var$ is positive.
	Then there exists $\eta>0$ such that for any $\e\in \big(0,\frac{\Lambda(\eta)}{\eta}\big)$ and any unit vector $u\in \R^d$,
	\[ \lim_{n \to \infty}\frac{1}{n}\log \mu\set{x:\Big| \log \norm{\A^{[n]}(x)u} - n\lambda_1\Big|>n\e}=-\Lambda^{\ast}(\e)<0. \]
Here $\Lambda^{\ast}$ is the Legendre transform of $\Lambda$ on the interval $[0,\eta]$.
\end{theorem}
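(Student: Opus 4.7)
The plan is to apply a Gärtner--Ellis style argument anchored in the spectral perturbation theory of $\L_\e$ established in Proposition \ref{prop:perturbation}. By Lemma \ref{lem: property L_z}, $\e\mapsto \L_\e$ is operator-analytic, so the top eigenvalue $\rho_\e$ is real-analytic on some real neighborhood $[-\eta,\eta]$ of $0$ with $\rho_0=1$, and hence $\Lambda(\e)=\log\rho_\e-\e\lambda_1$ is real-analytic there. By Proposition \ref{prop: deriv of rho}, $\Lambda(0)=0$ and $\Lambda'(0)=\rho_0'-\lambda_1=0$; differentiating the eigenvalue identity once more gives $\Lambda''(0)=\rho_0''-(\rho_0')^2=\var>0$. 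After shrinking $\eta$, $\Lambda$ is strictly convex on $[-\eta,\eta]$, so its Legendre transform $\Lambda^{\ast}$ is well-defined and strictly convex on $[0,\Lambda(\eta)/\eta)$, with $\Lambda^{\ast}(\e)>0$ throughout this interval (as $\Lambda^{\ast}(0)=0$ is the unique minimum).

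For the upper bound I iterate the spectral decomposition from Proposition \ref{prop:perturbation} and combine with equation \eqref{eq:Characteristicfunction}: for every unit vector $u$ and $t\in[-\eta,\eta]$,
$$\int\|\A^{[n]}(x)u\|^{t}\,d\mu(x)=\rho_t^{n}\bigl(C_n(t,\ol{u})+O(\beta^{n})\bigr),$$
with $C_n$ uniformly bounded above (oscillations produced by the peripheral spectrum are absorbed into the prefactor and cause no harm on the exponential scale). Chebyshev's inequality then yields, for $t\in(0,\eta]$,
$$\mu\bigl\{x:\log\|\A^{[n]}(x)u\|-n\lambda_1>n\e\bigr\}\le e^{-nt(\lambda_1+\e)}\int\|\A^{[n]}(x)u\|^{t}\,d\mu=e^{-n(t\e-\Lambda(t))}\,O(1),$$
and optimizing over $t$ (the supremum is achieved in the interior thanks to the constraint $\e<\Lambda(\eta)/\eta$) gives $\displaystyle\limsup_n\tfrac{1}{n}\log\mu\{\cdot\}\le-\Lambda^{\ast}(\e)$. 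The symmetric reasoning with $t\in[-\eta,0)$ handles the opposite deviation $\log\|\A^{[n]}u\|-n\lambda_1<-n\e$, and taking the maximum of the two rates reproduces the correct upper bound for the two-sided event in the theorem.

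The matching lower bound is the main obstacle, and I would handle it by exponential tilting. Given $\e$ in the prescribed range, let $t\in(-\eta,\eta)$ be the unique solution to $\Lambda'(t)=\e$, so that $\Lambda^{\ast}(\e)=t\e-\Lambda(t)$, and define tilted probabilities
$$d\mu_t^{(n)}(x)=\frac{\|\A^{[n]}(x)u\|^{t}}{Z_n(t)}\,d\mu(x),\qquad Z_n(t):=\int\|\A^{[n]}(x)u\|^{t}\,d\mu.$$
I would then establish a central limit theorem for $\tfrac{1}{\sqrt n}\bigl(\log\|\A^{[n]}u\|-n(\lambda_1+\e)\bigr)$ under $\mu_t^{(n)}$ by repeating the characteristic function calculation from the proof of Theorem \ref{thm: clt 2}, applied this time to the shifted analytic family $\L_{t+is/\sqrt n}$ instead of $\L_{is/\sqrt n}$; the necessary peripheral spectral data are supplied by reapplying Proposition \ref{prop:perturbation} centered at $t$. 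This yields $\mu_t^{(n)}(A_n)\to 1$ for $A_n:=\{x:|\log\|\A^{[n]}u\|-n(\lambda_1+\e)|<n\delta\}$, and since $\|\A^{[n]}u\|^{-t}\ge e^{-nt(\lambda_1+\e)-n|t|\delta}$ on $A_n$ while $Z_n(t)\asymp\rho_t^{n}$, one obtains
$$\mu(A_n)\ge e^{-nt(\lambda_1+\e)-n|t|\delta}\cdot Z_n(t)\cdot\mu_t^{(n)}(A_n)\ge C\,e^{-n(\Lambda^{\ast}(\e)+|t|\delta)}$$
for some $C>0$. Noting that $A_n\subseteq\{x:\log\|\A^{[n]}u\|-n\lambda_1>n(\e-\delta)\}$, letting $n\to\infty$ and then $\delta\to 0$ gives the matching lower bound. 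The most delicate point is verifying that the spectral structure of Proposition \ref{prop:perturbation} persists at the shifted parameter $t$ uniformly for $|t|\leq\eta$, so that the CLT on the tilted measure goes through verbatim; this is justified by reapplying analytic perturbation theory with $\L_t$ (rather than $\L_0$) playing the role of the unperturbed operator, which is legitimate provided $\eta$ is chosen small enough that $\L_t$ stays in a neighborhood of $\L_0$ where the spectral gap of Theorem \ref{thm:spectralstructure} is preserved.
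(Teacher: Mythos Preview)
Your proposal is correct and rests on the same spectral input as the paper: the analyticity of $\e\mapsto\rho_\e$ from Proposition~\ref{prop:perturbation} and the identification $\Lambda'(0)=0$, $\Lambda''(0)=\var>0$ from Proposition~\ref{prop: deriv of rho}. The paper's proof is, however, considerably shorter: after verifying these same three facts and checking (via the computation behind \eqref{eq:charfuncspectralradius}) that $\tfrac1n\log\int\|\A^{[n]}(x)u\|^{\e}\,d\mu\to\Lambda(\e)$, it invokes the local G\"artner--Ellis theorem (Lemma~\ref{lem:GartnerEllis}, quoted from Bougerol--Lacroix) as a black box, rather than reproving it. What you do instead---Chebyshev for the upper bound and exponential tilting plus a tilted CLT for the lower bound---is precisely the classical proof of that lemma, so your argument is a self-contained unpacking of the same step. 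The gain of your route is that it makes the paper independent of the external reference; the cost is length and the need to track the peripheral projections $P_{k,z}$ through the tilting.

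Two small remarks on your write-up. First, there is no need to ``recentre'' the perturbation theory at $t$: Proposition~\ref{prop:perturbation} already gives the decomposition $\L_z=\rho_z(\sum_k e^{2\pi ik/h}P_{k,z}+S_z)$ for \emph{all} $z$ in a fixed neighborhood $U$ of $0$, so for $|t|\le\eta$ and $n$ large one has $t+is/\sqrt n\in U$ and the tilted characteristic function $\E_{\mu_t^{(n)}}[e^{isX_n/\sqrt n}]=\rho_{t+is/\sqrt n}^n\,C_n(t+is/\sqrt n)\big/\rho_t^n\,C_n(t)$ is handled directly. The ratio $C_n(t+is/\sqrt n)/C_n(t)\to1$ follows from continuity of $z\mapsto P_{k,z}$ and the fact that (for small real $t$) $C_n(t)$ stays uniformly close to $1$, hence bounded away from $0$. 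Second, for the two-sided event the rate is the \emph{minimum} of the right- and left-deviation rate functions; your phrase ``taking the maximum of the two rates'' should be read at the level of $\tfrac1n\log\mu(\cdot)$ rather than the rate itself.
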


\begin{lemma}[Local G\"artner-Ellis Theorem]\label{lem:GartnerEllis}
	Suppose that $X_{n}$ is a sequence of random variables such that there exists $\eta>0$ for which
	\[ \lim_{n \to \infty}\frac{1}{n}\log \E(e^{tX_{n}})=c(t) \]
	exists in an open set of $[-\eta,\eta]$. If $c(t)$ is continuously differentiable and strictly convex on $[-\eta,\eta]$ and $c'(0)=0$, then for any $0<\e < \frac{c(\eta)}{\eta}$
	\[ \lim_{n \to \infty}\frac{1}{n}\log \P(X_{n}>n\e)=-c^{\ast}(\e)<0. \]
	where $c^{\ast}$ is the Legendre transform of $c$ on the interval $[0,\eta]$.
\end{lemma}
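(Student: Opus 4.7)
The plan is to prove matching upper and lower bounds via exponential Chebyshev together with an exponential-tilting (change of measure) argument. This is a local version of the classical G\"artner--Ellis theorem, so the standard strategy applies provided one keeps careful track of the window $[-\eta,\eta]$ on which $c(t)$ is defined. First observe that the hypotheses (strict convexity, $c'(0)=0$, continuous differentiability) together with $0<\e<c(\eta)/\eta$ guarantee that the Legendre transform $c^{\ast}(\e)=\sup_{t\in[0,\eta]}(t\e-c(t))$ is attained at a unique interior point $t_{\e}\in(0,\eta)$ characterized by $c'(t_{\e})=\e$: since $c'$ is strictly increasing with $c'(0)=0$, the mean-value relation gives $c(\eta)/\eta<c'(\eta)$, so $\e\in(0,c'(\eta))$; moreover the boundary contribution $\eta\e-c(\eta)$ is negative while $t_{\e}\e-c(t_{\e})$ is positive (in particular $c^{\ast}(\e)>0$).

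For the upper bound, Markov's inequality gives for every $t\in(0,\eta]$
\[
\P(X_{n}>n\e)\leq e^{-tn\e}\E(e^{tX_{n}}),
\]
hence $\displaystyle\limsup_{n\to\infty}\tfrac{1}{n}\log\P(X_{n}>n\e)\leq -(t\e-c(t))$, and optimizing over $t\in[0,\eta]$ yields the bound $-c^{\ast}(\e)$.

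The substantive part is the matching lower bound. Introduce the tilted probability measure $\mathbb{Q}_{n}$ with Radon--Nikodym derivative $d\mathbb{Q}_{n}/d\P=e^{t_{\e}X_{n}}/\E(e^{t_{\e}X_{n}})$. For any small $\delta>0$, restricting the event and factoring out,
\[
\P(X_{n}>n\e)\;\geq\;\E(e^{t_{\e}X_{n}})\cdot e^{-t_{\e}n(\e+\delta)}\cdot\mathbb{Q}_{n}\bigl(n\e<X_{n}<n(\e+\delta)\bigr).
\]
Taking $\tfrac{1}{n}\log$, the first two factors contribute $c(t_{\e})-t_{\e}(\e+\delta)+o(1)=-c^{\ast}(\e)-t_{\e}\delta+o(1)$. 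It therefore suffices to show that the $\mathbb{Q}_{n}$-probability does not decay exponentially; in fact I claim it tends to $1$. A direct computation gives
\[
\lim_{n\to\infty}\frac{1}{n}\log\E_{\mathbb{Q}_{n}}(e^{sX_{n}})=c(s+t_{\e})-c(t_{\e})
\]
for $s$ in a neighborhood of $0$, available precisely because $t_{\e}\in(0,\eta)$ is interior. Since the derivative of the right-hand side at $s=0$ equals $c'(t_{\e})=\e$, applying exponential Chebyshev under $\mathbb{Q}_{n}$ to both tails shows that $\mathbb{Q}_{n}(X_{n}\geq n(\e+\delta))$ and $\mathbb{Q}_{n}(X_{n}\leq n\e)$ decay exponentially. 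Hence $\mathbb{Q}_{n}(n\e<X_{n}<n(\e+\delta))\to 1$, and letting $\delta\to 0$ after passing to the liminf gives $\liminf_{n\to\infty}\tfrac{1}{n}\log\P(X_{n}>n\e)\geq-c^{\ast}(\e)$.

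The main obstacle is the bookkeeping in the tilting step: one has to verify that every relevant cumulant generating function remains controlled in a full neighborhood of $t_{\e}$, which is exactly what the interior condition $t_{\e}\in(0,\eta)$ (guaranteed by $\e<c(\eta)/\eta$) supplies. Once that neighborhood is secured, the concentration of $X_{n}/n$ under $\mathbb{Q}_{n}$ around $c'(t_{\e})=\e$ is completely standard and both tails are disposed of identically.
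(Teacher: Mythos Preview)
The paper does not supply its own proof of this lemma; it simply refers the reader to \cite[Chapter V, Lemma 6.2]{MR886674}. Your overall strategy---Chebyshev for the upper bound, exponential tilting for the lower bound---is the classical one and is undoubtedly what appears there.

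There is, however, a real gap in your lower bound. You tilt at the point $t_{\e}$ with $c'(t_{\e})=\e$ and then assert that exponential Chebyshev under $\mathbb{Q}_{n}$ forces both $\mathbb{Q}_{n}(X_{n}\geq n(\e+\delta))$ and $\mathbb{Q}_{n}(X_{n}\leq n\e)$ to decay exponentially. The first does; the second does not. The Chebyshev bound for the left tail is $\inf_{s<0}\bigl(\tilde c(s)-s\e\bigr)$ with $\tilde c(s)=c(s+t_{\e})-c(t_{\e})$, and the function $s\mapsto\tilde c(s)-s\e$ is strictly convex with value $0$ and derivative $\tilde c'(0)-\e=0$ at $s=0$, hence strictly positive for every $s\neq 0$. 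The bound is vacuous. Intuitively, after tilting so that $X_{n}/n$ concentrates at $\e$, the event $\{X_{n}\leq n\e\}$ is a half-space through the mean, not a tail, and nothing in the hypotheses forces its $\mathbb{Q}_{n}$-probability to vanish.

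The standard repair is to tilt slightly past $\e$: for small $\delta>0$ set $\e'=\e+\delta$, pick $t_{\e'}\in(0,\eta)$ with $c'(t_{\e'})=\e'$ (available since $\e'<c'(\eta)$ for $\delta$ small), and bound $\P(X_{n}>n\e)$ from below via the window $\bigl(n\e,\,n(\e+2\delta)\bigr)$. Under this tilt both endpoints sit at distance $\delta$ from the tilted mean $\e'$, so the same Chebyshev computation now gives genuine exponential decay on each side and $\mathbb{Q}_{n}\bigl(n\e<X_{n}<n(\e+2\delta)\bigr)\to 1$. This yields
\[
\liminf_{n\to\infty}\frac{1}{n}\log\P(X_{n}>n\e)\ \geq\ c(t_{\e'})-t_{\e'}(\e+2\delta)\ =\ -c^{\ast}(\e')-t_{\e'}\delta,
\]
and letting $\delta\to 0$ (using continuity of $c^{\ast}$ and of $(c')^{-1}$) recovers $-c^{\ast}(\e)$. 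With this adjustment your argument is complete.
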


The proof of Lemma \ref{lem:GartnerEllis} can be found in \cite[Chapter V Lemma 6.2]{MR886674}.

\begin{proof}[Proof of Theorem \ref{thm:LDPgfunction}]
Without the loss of generality, we may assume that $\lambda_1=0$.
Then $\Lambda(\ep) = \log \rho_\e$. We will verify the assumptions of Lemma \ref{lem:GartnerEllis} for $X_n =\log \norm{\A^{[n]}(x)u}$.

First, $\Lambda'(0) = \rho'_0/\rho_0  = 0$; here we have used the assumption that $\rho_0' = \lambda_1 = 0$.
By reasoning as in \eqref{eq:charfuncspectralradius}, for sufficiently small $\e$ we have
	\[ \lim_{n \to \infty}\frac{1}{n}\log\int \norm{\A^{[n]}(x)u}^{\e}d\mu=\lim_{n\to \infty} \frac{1}{n}\log \Big(\rho_\e^n \cdot (1+o(1))\Big)= \Lambda(\e). \]
Since $\var = \rho_0''$ from Proposition \ref{prop: deriv of rho} and $\var$ is positive from the assumption, it follows that 

	\[ \frac{d^{2}\Lambda}{d\e^{2}}\Big|_{\e = 0}=\frac{ \rho_{\e}''\rho_{\e} - (\rho_{\e}')^2}{\rho_{\e}^2}\Big|_{\ep=0} = \rho_0''>0. \]
Thus there is some neighborhood of $0$ on which $\Lambda(\e)$ is strictly convex. 

The analogous argument also applies to $X_n =-\log \norm{\A^{[n]}(x)u}$, and the result then follows by applying Lemma \ref{lem:GartnerEllis}.
\end{proof}
\begin{proof}[Proof of Corollary \ref{cor: LDP}] As in Theorem \ref{thm: clt 2} and \ref{thm:LDPgfunction}, we will prove the result for $\A^{[n]}(x)$ with respect to $\mu$.
Again without loss of generality, we may assume that $\lambda_1 = 0$.

We begin by choosing a basis $\U=\{u_1,\ldots,u_d\}$ of $\R^d$ consisting of unit vectors and defining a new norm 
$$\|A\|_\U:=\max\limits_{1 \leq i \leq d} \|Au_i\|.$$
By relaxing the constant $C>0$ in the statement of the corollary if necessary, we may prove the result with respect to the new norm $\|\cdot \|_\U$. 

Then for any $\ep>0$,
$$\set{x:\log \norm{\A^{[n]}(x)}_\U>n\e} = \bigcup_{1\leq i \leq d} \set{x:\log \norm{\A^{[n]}(x)u_{i}}_\U>n\e},$$
and hence, 
$$\mu\set{x:\log \norm{\A^{[n]}(x)}_\U>n\e}  \leq \sum_{1\leq i \leq d} \mu \set{x:\log \norm{\A^{[n]}(x)u_{i}}_\U>n\e}.$$
Since each term in the summation on the right hand side decreases exponentially from Theorem \ref{thm:LDPgfunction}, so does the sum. Hence, the term on the left also has to decrease exponentially fast, as required.
\end{proof}

As mentioned in the introduction, the large deviation principle stated in Corollary \ref{cor: LDP} is already known. For instance, Gou\"ezel and Stoyanov \cite{gouezel2019} obtain this result which implies exponential returns to Pesin sets. Such property is then used to deduce exponential mixing of Gibbs measures in certain settings. We remark, however, that the large deviation principle stated in Theorem \ref{thm:LDP} is new, and we provide a simpler proof via using the spectral properties of the operator $\L$.

We also note that Duarte, Klein, and Poletti \cite{duartekleinpoletti} prove the uniform version of Corollary \ref{cor: LDP}. They then use it to show \hol continuity of the Lyapunov exponents as a function of the cocycle.

\subsection{Analyticity of the Lyapunov exponent $\lambda_1$} 
By analyticity of a function mapping some domain of $\R$ into a Banach space $\B$ we mean that the function admits an analytic extension to a domain in $\C$ which contains the original domain of $\R$. 

\begin{theorem}\label{thm:Analyticity}
	Suppose $\h\A \colon \S_{T} \to \glr$ is $1$-typical and $t \mapsto g_{t}$ be a function defined on some interval $(-\e,\e)$ such that $g_{t}$ is a H\"older continuous $g$-function for each $t$. If the function $t\mapsto g_{t}$ is analytic,
then $t \mapsto \lambda_{1}(\h\A ,\mu_{g_{t}})$ is real analytic in a neighborhood of $0$.
\end{theorem}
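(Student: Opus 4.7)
The plan is to combine Furstenberg's formula (Proposition \ref{prop:FurstenbergsFormula}) with analytic perturbation theory for the family of operators $\L_t$ associated with $g_t$. First, extend $t \mapsto g_t$ to an analytic $C^\theta(\Sig^+)$-valued map on a complex neighborhood $V \subset \C$ of $0$. For complex $t$, $g_t$ is no longer a genuine $g$-function, but the operator
\[ \L_t f(x,\ol u) := \sum_{\sigma y = x} g_t(y) f(y, \ol{\A(y)^* u}) \]
is still a bounded operator on $C^\alpha(\Sig^+ \times \P^{d-1})$ depending linearly, hence analytically, on $g_t$, and thus analytically on $t$.

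Next, I would invoke analytic perturbation theory of linear operators. By Theorem \ref{thm:spectralstructure} and Lemma \ref{lem:algmultiplicityof1}, the eigenvalue $1$ of $\L_0$ is algebraically simple and isolated from the rest of the spectrum. Kato-type analytic perturbation theory (exactly as invoked in the proof of Proposition \ref{prop:perturbation}) then produces, on a possibly smaller complex neighborhood of $0$, an analytic family $t \mapsto \rho_t$ of simple isolated eigenvalues with $\rho_0 = 1$ and an analytic family $t \mapsto P_t$ of rank-one spectral projections onto the associated eigenlines. For real $t$, $g_t$ is a genuine $g$-function so $\L_t 1 = 1$, which forces $\rho_t \equiv 1$ and the range of $P_t$ to equal $\spn\{1\}$. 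To extract an analytic family of eigenmeasures, fix a bounded linear functional $\ell$ on $C^\alpha(\Sig^+ \times \P^{d-1})$ with $\ell(1) \neq 0$ (e.g.\ $\ell(f) = \int f \, d\mu_{g_0}$), set $h_t := P_t 1$, and define
\[ \nu_t(f) := \ell(P_t f)/\ell(h_t). \]
Since $h_0 = 1$, the denominator is analytic and nonvanishing for $t$ near $0$, so $t \mapsto \nu_t$ is an analytic family of continuous functionals satisfying $\nu_t(1) = 1$; for real $t$ one checks directly that it coincides with the probability eigenmeasure from Proposition \ref{prop: eigendata} associated with $g_t$.

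The final step is to apply Furstenberg's formula to each real $t$. Since the derivation of Proposition \ref{prop:FurstenbergsFormula} uses only that $g_t$ is a Hölder continuous $g$-function together with the $1$-typicality of $\h\A$, we obtain for all real $t$ near $0$
\[ \lambda_1(\h\A, \mu_{g_t}) = \int L_{\log g_t}\psi_1 \, d\nu_t, \qquad \psi_1(x,\ol u) := \log \bigl\|\A(x)^* u/\|u\|\bigr\|. \]
The right-hand side is the pairing of the analytic $C^\alpha$-valued family $t \mapsto L_{\log g_t}\psi_1$ (linear, hence analytic, in $g_t$) with the analytic family of functionals $t \mapsto \nu_t$, and therefore extends analytically to the complex neighborhood $V$. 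Restricting to real $t$ yields real analyticity of $t \mapsto \lambda_1(\h\A, \mu_{g_t})$ in a neighborhood of $0$.

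The main technical obstacle will be executing the perturbation step cleanly: verifying operator-norm analyticity of $t \mapsto \L_t$ from $V$ into $\B(C^\alpha(\Sig^+ \times \P^{d-1}))$ and applying Kato's theorem to the specific simple isolated eigenvalue $1$ rather than to the full peripheral spectrum. When $h > 1$, the $h$ simple peripheral eigenvalues $\{e^{2\pi i k/h}\}_{k=0}^{h-1}$ remain mutually separated under small perturbations, so the analysis localizes at $1$; this separation is quantitative and can be extracted from the spectral gap established in Theorem \ref{thm:spectralstructure}, and the same care as taken in Proposition \ref{prop:perturbation} suffices here.
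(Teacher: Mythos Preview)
Your proposal is correct and follows essentially the same route as the paper: extend $g_t$ analytically to complex $t$, observe that the eigenvalue $1$ of $\L_{g_0}$ is simple and isolated, apply analytic perturbation theory to obtain an analytic family of eigenmeasures $\nu_t$, and then invoke Furstenberg's formula $\lambda_1(\h\A,\mu_{g_t}) = \langle L_{\log g_t}\psi_1,\nu_t\rangle$ to conclude. Your treatment is in fact slightly more explicit than the paper's in constructing $\nu_t$ from the analytic spectral projection $P_t$, but the underlying argument is the same.
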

\begin{proof} 
	Let $z \mapsto g_{z}$ be the analytic extension of $t \mapsto g_{t}$. Define
	\[\L_{g_{z}}f(x,\ol{u}):=\sum_{\sigma y =x}g_{z}(y)f(y,\ol{\A(y)^{\ast}u}). \]
	By Theorem \ref{thm:spectralstructure}, for each $t \in (-\e,\e)$ the number $1$ is a simple eigenvalue for the operator $\L_{g_{t}}$. We denote by $\nu_{t}$ the eigenmeasure of $\L_{g_{t}}$ corresponding to $\rho(\L_{g_t})$. By analytic perturbation theory (see \cite{MR1335452} or \cite{MR3920693}), there exists an open neighborhood $U\subseteq \C$ of $(-\e,\e)$ and an analytic function $z \mapsto \nu_{z}$ defined on $U$ which extends $\nu_{t}$; that is, $\nu_{z}$ is a eigenmeasure of $\L_{g_{z}}$ for $z\in U$. Moreover the function $z \mapsto L_{\log g_{z}}$ is analytic and thus $z \mapsto L_{\log g_{z}}^{\ast}\nu_{z}$ is also analytic. Hence the function
	\[ \lambda(z):= \inn{L_{\log g_{z}} \psi_{1},\nu_{z}} \]
	is an analytic function on $U$, and by Proposition \ref{prop:FurstenbergsFormula} we have $\lambda(t) = \lambda_1(\h\A , \mu_{g_{t}})$ for all $t \in (-\e,\e)$. Therefore $t \mapsto \lambda_{1}(\h\A ,\mu_{g_{t}})$ is real analytic in a neighborhood of $0$.
\end{proof}

This result is new even in the case when $\A$ and $g$ depend only on two coordinates. Given an irreducible stochastic matrix $S$, we denote by $\mu_{S}$ the unique Markov measure determined by $S$. In this special case, the result says the following:

\begin{corollary}
	Suppose that $T$ is irreducible and $\set{A_{ij}}_{i,j:T_{ij}=1} \subseteq \glr$. If the function $\A:\S_{T}^{+}\to \glr$ defined by $\A(x)=A_{x_{0}x_{1}}$ is $1$-typical, then there is a relatively open neighborhood $U$ in the hyperplane 
	\[ \set{(z_{ij})\in \C^{d^{2}} : \sum_{i=1}^{d}z_{ij}=1 \text{ and }z_{ij}\neq 0 \text{ if and only if }T_{ij}=1} \]  
	such that
	\[ \set{S\in M_{d}(\R): S \text{ is stochastic and }S_{ij}>0 \text{ if and only if }T_{ij}=1} \subseteq U \] 
	and $S \mapsto \lambda_{1}(\A, \mu_{S})$ extends to an analytic function on $U$.
\end{corollary}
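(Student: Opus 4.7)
The plan is to view this as an explicit multi-parameter instance of Theorem \ref{thm:Analyticity}. For each $z=(z_{ij})$ in the complex hyperplane $\mathcal{H}$, define $g_z\colon \Sig^+ \to \C$ by $g_z(x):=z_{x_0x_1}$. The map $z \mapsto g_z$ into the space of \hol functions on $\Sig^+$ is affine in $z$, hence analytic. When $z=S$ is a real stochastic matrix with support exactly $T$, the function $g_S$ is a positive \hol $g$-function, and a direct check of the defining identity $L^*_{\log g_S}\mu=\mu$ shows that its unique eigenmeasure is precisely $\mu_S$.

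Next I would repeat the argument in the proof of Theorem \ref{thm:Analyticity} in the multi-parameter setting. At each real stochastic $S$ with support $T$, Theorem \ref{thm:spectralstructure} ensures that $1$ is a simple peripheral eigenvalue of $\L_{g_S}$ with eigenmeasure $\nu_S$. Since $z \mapsto \L_{g_z}$ is analytic on $\mathcal{H}$, analytic perturbation theory for linear operators in several complex variables (Kato \cite{MR1335452}, combined with Hartogs' theorem on separate analyticity) produces a complex neighborhood $U_S \subset \mathcal{H}$ of $S$ on which $z \mapsto \nu_z$ is analytic, where $\nu_z$ is the eigenmeasure for the simple eigenvalue of $\L_{g_z}$ closest to $1$. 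Furstenberg's formula (Proposition \ref{prop:FurstenbergsFormula}) then shows that $\lambda(z):=\inn{L_{\log g_z}\psi_1,\nu_z}$ is analytic on $U_S$ and restricts to $S'\mapsto\lambda_1(\A,\mu_{S'})$ at real stochastic $S' \in U_S$.

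Finally, taking $U:=\bigcup_S U_S$ over all real stochastic $S$ with support $T$ produces a relatively open neighborhood of the full real stochastic region in $\mathcal{H}$. The local extensions patch together: on any connected component of $U_{S_1}\cap U_{S_2}$, both analytic functions agree with $S'\mapsto\lambda_1(\A,\mu_{S'})$ at the real stochastic points of that component, so by the identity theorem they agree on the whole component. Shrinking the $U_S$ to convex neighborhoods if necessary, this yields a single analytic function on $U$ extending $S \mapsto \lambda_1(\A,\mu_S)$.

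The main technical point is precisely this compatibility on overlaps; it is routine because the real stochastic region with support $T$ is the intersection of a real affine subspace with an open orthant, hence convex and connected, so any two local extensions are joined through real stochastic points and the identity theorem applies.
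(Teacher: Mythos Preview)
Your proposal is correct and follows the same idea as the paper: the corollary is presented there without proof, as the specialization of Theorem \ref{thm:Analyticity} to the locally constant two-coordinate setting, and you have supplied exactly the missing step—running the proof of Theorem \ref{thm:Analyticity} with a multi-parameter analytic family $z\mapsto g_z$ in place of a one-parameter family, then patching the resulting local complex neighborhoods over the connected convex set of real stochastic matrices. The patching argument via the identity theorem (using that overlaps of convex balls centered at real stochastic points contain real stochastic points) is the right way to globalize, and is the only point not already contained verbatim in the proof of Theorem \ref{thm:Analyticity}.
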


One may compare this result to \cite[Theorem 2]{MR1158741} in which the matrices are assumed to be positive. Finally we use Theorem \ref{thm:Analyticity} to prove Theorem \ref{thm:Analyticitygeneral}.

\begin{proof}[Proof of Theorem \ref{thm:Analyticitygeneral}]
	By assumption, $t \mapsto \hat{\psi}_{t}$ is analytic. Essentially we must check that the reduction to a one-sided potential and then to a $g$-function preserves this analyticity. 
	
	As in \cite[Proposition 1.2]{MR1085356}, there is a bounded linear map $W:C^{\alpha}(\S_{T})\to C^{\alpha/2}(\S_{T}^{+})$ such that $\hat{\psi}$ is cohomologous to $W\hat{\psi}$. Writing $\psi_{t}=W\hat{\psi}_{t}$, we have that $t \mapsto \psi_{t}$ is analytic and $\lambda_{1}(\hat{\A},\mu_{\hat{\psi_{t}}})=\lambda_{1}(\A,\mu_{\psi_{t}})$. Let $h_{t}$ be the eigenfunction for the transfer operator $L_{\psi_{t}}$ corresponding to $\rho(L_{\psi_{t}})$. It is known (see \cite{MR1085356} or \cite{MR1793194}) that $h_{t}$ and $\rho(L_{\psi_{t}})$ are analytic functions. Thus if we define
	\[ g_{t}:=\frac{e^{\psi_{t}(x)}}{\lambda} \cdot \frac{h_{t}(x)}{h_{t}(\s x)} \]
	as in \eqref{eq: g},
	then $t \mapsto g_{t}$ is analytic, and $g_{t}$ is a $g$-function where $\lambda_{1}(\hat{\A},\mu_{\hat{\psi}_{t}})=\lambda_{1}(\h\A, \mu_{g_{t}})$ for each $t \in \R$. The result now follows from Theorem \ref{thm:Analyticity}.
\end{proof}

\section{Appendix}\label{sec: appendix}

\subsection{Complex analysis in Banach Algebras}

Here we collect some standard results about analytic functions which take values in Banach algebras. For proofs of most of these results and general background we refer the reader to \cite[III.14]{MR1009162}. Let $X$ be a Banach algebra.

\begin{definition}
	Suppose that $U \subseteq \C$ is open and $f:U \to X$ is a function. We say that $f$ is \emph{differentiable} at $z_{0}$ if there exists an element $f'(z_{0})\in X$ such that 
	\[ \norm{\frac{f(z)-f(z_{0})}{z-z_{0}}-f'(z_{0})}\xrightarrow{z \to z_{0}}0. \]
We say $f$ is \emph{analytic} on $U$ if $f$ is continuously differentiable on $U$.
\end{definition}

\begin{theorem}\label{thm:wealanalyticimpliesanalytic}
	Suppose $U \subseteq \C$ and $f:U \to X$. Then $f$ is analytic if and only if $\varphi(f(z)):U \to \C$ is analytic for all $\varphi \in X^{\ast}$.
\end{theorem}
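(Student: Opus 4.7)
The forward direction requires no real work. If $f$ is differentiable at $z_0$ with derivative $f'(z_0)\in X$, then for any $\varphi \in X^{\ast}$ the linearity and continuity of $\varphi$ give
\[
\frac{\varphi(f(z))-\varphi(f(z_{0}))}{z-z_{0}} = \varphi\!\left(\frac{f(z)-f(z_{0})}{z-z_{0}}\right) \xrightarrow{z\to z_{0}} \varphi(f'(z_{0})),
\]
so $\varphi\circ f$ is analytic on $U$ with derivative $\varphi\circ f'$.

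The interesting direction is the converse, and I would proceed in three steps. First, I would establish \emph{local norm-boundedness} of $f$. Fix $z_{0}\in U$ and a closed disk $\ol{D}(z_{0},r)\subset U$. For each $\varphi\in X^{\ast}$ the scalar function $\varphi\circ f$ is by hypothesis analytic, hence continuous on the compact disk, hence bounded. Viewing each $f(z)$ as an element of the bidual $X^{\ast\ast}$ via the canonical isometric embedding, the collection $\{f(z):z\in \ol{D}(z_{0},r)\}$ is pointwise bounded on $X^{\ast}$. The uniform boundedness principle then upgrades this to a uniform bound $\sup_{z\in \ol{D}(z_{0},r)}\norm{f(z)}<\infty$.

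With local boundedness in hand, I would write down the vector-valued Cauchy integral
\[
g(z):=\frac{1}{2\pi i}\oint_{|w-z_{0}|=r}\frac{f(w)}{w-z}\,dw, \qquad |z-z_{0}|<r,
\]
interpreted as a Riemann/Bochner integral in $X$. The hypothesis together with local boundedness yields enough continuity of $f$ (via scalar Cauchy estimates applied to each $\varphi\circ f$) to make sense of this integral. Pairing with an arbitrary $\varphi\in X^{\ast}$ and invoking the scalar Cauchy formula for the analytic function $\varphi\circ f$ gives $\varphi(g(z))=\varphi(f(z))$, and since $X^{\ast}$ separates points of $X$ by Hahn--Banach, $g(z)=f(z)$. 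The final step is then routine: the standard argument of differentiating under the integral sign, justified by the uniform bound on $f$ over the contour, shows that $z\mapsto g(z)$ is differentiable in the Banach-space sense with derivative $\tfrac{1}{2\pi i}\oint \frac{f(w)}{(w-z)^{2}}\,dw$, and continuous dependence on $z$ of this derivative is automatic.

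The one non-formal step is the first: passing from ``$\varphi\circ f$ is bounded on $\ol{D}(z_{0},r)$ for every $\varphi$'' to ``$\norm{f(\cdot)}$ is bounded on $\ol{D}(z_{0},r)$.'' Without this uniform bound the candidate integral defining $g$ need not converge, and one cannot even begin to exploit the scalar Cauchy formula. Everything after that step is essentially a transcription of the classical complex-analytic argument, using Hahn--Banach to promote scalar identities to identities in $X$.
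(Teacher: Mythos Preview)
The paper does not supply its own proof of this theorem; it is stated as a standard fact with a citation to the literature (\cite[III.14]{MR1009162}). So there is nothing to compare against on the paper's side, and the question reduces to whether your argument stands on its own.

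It does. The forward direction is immediate, and your three-step converse (uniform boundedness $\Rightarrow$ local norm bound; Cauchy integral representation; differentiate under the integral) is one of the two standard routes. The one place you are slightly elliptical is the passage from local norm-boundedness to norm-continuity of $f$ on the contour, which you need to make the Bochner/Riemann integral well-defined. Your parenthetical ``via scalar Cauchy estimates'' is the right hint: from $\sup_{\ol D(z_0,r)}\norm{f}\le M$ one gets $\abs{(\varphi\circ f)'(z)}\le 2M\norm{\varphi}/r$ on $\ol D(z_0,r/2)$ for every $\varphi$ with $\norm{\varphi}\le 1$, hence $\norm{f(z)-f(w)}\le (2M/r)\abs{z-w}$ there, so $f$ is in fact Lipschitz on the smaller disk. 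With that filled in, everything else goes through.

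For comparison, the proof in the cited reference bypasses the Cauchy integral representation and works directly with the difference quotients: one writes $\varphi\bigl(\tfrac{f(z)-f(z_0)}{z-z_0}-\tfrac{f(w)-f(z_0)}{w-z_0}\bigr)$ as a contour integral via the scalar Cauchy formula, bounds it by $C\abs{z-w}\cdot\norm{\varphi}$ using the uniform norm bound on $f$, and concludes that the difference quotients form a Cauchy net in $X$. This is marginally more economical (no need to set up the vector-valued integral), but your approach has the advantage of immediately giving the integral formula for $f'$, which is what one typically wants next anyway.
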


The following corollary can be deduced from the proof of Theorem \ref{thm:wealanalyticimpliesanalytic}.

\begin{corollary}\label{cor:analyticfunctionboundedlinearops}
	Let $f:U \to L(X_{1},X_{2})$. Then $f(z)$ is analytic if and only if $\inn{f(z)x_{1},x_{2}^{\ast}}$ is analytic for all $x \in X_{1}$ and $x_{2}^{\ast}\in X_{2}^{\ast}$.
\end{corollary}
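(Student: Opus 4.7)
The plan is to deduce this corollary by applying Theorem \ref{thm:wealanalyticimpliesanalytic} together with the uniform boundedness principle, as suggested by the paper's own remark that the result follows ``from the proof of Theorem \ref{thm:wealanalyticimpliesanalytic}.'' The subtlety is that the functionals $T\mapsto\inn{Tx_{1},x_{2}^{\ast}}$ generally do not exhaust $L(X_{1},X_{2})^{\ast}$, so one cannot simply invoke Theorem \ref{thm:wealanalyticimpliesanalytic} in the Banach space $L(X_{1},X_{2})$; instead one must re-run its proof for this restricted family of functionals.

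The forward direction will be immediate: for each fixed pair $(x_{1},x_{2}^{\ast})$ the map $T\mapsto\inn{Tx_{1},x_{2}^{\ast}}$ is a continuous linear functional on $L(X_{1},X_{2})$, so if $f$ is analytic as an $L(X_{1},X_{2})$-valued function, Theorem \ref{thm:wealanalyticimpliesanalytic} applied to the scalar composition yields analyticity of $z\mapsto\inn{f(z)x_{1},x_{2}^{\ast}}$.

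For the converse, I would proceed in two steps. First, fix $x_{1}\in X_{1}$ and view $g_{x_{1}}(z):=f(z)x_{1}$ as an $X_{2}$-valued function. The hypothesis says $\inn{g_{x_{1}}(z),x_{2}^{\ast}}$ is analytic for every $x_{2}^{\ast}\in X_{2}^{\ast}$, so a direct application of Theorem \ref{thm:wealanalyticimpliesanalytic} (in the Banach space $X_{2}$ with its \emph{full} dual) gives that each $g_{x_{1}}$ is analytic as an $X_{2}$-valued function. In particular, on any closed disk $\bar{D}(z_{0},r)\subseteq U$, the function $z\mapsto\|f(z)x_{1}\|_{X_{2}}$ is bounded for each $x_{1}$; the uniform boundedness principle then produces a constant $M$ with $\|f(z)\|_{\op}\leq M$ for all $z\in\bar{D}(z_{0},r)$.

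Second, using this local operator-norm bound, I would invoke the Cauchy integral representation
\[ f(z)x_{1}=\frac{1}{2\pi i}\oint_{|w-z_{0}|=r}\frac{f(w)x_{1}}{w-z}\,dw, \]
valid on $D(z_{0},r)$ since the integrand is continuous in $X_{2}$. The bound $M$ makes the integrand norm-dominated by $M/(r-|z-z_{0}|)$ uniformly in $x_{1}$ in the unit ball of $X_{1}$, so differentiating the kernel $1/(w-z)$ in $z$ and taking the supremum over such $x_{1}$ upgrades the pointwise derivative $g_{x_{1}}'(z)$ to a genuine derivative in operator norm, with the remainder in the difference quotient estimated uniformly. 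The main obstacle is precisely this bootstrap from pointwise analyticity (step one) to operator-norm analyticity (step two); fortunately the Banach--Steinhaus step is the standard mechanism behind Theorem \ref{thm:wealanalyticimpliesanalytic} itself, so no genuinely new idea is required beyond rerunning that argument for the specific family of functionals $\{T\mapsto\inn{Tx_{1},x_{2}^{\ast}}\}$.
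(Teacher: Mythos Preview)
Your proposal is correct and follows precisely the approach the paper indicates, namely rerunning the proof of Theorem \ref{thm:wealanalyticimpliesanalytic} (weak analyticity plus uniform boundedness yields local norm-boundedness, then Cauchy's formula upgrades to norm analyticity). Since the paper itself gives no proof beyond the remark that the corollary ``can be deduced from the proof of Theorem \ref{thm:wealanalyticimpliesanalytic},'' your write-up is in fact a faithful and complete elaboration of exactly what the authors had in mind.
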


\begin{lemma}
	If $f(z), g(z):U \to X$ are analytic, then
	\begin{enumerate}
		\item 
		$f+g$ is analytic and $(f+g)'(z) = f'(z)+g'(z)$.
		
		\item 
		$fg$ is analytic and $(fg)'(z)= f'(z)g(z)+f(z)g'(z)$.
	\end{enumerate}
\end{lemma}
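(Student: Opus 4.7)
The plan is to work directly from the definition of differentiability in the Banach algebra $X$, transcribing the one-variable scalar proofs of linearity and Leibniz's rule while tracking that addition and multiplication in $X$ are continuous operations with respect to the norm.

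For part (1), I would fix $z_{0} \in U$ and rewrite the difference quotient as
\[
\frac{(f+g)(z) - (f+g)(z_{0})}{z - z_{0}} - \big(f'(z_{0}) + g'(z_{0})\big) = \Big(\frac{f(z) - f(z_{0})}{z - z_{0}} - f'(z_{0})\Big) + \Big(\frac{g(z) - g(z_{0})}{z - z_{0}} - g'(z_{0})\Big),
\]
then apply the triangle inequality for the Banach algebra norm together with the differentiability hypotheses for $f$ and $g$ at $z_{0}$. This establishes the additivity formula pointwise, and continuity of $(f+g)' = f' + g'$ on $U$ follows immediately from continuity of $f'$ and $g'$ on $U$.

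For part (2), I would use the standard Leibniz manipulation of adding and subtracting an intermediate term:
\[
\frac{f(z)g(z) - f(z_{0})g(z_{0})}{z - z_{0}} = \frac{f(z) - f(z_{0})}{z - z_{0}}\, g(z) + f(z_{0})\, \frac{g(z) - g(z_{0})}{z - z_{0}}.
\]
Two ingredients are then combined: differentiability of $g$ at $z_{0}$ gives continuity of $g$ there (so $g(z) \to g(z_{0})$ in $X$ and in particular $g$ is locally bounded in norm), and the Banach algebra norm is submultiplicative, $\|ab\| \le \|a\|\|b\|$. With these, the first summand converges in norm to $f'(z_{0})g(z_{0})$ and the second to $f(z_{0})g'(z_{0})$. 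Continuous differentiability of $fg$ then follows because $f'g + fg'$ is a sum of products of norm-continuous $X$-valued functions and multiplication in $X$ is jointly continuous.

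There is essentially no serious obstacle; the argument is a verbatim transcription of the classical proofs in $\mathbb{C}$, with the only step specific to the Banach algebra setting being the appeal to submultiplicativity (equivalently, joint continuity of multiplication) at the point where the scalar proof would use $|ab| = |a||b|$.
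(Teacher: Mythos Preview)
Your proposal is correct and is the standard direct argument from the definition. The paper does not actually prove this lemma: it is stated without proof in the appendix, with a blanket reference to \cite[III.14]{MR1009162} for background on Banach-algebra-valued analytic functions, so there is no approach to compare against.
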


\begin{lemma}
	Let $X$ be a Banach space, $z \mapsto L_{z}$ a function from $U\subseteq \C$ to the bounded linear operators on $X$ and $z \mapsto x_{z}$ a function from $U$ to $X$.
	\begin{enumerate}
		\item 
		Suppose that $z \mapsto L_{z}$ is analytic, then $z \mapsto L_{z}^{\ast}$ is analytic.
		
		\item 
		Suppose that $z \mapsto x_{z}$ is analytic and $z \mapsto L_{z}$ is analytic, then 
		$z \mapsto L_{z}x_{z}$ is analytic.
	\end{enumerate}
\end{lemma}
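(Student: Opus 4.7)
The plan is to handle the two parts separately, using the elementary fact that the Banach-space adjoint operation $T \mapsto T^*$ is a linear isometry from $\B(X)$ into $\B(X^*)$ for part (1), and the standard product rule for part (2). Both arguments reduce to direct computations of difference quotients together with the paper's characterization of analyticity as continuous differentiability.

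For part (1), since $z \mapsto L_z$ is analytic, there exists $L_z' \in \B(X)$ such that
$$\norm{\frac{L_{z+h} - L_z}{h} - L_z'}_{\B(X)} \xrightarrow{h \to 0} 0.$$
Taking adjoints term by term and using linearity of the adjoint map, together with the isometric property $\norm{T^*}_{\B(X^*)} = \norm{T}_{\B(X)}$, I get
$$\norm{\frac{L_{z+h}^* - L_z^*}{h} - (L_z')^*}_{\B(X^*)} = \norm{\frac{L_{z+h} - L_z}{h} - L_z'}_{\B(X)} \xrightarrow{h \to 0} 0.$$
Hence $z \mapsto L_z^*$ is differentiable at $z$ with derivative $(L_z')^*$. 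Continuity of $z \mapsto (L_z')^*$ in $\B(X^*)$ follows from applying the same isometry to the continuous map $z \mapsto L_z'$ in $\B(X)$, so $z \mapsto L_z^*$ is analytic.

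For part (2), I use the standard product-rule decomposition
$$\frac{L_{z+h} x_{z+h} - L_z x_z}{h} = \left(\frac{L_{z+h} - L_z}{h}\right) x_{z+h} + L_z \left(\frac{x_{z+h} - x_z}{h}\right).$$
Analyticity of $z \mapsto L_z$ and $z \mapsto x_z$ yields $\frac{L_{z+h} - L_z}{h} \to L_z'$ in $\B(X)$, $x_{z+h} \to x_z$ in $X$, and $\frac{x_{z+h} - x_z}{h} \to x_z'$ in $X$ as $h \to 0$. Since operator evaluation $(T,x) \mapsto Tx$ is jointly continuous as a map $\B(X) \times X \to X$, the right-hand side converges in $X$ to $L_z' x_z + L_z x_z'$. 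Thus $z \mapsto L_z x_z$ is differentiable with derivative $L_z' x_z + L_z x_z'$, and continuity of this derivative in $z$ follows from continuity of each of $L_z, L_z', x_z, x_z'$ together with joint continuity of operator evaluation. Therefore $z \mapsto L_z x_z$ is analytic.

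There is no serious obstacle here, as both parts reduce to elementary manipulations of difference quotients. The only care required is to verify both differentiability and continuity of the derivative, since the paper's definition of analyticity demands both.
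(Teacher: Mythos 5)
Your proof is correct, but part (1) takes a genuinely different route from the paper. The paper disposes of (1) by invoking its Corollary on weak versus strong analyticity for operator-valued functions (that $f:U\to L(X_1,X_2)$ is analytic iff $\inn{f(z)x_1,x_2^{\ast}}$ is analytic for all $x_1, x_2^{\ast}$), whereas you argue directly: the Banach-space adjoint $T\mapsto T^{\ast}$ is a linear isometry of $\B(X)$ into $\B(X^{\ast})$, so it carries convergent difference quotients to convergent difference quotients. Your argument is more elementary and self-contained; it also sidesteps a subtlety in the paper's route, namely that applying the weak-analyticity criterion to $L_z^{\ast}$ literally requires analyticity of $\inn{L_z^{\ast}x^{\ast},\Phi}$ for all $\Phi\in X^{\ast\ast}$, while the hypothesis only hands you this for $\Phi$ in the canonical image of $X$ (harmless in the end, but not quite "immediate" without reflexivity or an appeal to local boundedness). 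What the paper's approach buys is brevity and uniformity, since the same weak-analyticity corollary is reused elsewhere in the appendix. For part (2) your product-rule decomposition with joint continuity of $(T,x)\mapsto Tx$ is exactly what the paper means by "deduced in the same way as the usual product rule," and your care in checking continuity of the derivative (required by the paper's definition of analyticity as continuous differentiability) is appropriate.
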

\begin{proof}
(1) follows immediately from Corollary \ref{cor:analyticfunctionboundedlinearops}
and (2) can be deduced in the same way as the usual product rule.
\end{proof}

\begin{proposition}\label{prop:abelbanach}
	Let $X$ be a Banach space and $x_{n}$ be a sequence of vectors in $X$. If $\displaystyle \sum_{n=1}^{\infty}\norm{x_{n}}r^{n}$ converges for some $r>0$, then the function $\displaystyle f(z)=\sum_{n=1}^{\infty}x_{n}z^{n}$ is analytic on the set $\abs{z}<r$.
\end{proposition}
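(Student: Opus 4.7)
The plan is to reduce to the scalar case via the weak-analyticity criterion already recorded as Theorem \ref{thm:wealanalyticimpliesanalytic}: a Banach-space-valued function on an open subset of $\C$ is analytic if and only if it is weakly analytic, i.e.\ $\varphi \circ f$ is analytic for every $\varphi \in X^{*}$.

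The first step is to verify that $f(z) = \sum_{n=1}^{\infty} x_{n} z^{n}$ is actually well-defined as an element of $X$ for every $z$ with $|z| < r$. For such $z$, pick $s$ with $|z| < s < r$. The assumption $\sum \|x_{n}\| r^{n} < \infty$ forces $\|x_{n}\| r^{n}$ to be bounded by some constant $M$, so
\[
\|x_{n} z^{n}\| \;\leq\; M \left(\frac{|z|}{r}\right)^{n},
\]
and the geometric series on the right is summable. Thus the partial sums $S_{N}(z) = \sum_{n=1}^{N} x_{n} z^{n}$ form a Cauchy sequence in $X$, so $f(z) \in X$ is well-defined, and in fact $S_{N} \to f$ uniformly on every closed subdisk $\{|z|\leq s\}$ with $s<r$.

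Next I would fix an arbitrary $\varphi \in X^{*}$. Applying $\varphi$ termwise (justified by continuity of $\varphi$ together with the absolute convergence just established) gives the scalar power series
\[
\varphi(f(z)) \;=\; \sum_{n=1}^{\infty} \varphi(x_{n})\, z^{n}.
\]
The bound $|\varphi(x_{n})| \leq \|\varphi\|\cdot\|x_{n}\|$ combined with the hypothesis shows $\sum |\varphi(x_{n})|\, r^{n} < \infty$, so the radius of convergence of this scalar series is at least $r$. By the classical Cauchy--Hadamard theorem, $\varphi \circ f$ is holomorphic (in the usual scalar sense) on $\{|z|<r\}$. Since $\varphi$ was arbitrary, $f$ is weakly analytic, and Theorem \ref{thm:wealanalyticimpliesanalytic} upgrades this to analyticity in the Banach-space sense.

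There is no real obstacle in this argument: the result is purely formal once the weak-analyticity criterion is available. The only point one must be careful about is establishing convergence of the Banach-space series before applying $\varphi$ termwise, which is handled by the geometric-series comparison above.
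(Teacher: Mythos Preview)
Your argument is correct. The paper does not supply its own proof of this proposition; it is stated in the appendix among standard background results with a blanket reference to Rudin's \emph{Functional Analysis} for proofs, so there is no in-paper argument to compare against. Your reduction to the scalar case via Theorem~\ref{thm:wealanalyticimpliesanalytic} is exactly the intended route and all steps are justified.
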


\subsection{The Ruelle-Perron-Frobenius theorem for semi-positive operators}

Here we present the abstract Ruelle-Perron-Frobenius theorem that we make use of in this paper. We begin by recalling some definitions and propositions; for more background and proofs of these facts, we refer the reader to \cite{MR1009162}.

Let $X$ be a complex Banach space and $\L(E)$ the space of bounded linear operators on $X$. If $L \in \L(E)$, we define
\[ \spec(L)=\set{\lambda \in \C : \lambda - L \text{ is not invertible}} \]
and
\[ \res(L)=\C \setminus \spec(L). \]

\begin{theorem}
	The function $\lambda \mapsto (\lambda - L)^{-1}$ is analytic on $\res(L)$.
\end{theorem}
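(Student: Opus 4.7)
The plan is to prove analyticity by exhibiting a locally convergent power series expansion of the resolvent about each point of $\res(L)$, and then invoke Proposition \ref{prop:abelbanach}. Fix an arbitrary $\lambda_0 \in \res(L)$, so that $R_0:=(\lambda_0 - L)^{-1}$ is a bounded operator on $X$. For $\lambda$ near $\lambda_0$ I would factor
\[ \lambda - L = (\lambda_0 - L) + (\lambda - \lambda_0) = (\lambda_0 - L)\bigl[I + (\lambda - \lambda_0)R_0\bigr], \]
which reduces the question of invertibility of $\lambda - L$ to invertibility of the bracketed factor.

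The next step is to invert $I + (\lambda - \lambda_0)R_0$ using the Neumann series, which converges in operator norm as long as $|\lambda - \lambda_0|\cdot \|R_0\| < 1$. In that disk one obtains the explicit expansion
\[ (\lambda - L)^{-1} = \sum_{n=0}^{\infty}(-1)^{n}(\lambda - \lambda_0)^{n} R_0^{\,n+1}. \]
In particular this shows $\lambda \in \res(L)$ for all such $\lambda$ (so $\res(L)$ is open), and it writes $\lambda \mapsto (\lambda - L)^{-1}$ as a norm-convergent power series in $\lambda - \lambda_0$ with coefficients $(-1)^n R_0^{n+1} \in \L(E)$.

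Finally, since $\sum_{n=0}^\infty \|(-1)^n R_0^{n+1}\| r^n \leq \|R_0\|\sum_{n=0}^\infty (r\|R_0\|)^n < \infty$ for any $r < \|R_0\|^{-1}$, Proposition \ref{prop:abelbanach} applied in the Banach space $\L(E)$ yields that this power series defines an analytic function of $\lambda$ on the disk $\{|\lambda - \lambda_0| < \|R_0\|^{-1}\}$. Since $\lambda_0 \in \res(L)$ was arbitrary, the map $\lambda \mapsto (\lambda - L)^{-1}$ is analytic on all of $\res(L)$. There is no genuine obstacle in this proof; the only thing to be slightly careful about is verifying the Neumann-series factorization identity so that the power series really represents $(\lambda - L)^{-1}$ and not merely some analytic function.
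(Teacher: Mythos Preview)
Your argument is correct and is the standard Neumann-series proof of analyticity of the resolvent. The paper does not actually supply its own proof of this statement; it is listed among standard background results in the Appendix with a reference to Dunford--Schwartz, so there is no meaningful comparison to make beyond noting that your approach is exactly the classical one found in such references.
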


For $\lambda \in \res(L)$, we define
\[ R(\lambda, L)=(\lambda - L)^{-1}. \]

\begin{definition}
	The \emph{geometric multiplicity} of an eigenvalue $\lambda$ is
	\[ \geom (\lambda):=\dim \ker (\lambda - L) \]
	and the \emph{algebraic multiplicity}
	\[ \alg (\lambda):=\dim \bigcup_{k=1}^{\infty} \ker(\lambda - L)^{k}. \]
	The \emph{order} of $\lambda$, provided it exists, is
	\[ \min \set{k:\ker (\lambda - L)^{k+1}=\ker (\lambda - L)^{k}}. \]
\end{definition}

\begin{proposition}\label{prop:index-order}
	If $\lambda_{0}$ is an isolated point of $\spec(L)$ and a pole of $R(\lambda)$, then $\lambda_{0}$ is a eigenvalue for $L$. The order of $\lambda_{0}$ as an eigenvalue is the order of $\lambda_{0}$ as a pole of $R(\lambda)$.
\end{proposition}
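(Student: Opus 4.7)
The plan is to analyze the Laurent expansion of the resolvent
\[
R(\lambda, L) = \sum_{n=-m}^{\infty} (\lambda - \lambda_0)^n A_n
\]
around $\lambda_0$, where $m \geq 1$ is the order of $\lambda_0$ as a pole, so that $A_{-m} \neq 0$. Here $A_{-1}$ coincides with the Riesz spectral projection
\[
P := \frac{1}{2\pi i} \oint_\gamma R(\lambda,L)\,d\lambda,
\]
for a small positively oriented circle $\gamma$ around $\lambda_0$ enclosing no other point of $\spec(L)$. Applying the identity $(\lambda - L)R(\lambda, L) = I$ to the series and matching the coefficient of each power $(\lambda - \lambda_0)^n$ against that of $I$, I will deduce the recursions $A_{-k-1} = (L - \lambda_0) A_{-k}$ for $1 \leq k \leq m-1$ together with the boundary relation $(L - \lambda_0) A_{-m} = 0$. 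Together these give $A_{-m} = (L - \lambda_0)^{m-1} P$ and $(L - \lambda_0)^m P = 0$.

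Next I will show that $\ran(P)$ coincides with the generalized eigenspace $\bigcup_{k \geq 1} \ker(L - \lambda_0)^k$. The inclusion $\ran(P) \subseteq \ker(L - \lambda_0)^m$ is immediate from $(L - \lambda_0)^m P = 0$. For the reverse inclusion, suppose $x$ satisfies $(L - \lambda_0)^k x = 0$ for some $k$. A direct telescoping computation shows
\[
y(\lambda) := \sum_{j=0}^{k-1} \frac{(L - \lambda_0)^j x}{(\lambda - \lambda_0)^{j+1}}
\]
satisfies $(\lambda - L)\,y(\lambda) = x$, whence $R(\lambda, L)x = y(\lambda)$ on the resolvent set. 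Integrating term by term along $\gamma$ and using $\frac{1}{2\pi i}\oint_\gamma (\lambda - \lambda_0)^{-j-1}\,d\lambda = \delta_{j,0}$ yields $Px = x$. Consequently $\ker(L - \lambda_0)^k \subseteq \ran(P)$ for every $k \geq 1$, and in particular $\ran(P) = \ker(L - \lambda_0)^m = \ker(L - \lambda_0)^{m+1}$.

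To conclude, the condition $A_{-m} = (L - \lambda_0)^{m-1} P \neq 0$ exhibits some $x \in \ran(P)$ with $(L - \lambda_0)^{m-1} x \neq 0$, so $\ker(L - \lambda_0)^{m-1} \subsetneq \ker(L - \lambda_0)^m$; combined with the stabilization at step $m$ from the previous paragraph, this makes $m$ precisely the order of $\lambda_0$ as an eigenvalue. It remains to verify that $\lambda_0$ actually is an eigenvalue. Let $k_0$ be the smallest positive integer with $\ker(L - \lambda_0)^{k_0} \neq \{0\}$, which exists because $\ran(P)$ is nonzero whenever $\lambda_0$ is a pole. If $k_0 \geq 2$, then for any nonzero $x \in \ker(L - \lambda_0)^{k_0}$ the vector $(L - \lambda_0) x$ would lie in $\ker(L - \lambda_0)^{k_0 - 1} = \{0\}$, forcing $x \in \ker(L - \lambda_0) = \{0\}$, a contradiction. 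Hence $k_0 = 1$ and $\ker(L - \lambda_0) \neq \{0\}$, so $\lambda_0$ is an eigenvalue.

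The main technical content is the second paragraph, in particular the explicit formula $R(\lambda, L)x = \sum_{j=0}^{k-1}(\lambda - \lambda_0)^{-j-1}(L-\lambda_0)^j x$ on vectors annihilated by a power of $L - \lambda_0$, from which the identification $\ran(P) = \ker(L - \lambda_0)^m$ follows by residue calculus. Once this is in place, everything else is routine bookkeeping with the Laurent coefficients and a short induction on the kernel filtration.
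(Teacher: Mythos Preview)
Your argument is correct and self-contained. The paper does not actually prove this proposition; it is stated as a standard fact from spectral theory, with the reader referred to \cite[III.14]{MR1009162} for background. Your proof via the Laurent expansion of the resolvent, the recursion $A_{-k-1}=(L-\lambda_0)A_{-k}$, and the explicit formula $R(\lambda,L)x=\sum_{j=0}^{k-1}(\lambda-\lambda_0)^{-j-1}(L-\lambda_0)^jx$ for generalized eigenvectors is exactly the classical argument one finds in such references, carried out cleanly.

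One minor stylistic point: in the final paragraph, the reduction from $\ker(L-\lambda_0)^{m-1}\subsetneq\ker(L-\lambda_0)^m$ together with $\ker(L-\lambda_0)^m=\ker(L-\lambda_0)^{m+1}$ to the conclusion that $m$ is the \emph{minimal} index of stabilization tacitly uses the standard fact that once the kernel chain stabilizes it stays stabilized (if $\ker T^k=\ker T^{k+1}$ then $\ker T^k=\ker T^{k+j}$ for all $j\geq 0$). This is routine, but you might state it explicitly for completeness.
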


The main ideas of results are essentially contained in Sasser \cite{MR0169067}. However, we cannot use the results from those contained in \cite{MR0169067} directly. So we will use the ideas to prove a Ruelle-Perron-Frobenius theorem which applies in our case.

\begin{definition}
	Let $X$ be a real topological vector space a set $\c \subseteq X$ is called a \emph{cone} if
	\begin{enumerate}
		\item 
		$\c$ is convex
		
		\item 
		If $x \in \c$, then $\lambda x \in \c$ for all $\lambda \geq 0$.
	\end{enumerate}
	A cone is called \emph{proper} if $-\c \cap \c = \set{0}$. A cone is called \emph{closed} if it is a closed set in the topology of $X$.
\end{definition}

Throughout this section we will assume that $X$ is a real Banach space ordered by a closed proper cone $\c$.

\begin{definition}\label{defn: sufficient}
	A set $\SS \subseteq X^{\ast}$ is called \emph{sufficient} for $\c$ if 
	\[ \c=\set{x \in X:\inn{x,s}\geq 0 \text{ for all }s \in \SS}. \]
\end{definition}

\begin{definition}\label{defn: semi-positive}
	Let $L:X \to X$ be linear and bounded, $\c \subseteq X$ be a closed proper cone with non-empty interior, and $\SS$ be a sufficient set for $\c$. We say that $L$ is \emph{semi-positive} with respect to $\c$ if
	\begin{enumerate}
		\item 
		$L\c \subseteq \c$.
		
		\item 
		For all $x \in \c$ one of the following is true:
		\begin{enumerate}
			\item 
			There exists $N(x)$ such that $L^{n}x \in \interior(\c)$ for all $n \geq N(x)$.
			
			\item 
			For any $s\in \SS$, the sequence $\set{\inn{L^{n}x,s}}_{n=1}^{\infty}$ converges to $0$.
		\end{enumerate}
	\end{enumerate}
\end{definition} 

The main goal of this subsection is to prove the following theorem.

\begin{theorem}\label{abstractPFThm-Primtive}
	Let $X$ be a real Banach space ordered by a proper closed cone $\c$, with non-empty interior and $L:X \to X$ be a quasi-compact bounded linear operator which is semi-positive with respect to $\c$. Then
	\begin{enumerate}
		\item 
		There exist $u \in \interior(\c)$ and $u^{\ast}\in \c^{\ast}$ such that $Lu=\rho(L)u$, $L^{\ast}u^{\ast}=\rho(L)u^{\ast}$, and $\inn{u,u^{\ast}}=1$.
		
		\item 
		If $\lambda\in \spec(L)$ with $\abs{\lambda}=\rho(L)$, then $\lambda=\rho(L)$.
		
		\item 
		$L$ can be written as
		\[ L=\rho(L)(P+S) \]
		where $Px = \inn{x,u^{\ast}}u$, $S$ is a bounded linear operator with $\rho(S)<1$ and $PS=SP=0$.
		
		\item 
		There exist constants $C>0$ and $0<\gamma < 1$ such that for any $x \in X$
		\[ \norm{\rho(L)^{-n}L^{n}x - \inn{x,u^{\ast}}u}\leq C\norm{x}\gamma^{n} \]
		for all $n \geq 0$.
	\end{enumerate}
\end{theorem}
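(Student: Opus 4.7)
The plan is to implement a Krein-Rutman / Perron-Frobenius argument adapted to the semi-positive quasi-compact setting, where the semi-positivity dichotomy replaces strict positivity at each critical step. First, by quasi-compactness I would fix $r \in (0, \rho(L))$ so that the spectrum of $L$ in $\{\abs{z} > r\}$ consists of finitely many eigenvalues of finite algebraic multiplicity; the associated Riesz projection $P_{\text{per}}$ onto the direct sum $E_{\text{per}}$ of their generalized eigenspaces is finite rank and commutes with $L$, while $L$ restricted to $\ker P_{\text{per}}$ has spectral radius at most $r$. The analysis then reduces, on the one hand, to the finite-dimensional action $L|_{E_{\text{per}}}$ and, on the other, to exponentially contracting estimates on $\ker P_{\text{per}}$.

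To produce the Perron eigenvector I would apply a Krein-Rutman argument: for $y \in \interior(\c)$, the sequence $\rho(L)^{-n} L^n y$ stays in $\c$ by cone-preservation and is bounded via the spectral decomposition; passing to a cluster point inside $E_{\text{per}}$ and using a Brouwer/Tychonoff fixed point argument on $\c \cap E_{\text{per}}$ intersected with a normalizing slice yields a nonzero $u \in \c$ with $L u = \rho(L) u$. Semi-positivity applied to $u$ itself now forces alternative (a): alternative (b) would give $\langle u, s\rangle = \rho(L)^{-n}\langle L^n u, s\rangle \to 0$ for every $s \in \SS$, forcing $\pm u \in \c$ by sufficiency of $\SS$ and thus $u = 0$ by properness, a contradiction. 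Therefore $u = \rho(L)^{-n} L^n u \in \interior(\c)$ for $n$ large. The same argument applied to $L^{\ast}$ acting on $\c^{\ast}$ produces $u^{\ast} \in \c^{\ast} \setminus \{0\}$ with $L^{\ast} u^{\ast} = \rho(L) u^{\ast}$; since $u \in \interior(\c)$ we have $\langle u, u^{\ast}\rangle > 0$, and I rescale so $\langle u, u^{\ast}\rangle = 1$.

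Geometric simplicity follows by a standard comparison: if $u_1, u_2 \in \interior(\c)$ are both Perron eigenvectors, set $\alpha = \sup\{t > 0 : u_1 - t u_2 \in \c\}$, finite by properness and $u_2 \in \interior(\c)$. Closedness gives $u_1 - \alpha u_2 \in \c$, while maximality gives $u_1 - \alpha u_2 \notin \interior(\c)$; if nonzero it would again be a Perron eigenvector, hence in $\interior(\c)$ by the previous step, so $u_1 = \alpha u_2$. Algebraic simplicity: any Jordan relation $L v = \rho(L) v + u$ paired with $u^{\ast}$ yields $\rho(L)\langle v, u^{\ast}\rangle = \rho(L)\langle v, u^{\ast}\rangle + \langle u, u^{\ast}\rangle$, forcing $\langle u, u^{\ast}\rangle = 0$, contradicting our normalization.

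The main obstacle is excluding other peripheral eigenvalues $\lambda = \rho(L) e^{i\theta}$ with $\theta \neq 0$. Here I would exploit the compact-group structure of the peripheral dynamics together with properness of $\c$. The closure of $\{\rho(L)^{-n} L^n|_{E_{\text{per}}}\}$ in $\End(E_{\text{per}})$ is a compact abelian group $G$ that preserves $\c \cap E_{\text{per}}$. Using that the image $P_{\text{per}}(\interior(\c))$ is open in $E_{\text{per}}$ (by continuity and openness of $\interior(\c)$, together with surjectivity of $P_{\text{per}}$), one can choose $y \in \interior(\c)$ so that $P_{\text{per}} y$ has nonzero projection onto the real plane spanned by the real and imaginary parts $v_1, v_2$ of an eigenvector for $\lambda$; the cluster points of $\rho(L)^{-n} L^n y$ sweep out the entire $G$-orbit of $P_{\text{per}} y$, all of which must lie in the closed cone $\c$. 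If $G$ acts non-trivially on the $(v_1,v_2)$-plane, averaging with respect to the Haar measure of $G$ shows that the convex hull of this orbit contains a neighborhood of the Perron ray within the $(v_1, v_2)$-directions, which forces $\pm v_1, \pm v_2 \in \c$ up to scaling and contradicts properness of $\c$. Hence $G$ is trivial, the peripheral spectrum reduces to $\{\rho(L)\}$, and the Riesz functional calculus delivers the decomposition $L = \rho(L)(P + S)$ with $P = \langle \cdot, u^{\ast}\rangle u$, $PS = SP = 0$, and $\rho(S) < 1$; the exponential estimate $\norm{\rho(L)^{-n} L^n x - \langle x, u^{\ast}\rangle u} \leq C\norm{x}\gamma^n$ then follows for any $\gamma \in (\rho(S)/\rho(L), 1)$ by the spectral radius formula.
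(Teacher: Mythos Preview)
Your argument is sound through the construction of $u$, $u^{\ast}$, geometric and algebraic simplicity of the eigenvalue $\rho(L)$, and the final spectral decomposition. The gap is in the step excluding other peripheral eigenvalues, and it occurs in two places.

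First, the claim that the closure of $\{\rho(L)^{-n}L^{n}|_{E_{\mathrm{per}}}\}$ is a compact group presupposes that \emph{every} peripheral eigenvalue is semisimple; your Jordan-block argument via pairing with $u^{\ast}$ only treats the eigenvalue $\rho(L)$. For $\lambda=\rho(L)e^{i\theta}$ with $\theta\neq 0$ you have no functional killing the other eigenspaces, so the same trick does not apply. The paper handles this separately: Sasser's index comparison (Proposition~\ref{indexprop}) shows that the order of any peripheral eigenvalue is at most the order of $1$, and then semi-positivity forces the order of $1$ to equal one (Proposition~\ref{irreducibleprop}(2)).

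Second, and more seriously, the Haar-averaging step does not yield the stated contradiction. Granting semisimplicity, for $y\in\interior(\c)$ write $P_{\mathrm{per}}y=cu+w$ with $w$ the component in the $(v_{1},v_{2})$-plane. The $G$-orbit gives you the disk $\{cu+w':|w'|\leq|w|\}\subseteq\c$, but this only says $cu\pm\varepsilon v_{1}\in\c$ for $\varepsilon\leq|w|$; it does not force $\pm v_{1}\in\c$. To reach that conclusion you would need $c\to 0$ with $|w|$ bounded below, but $c=\langle y,u^{\ast}\rangle>0$ for $y\in\interior(\c)$, and there is no mechanism preventing $P_{\mathrm{per}}(\interior(\c))$ from being, say, a cone about the $u$-axis in which $|w|\leq Kc$ always. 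The paper avoids this by using semi-positivity directly on a boundary point: pull $u$ to $\partial\c$ along $v_{1}$ via Lemma~\ref{lem:pulltoboundary}, observe that alternative (b) of semi-positivity is ruled out because $\langle u,s\rangle>0$, and then alternative (a) combined with the interior-closure Lemma~\ref{lem:interiorelementclosure} and a recurrence subsequence $e^{in_{k}\theta}\to 1$ forces the boundary point back into $\interior(\c)$, a contradiction unless $v_{1}$ is a multiple of $u$.
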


\begin{lemma}
	Suppose that $\c$ is a closed proper cone and $\SS$ is a sufficient set for $\c$. If $\inn{x,s}=0$ for all $s \in \SS$, then $x=0$.
\end{lemma}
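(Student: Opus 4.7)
The plan is to exploit the definition of a sufficient set in both directions and then invoke properness of the cone. By hypothesis, $\inn{x,s} = 0$ for every $s \in \SS$. In particular $\inn{x,s} \geq 0$ for every $s \in \SS$, so the characterization
\[ \c = \set{y \in X : \inn{y,s} \geq 0 \text{ for all }s \in \SS} \]
immediately gives $x \in \c$.

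Applying the same reasoning to $-x$, I would observe that $\inn{-x,s} = -\inn{x,s} = 0 \geq 0$ for every $s \in \SS$, so $-x \in \c$ as well. Hence $x \in \c \cap (-\c)$.

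Finally, since $\c$ is assumed to be proper, $\c \cap (-\c) = \set{0}$, and therefore $x = 0$. This completes the argument.

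There is no real obstacle here: the lemma is essentially an unwrapping of the two definitions (sufficiency of $\SS$ and properness of $\c$), with the only substantive step being the observation that sufficiency, which a priori yields only a one-sided inequality, forces membership of $-x$ in $\c$ once the pairing vanishes identically on $\SS$.
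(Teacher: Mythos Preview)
Your proof is correct and follows essentially the same approach as the paper: both use sufficiency of $\SS$ to conclude $x \in \c$ and $-x \in \c$, and then invoke properness of $\c$ to force $x = 0$.
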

\begin{proof}
	Notice that if $\inn{x,s}=0$ for all $s$, then $x \in \c$. On the other hand $\inn{-x,s}=0$ for all $s \in \SS$, so $-x \in \c$. As $\c$ is proper this implies that $x=0$.
\end{proof}

One may in general be concerned that a cone may not have a sufficient set. Note that every cone has a natural candidate for a sufficient set. Define
\[ \c^{\ast}=\set{x^{\ast}\in X^{\ast}: \inn{x,x^{\ast}}\geq 0 \text{ for all }x \in \c}. \]

\begin{lemma}\label{conepropertieslemma}
	Suppose that $\c$ is a closed proper cone with non-empty interior. Then
	\begin{enumerate}
		\item 
		$x \in \c$ if and only if $\inn{x,x^{\ast}}\geq 0$ for all $x^{\ast}\in \c$; that is $\c^{\ast}$ is a sufficient set for $\c$.
		
		\item 
		We have
		\[ \interior(\c)=\set{x \in X: \inn{x,x^{\ast}}>0 \text{ for all }x^{\ast}\in \c^{\ast} }. \]
	\end{enumerate}
\end{lemma}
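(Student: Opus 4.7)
The plan is to deduce (1) from the Hahn--Banach separation theorem applied to the closed convex set $\c$, and then to attack (2) by manufacturing a weak-$\ast$ compact ``base'' of the dual cone $\c^{\ast}$ using the nonempty interior of $\c$.

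For (1), the forward direction is the definition of $\c^{\ast}$. Conversely, if $x\notin\c$, then because $\c$ is closed and convex, Hahn--Banach produces $x^{\ast}\in X^{\ast}$ and $\alpha\in\R$ with $\inn{x,x^{\ast}}<\alpha\le\inn{y,x^{\ast}}$ for all $y\in\c$. Since $0\in\c$ we have $\alpha\le 0$; applying the inequality to $\lambda y$ and sending $\lambda\to\infty$ forces $\inn{y,x^{\ast}}\ge 0$ on $\c$, so $x^{\ast}\in\c^{\ast}$, while $\inn{x,x^{\ast}}<0$. For the easy inclusion in (2), if $B(x,\epsilon)\subseteq\c$ and $x^{\ast}\in\c^{\ast}$, then $x\pm\epsilon y\in\c$ for every $y$ with $\norm{y}\le 1$ gives $\inn{x,x^{\ast}}\ge\epsilon\abs{\inn{y,x^{\ast}}}$, so $\inn{x,x^{\ast}}\ge\epsilon\norm{x^{\ast}}>0$ whenever $x^{\ast}\neq 0$.

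The main obstacle is the reverse inclusion of (2): since $0\in\c^{\ast}$, no uniform bound $\inn{x,x^{\ast}}\ge c\norm{x^{\ast}}$ follows directly by compactness on $\c^{\ast}\setminus\set{0}$. To circumvent this, fix $e\in\interior(\c)$ with $B(e,2\epsilon)\subseteq\c$, so by the easy direction $\inn{e,x^{\ast}}\ge 2\epsilon\norm{x^{\ast}}$ for every $x^{\ast}\in\c^{\ast}$. The base $B:=\set{x^{\ast}\in\c^{\ast}:\inn{e,x^{\ast}}=1}$ is weak-$\ast$ closed and norm-bounded by $1/(2\epsilon)$, hence weak-$\ast$ compact by Banach--Alaoglu. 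For $x$ with $\inn{x,x^{\ast}}>0$ on every nonzero $x^{\ast}\in\c^{\ast}$, the weak-$\ast$ continuous functional $x^{\ast}\mapsto\inn{x,x^{\ast}}$ attains a strictly positive minimum $\eta$ on $B$; by homogeneity, $\inn{x,x^{\ast}}\ge \eta\inn{e,x^{\ast}}\ge 2\epsilon\eta\norm{x^{\ast}}$ for every $x^{\ast}\in\c^{\ast}$. Then for any $z$ with $\norm{z-x}<2\epsilon\eta$ and any $x^{\ast}\in\c^{\ast}$, $\inn{z,x^{\ast}}\ge\inn{x,x^{\ast}}-\norm{z-x}\norm{x^{\ast}}\ge 0$, so $z\in\c$ by (1); thus $B(x,2\epsilon\eta)\subseteq\c$ and $x\in\interior(\c)$.
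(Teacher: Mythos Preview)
Your proof is correct and follows essentially the same route as the paper: the paper cites an external reference for (1) while you give the standard Hahn--Banach separation argument, and for (2) both proofs use an interior point $e$ (respectively $u$) to build the weak-$\ast$ compact base $\{x^{\ast}\in\c^{\ast}:\inn{e,x^{\ast}}=1\}$ of the dual cone and then extract a uniform positive lower bound by compactness. The only cosmetic difference is that you pass through the explicit estimate $\inn{x,x^{\ast}}\ge 2\epsilon\eta\norm{x^{\ast}}$ before invoking (1), whereas the paper works directly on the base $S$; the content is the same.
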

\begin{proof} See \cite[Proposition 4.11]{MR2083432} for (1).
 
For (2), we will show inclusions of each set into another.
First, suppose that $x \in \interior(\c)$. Take $\delta>0$ such that $B(x,\delta)\subseteq \c$. If $\norm{y}=\delta/2$, then 
		\[ \norm{(x+y)-x}=\norm{y}<\delta \text{ and }\norm{(x-y)-x}=\norm{y}<\delta ,\]
and this	 implies that $x-y,x+y \in \c$. Thus for any $x^{\ast} \in \c^{\ast}$,
		\[0 \leq \inn{x+y,x^{\ast}}=\inn{x,x^{\ast}}+\inn{y,x^{\ast}} \text{ and }0 \leq \inn{x-y,x^{\ast}}=\inn{x,x^{\ast}}-\inn{y,x^{\ast}} \]
		which implies 
		\[ -\inn{x,x^{\ast}}\leq \inn{y,x^{\ast}}\leq \inn{x,x^{\ast}}. \]
		Then we have
		\begin{equation}\label{eq:interiornorm}
		\norm{x^{\ast}}=2/\delta\sup_{\norm{y}= \delta/2}\abs{\inn{y,x^{\ast}}}\leq 2/\delta \inn{x,x^{\ast}}.
		\end{equation}
		In particular if $x^*\neq 0$, then $\inn{x,x^{\ast}}>0$.
		
		For the other inclusion, suppose that 
		\[x \in \set{x \in X: \inn{x,x^{\ast}}>0 \text{ for all }x^{\ast} \in \c^{\ast}}. \]
		Let $u \in \interior(\c)$ and take $\delta>0$ such that $B(u , \delta)\subseteq \interior(\c)$. Setting
		\[ S=\set{x^{\ast}\in \c^{\ast}: \inn{u , x^{\ast}}=1},\]
		notice that $S$ is closed in the weak*-topology and by \eqref{eq:interiornorm} we have that $S \subseteq \ol{B(0, 2/ \delta)}^{\norm{\cdot}}$. Thus $S$ is weak*-compact by Banach-Alaoglu, and we can take $\eta>0$ such that $\inn{x,x^{\ast}}>\eta$ for all $x^{\ast}\in S$. Then for any $y\in X$ with $\norm{x-y}<\eta\delta/4$ and any $x^{\ast}\in S$, we have
		\[ \abs{\inn{x,x^{\ast}} - \inn{y,x^{\ast}}} \leq \norm{x-y}\norm{x^{\ast}}\leq 2\norm{x-y}/\delta \leq \eta/2. \]
		Hence $\inn{y,x^{\ast}}>0$. As $\c$ is defined as
		\[ \c=\set{x \in X:\inn{x,x^{\ast}} \geq 0 \text{ for all } x^{\ast}\in S}, \]
		we have that $y \in \c$. This shows that $x \in\interior(\c)$. 
\end{proof}

Denote by $\widetilde{X}$ the complexification of $X$. That is,
\[ \widetilde{X}:=\set{x+iy:x,y \in X}. \]
With addition and scalar multiplication defined in the natural way, this becomes a complex vector space. The function
\[ \norm{x+iy}_{\widetilde{X}}:= \sup\set{\norm{x\cos \theta + y\sin \theta }:\theta \in [0,2\pi]} \]
defines a norm on $\widetilde{X}$ making it into a Banach space. 

Any bounded linear operator $T:X \to X$ extends to a bounded linear operator $\widetilde{T}:\widetilde{X}\to \widetilde{X}$ and $\norm{T}_{X}=\norm{\wt{T}}_{\widetilde{X}}$. For a bounded linear operator $T$, we define $\spec(T):=\spec(\widetilde{T})$. Finally the dual of the complexification of $X$ is isomorphic to the complexification of $X^{\ast}$ and elements $x^{\ast}\in X^{\ast}$ act on $\widetilde{X}$ in the natural way by $\inn{x+iy,x^{\ast}}:=\inn{x,x^{\ast}}+i\inn{y,x^{\ast}}$.

Let $\widetilde{L}:\widetilde{X} \to \widetilde{X}$ be the extension of $L$ to $\widetilde{X}$. As $L$ is quasi-compact, any $\lambda_{0}\in \spec(L):=\spec(\widetilde{L})$ with $\abs{\lambda_{0}}=1$ is an isolated point of the spectrum and a pole of the resolvent. Thus $R(z)=(zI-\widetilde{L})^{-1}$ has an expansion
\[ R(z)=\sum_{k=1}^{n}\frac{(\lambda_{0}I-\widetilde{L})^{k-1} P_{\lambda_{0}}}{(z - \lambda_{0})^{k}} + \sum_{k=0}^{\infty}(z-\lambda_{0})^{k}A_{k} \]
which is valid for $0<\abs{z-\lambda_{0}}<\delta$ for any $\delta>0$ with $B(\lambda_{0},\delta)\cap \spec(L)=\set{\lambda_{0}}$. Note that $\displaystyle \sum_{k=0}^{\infty}(z-\lambda_{0})^{k}A_{k}$ converges in the operator norm topology, $n$ is the order of $\lambda$ (by Proposition \ref{prop:index-order}, $n$ is the smallest integer such that $\ker (\lambda_{0}I-\widetilde{L})^{n}=\ker(\lambda_{0}I-\widetilde{L})^{n+1}$), and $P_{\lambda_{0}}$ is the spectral projection onto the subspace $\ker (\lambda_{0}I-\widetilde{L})^{n}$.

Next two results due to Sasser \cite{MR0169067} describes the properties of quasi-compact operator preserving a closed cone.

\begin{proposition}\label{indexprop}
	Suppose that $L:X\to X$ is quasi-compact, $\c$ is a proper closed cone, $L\c\subseteq \c$ and $\rho(L)=1$. Then $1 \in \spec(L)$. 
	Moreover if $\lambda_{0}\in \spec(L)$ and $\abs{\lambda_{0}}=1$, then the order of $\lambda_{0}$ is at most the order of $1$.
\end{proposition}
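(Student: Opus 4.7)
The plan is to adapt the classical Pringsheim-type argument from Perron-Frobenius theory. Since $L$ is quasi-compact and $\rho(L)=1$, any $\lambda_{0}\in\spec(L)$ with $\abs{\lambda_{0}}=1$ is an isolated pole of the resolvent $R(z)=(zI-\widetilde{L})^{-1}$ of some finite order $n\geq 1$, with Laurent expansion at $\lambda_{0}$
\[ R(z)=\sum_{k=1}^{n}\frac{A_{k}^{(\lambda_{0})}}{(z-\lambda_{0})^{k}}+h(z), \]
where $h$ is holomorphic at $\lambda_{0}$ and the leading coefficient $A_{n}^{(\lambda_{0})}:=(\lambda_{0}I-\widetilde{L})^{n-1}P_{\lambda_{0}}$ is nonzero by the definition of the order.

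For $\abs{z}>1$ the Neumann series $R(z)=\sum_{m=0}^{\infty}z^{-m-1}L^{m}$ converges. The assumption $L\c\subseteq\c$ gives $\inn{L^{m}x,x^{\ast}}\geq 0$ for all $x\in\c$, $x^{\ast}\in\c^{\ast}$, and $m\geq 0$. Evaluating the Neumann series at $z=(1+\e)\lambda_{0}$ and at $z=1+\e$, using $\abs{\lambda_{0}}=1$, and applying the triangle inequality termwise yields the key comparison
\[ \abs{\inn{R((1+\e)\lambda_{0})x,x^{\ast}}}\leq \inn{R(1+\e)x,x^{\ast}} \]
for every $\e>0$, $x\in\c$, and $x^{\ast}\in\c^{\ast}$.

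I would then multiply both sides by $\e^{n}$ and let $\e\to 0^{+}$. From the Laurent expansion, $\e^{n}R((1+\e)\lambda_{0})\to \lambda_{0}^{-n}A_{n}^{(\lambda_{0})}$ in operator norm. On the right-hand side, if $1\notin\spec(L)$ or if $1$ is a pole of $R$ of order strictly less than $n$, then $\e^{n}R(1+\e)\to 0$. Combined with the comparison this forces $\inn{A_{n}^{(\lambda_{0})}x,x^{\ast}}=0$ for every $x\in\c$ and $x^{\ast}\in\c^{\ast}$. Writing $A_{n}^{(\lambda_{0})}x=u+iv$ with $u,v\in X$ and recalling that $\c^{\ast}$ consists of real-valued functionals, this gives $\inn{u,x^{\ast}}=\inn{v,x^{\ast}}=0$ for all $x^{\ast}\in\c^{\ast}$; sufficiency of $\c^{\ast}$ (Lemma \ref{conepropertieslemma}) and properness of $\c$ then yield $u=v=0$, so $A_{n}^{(\lambda_{0})}$ vanishes on $\c$. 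Since $\c-\c=X$ under the standing nonempty-interior hypothesis, this means $A_{n}^{(\lambda_{0})}\equiv 0$, contradicting the choice of $n$. Hence $1$ must be a pole of $R$ of order at least $n$. Applied to any $\lambda_{0}\in\spec(L)$ on the unit circle, whose existence is guaranteed by $\rho(L)=1$, this simultaneously delivers $1\in\spec(L)$ and the claimed ordering of the pole orders.

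The main difficulty is executing the $\e\to 0^{+}$ passage while keeping the real/complex bookkeeping straight: the central estimate pairs a complex-valued resolvent with a real-valued one, and $A_{n}^{(\lambda_{0})}$ acts on the complexification $\widetilde{X}$ while the testing functionals $x^{\ast}\in\c^{\ast}$ are real. Splitting $A_{n}^{(\lambda_{0})}x$ into real and imaginary parts before invoking sufficiency of $\c^{\ast}$ resolves this, after which the rest of the argument is a direct consequence of the Neumann-series comparison.
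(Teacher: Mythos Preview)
Your argument is correct and is precisely the Pringsheim-type comparison that underlies the cited result; the paper itself gives no proof here but simply refers to Sasser's Theorem~2, so there is nothing further to compare against.

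One clarification about your last step. You invoke $\c-\c=X$ ``under the standing nonempty-interior hypothesis'', but the proposition as stated does not list nonempty interior among its assumptions (that hypothesis is added separately in Theorem~\ref{abstractPFThm-Primtive} and Lemma~\ref{conepropertieslemma}). Some such generation hypothesis is genuinely needed: if $\c$ is a single ray in $\R^{3}$ and $L$ acts by $\tfrac12$ on the ray and by a rotation on its orthogonal complement, then $L\c\subseteq\c$ and $\rho(L)=1$ yet $1\notin\spec(L)$. So the paper is implicitly importing a generating-cone assumption from Sasser; it is present in every application the paper makes of this proposition, and your proof is complete once that hypothesis is made explicit. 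Note also that the sufficiency of $\c^{\ast}$ for a closed proper cone follows from Hahn--Banach separation and does not itself require nonempty interior, so the only place the extra hypothesis is used is exactly the step $\c-\c=X$.
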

\begin{proof}
	This is \cite[Theorem 2]{MR0169067}.
\end{proof}

\begin{lemma}\label{eigenvectors}
	Suppose that $L:X\to X$ is quasi-compact, $\c$ is a proper closed cone,  $L\c\subseteq \c$, and $\rho(L)=1$. Then there exist $u \in \c$ ($u \neq 0$) and $u^{\ast}\in \c^{\ast}$ ($u^{\ast}\neq 0$) such that $Lu=u$ and $L^{\ast}u^{\ast}=u^{\ast}$.
\end{lemma}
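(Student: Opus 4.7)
The plan is to follow the classical Krein--Rutman strategy: produce $u$ and $u^{\ast}$ as leading coefficients of the Laurent expansion of the resolvent $R(z):=(zI-\widetilde{L})^{-1}$ at its pole $z=1$, combined with the fact that $R(z)$ and $R(z)^{\ast}$ preserve the respective cones for real $z>1$.

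First, by Proposition \ref{indexprop}, $1$ is an eigenvalue of $\widetilde L$ of some order $n\geq 1$, and $R(z)$ has the Laurent expansion
\[R(z)=\sum_{k=1}^{n}\frac{(I-\widetilde{L})^{k-1}P_{1}}{(z-1)^{k}}+A(z)\]
on a punctured neighborhood of $1$, where $A(z)$ is analytic at $1$ and $P_{1}$ is the spectral projection onto $\ker(I-\widetilde{L})^{n}$. By minimality of $n$, the leading coefficient $(I-\widetilde{L})^{n-1}P_{1}$ is nonzero. Since $\rho(L)=1$, for real $z>1$ the Neumann series $R(z)=\sum_{k\geq 0}L^{k}/z^{k+1}$ converges, so $R(z)$ restricts to a real operator that preserves $\c$.

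To obtain $u$, fix $x\in\interior(\c)$; then $(z-1)^{n}R(z)x\in\c$ for real $z>1$ in a small punctured neighborhood of $1$, and letting $z\to 1^{+}$ the Laurent expansion gives
\[u_{x}:=\lim_{z\to 1^{+}}(z-1)^{n}R(z)x=(I-L)^{n-1}P_{1}x\in\c\cap X,\]
since $\c$ and $X\subset\widetilde X$ are closed. Moreover $(I-L)u_{x}=(I-L)^{n}P_{1}x=0$ because $P_{1}$ projects onto $\ker(I-\widetilde L)^{n}$, so $Lu_{x}=u_{x}$. For non-triviality, I would verify the standard fact $\interior(\c)-\interior(\c)=X$ (for $y\in X$ and any fixed $u_{0}\in\interior(\c)$, $tu_{0}\pm y\in\interior(\c)$ once $t>0$ is large) and combine it with the fact that the restriction of $(I-\widetilde{L})^{n-1}P_{1}$ to $X$ is nonzero (take real and imaginary parts of any complex witness). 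Expanding a witness $y\in X$ as a difference of two elements of $\interior(\c)$, one of the two summands yields a nonzero $u_{x}$.

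The construction of $u^{\ast}$ runs in parallel using the adjoint resolvent. For $x^{\ast}\in\c^{\ast}$ and real $z>1$, the pairing $\langle R(z)x,x^{\ast}\rangle\geq 0$ for all $x\in\c$ forces $R(z)^{\ast}x^{\ast}\in\c^{\ast}$, and the same limit procedure yields $u^{\ast}_{x^{\ast}}:=(I-L^{\ast})^{n-1}P_{1}^{\ast}x^{\ast}\in\c^{\ast}$ with $L^{\ast}u^{\ast}_{x^{\ast}}=u^{\ast}_{x^{\ast}}$. The novelty here is the non-vanishing step, since $\c^{\ast}$ need not have nonempty interior in $X^{\ast}$. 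Instead, the operator $(I-L^{\ast})^{n-1}P_{1}^{\ast}$ is weak$^{\ast}$-continuous (as an adjoint of a bounded operator on $X$), so its kernel is weak$^{\ast}$-closed; if it contained $\c^{\ast}$ it would contain the weak$^{\ast}$-closure of $\spn(\c^{\ast})$. A Hahn--Banach argument shows this closure equals $X^{\ast}$: any nonzero weak$^{\ast}$-continuous functional vanishing on $\spn(\c^{\ast})$ would be evaluation at some $x\in X\setminus\{0\}$ with $\langle x,y^{\ast}\rangle=0$ for every $y^{\ast}\in\c^{\ast}$, forcing $\pm x\in\c$ by Lemma \ref{conepropertieslemma}(1) and contradicting properness of $\c$. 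Thus some $x^{\ast}\in\c^{\ast}$ yields $u^{\ast}\neq 0$.

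The main obstacle is precisely this last step: while the primal non-vanishing uses $\interior(\c)-\interior(\c)=X$ in a routine manner, the dual case lacks an analogous interior hypothesis on $\c^{\ast}$ and must instead leverage properness of $\c$ through weak$^{\ast}$ duality and the weak$^{\ast}$-continuity of adjoints.
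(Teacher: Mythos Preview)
Your argument is correct and is essentially the classical Krein--Rutman/Sasser proof; the paper itself gives no argument here but simply cites \cite[Theorem 3]{MR0169067}, so you have supplied the details the paper omits. One small point: your primal non-vanishing step uses $\interior(\c)\neq\emptyset$, which the lemma does not state explicitly (and indeed without it the conclusion can fail, e.g.\ $X=\R^{2}$, $\c=\{(t,0):t\ge 0\}$, $L=\mathrm{diag}(1/2,1)$); this hypothesis is, however, in force throughout the appendix (see Definition~\ref{defn: semi-positive} and Theorem~\ref{abstractPFThm-Primtive}), so the omission is harmless in context. Your dual non-vanishing argument via weak$^{\ast}$-closedness of $\ker\big((I-L^{\ast})^{n-1}P_{1}^{\ast}\big)=(\mathrm{im}\,P_{1}(I-L)^{n-1})^{\perp}$ together with weak$^{\ast}$-density of $\spn(\c^{\ast})$ (which only needs properness of $\c$ and Lemma~\ref{conepropertieslemma}(1)) is clean and correct.
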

\begin{proof}
	This is \cite[Theorem 3]{MR0169067}.
\end{proof}

\begin{lemma}\label{lem:pulltoboundary}
		If $z \in \interior(\c)$ and $x \in X$ with $x \neq 0$, then there exists $t \in \R\setminus\set{0}$ such that $z+tx \in \partial \c$.
\end{lemma}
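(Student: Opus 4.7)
The plan is to examine the line $\{z + tx : t \in \R\}$ and show it must cross the boundary of $\c$ at some nonzero time. Let $T := \{t \in \R : z + tx \in \c\}$. Since $\c$ is closed and $t \mapsto z+tx$ is continuous, $T$ is a closed subset of $\R$. Since $z \in \interior(\c)$, the preimage of $\interior(\c)$ under this map is open and contains $0$, so $T$ contains an open interval around $0$.

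The main step is to rule out $T = \R$. If $z + tx \in \c$ for every $t \in \R$, then for $t > 0$ the cone property gives $(z+tx)/t = z/t + x \in \c$; letting $t \to \infty$ and using that $\c$ is closed yields $x \in \c$. Applying the same argument to $t < 0$ (equivalently, to $-x$) gives $-x \in \c$. Properness of $\c$ then forces $x = 0$, contradicting the hypothesis $x \neq 0$.

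Therefore $T$ is a proper closed subset of $\R$ containing a neighborhood of $0$, so at least one of $t_+ := \sup\{t \geq 0 : z+tx \in \c\}$ or $t_- := \inf\{t \leq 0 : z+tx \in \c\}$ is a finite real number. Without loss of generality take $t^\ast = t_+ \in (0,\infty)$ (the other case is identical). By closedness of $\c$ we have $z + t^\ast x \in \c$. If $z + t^\ast x$ were in $\interior(\c)$, then by continuity $z + tx$ would lie in $\c$ for all $t$ in some open neighborhood of $t^\ast$, contradicting the definition of $t^\ast$ as the supremum. Hence $z + t^\ast x \in \c \setminus \interior(\c) = \partial \c$, completing the proof. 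The only mildly subtle point is the properness-based argument ruling out the full line; everything else is routine continuity and supremum reasoning.
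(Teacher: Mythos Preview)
Your proof is correct and takes a somewhat different route from the paper's. The paper first reduces to the case $x \notin \c$ (using properness to say at least one of $x,-x$ lies outside $\c$), then picks a functional $x^{\ast}\in\c^{\ast}$ with $\langle x,x^{\ast}\rangle<0$ via Lemma~\ref{conepropertieslemma}(1), and uses the inequality $\langle z+tx,x^{\ast}\rangle<0$ for large $t$ to bound $t_{0}=\sup\{t:z+tx\in\c\}$. Your argument avoids the dual cone entirely: you show directly that if the whole line stayed in $\c$, then rescaling and passing to the limit would force $x\in\c$ and $-x\in\c$, hence $x=0$. This is more self-contained, since it does not appeal to the Hahn--Banach-based characterization of $\c$ in terms of $\c^{\ast}$.

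One small point to tighten: the sentence ``$T$ is a proper closed subset of $\R$ containing a neighborhood of $0$, so at least one of $t_{+}$ or $t_{-}$ is finite'' is not valid for an arbitrary closed set $T$ (take $T=\R\setminus(1,2)$). It becomes immediate once you note that $T$ is convex, because $\c$ is convex, so $T$ is a closed interval and hence has a finite endpoint. Alternatively, your limiting argument already yields the stronger statement that $t_{+}=\infty$ implies $x\in\c$ and $t_{-}=-\infty$ implies $-x\in\c$, so both being infinite again forces $x=0$; either observation closes the gap.
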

\begin{proof}
By replacing $x$ with $-x$ if necessary, we may assume that $x \notin \c$. Set
	\[ t_{0}=\sup\set{t:z+tx \in \c} \]
	and notice that as $z \in \interior(\c)$ we have that $t_{0}>0$. As $x \notin \c$, we may choose $x^{\ast} \in \c^{\ast}$ such that $\inn{x,x^{\ast}}<0$ by Lemma \ref{conepropertieslemma} (1). Notice that if $t>-\inn{z,x^{\ast}}/\inn{x,x^{\ast}}$, then $z+tx \notin \c$. Thus $t_{0}<\infty$. As $\c$ is closed we have that $z+t_{0}x \in \c$, but it is clear that $z+t_{0}x\notin \interior(\c)$. Hence, $z+t_{0}x \in \partial \c$.
\end{proof}

The following proposition shows that with the added assumption of semi-positivity with respect to a proper closed cone, extra properties (such as some of the listed properties in Theorem \ref{abstractPFThm-Primtive}) of the operator can be established.

\begin{proposition}\label{irreducibleprop}
	Suppose that $L:X\to X$ is quasi-compact, $\c$ is a proper closed cone, $L$ is semi-positive with respect to $\c$, and $\rho(L)=1$. Then the following are true:
	\begin{enumerate}
		
		\item 
		If $w \in \c \setminus\{0\}$ with $Lw=w$, then $w \in \interior(\c)$.
		
		\item 
		The order of the eigenvalue $1$ is one.
		
		\item 
		$\widetilde{L}$ can be written as
		\[ \widetilde{L}=\sum_{j=1}^{m}\lambda_{j}P_{j}+S \]
		where $\set{\lambda_{j}}$ is the collection of eigenvalues of $\L$ with modulus $1$, $P_{j}^{2}=P_{j}$, $P_{j}S=SP_{j}=0$, $P_{i}P_{j}=0$ for $i\neq j$, and $\rho(S)<1$.
		
		\item 
		The dimension of the eigenspace corresponding to $1$ is one-dimensional.
	\end{enumerate}
\end{proposition}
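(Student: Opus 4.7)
\textbf{Proof proposal for Proposition \ref{irreducibleprop}.}

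My approach is to establish the four parts in order, with part (1) doing the heavy lifting and the remaining parts following by short arguments built on top of it plus standard quasi-compact spectral theory. For (1), I would simply invoke semi-positivity applied to the fixed vector $w \in \c\setminus\{0\}$. One of the two alternatives in Definition \ref{defn: semi-positive} must hold for $w$: either $L^n w = w$ lies in $\interior(\c)$ for large $n$, which immediately gives $w \in \interior(\c)$; or $\inn{L^n w, s} = \inn{w, s}$ tends to $0$ for every $s$ in the sufficient set, forcing $\inn{w,s}=0$ for all $s$ and hence $w = 0$ by the sufficiency lemma, contradicting $w \neq 0$.

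For (2), I would argue by contradiction. Fix the eigenvectors $u \in \c$ and $u^{\ast} \in \c^{\ast}$ from Lemma \ref{eigenvectors}; by (1), $u \in \interior(\c)$, and by Lemma \ref{conepropertieslemma}(2) this gives $\inn{u, u^{\ast}} > 0$. Suppose $(I - \wt L)^2 v = 0$ for some $v \in \wt X$ with $y := (I - \wt L)v \neq 0$. Writing $v = v_1 + i v_2$ and $y = y_1 + iy_2$, at least one of $y_1, y_2$ is nonzero and lies in $\ker(I - L)$ (real part). Call this real eigenvector $y$. Pairing against $u^{\ast}$ gives $\inn{y, u^{\ast}} = \inn{(I-L) v_j, u^{\ast}} = \inn{v_j, (I - L^{\ast})u^{\ast}} = 0$. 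By Lemma \ref{lem:pulltoboundary} applied with $z = u \in \interior(\c)$ and $x = y$, there exists $t \neq 0$ with $u + ty \in \partial \c$; this element is nonzero (else $y = -u/t$ would force $\inn{y, u^{\ast}} \neq 0$), is fixed by $L$, and therefore lies in $\interior(\c)$ by part (1) — contradicting $u+ty \in \partial \c$. Hence $y = 0$ and the order of $1$ is one.

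For (3), I would combine quasi-compactness with Proposition \ref{indexprop} and part (2): since $\rho(L) = 1$ and $L$ is quasi-compact, the spectrum outside any disk of radius strictly greater than $r_{\text{ess}}(L)$ consists of finitely many isolated eigenvalues of finite algebraic multiplicity, so there are only finitely many peripheral eigenvalues $\{\lambda_j\}_{j=1}^m$; by Proposition \ref{indexprop} each has order at most that of $1$, which by (2) equals $1$, so each $\lambda_j$ is a simple pole of the resolvent. The decomposition $\wt L = \sum_j \lambda_j P_j + S$ with the stated properties is then the standard Riesz spectral decomposition: each $P_j$ is the residue at $\lambda_j$, the relations $P_j^2 = P_j$, $P_i P_j = 0$ and $P_j S = S P_j = 0$ are routine consequences of the resolvent identity together with Cauchy's theorem, and $\rho(S) < 1$ because the spectrum of $S$ is the non-peripheral part of $\spec(\wt L)$.

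Part (4) is then a short rerun of the argument in (2). Suppose $\wt L v = v$ with $v \neq 0$; taking real and imaginary parts reduces to showing any real eigenvector is a scalar multiple of $u$. If $w \in X$ satisfies $Lw = w$ and is linearly independent of $u$, then Lemma \ref{lem:pulltoboundary} again produces $t \neq 0$ with $u + tw \in \partial \c \setminus \{0\}$ (nonzero by the linear independence), and (1) forces $u + tw \in \interior(\c)$, a contradiction. Hence the real eigenspace is one-dimensional, and so is the complex one. The main obstacle I expect is simply being careful in part (2) that the eigenvector $y$ manufactured from a real/imaginary part is actually nonzero and lies in $X$ (not $\wt X$), so that both Lemma \ref{lem:pulltoboundary} and the duality pairing $\inn{y, u^{\ast}}$ with a real functional apply cleanly.
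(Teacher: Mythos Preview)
Your proposal is correct. Parts (1), (3), and (4) match the paper's arguments essentially verbatim: part (1) is the direct dichotomy from semi-positivity, part (3) is the standard Riesz spectral decomposition once all peripheral eigenvalues are known to be simple poles, and part (4) is precisely the pull-to-boundary trick you describe.

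The one genuine difference is in part (2). The paper follows Sasser's approach: let $n$ be the order of $1$, set $\Gamma = (I-\wt L)^{n-1}P_1$, and identify $\Gamma$ as the limit $\lim_{z\to 1^+}(z-1)^n R(z) = \lim_{z\to 1^+}(z-1)^n\sum_k z^{-k-1}\wt L^k$; this representation shows $\Gamma$ maps $\c$ into $\c$, and then for $x\in\c$ with $\Gamma x\neq 0$ one pairs against $u^\ast$ and computes $0<\inn{\Gamma x,u^\ast}=\inn{x,u^\ast}\lim_{z\to 1^+}(z-1)^{n-1}$, forcing $n=1$. Your route is different and more elementary: you take a Jordan vector $v$ of height two, observe that the real (or imaginary) part $y$ of $(I-\wt L)v$ is a nonzero real eigenvector annihilated by $u^\ast$, and then rerun the pull-to-boundary argument from (4) to reach a contradiction via part (1). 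Your approach avoids the resolvent calculus entirely and unifies (2) and (4) under a single mechanism; the paper's approach, on the other hand, explains structurally why the leading Laurent coefficient is cone-preserving, which is the conceptual content behind Sasser's theorem. Both are clean; yours is shorter here because you have already paid the cost of proving Lemma \ref{lem:pulltoboundary} and part (1).
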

\begin{proof}~
We can modify the proof of \cite[Theorem 4]{MR0169067} to fit our assumptions. 
\begin{enumerate}
	\item 
	It is clear that there is some $s\in \SS$ such that $\set{\inn{L^{n}w,s}}_{n=1}^{\infty}$ does not converge to $0$. So by the assumption that $L$ is semi-positive, it must be that $L^{n}w\in \interior(\c)$ for some $n$, but of course $L^{n}w=w$. Hence, $w \in \interior(\c)$.
	
	\item
	Let $n$ be the order of $1$ and set
	\[ \Gamma = (I-\widetilde{L})^{n-1}P_{1}. \]
	From the Laurent expansion of $R(z)$ about $1$ we can see that
	\begin{align*}
	\Gamma &= \lim_{z \to 1}(z-1)^{n}R(z) \\
	&= \lim_{z \to 1^{+}}(z-1)^{n}R(z)\\
	&= \lim_{z \to 1^{+}}(z-1)^{n}\sum_{k=0}^{\infty}z^{-k-1}\widetilde{L}^{k}.
	\end{align*}
	From this it follows that $\Gamma X \subseteq X$ and $\Gamma \c \subseteq \c$. Moreover 
	\[ (I-\widetilde{L})\Gamma = (I-\widetilde{L})^{n}P_{1}=0. \]
	This combined with the previous equation implies that $\Gamma\widetilde{L}=\widetilde{L}\Gamma = \Gamma$. 
	
	Take $x\in \c$ such that $\Gamma x \neq 0$. Notice that $L\Gamma x = \Gamma x$ and $\Gamma x \in \c$, and thus $\Gamma x \in \interior(\c)$ by part (1). With $u^* \in \c^*$ from Lemma \ref{eigenvectors}, we have
	\begin{align*}
	0<\inn{\Gamma x, u^{\ast}}&=\lim_{z\to 1^{+}}(z-1)^{n}\sum_{k=0}^{\infty}z^{-k-1}\inn{\widetilde{L}^{k}x,u^{\ast}}\\
	&=\inn{x,u^{\ast}}\lim_{z\to 1^{+}}(z-1)^{n}\sum_{k=0}^{\infty}z^{-k-1}\\
	&=\inn{x,u^{\ast}}\lim_{z\to 1^{+}}(z-1)^{n}\frac{z^{-1}}{1-z^{-1}}\\
	&=\inn{x,u^{\ast}}\lim_{z\to 1^{+}}(z-1)^{n-1}.
	\end{align*}
	Therefore $n =1$.
	
	\item 
We enumerate the points in $\spec(L)$ with modulus $1$ as $\set{\lambda_{i}}_{i=1}^{m}$ where $\lambda_{1}=1$. Letting $P_{i}=P_{\lambda_{i}}$ be the spectral projection onto $\ker (\lambda_{i}I-\widetilde{L})$, notice from Proposition \ref{indexprop} that the order of $\lambda_{i}$ is one. Setting $\mathcal{P}=\sum_{i}P_{i}$ and writing
	\[ \widetilde{L}= \sum_{i}\widetilde{L}P_{i} + \widetilde{L}(I-\mathcal{P}), \]
	it is easily seen that $\widetilde{L}P_{i} = \lambda_{i}P_{i}$. 
	
	We then take $S = \widetilde{L}(I-\mathcal{P})$. As $I-\mathcal{P}$ is the spectral projection onto $\spec(L)\setminus\set{\lambda_{i}}_{i=1}^{m}$ we have that $\spec(\widetilde{L}|_{\ran(I-\mathcal{P})})=\spec(L)\setminus \set{\lambda_{i}}_{i=1}^{k}$. In particular, $\rho(S)<1$.
	
	\item 
	Let $u\in \c$ be as in Lemma \ref{eigenvectors}. Supposing that $Lx=x$ for some $x\in X$, we take $t$ as in Lemma \ref{lem:pulltoboundary} such that $u+tx \in \partial \c$. Since $L(u+tx)=u+tx$, it follows from part (1) that $u+tx=0$; that is $x$ is a scalar multiple of $u$.
	
	 Now if $x+iy \in \wt{X}$ is such that $L(x+iy)=x+iy$, then $Lx=x$ and $Ly=y$. Hence, $x+iy$ is a scalar multiple of $u$, as required.
\end{enumerate}
\end{proof}

\begin{lemma}\label{lem:interiorelementclosure}
	Suppose that $L:X\to X$ is quasi-compact, $\c$ is a proper closed cone, $L$ is semi-positive with respect to $\c$, and $\rho(L)=1$. Then for any $z \in \interior(\c)$ we have that $\ol{\set{L^{n}z}}\subseteq \interior(\c)$.
\end{lemma}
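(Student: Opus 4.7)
The plan is to exploit the fixed eigenvector $u \in \interior(\c)$ produced in the preceding results and use a translation-by-cone argument. First, by Lemma \ref{eigenvectors} there is an eigenvector $u \in \c \setminus \{0\}$ with $Lu = u$, and by Proposition \ref{irreducibleprop}(1) we in fact have $u \in \interior(\c)$.

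Next, I would establish two elementary cone facts. First, $\interior(\c) + \c \subseteq \interior(\c)$: if $B(x,\delta) \subseteq \c$ and $y \in \c$, then $B(x+y,\delta) = B(x,\delta) + y \subseteq \c + \c \subseteq \c$. Second, for any vector $v \in \interior(\c)$, the set $v + \c$ is closed (it is a translate of the closed set $\c$) and, by the first fact, contained in $\interior(\c)$.

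Now fix $z \in \interior(\c)$. Since $z \in \interior(\c)$, there exists $\delta > 0$ with $B(z,\delta) \subseteq \c$; choosing $\epsilon \in (0, \delta/\|u\|)$ we obtain $z - \epsilon u \in \c$. Applying $L^n$ and using $L\c \subseteq \c$ together with $L^n u = u$ gives
\[ L^n z - \epsilon u = L^n(z - \epsilon u) \in \c, \]
so $L^n z \in \epsilon u + \c$ for every $n \geq 0$. Because $\epsilon u + \c$ is closed and contained in $\interior(\c)$, we conclude
\[ \ol{\set{L^n z}} \subseteq \epsilon u + \c \subseteq \interior(\c), \]
which is precisely the claim.

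There is no real obstacle here: the only nontrivial input is the existence of a strictly positive fixed vector, which is exactly what Proposition \ref{irreducibleprop}(1) combined with Lemma \ref{eigenvectors} supplies under the hypothesis $\rho(L)=1$. The argument does not require the finer spectral decomposition from Proposition \ref{irreducibleprop}(3), and it works in this purely order-theoretic way because the orbit is dominated below (in the cone order) by a fixed element of $\interior(\c)$.
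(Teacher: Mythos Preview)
Your proof is correct and, in fact, cleaner than the paper's. Both arguments hinge on the same essential input---the existence of a fixed eigenvector $u\in\interior(\c)$ furnished by Lemma~\ref{eigenvectors} together with Proposition~\ref{irreducibleprop}(1)---but the two proofs exploit it differently. The paper works dually: it shows that for every $x^\ast\in\c^\ast$ one has a uniform lower bound $\inn{L^n z,x^\ast}\geq \tfrac{\delta}{2\|u\|}\inn{u,x^\ast}$, obtained by combining the inequality \eqref{eq:interiornorm} (which bounds $\inn{z,y^\ast}$ below by $\tfrac{\delta}{2}\|y^\ast\|$ for $y^\ast\in\c^\ast$) with the estimate $\|(L^\ast)^n x^\ast\|\geq \inn{u,x^\ast}/\|u\|$ coming from $L^n u=u$; the characterization of $\interior(\c)$ in Lemma~\ref{conepropertieslemma}(2) then finishes. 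Your argument is the primal version of the same inequality: you observe directly that $z-\epsilon u\in\c$ for small $\epsilon>0$, apply $L^n$, and conclude $L^n z\in \epsilon u+\c$, a closed set contained in $\interior(\c)$. This sidesteps the dual-cone machinery and the quantitative bound \eqref{eq:interiornorm} entirely, at no cost in generality. The paper's route yields an explicit constant in terms of $\delta$ and $\|u\|$, but that is not needed for the lemma as stated.
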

\begin{proof}
	Let $u\in \c$ be as in Lemma \ref{eigenvectors}. Notice that for any $x^{\ast}\in \c^{\ast}$, we have that 
	\[ \inn{u,x^{\ast}}= \inn{L^{n}u,x^{\ast}}\leq \norm{u}\norm{(L^{\ast})^{n}x^{\ast}}. \]
	Suppose $z \in \interior(\c)$ and take $\delta>0$ such that $B(z,\delta)\subseteq \c$. Then 
$$
	\inn{L^{n}z, x^{\ast}} = \inn{z, \frac{(L^{\ast})^{n}x^{\ast}}{\norm{(L^{\ast})^{n}x^{\ast}}}}\norm{(L^{\ast})^{n}x^{\ast}}\geq \frac{\delta}{2}\norm{(L^{\ast})^{n}x^{\ast}}\geq \frac{\delta}{2}\cdot \frac{\inn{u,x^{\ast}}}{\norm{u}}
$$
where the first inequality is due to \eqref{eq:interiornorm}.	In particular, if $y \in \ol{\set{L^{n}z}}$, then $\inn{y,x^{\ast}}>0$. Thus by Lemma \ref{conepropertieslemma}, $y \in \interior(\c)$.
\end{proof}

\begin{proposition}\label{primitiveperspec}
	Suppose that $L:X\to X$ is quasi-compact, $\c$ is a proper closed cone, $L$ is semi-positive with respect to $\c$, and $\rho(L)=1$. Then $1$ is the only element of $\spec(L)$ of modulus $1$.
\end{proposition}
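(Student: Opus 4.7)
I argue by contradiction: assume there exists $\lambda\in\spec(L)$ with $|\lambda|=1$ and $\lambda\ne 1$. Since $L$ is quasi-compact, $\lambda$ is an isolated eigenvalue of $\widetilde L$, so I pick a nonzero eigenvector $z = x+iy\in\widetilde X$. This yields the real relations $Lx = \Re(\lambda)x - \Im(\lambda)y$ and $Ly = \Im(\lambda)x + \Re(\lambda)y$. Lemma \ref{eigenvectors} combined with Proposition \ref{irreducibleprop}(1) provides $u\in\interior(\c)$ with $Lu = u$. After replacing $z$ by $iz$ if necessary, I may assume $x\ne 0$, and Lemma \ref{lem:pulltoboundary} then produces $t\ne 0$ with $v := u + tx \in \partial\c$.

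A direct induction from the recurrences above yields, writing $\lambda = e^{i\alpha}$,
\[ L^n v = u + t\bigl(\cos(n\alpha)\,x - \sin(n\alpha)\,y\bigr), \quad n\ge 0, \]
so $\{L^n v\}$ is a bounded sequence in $\c$. By compactness of the unit circle there is a subsequence $n_k\to\infty$ with $\lambda^{n_k}\to 1$, and consequently $L^{n_k}v\to v$ in $X$. I then apply the semi-positivity dichotomy to $v\in\c$. If $L^N v\in\interior(\c)$ for some $N$, Lemma \ref{lem:interiorelementclosure} forces every limit point of $\{L^{N+n}v\}_{n\ge 0}$ to lie in $\interior(\c)$; but $v$ itself is such a limit point, contradicting $v\in\partial\c$. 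Otherwise $\inn{L^n v,s}\to 0$ for every $s$ in the sufficient set $\SS$, and evaluating along $n_k$ gives $\inn{v,s}=0$ for all $s\in\SS$, which by sufficiency together with properness of $\c$ forces $v=0$.

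The final step, which I expect to be the most delicate piece of bookkeeping, extracts a contradiction from $v=0$, i.e., from $x=-u/t$. This directly gives $Lx=x$. If $\Im(\lambda)=0$, then $Lx = \Re(\lambda)x$ combined with $Lx=x$ forces $\Re(\lambda)=1$ and hence $\lambda=1$, contrary to assumption. Otherwise the relation $Lx=\Re(\lambda)x-\Im(\lambda)y=x$ expresses $y$ as a scalar multiple of $x$, hence of $u$; thus $z\in\spn_{\C}\{u\}$, say $z=\gamma u$ with $\gamma\ne 0$. But then $\widetilde L z = \gamma u = z$, which compared with $\widetilde L z = \lambda z$ forces $\lambda=1$, the desired contradiction. (Alternatively, one can invoke Proposition \ref{irreducibleprop}(4), whose statement that the 1-eigenspace is one-dimensional immediately places $z$ in $\spn_{\C}\{u\}$.)
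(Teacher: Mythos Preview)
Your proof is correct and follows essentially the same approach as the paper's: both push a boundary point $u+tx$ forward by $L$, use a subsequence $n_k$ with $\lambda^{n_k}\to 1$ to return to $u+tx$, and invoke the semi-positivity dichotomy together with Lemma \ref{lem:interiorelementclosure} to force $u+tx=0$. The only cosmetic difference is the endgame: the paper simply repeats the boundary-point argument verbatim for $y$ to conclude that $y$ too is a scalar multiple of $u$, whereas you instead read off $y$ as a multiple of $x$ from the eigenvalue relation $Lx=\Re(\lambda)x-\Im(\lambda)y$ once $Lx=x$ is known. Both routes are equally short.
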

\begin{proof}
Let $u$ be as in Lemma \ref{eigenvectors}. Suppose that $e^{i\theta}$ belongs to $\spec(L)$ and let $x+iy \neq 0$ be its corresponding eigenvector; that is, $L(x+iy)=e^{i\theta}(x+iy)$. Take a sequence $n_{k}$ such that $e^{in_{k}\theta}\xrightarrow{k \to \infty}1$, then 
	\[ L^{n_{k}}(x+iy) \xrightarrow{k \to \infty}x+iy .\]
	Thus 
	\[ L^{n_{k}}x \xrightarrow{k \to \infty}x \text{ and }L^{n_{k}}y \xrightarrow{k \to \infty}y. \]
	If $x \neq 0$, then by Lemma \ref{lem:pulltoboundary} we can take $t\neq 0$ such that $u+tx \in \partial \c$. Notice that
	\[ L^{n_{k}}(u+tx) = u + t L^{n_{k}}x \xrightarrow{k \to \infty}u+tx. \]
	By the assumption that $L$ is semi-positive with respect to $\c$, either $u+tx=0$ or there exists $N$ such that $L^{n}(u+tx)\in \interior(\c)$ for all $n\geq N$. 
	
It turns out that we must have $u+tx=0$. This is because if
$u+tx \neq 0$, then we would have $u+tx \in \ol{\set{L^{n}(L^{N}(u+tx))}_{n=1}^{\infty}} \subseteq \interior(\c)$ by Lemma \ref{lem:interiorelementclosure}. Then $u+tx \in \partial \c$, and at the same time, $u+tx \in \interior(\c)$, which is absurd. 
	
	The same argument shows that if $y \neq 0$, then $y$ is a scalar multiple of $u$. Therefore, $x+iy$ is a scalar multiple of $u$ and $e^{i \theta}=1$.
\end{proof}

\begin{proof}[Proof of Theorem \ref{abstractPFThm-Primtive}]~
	\begin{enumerate}
		\item 
		This follows by applying Lemma \ref{eigenvectors} as well as Proposition \ref{irreducibleprop} (1) and (2) to $\rho(L)^{-1}L$.
		
		\item 
		This follows by an application of Proposition \ref{primitiveperspec} to $\rho(L)^{-1}L$.
		
		\item 
		This follows from Proposition \ref{irreducibleprop} (4) and part (2) of this theorem.
		
		\item 
		Notice that by part (3) we have that $\rho(L)^{-n}L^{n}= P + S^{n}$. Thus
		\begin{align*}
		\norm{\rho(L)^{-n}L^{n}x - \inn{x,u^{\ast}}u} = \norm{Px+S^{n}x - \inn{x,u^{\ast}}u}=\norm{S^{n}x}.
		\end{align*}
		Take $\rho(S)<\gamma < 1$, then 
		\[ \norm{S^{n}x}\leq \frac{\norm{S^{n}}}{\gamma^{n}}\norm{x}\gamma^{n} .\]
		As $\norm{S^{n}}$ has an exponential growth rate of $\log \rho(S)$ by Gelfand's formula, we have that $\displaystyle \frac{\norm{S^{n}}}{\gamma^{n}}$ tends to $0$, and in particular, is bounded by some constant $C>0$.
	\end{enumerate}
\end{proof}

\subsection{Direct Sums of Banach Spaces and spectral properties of non-negative matrices}

\begin{proposition}\label{prop:directsumproperties}
~
	\begin{enumerate}
		\item 
		Let $\set{X_{i}}_{i=1}^{n}$ be a finite collection of Banach spaces. Define the vector space
		\[ \bigoplus_{i=1}^{n}X_{i}=\set{(x_{1}, x_{2}, \ldots , x_{n}) :x_{i}\in X_{i} } \]
		with addition and scalar multiplication being coordinate wise. Then the function
		\[ \norm{(x_{i})_{i=1}^{n}} = \max_{i}\norm{x_{i}}_{X_{i}} \]
		defines a norm on $\bigoplus_{i=1}^{n}X_{i}$ which makes it a Banach space.
		
		\item 
		Suppose that $\set{X_{i}}_{i=1}^{n}$ is a collection of Banach spaces and that $\set{A_{ij}}_{i=1,j=1}^{n,n}$ is a collection of bounded linear operators $A_{ij}:X_{j}\to X_{i}$. The function
		\[ \A(x_{i})_{i=1}^{n}:= \left(\sum_{j=1}^{n}A_{ij}x_{j}\right)_{i=1}^{n} \]
		defines a bounded linear operator $\A:\bigoplus_{i=1}^{n}X_{i}\to \bigoplus_{i=1}^{n}X_{i}$. We will denote this operator by $[A_{ij}]$.
		
		\item 
		Suppose that $\A: \bigoplus_{i}X_{i} \to \bigoplus_{i}X_{i}$ is a bounded linear operator. Then there exists a collection of bounded linear operators $A_{ij}:X_{j}\to X_{i}$ such that $\A = [A_{ij}]$.
		
		\item 
		Let $\A = [A_{ij}]$ and $\B=[B_{ij}]$, then $\A\B = [\sum_{k}A_{ik}B_{kj}]$.
	\end{enumerate}
\end{proposition}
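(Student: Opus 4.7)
The proposition is essentially a bookkeeping result identifying bounded operators on a finite direct sum with matrices of bounded operators between the summands. My plan is to handle each of the four parts in order, since each later part relies on the framework built up in the earlier parts.

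For (1), the verification that $\norm{(x_i)_{i=1}^n} = \max_i \norm{x_i}_{X_i}$ is a norm is immediate from the facts that each $\norm{\cdot}_{X_i}$ is a norm: positive definiteness, homogeneity, and the triangle inequality all follow by taking the max over $i$. For completeness, I would take a Cauchy sequence $\{(x_i^{(k)})_{i=1}^n\}_{k\in\N}$ in $\bigoplus_i X_i$ and observe that the bound $\norm{x_i^{(k)} - x_i^{(\ell)}}_{X_i} \leq \norm{(x_i^{(k)}) - (x_i^{(\ell)})}$ implies each coordinate $\{x_i^{(k)}\}_{k\in \N}$ is Cauchy in $X_i$. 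By completeness of each $X_i$, these converge to some $x_i \in X_i$, and then $(x_i)_{i=1}^n$ is the limit in $\bigoplus_i X_i$.

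For (2), linearity of $\A = [A_{ij}]$ is inherited coordinatewise from the linearity of each $A_{ij}$. For boundedness,
\[
\norm{\A(x_i)_{i=1}^n} = \max_i \norm{\sum_j A_{ij}x_j}_{X_i} \leq \max_i \sum_j \norm{A_{ij}}_{\op} \norm{x_j}_{X_j} \leq \Big(\max_i \sum_j \norm{A_{ij}}_{\op}\Big)\cdot \norm{(x_j)_{j=1}^n}.
\]
For (3), I would introduce the canonical inclusions $\iota_j \colon X_j \to \bigoplus_i X_i$ sending $x_j$ to the tuple with $x_j$ in the $j$-th slot and zeros elsewhere, and canonical projections $\pi_i \colon \bigoplus_k X_k \to X_i$ extracting the $i$-th coordinate. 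Both are bounded linear maps of norm at most $1$. Defining $A_{ij}:= \pi_i \A \iota_j$ yields a bounded linear map $X_j \to X_i$, and the identity $(x_i)_{i=1}^n = \sum_j \iota_j x_j$ combined with linearity of $\A$ gives $\A(x_i)_{i=1}^n = \sum_j \A \iota_j x_j$, whose $i$-th coordinate is $\sum_j A_{ij} x_j$. This shows $\A = [A_{ij}]$.

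For (4), the computation is direct: applying $\B$ first and then $\A$ and using the definition from (2),
\[
\A\B(x_j)_{j=1}^n = \A\Big(\sum_k B_{kj} x_j\Big)_{k=1}^n = \Big(\sum_k A_{ik}\sum_j B_{kj}x_j\Big)_{i=1}^n = \Big(\sum_j \big(\sum_k A_{ik}B_{kj}\big)x_j\Big)_{i=1}^n,
\]
which is exactly $[\sum_k A_{ik}B_{kj}](x_j)_{j=1}^n$. Since all four parts are essentially standard functional-analytic verifications, I do not expect any genuine obstacle; the only item that requires a small observation rather than pure computation is (3), where the use of the inclusion/projection maps $\iota_j, \pi_i$ makes the extraction of the $A_{ij}$ from $\A$ transparent.
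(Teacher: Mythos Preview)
Your proof is correct and follows essentially the same approach as the paper. The only cosmetic differences are that the paper cites a reference for part (1) rather than writing out the Cauchy-sequence argument, and uses the notation $P_i,\ol{P}_j$ in place of your $\pi_i,\iota_j$ for the projections and inclusions in part (3); the computations in parts (2) and (4) are identical.
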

\begin{proof} ~
	\begin{enumerate}
		\item 
		This can be found in \cite[Chapter 3 proposition 4.4]{MR1070713}.
		
		\item 
		That the function $\A$ is linear can be verified by computation. To see that $\A$ is bounded, we note that 
		\begin{align*}
		\norm{\sum_{j=1}^{n}A_{ij}x_{j}}_{X_{i}}\leq \sum_{j=1}^{n}\norm{A_{ij}}_{\op}\norm{x_{j}}_{X_{j}}\leq \norm{(x_{i})}\sum_{j=1}^{n}\norm{A_{ij}}_{\op}.
		\end{align*}
		Therefore $\norm{\A}_{\op}\leq \max\limits_{i}\displaystyle\sum_{j=1}^{n}\norm{A_{ij}}_{\op}$.
		
		\item 
		
	For each $1\leq i\leq n$, define $P_{i}:\bigoplus_{k}X_{k} \to X_{i}$ by
\[ P_{i}(x_{1}, x_{2}, \ldots , x_{n})=x_{i} \]
and $:\ol{P}_{i}:X_{i} \to \bigoplus_{k}X_{k}$ by
\[ \ol{P}_{i}x = (0,\ldots, 0,\underset{\text{$i$th place}}{x},0,\ldots ,0). \]
Setting $A_{ij}=P_{i}\A \ol{P}_{j}$, it can be verified that $\A=[A_{ij}]$.
	
		
		\item 
		Notice that
		\begin{align*}
		\A \B(x_{i})_{i=1}^{n}&= \A\left(\sum_{j=1}^{n}B_{ij}x_{j}\right)_{i=1}^{n}\\
		&=\left(\sum_{k=1}^{n}A_{ik}\sum_{j=1}^{n}B_{kj}x_{j}\right)_{i=1}^{n}\\
		&=\left(\sum_{j=1}^{n}\sum_{k=1}^{n}A_{ik}B_{kj}x_{j}\right)_{i=1}^{n}.
		\end{align*} 
		Therefore $\A \B= [\sum_{k}A_{ik}B_{kj}]$.
	\end{enumerate}
\end{proof}

When working with an operator $\A$ acting on a direct sum, we will sometimes write
\[ \A = \bmat{A_{11}& \cdots & A_{1n} \\ \vdots & & \vdots \\ A_{n1} & \cdots & A_{nn}} \]
to mean that $\A=[A_{ij}]$. By Proposition \ref{prop:directsumproperties}, the operation of multiplication of operators is compatible with the typical formulas for matrix multiplication, justifying such notation.

\begin{lemma}
	Let $\set{X_{i}}_{i=1}^{n}$ be a collection of Banach space and $\A:\bigoplus_{i}X_{i}\to \bigoplus_{i}X_{i}$. Suppose that $A$ is block upper triangular 
	\[ \A=\bmat{A_{11}&A_{12}&\cdots & A_{1n} \\ 0& A_{22} & & \vdots \\ \vdots &  & \ddots & \\ 0 & \cdots & 0 &A_{nn}} \]
	\begin{enumerate}
		\item 
		$A$ is invertible if and only if $A_{ii}$ is invertible for all $1 \leq i \leq n$.
		
		\item We have
		\[ \spec(A) = \bigcup_{i=1}^{n}\spec(A_{ii}). \]
	\end{enumerate}
\end{lemma}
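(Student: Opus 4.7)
My plan is to prove (1) by induction on $n$, and deduce (2) as an immediate consequence by applying (1) to $\lambda I - \A$ (which is again block upper triangular, with diagonal blocks $\lambda I - A_{ii}$) for each $\lambda \in \C$. The base case $n=1$ is trivial, and for the inductive step I would decompose $\A$ as a $2 \times 2$ block operator
\[
\A = \begin{pmatrix} A_{11} & B \\ 0 & \A' \end{pmatrix},
\]
where $\A' : \bigoplus_{i=2}^n X_i \to \bigoplus_{i=2}^n X_i$ is itself block upper triangular with diagonal blocks $A_{22},\ldots,A_{nn}$ (to which the inductive hypothesis applies) and $B$ is the remaining first row of off-diagonal blocks, viewed as a bounded operator from $\bigoplus_{i=2}^n X_i$ to $X_1$.

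For the ``if'' direction of (1), assuming each $A_{ii}$ is invertible, induction yields a bounded $(\A')^{-1}$, and a direct computation shows that
\[
\begin{pmatrix} A_{11}^{-1} & -A_{11}^{-1} B (\A')^{-1} \\ 0 & (\A')^{-1} \end{pmatrix}
\]
serves as a bounded two-sided inverse of $\A$. For the ``only if'' direction, I would exploit the observation that the subspace $Y := X_1 \oplus \{0\}^{n-1}$ is $\A$-invariant by block upper triangularity, with $\A|_Y$ canonically identified with $A_{11}$, while the induced quotient operator on $\bigoplus_i X_i / Y$ is identified with $\A'$. Injectivity of $A_{11}$ is immediate (if $A_{11}x = 0$, then $\A(x,0,\ldots,0) = 0$, so $x = 0$), and surjectivity of $\A'$ follows from surjectivity of $\A$ since target vectors of the form $(0,y')$ can be hit. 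The complementary facts, surjectivity of $A_{11}$ and injectivity of $\A'$, would be obtained by iteratively solving the triangular system $\sum_j A_{ij} x_j = y_i$ from the bottom row upward, using invertibility of $\A$ at each step and the open mapping theorem to conclude boundedness of the resulting inverses.

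Once (1) is established, (2) follows immediately: for each $\lambda \in \C$ the operator $\lambda I - \A$ is block upper triangular with diagonal blocks $\lambda I - A_{ii}$, so (1) gives $\lambda I - \A$ invertible if and only if every $\lambda I - A_{ii}$ is invertible, which is precisely $\spec(\A) = \bigcup_{i=1}^{n} \spec(A_{ii})$. The main obstacle is the ``only if'' direction of (1); in infinite dimensions, transferring invertibility from $\A$ to each diagonal block requires genuine use of the finiteness of the block decomposition, unlike the finite-dimensional case, where the statement is immediate from the product formula for the determinant of a block triangular matrix.
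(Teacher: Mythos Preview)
Your ``if'' direction of (1) and the deduction of (2) from (1) are correct and essentially match the paper (which splits off $X_n$ rather than $X_1$, an immaterial difference). The gap is in the ``only if'' direction: your proposed back-substitution from the bottom row only shows that the \emph{system} $\A x = y$ is solvable, which is just surjectivity of $\A$ again---it does not force each $A_{ii}$ to be individually bijective, and in fact this direction is false in general. On $X_1 = X_2 = \ell^2(\N_0)$, with $S$ the unilateral right shift and $P_0$ the orthogonal projection onto $\spn\{e_0\}$, the block upper triangular operator
\[
\A = \begin{pmatrix} S & P_0 \\ 0 & S^* \end{pmatrix}
\]
is invertible (one checks directly that $\A^{-1}(u,v) = (S^*u,\ P_0 u + Sv)$), yet $A_{11}=S$ is not surjective and $A_{22}=S^*$ is not injective. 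Consequently part (2) also fails as stated: $0 \in \spec(S)\cup\spec(S^*)$ but $0 \notin \spec(\A)$. You correctly flagged this direction as the real obstacle; the point is that in infinite dimensions it cannot be overcome without further hypotheses.

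The paper's own argument for this direction (``apply $\A$ to vectors of the form $(0,\ldots,x_i,\ldots,0)$'') is equally incomplete and fails for the same reason: such vectors witness injectivity of $A_{11}$ but give no handle on surjectivity of any $A_{ii}$. What does hold unconditionally is the inclusion $\spec(\A) \subseteq \bigcup_i \spec(A_{ii})$, which follows from your correct ``if'' direction applied to $\lambda I - \A$; the reverse inclusion requires additional structure, for instance that each $A_{ii}$ be Fredholm of index zero, or that the $X_i$ be finite-dimensional. Since this lemma is not invoked elsewhere in the paper, the defect does not affect the main results.
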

\begin{proof}~
	\begin{enumerate}
		\item 
		$(\implies)$ This direction is clear; use the invertibility of $A$ and apply it to vectors of the form $(0,0, \ldots, x_{i}, \ldots , 0)$ to define the inverse for $A_{ii}$.
		
\noindent		$(\impliedby)$ The proof is by induction on $n$ (the number of Banach spaces). It is clear that the result holds for $n=1$. Supposing that the result holds for $n-1$, we can think of $\bigoplus_{i=1}^{n}X_{i}$ as $\left(\bigoplus_{i=1}^{n-1}X_{i}\right) \oplus X_{n}$ and write $\A$ as a matrix with respect to this direct sum		
\[ \A = \bmat{\wt{A}_{11}& \wt{A}_{12} \\ 0 & A_{nn}} .\]
		By assumption $A_{nn}$ is invertible, and by the induction hypothesis $\wt{A}_{11}$ is invertible. Now one can check using Proposition \ref{prop:directsumproperties} (4) that
		\[ \bmat{\wt{A}^{-1}_{11}&-\wt{A}_{11}^{-1}\wt{A}_{12}A_{nn}^{-1} \\ 0 & A_{nn}^{-1}} \]
		defines an inverse for $A$.
		
		\item 
		This follows from part $(1)$.
	\end{enumerate}
\end{proof}

\begin{theorem}[Perron-Frobenius]\label{thm:irreduciblePF}
	Suppose that $T$ is an irreducible non-negative matrix. There exist a number $h\geq 1$, called the \emph{period} of $A$, and a permutation matrix $P$ such that $PAP^{\ast}$ can be written as a block matrix of the following form
	\[ PTP^{\ast}=\bmat{0& T_{12} & 0 & \cdots & 0 \\ \vdots &0 & T_{23}& \cdots &0\\ \vdots &  &   & \ddots & \vdots \\  0 & 0& \cdots & & T_{h-1h}\\ T_{h1} & 0& \cdots & & 0} \]
	where the diagonal blocks are square. Moreover,
	\[ PT^{h}P^{\ast}=\bmat{T_{12}T_{23}T_{34}\cdots T_{h1} & 0 &0 &\cdots& 0 \\ 0 & T_{23}T_{34}\cdots T_{h1}T_{12} &0 &\cdots& 0 \\ \vdots  & &\; \; \ddots & & \vdots \\0 & \cdots & & & T_{h1}T_{12}T_{23}\cdots T_{h-1h}} \]
	and each diagonal block is primitive.
\end{theorem}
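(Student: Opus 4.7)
The plan is to recover the cyclic structure from the greatest common divisor of return times in the directed graph whose adjacency matrix is $T$, then read off the block form by reindexing.

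First I would fix an index $i_0$ and consider $R(i_0) := \{n \geq 1 : (T^n)_{i_0 i_0} > 0\}$, the set of lengths of closed paths at $i_0$. Set $h := \gcd R(i_0)$. I would verify that $h$ is independent of $i_0$: for any other index $j_0$, irreducibility of $T$ provides integers $a, b \geq 1$ with $(T^a)_{i_0 j_0} > 0$ and $(T^b)_{j_0 i_0} > 0$, so $a+b \in R(i_0)$ and any $n \in R(j_0)$ yields $a+n+b \in R(i_0)$, which forces $\gcd R(j_0)$ to equal $\gcd R(i_0)$. With $h$ thus intrinsic to $T$, I would define, for $p \in \{0, 1, \dots, h-1\}$, the class
\[ (p) := \{j : \text{there exists } n \geq 0 \text{ with } (T^n)_{i_0 j} > 0 \text{ and } n \equiv p \pmod h\}. \]
Irreducibility ensures $\bigcup_p (p)$ is the full index set. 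To see the $(p)$ are disjoint, suppose $j \in (p) \cap (q)$ with witnessing lengths $\ell_1, \ell_2$; picking $m$ with $(T^m)_{j i_0} > 0$, both $\ell_1 + m$ and $\ell_2 + m$ lie in $R(i_0)$, so their difference $\ell_1 - \ell_2$ is divisible by $h$, forcing $p = q$.

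Next I would derive the cyclic block form. If $T_{jk} > 0$ and $j \in (p)$, then concatenating a length-$n$ path from $i_0$ to $j$ (with $n \equiv p$) with the edge $j \to k$ gives a length-$(n+1)$ path from $i_0$ to $k$, so $k \in (p+1 \bmod h)$. Letting $P$ be the permutation that groups the indices of $(0), (1), \dots, (h-1)$ together in that order (matching the paper's convention where class labels increase cyclically), this observation yields exactly the claimed block form of $PTP^*$: the only nonzero blocks are $T_{p, p+1}$ for $p = 1, \dots, h-1$ and $T_{h,1}$.

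Finally I would show primitivity of each diagonal block of $PT^hP^*$. By direct block multiplication (Proposition \ref{prop:directsumproperties}(4)), the diagonal blocks of $PT^hP^*$ are $T_{12}T_{23}\cdots T_{h1}$, $T_{23}T_{34}\cdots T_{h1}T_{12}$, and so on, whose $(j,k)$ entries count length-$h$ paths from $j$ to $k$ staying within a single class. Each such block is irreducible because within the class, $T^h$ induces an irreducible graph (a direct consequence of $T$ being irreducible combined with the cyclic structure). Moreover, for any $i$ in the class, $\{n/h : n \in R(i), h \mid n\}$ is a subsemigroup of $\N$ whose gcd is $1$ (by the basepoint-invariance of $h$ applied to $i$, together with the definition of $h$), so it contains every sufficiently large integer; equivalently, $((T^h)^N)_{ii} > 0$ for all $N$ large. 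Combined with irreducibility of the block, this yields primitivity.

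The main obstacle is the careful bookkeeping for basepoint-independence of $h$ and the disjointness of the classes $(p)$; the primitivity statement, while classical, also requires the standard number-theoretic fact that a subsemigroup of $\N$ with gcd $1$ cofinitely covers $\N$, which I would invoke rather than reprove.
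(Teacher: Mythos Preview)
Your argument is correct and is the standard textbook proof of the cyclic decomposition for irreducible non-negative matrices. The paper, however, does not prove this statement at all: it simply refers the reader to \cite[Section 8.4]{MR2978290}. So there is nothing to compare beyond noting that you have supplied the classical graph-theoretic argument that the cited reference contains. One small remark: in your primitivity step you could streamline by observing directly that every element of $R(i)$ is already a multiple of $h$ (since $i$ lies in some class and any closed path at $i$ must traverse a multiple of $h$ steps by the cyclic structure), so the restriction ``$h \mid n$'' in $\{n/h : n \in R(i),\ h \mid n\}$ is automatic.
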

\begin{proof}
For the proof we refer the reader to \cite[Section 8.4]{MR2978290} and the references therein.
\end{proof}

\begin{proposition}\label{prop:rotationsymmetry}
	Suppose that 
	\[ \A = \bmat{0&A_{12} & 0 & \cdots & 0 \\ \vdots &0 & A_{23}& \cdots &0\\ \vdots &  &   & \ddots & \vdots \\  0 & 0& \cdots & & A_{h-1h}\\ A_{h1} & 0& \cdots & & 0}. \]
	That is $\A=[A_{ij}]$ where $A_{ij}=0$ unless $j\equiv i+1 \mod h$. Then for each $0\leq k\leq h-1$, there exists an isometric isomorphism $\D:\bigoplus_{i=1}^{h}X_{i}\to \bigoplus_{i=1}^{h}X_{i}$ such that $\D^{-1}\A\D = e^{\frac{2k \pi i}{h}}A$.
\end{proposition}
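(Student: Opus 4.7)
The plan is to produce $\D$ explicitly as a block-diagonal multiplication operator built from powers of the root of unity $\zeta := e^{\frac{2k\pi i}{h}}$, exploiting the fact that the only nonzero blocks of $\A$ lie on a single cyclic ``superdiagonal'' $j \equiv i+1 \pmod h$.

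First I would define
\[
\D \;=\; \bmat{\zeta\, I_{X_{1}} & & & \\ & \zeta^{2}\, I_{X_{2}} & & \\ & & \ddots & \\ & & & \zeta^{h}\, I_{X_{h}}},
\qquad\text{i.e.}\qquad
\D(x_{1},\ldots,x_{h}) = (\zeta^{i} x_{i})_{i=1}^{h}.
\]
Because each diagonal entry is multiplication by a scalar of modulus $1$ on the complexification of $X_{i}$, each is an isometric isomorphism, and with the max norm on $\bigoplus_{i} X_{i}$ from Proposition \ref{prop:directsumproperties}, $\D$ is an isometric isomorphism with inverse $\D^{-1}(x_{1},\ldots,x_{h}) = (\zeta^{-i} x_{i})_{i=1}^{h}$.

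Next I would compute $\D^{-1}\A \D$ using the block-multiplication rule of Proposition \ref{prop:directsumproperties} (4). The $(i,j)$-block of the product equals $\zeta^{-i} A_{ij}\, \zeta^{j} = \zeta^{j-i} A_{ij}$. Since $A_{ij}=0$ unless $j \equiv i+1 \pmod h$, the surviving blocks are either $(i,j)=(i,i+1)$ for $1 \leq i \leq h-1$, contributing a factor $\zeta^{(i+1)-i}=\zeta$, or $(i,j)=(h,1)$, contributing $\zeta^{1-h}=\zeta \cdot \zeta^{-h}=\zeta$ because $\zeta^{h}=1$. Hence every nonzero block is multiplied by the same scalar $\zeta$, and
\[
\D^{-1}\A\D \;=\; \zeta\, \A \;=\; e^{\frac{2k\pi i}{h}}\A,
\]
as required.

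There is essentially no obstacle in this argument; it is a one-step diagonal conjugation. The only thing to watch is that the cyclic ``wrap-around'' block $A_{h,1}$ produces the shift $1-h$ in the exponent rather than $1$, and it is precisely the identity $\zeta^{h}=1$ that forces this wrap-around to pick up the same factor $\zeta$ as the other superdiagonal blocks, so the entire operator is scaled uniformly.
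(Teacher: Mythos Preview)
Your proof is correct and is essentially identical to the paper's: both define $\D$ as the block-diagonal operator with $i$-th diagonal entry $\zeta^{i}\,I_{X_i}$ (the paper writes the last entry as $1=\zeta^{h}$) and then verify blockwise that conjugation by $\D$ scales every superdiagonal block by $\zeta$. Your write-up is slightly more explicit about why $\D$ is an isometric isomorphism and about the wrap-around block, but the argument is the same.
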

\begin{proof}
	Define
	\[ \D=\bmat{e^{\frac{2k \pi i}{h}}& 0 &  & \cdots  &0 \\ 0&e^{\frac{2k \pi i}{h}2}&0&  &0 \\ 0&0& e^{\frac{2k \pi i}{h}3}&\cdots & 0\\ \vdots &  & \ddots &  & \vdots \\ 0& &  &e^{\frac{2k \pi i}{h}(h-1)}&0 \\0&\cdots & & 0 &1}. \]
	Then
	\[ \D^{-1}\A\D = \bmat{0&e^{\frac{-2k \pi i}{h}}A_{12}e^{\frac{2k \pi i}{h}2} & 0 & \cdots & 0 \\ \vdots &0 & e^{\frac{-2k \pi i}{h}2}A_{23}e^{\frac{2k \pi i}{h}3}& \cdots &0\\ \vdots &  &   & \ddots & \vdots \\  0 & 0& \cdots & & e^{\frac{-2k \pi i}{h}(h-1)}A_{h-1h}  \\ A_{h1}e^{\frac{2k \pi i}{h}} & 0& \cdots & & 0}=e^{\frac{2k \pi i}{h}}\A \]
\end{proof}

\bibliographystyle{amsalpha}
\bibliography{operator_final}

\providecommand{\bysame}{\leavevmode\hbox to3em{\hrulefill}\thinspace}
\providecommand{\MR}{\relax\ifhmode\unskip\space\fi MR }
\providecommand{\MRhref}[2]{%
  \href{http://www.ams.org/mathscinet-getitem?mr=#1}{#2}
}
\providecommand{\href}[2]{#2}
\begin{thebibliography}{DKP20}

\bibitem[AV07]{avila2007simplicity}
Artur Avila and Marcelo Viana, \emph{Simplicity of lyapunov spectra: proof of
  the zorich-kontsevich conjecture}, Acta mathematica \textbf{198} (2007),
  no.~1, 1--56.

\bibitem[Bal00]{MR1793194}
Viviane Baladi, \emph{Positive transfer operators and decay of correlations.},
  Advanced Series in Nonlinear Dynamics, 16., World Scientific Publishing Co.,
  Inc., River Edge, NJ, 2000.

\bibitem[BBB18]{backes2018continuity}
Lucas Backes, Aaron Brown, and Clark Butler, \emph{Continuity of lyapunov
  exponents for cocycles with invariant holonomies}, Journal of Modern Dynamics
  \textbf{12} (2018), no.~1, 223--260.

\bibitem[BL85]{MR886674}
Philippe Bougerol and Jean Lacroix, \emph{Products of random matrices with
  applications to {S}chr\"odinger operators}, Progress in Probability and
  Statistics, vol.~8, Birkh\"auser Boston, Inc., Boston, MA, 1985.

\bibitem[BNV17]{MR3667994}
Carlos Bocker-Neto and Marcelo Viana, \emph{Continuity of {L}yapunov exponents
  for random two-dimensional matrices}, Ergodic Theory and Dynamical Systems
  \textbf{37} (2017), no.~5, 1413--1442.

\bibitem[Bou88]{bougerol1988theoremes}
Philippe Bougerol, \emph{Th{\'e}or{\`e}mes limite pour les syst{\`e}mes
  lin{\'e}aires {\`a} coefficients markoviens}, Probability Theory and related
  fields \textbf{78} (1988), no.~2, 193--221.

\bibitem[Bow75]{bowen1975ergodic}
Rufus Bowen, \emph{Equilibrium states and the ergodic theory of anosov
  diffeomorphisms}, Lecture Notes in Mathematics, vol. 470, Springer-Verlag,
  1975.

\bibitem[BQ16]{benoist2016random}
Yves Benoist and Jean-Francois Quint, \emph{Random walks on reductive groups},
  Random Walks on Reductive Groups, Springer, 2016, pp.~153--167.

\bibitem[BV04]{bonatti2004lyapunov}
Christian Bonatti and Marcelo Viana, \emph{Lyapunov exponents with multiplicity
  1 for deterministic products of matrices}, Ergodic Theory and Dynamical
  Systems \textbf{24} (2004), no.~5, 1295--1330.

\bibitem[BV05]{bochi2005lyapunov}
Jairo Bochi and Marcelo Viana, \emph{The lyapunov exponents of generic
  volume-preserving and symplectic maps}, Annals of mathematics (2005),
  1423--1485.

\bibitem[Con90]{MR1070713}
John~B. Conway, \emph{A course in functional analysis}, second ed., Graduate
  Texts in Mathematics, vol.~96, Springer-Verlag, New York, 1990.

\bibitem[DK16]{duarte2016lyapunov}
Pedro Duarte and Silvius Klein, \emph{Lyapunov exponents of linear cocycles},
  Continuity via large deviations. Atlantis Studies in Dynamical Systems
  \textbf{3} (2016).

\bibitem[DKP20]{duartekleinpoletti}
Pedro Duarte, Silvius Klein, and Maurico Poletti, \emph{personal
  communication}.

\bibitem[DS88]{MR1009162}
Nelson Dunford and Jacob~T. Schwartz, \emph{Linear operators. {P}art {I}},
  Wiley Classics Library, John Wiley \& Sons, Inc., New York, 1988.

\bibitem[FK60]{MR121828}
H.~Furstenberg and H.~Kesten, \emph{Products of random matrices}, Ann. Math.
  Statist. \textbf{31} (1960), 457--469.

\bibitem[FK83]{MR727020}
H.~Furstenberg and Y.~Kifer, \emph{Random matrix products and measures on
  projective spaces}, Israel J. Math. \textbf{46} (1983), no.~1-2, 12--32.

\bibitem[Fur63]{furstenberg1963noncommuting}
Harry Furstenberg, \emph{Noncommuting random products}, Transactions of the
  American Mathematical Society \textbf{108} (1963), no.~3, 377--428.

\bibitem[GLP04]{MR2087783}
Yves Guivarc'h and \'{E}mile Le~Page, \emph{Simplicit\'{e} de spectres de
  {L}yapounov et propri\'{e}t\'{e} d'isolation spectrale pour une famille
  d'op\'{e}rateurs de transfert sur l'espace projectif}, Random walks and
  geometry, Walter de Gruyter, Berlin, 2004, pp.~181--259.

\bibitem[GS19]{gouezel2019}
S\'ebastien Gou\"ezel and Luchezar Stoyanov, \emph{Quantitative pesin theory
  for anosov diffeomorphisms and flows}, Ergodic Theory and Dynamical Systems
  \textbf{39} (2019), no.~1, 159–200.

\bibitem[Gui15]{guivarc2015spectral}
Yves Guivarc’h, \emph{Spectral gap properties and limit theorems for some
  random walks and dynamical systems}, Hyperbolic dynamics, fluctuations and
  large deviations, Proc. Sympos. Pure Math \textbf{89} (2015), 279--310.

\bibitem[Hen93]{MR1129880}
Hubert Hennion, \emph{Sur un th\'{e}or\`eme spectral et son application aux
  noyaux lipchitziens}, Proc. Amer. Math. Soc. \textbf{118} (1993), no.~2,
  627--634.

\bibitem[HJ13]{MR2978290}
Roger~A. Horn and Charles~R. Johnson, \emph{Matrix analysis}, second ed.,
  Cambridge University Press, Cambridge, 2013.

\bibitem[Kat95]{MR1335452}
Tosio Kato, \emph{Perturbation theory for linear operators}, Classics in
  Mathematics, Springer-Verlag, Berlin, 1995.

\bibitem[Kin73]{MR356192}
J.~F.~C. Kingman, \emph{Subadditive ergodic theory}, Ann. Probability
  \textbf{1} (1973), 883--909.

\bibitem[Klo19]{MR3920693}
Beno\^{\i}t~R. Kloeckner, \emph{Effective perturbation theory for simple
  isolated eigenvalues of linear operators}, J. Operator Theory \textbf{81}
  (2019), no.~1, 175--194.

\bibitem[KS13]{kalinin2013cocycles}
Boris Kalinin and Victoria Sadovskaya, \emph{Cocycles with one exponent over
  partially hyperbolic systems}, Geometriae Dedicata \textbf{167} (2013),
  no.~1, 167--188.

\bibitem[Lep00]{leplaideur2000local}
Renaud Leplaideur, \emph{Local product structure for equilibrium states},
  Transactions of the American Mathematical Society \textbf{352} (2000), no.~4,
  1889--1912.

\bibitem[LP82]{MR669072}
\'{E}mile Le~Page, \emph{Th\'{e}or\`emes limites pour les produits de matrices
  al\'{e}atoires}, Probability measures on groups ({O}berwolfach, 1981),
  Lecture Notes in Math., vol. 928, Springer, Berlin-New York, 1982,
  pp.~258--303.

\bibitem[LP89]{page1989regularite}
{\'E}mile Le~Page, \emph{R{\'e}gularit{\'e} du plus grand exposant
  caract{\'e}ristique des produits de matrices al{\'e}atoires ind{\'e}pendantes
  et applications}, Annales de l'IHP Probabilit{\'e}s et statistiques, vol.~25,
  1989, pp.~109--142.

\bibitem[Nau04]{MR2083432}
Fr\'ed\'eric Naud, \emph{Birkhoff cones, symbolic dynamics and spectrum of
  transfer operators}, Discrete Contin. Dyn. Syst. \textbf{11} (2004), no.~2-3,
  581--598.

\bibitem[Ose68]{oseledets1968multiplicative}
Valery~Iustinovich Oseledets, \emph{A multiplicative ergodic theorem.
  characteristic ljapunov, exponents of dynamical systems}, Trudy Moskovskogo
  Matematicheskogo Obshchestva \textbf{19} (1968), 179--210.

\bibitem[Par20]{park2020quasi}
Kiho Park, \emph{Quasi-multiplicativity of typical cocycles}, Communications in
  Mathematical Physics \textbf{376} (2020), no.~3, 1957--2004.

\bibitem[Per92]{MR1158741}
Yuval Peres, \emph{Domains of analytic continuation for the top {L}yapunov
  exponent}, Ann. Inst. H. Poincar\'{e} Probab. Statist. \textbf{28} (1992),
  no.~1, 131--148.

\bibitem[Pir18]{piraino2018weak}
Mark Piraino, \emph{The weak bernoulli property for matrix gibbs states},
  Ergodic Theory and Dynamical Systems (2018), 1--20.

\bibitem[PP90]{MR1085356}
William Parry and Mark Pollicott, \emph{Zeta functions and the periodic orbit
  structure of hyperbolic dynamics}, Ast\'erisque (1990), no.~187-188.

\bibitem[Rat73]{ratner1973central}
Marina Ratner, \emph{The central limit theorem for geodesic flows on
  n-dimensional manifolds of negative curvature}, Israel Journal of Mathematics
  \textbf{16} (1973), no.~2, 181--197.

\bibitem[Sas64]{MR0169067}
D.~W. Sasser, \emph{Quasi-positive operators}, Pacific J. Math. \textbf{14}
  (1964), 1029--1037. \MR{0169067}

\bibitem[Ser19]{sert2019}
Cagri Sert, \emph{Large deviation principle for random matrix products}, Ann.
  Probab. \textbf{47} (2019), no.~3, 1335--1377.

\bibitem[VY19]{viana2019continuity}
Marcelo Viana and Jiagang Yang, \emph{Continuity of lyapunov exponents in the
  c0 topology}, Israel Journal of Mathematics \textbf{229} (2019), no.~1,
  461--485.

\bibitem[You90]{young1990large}
Lai-Sang Young, \emph{Large deviations in dynamical systems}, Transactions of
  the American Mathematical Society \textbf{318} (1990), no.~2, 525--543.

\end{thebibliography}
\end{document}